\author{P. Caputo}
\address{Dipartimento di Matematica, Universit\`a Roma Tre,
Largo S.\ Murialdo 1, 00146 Roma, Italy.}
\email{caputo@mat.uniroma3.it}
\author {A. Faggionato}
\address{Dipartimento di Matematica ``G. Castelnuovo", Universit\`a   ``La
  Sapienza'', P.le Aldo Moro  2, 00185  Roma, Italy.}
\email{faggiona@mat.uniroma1.it}
\author{T. Prescott}
\address{Department of Mathematics, University of North Dakota, 101 Cornell St Stop 8376, Grand Forks, ND 58202-8376, United States.}
\email{timothy.prescott@und.edu}
\numberwithin{equation}{section}
\DeclareMathSymbol{\leqslant}{\mathalpha}{AMSa}{"36} 
\DeclareMathSymbol{\geqslant}{\mathalpha}{AMSa}{"3E} 
\DeclareMathSymbol{\eset}{\mathalpha}{AMSb}{"3F}     
\renewcommand{\leq}{\;\leqslant\;}                   
\renewcommand{\geq}{\;\geqslant\;}                   
\newcommand{\dd}{\,\mathrm{d}}               
\newcommand{\sumtwo}[2]{\sum_{\substack{#1 \\ #2}}} 
\newtheorem{theorem}{Theorem}
\newtheorem{corollary}[theorem]{Corollary}
\newtheorem{definition}[theorem]{Definition}
\newtheorem{lemma}[theorem]{Lemma}
\newtheorem{proposition}[theorem]{Proposition}
\newtheorem{theorem*}{Theorem}
\numberwithin{theorem}{section}
\newcommand{\ignore}[1]{}
\DeclareMathOperator{\Div}{div}
\newcommand{\Exp}[1]{{\exp\left\{ #1 \right\}}}
\newcommand{\coloneq}{\mathrel{\mathop:}=}
\newcommand{\Cvol}{C_{\mathrm{vol}}}
\newcommand{\la}{\label}
\newcommand{\be}{\begin{equation}}
\def\1{\ifmmode {1\hskip -3pt \rm{I}} \else {\hbox {$1\hskip -3pt \rm{I}$}}\fi}
\newcommand{\cA}{\ensuremath{\mathcal A}}
\newcommand{\cB}{\ensuremath{\mathcal B}}
\newcommand{\cC}{\ensuremath{\mathcal C}}
\newcommand{\cD}{\ensuremath{\mathcal D}}
\newcommand{\cF}{\ensuremath{\mathcal F}}
\newcommand{\cG}{\ensuremath{\mathcal G}}
\newcommand{\cH}{\ensuremath{\mathcal H}}
\newcommand{\cL}{\ensuremath{\mathcal L}}
\newcommand{\cN}{\ensuremath{\mathcal N}}
\newcommand{\cR}{\ensuremath{\mathcal R}}
\newcommand{\cS}{\ensuremath{\mathcal S}}
\newcommand{\cT}{\ensuremath{\mathcal T}}
\newcommand{\cW}{\ensuremath{\mathcal W}}
\newcommand{\bbB}{{\ensuremath{\mathbb B}} }
\newcommand{\bbE}{{\ensuremath{\mathbb E}} }
\newcommand{\bbG}{{\ensuremath{\mathbb G}} }
\newcommand{\bbI}{{\ensuremath{\mathbb I}} }
\newcommand{\bbL}{{\ensuremath{\mathbb L}} }
\newcommand{\bbN}{{\ensuremath{\mathbb N}} }
\newcommand{\bbP}{{\ensuremath{\mathbb P}} }
\newcommand{\bbQ}{{\ensuremath{\mathbb Q}} }
\newcommand{\bbR}{{\ensuremath{\mathbb R}} }
\newcommand{\bbZ}{{\ensuremath{\mathbb Z}} }
\newcommand{\si}{\sigma}
\newcommand{\wt}{\widetilde}
\let\a=\alpha \let\b=\beta   \let\d=\delta  \let\e=\varepsilon
 \let\g=\gamma     \let\k=\kappa  \let\l=\lambda
      \let\o=\omega    \let\p=\pi  
\let\r=\rho  \let\s=\sigma \let\t=\tau   
  \let\z=\zeta
\let\D=\Delta   \let\G=\Gamma  \let\L=\Lambda 
\let\O=\Omega      
\def\\{\hfill\break}
\def\thsp{\thinspace}
\def\tthsp{\kern .083333 em}
\def\?{\mskip -10mu}
\def\indbox#1{\hbox to \parindent{\hfil\ #1\hfil} }
\def\hexnumber#1{%
  \ifcase#1 0\or 1\or 2\or 3\or 4\or 5\or 6\or 7\or 8\or
  9\or A\or B\or C\or D\or E\or F\fi}
\font\tenmsa=msam10
\font\sevenmsa=msam7
\font\fivemsa=msam5
\edef\msafamhexnumber{\hexnumber\msafam}%
\mathchardef\restriction"1\msafamhexnumber16
\mathchardef\ssim"0218
\mathchardef\square"0\msafamhexnumber03
\mathchardef\eqd"3\msafamhexnumber2C
\def\QED{\ifhmode\unskip\nobreak\fi\quad
  \ifmmode\square\else$\square$\fi}
\font\tenmsb=msbm10
\font\sevenmsb=msbm7
\font\fivemsb=msbm5
\def\Bbb#1{\fam\msbfam\relax#1}
\font\teneufm=eufm10
\font\seveneufm=eufm7
\font\fiveeufm=eufm5
\def\({\left(}
\def\){\right)}
\let\integer=\bZ
\def\ZZ{{\integer^2}}
\let\neper=e
\let\ii=i
\def\nep#1{ \neper^{#1}}
\def\tc{\thsp | \thsp}
\def\diam{\mathop{\rm diam}\nolimits}
\outer\def\nproclaim#1 [#2]#3. #4\par{\medbreak \noindent
   \talato(#2){\bf #1 \Thm[#2]#3.\enspace }%
   {\sl #4\par }\ifdim \lastskip <\medskipamount
   \removelastskip \penalty 55\medskip \fi}
\def\thmm[#1]{#1}
\def\teo[#1]{#1}
\def\sttilde#1{%
\dimen2=\fontdimen5\textfont0
\setbox0=\hbox{$\mathchar"7E$}
\setbox1=\hbox{$\scriptstyle #1$}
\dimen0=\wd0
\dimen1=\wd1
\advance\dimen1 by -\dimen0
\divide\dimen1 by 2
\vbox{\offinterlineskip%
   \moveright\dimen1 \box0 \kern - \dimen2\box1}
}
\begin{document}

\title[
Invariance principle for  random walk on point processes]
{Invariance principle for Mott variable range hopping and other
walks on point processes }

\begin{abstract}
We consider a random walk on a homogeneous  Poisson point process
with energy marks. The jump rates decay exponentially in the
$\a$-power of the jump length and depend on the energy marks via a
Boltzmann--like factor. The case   $\a = 1$   corresponds to the
phonon-induced Mott variable range hopping in disordered solids in
the regime of strong Anderson localization. We prove that for almost
every realization of the marked process, the diffusively rescaled
random walk, with an arbitrary start point, converges to a Brownian
motion whose diffusion matrix is positive definite  and independent
of the environment.    Finally, we extend
 the above result  to  other point processes including diluted lattices.
%

\medskip

\noindent \textsl{Key words}: random walk in random environment,
Poisson point process, percolation,  stochastic domination,
invariance principle, corrector.

\medskip

\noindent \textsl{AMS 2000 subject classification}:
60K37, 
60F17, 
60G55. 
\end{abstract}

\maketitle
\thispagestyle{empty}
\section{Introduction and results}
Random walks on random point processes such as Mott variable range hopping
have been proposed in the physics literature as
effective models
for the analysis of the conductivity of disordered systems; see e.g.\ \cite{AHL,SE}.
They provide natural models of reversible random walks in random environments, which generalize in
several ways the well known random conductance lattice model.
Recently, several aspects of random walks on random point processes have been analyzed
with mathematical rigor: diffusivity \cite{FSS,CF2,FM}; isoperimetry and mixing times \cite{CF1}; and transience vs.\ recurrence \cite{CFG}.

\subsection{The model}
%
Let  $\xi$  denote the realization of a simple point
process on $\bbR^d$, $d\geq 1$,  and identify $\xi$ with the
countable collection of its points.
For example, one can take $\xi$ to be 
a homogeneous Poisson point process, or 
a Bernoulli process on $\bbZ^d$.
To each  point $x$ of $\xi$ we
associate an \textsl{energy mark} $E_x$, such that the family of energy
marks is independent from the point process and is given by i.i.d.\
random variables taking values in the interval $[-1,1]$.
We write $\bbP$ for the law of the resulting marked simple point process
$\o=\bigl( \xi, \{ E_x\} _{x \in \xi} \bigr)$,
which plays the role of the random environment.
Then, we consider
the discrete--time random walk $(X_n,\;n\geq 0)$ on $\xi$ jumping,
at each time step, from a point $x$ to a  point $y$  with probability
\begin{equation}\label{proba1}
 p(x,y)= \frac{r(x,y)  e^{-u(E_x,E_y)}  }{w(x) }\,,
 \quad w(x)=\sum_{z\in\xi} r(x,z)e^{- u(E_x,E_z)}  \,,
\end{equation}
where the functions $u$ and $r$ satisfy the following properties
for some constants $c,\a>0$:
\begin{enumerate}[(i)]
\item
$u:[-1,1]^2\to \bbR_+$ is a bounded nonnegative symmetric function:
\begin{equation}
0\leq u(E_x,E_y)=u(E_y,E_x) \leq c\,,\la{paletti1}
\end{equation}

\item
 $r$ is symmetric and translation invariant, i.e.\ $r(x,y)=r(y,x)=r(y-x)$, and
\begin{equation}\label{paletti2}
 c^{-1}\, \exp{\(- c\, |x|^\a \)}
 \leq r(x) = r(-x)
 \leq c \, \exp{\(-c^{-1}\, |x|^\a\)}\,,   \qquad x\in\bbR^d\,,
\end{equation}
\end{enumerate}
Here and below $|\cdot|$ denotes euclidean distance.
For this model to be well defined it suffices to assume
that $w(x) < \infty$ for
all $x \in \xi$ and almost all realizations of the environment (see
Lemma \ref{arancia}).
Below, we write $X_t := X_{\lfloor t\rfloor}$, $t\geq 0$,
and consider the associated distribution on the
space $\cD=D([0,\infty),\bbR^d)$ of right-continuous paths with left
limits, equipped with the Skorohod topology.

Similarly, consider the continuous--time version
 of the above random walk, with state space $\xi$ and infinitesimal generator
\begin{equation}\label{generatore}
 \bbL f(x) = \sum _{y \in \xi } r(x,y) \,\nep{- u(E_x,E_y)} \bigl(
f(y)-f(x) \bigr)\,, \qquad x\in \xi\,,
\end{equation}
for bounded functions $f:\xi\to\bbR$.
With some abuse of notation, the resulting random process on $\cD$
is again denoted by
$(X_t \,:\, t \geq 0)$. To avoid confusion we shall refer to
the two processes as the DTRW (discrete-time random walk)
and the CTRW (continuous-time random walk).
In words, the CTRW behaves as follows:
having arrived at a point
$x\in \xi$, it waits an exponential time with parameter $w(x)$,
after which it jumps to a point $y\in \xi$ with probability $p(x,y)$.
In Lemma \ref{arancia} we give
some sufficient conditions ensuring that the CTRW is well
defined, i.e.\ no explosion takes place.

An important special case of the model introduced above is
\emph{Mott variable range hopping}, obtained by choosing
\begin{equation}\la{mottvrh}
r(v)=e^{-|v|}\,,
\quad u(E_x,E_y)= \b\,\(|E_x|+|E_y|+|E_y-E_x|\)
\end{equation}
where $\b$ is a positive constant proportional to the inverse temperature.
Here the underlying point process is  often taken as the homogeneous
Poisson process or the diluted lattice $\bbZ^d$, the common law
$\nu$ of the energy marks is assumed to be of the form $\nu (dE)=c
|E|^\g dE$  on $[-1,1]$ for some  constants $c>0$ and $\g\geq 0$,
and the relevant issue is the asymptotic behavior as $\b\to\infty$. 
Mott variable range hopping   is
 a mean field dynamics 
describing low temperature phonon--assisted electron transport in disordered solids, in
which the Fermi level (which is $0$ above) lies in a region of
strong Anderson localization. The points of $\xi$  correspond to the
impurities of the disordered solid and the electron Hamiltonian has
exponentially localized quantum eigenstates with localization
centers $x\in \xi$ and corresponding energy $E_x$. The rate of
transitions between the localized eigenstates can be calculated from
first principles by means of the Fermi golden rule \cite{MA,SE}. Due
to localization, one can approximate the above quantum system by an
exclusion process, where the hard--core interaction comes from the
Pauli blocking induced by the Fermi statistics. If, however, the
blocking is treated in a mean field approximation, one obtains a
family of independent random walks with rates described by
\eqref{mottvrh} in the limit $\b \to \infty$ \cite{MA,AHL}.  Mott's law represents a 
fundamental principle describing the decay of  the DC conductivity at
low temperature  \cite{Mo,SE}.  
In view of
Einstein's relation \cite{S}, this law can be restated in terms of
the   diffusivity of  Mott variable range hopping.

\subsection{Invariance principle}
When we need to emphasize the dependence on the environment $\o$
and the starting point $x_0$,
we write 
$X_t(x_0,\o)$ for the two processes defined above and $P_{x_0,\o}$ for
the associated laws on $\cD$.
Asymptotic diffusive behavior of both DTRW and CTRW is studied via the
rescaled process
\be\la{resca}
X^{(\e)}(t):= \e \,X_{t/\e^2}\,,
\end{equation}
and the associated laws $P^{(\e)}_{x_0,\o}$ on $\cD$.

\begin{definition}\la{invprin}
We say that the \emph{strong invariance principle} (SIP) holds if there
exists a positive definite $d\times d$ matrix $D$ such that $\bbP$
almost surely, for every $x_0\in\xi$, $P^{(\e)}_{x_0,\o}$ converges
weakly to $d$-dimensional Browninan motion with diffusion matrix $D$.
We say that the \emph{weak invariance principle} (WIP) holds if the above
convergence takes place in $\bbP$-probability.
\end{definition}
The terms \emph{quenched} and \emph{annealed} are sometimes used to
replace \emph{strong} and \emph{weak}, respectively, in the above definition.
Diffusive behavior of the CTRW
has been rigorously investigated in
\cite{FSS}. Under suitable assumptions on the law of the point process
the authors prove the WIP.
Moreover, \cite{FSS} proves lower bounds on the
diffusion coefficient $D$ in agreement with Mott's law for the
special case \eqref{mottvrh}, as $\b\to\infty$.
  %
The corresponding upper bound is proven
in \cite{FM}. In \cite{CF2}, the authors
consider the case $d=1$, where they obtain
the SIP, and analyze the large $\b$ behavior of various
generalizations of the model \eqref{mottvrh} at the edge of subdiffusivity.

\smallskip
The aim of the present work is to
prove the strong
invariance principle 
in dimension $d \geq 2$. 
To state our main result we need to introduce some more notation.
 We write $\xi(A)$ for the number of points of
$\xi$ in a bounded Borel set $A\subset\bbR^d$.
Let $\bbE$ denote the expectation associated to the law $\bbP$ of the
environment $\o$.
%
Set $\r_k=\bbE\(\xi([0,1)^d)^k\)$, so that $\r_1$ is the density and
$\r_2$ stands for the second moment of the point process. If $\xi$
is a stationary simple point process with finite density, then we
can consider the associated Palm distribution. If $\xi$ is a
homogeneous Poisson point process (from now on, PPP), then its Palm
distribution is simply the law of the point process obtained from
$\xi$ by adding a point at the origin. In general, if $\bbP$ is the
law of $\o=(\xi,\{E_x\})$, then we let $\bbP_{0}$ denote the
associated Palm distribution (see Section \ref{ambiente} for the
definition) and we write $\bbE_0$ for the expectation with respect
to $\bbP_0$. As explained in Lemma \ref{arancia} in the appendix, if
$\r_2<\infty$, then $\bbP$--a.s.\ the law $P_{x_0,\o}$ on $\cD$ is
well defined for both DTRW and CTRW, for all $x_0\in\xi$. Moreover,
under the same assumption, the law $P_{0,\o}$ on $\cD$ with starting
point $0$ is well defined $\bbP_{0}$--a.s.

Our main result applies to several examples of point processes.
These include homogeneous PPP, as well as Bernoulli fields on a
lattice, referred to as the \emph{diluted lattice} case below.
In Section \ref{ambiente.1} we describe
conditions on the point process, under which all our arguments apply.
Below we restrict to $d\geq 2$ since
the one dimensional case is already treated in \cite{CF1}.
\begin{theorem}\label{atreiu}
Let  $d\geq 2$, $\a>0$, and fix an arbitrary law $\nu$ on $[-1,1]$.
 Let $\xi$ be the realization of a homogeneous PPP, or a diluted lattice, or
else any
stationary simple point process with $\r_2<\infty$,
and satisfying the
assumptions listed in  Section \ref{ambiente.1}.
Then, the following holds for both the DTRW and the CTRW:
$\bbP_{0}$
almost surely, as $\e\to0$, $P^{(\e)}_{0,\o}$ converges
weakly to $d$-dimensional Brownian motion with positive
diffusion matrix $D_{DTRW}$ and $D_{CTRW}$ respectively. Moreover,
the diffusion coefficients are related by
\be\la{relaz}
D_{CTRW}=  \bbE_0 w(0)\, D_{DTRW} \,.
\end{equation}
\end{theorem}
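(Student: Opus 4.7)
The plan is to prove the quenched invariance principle via the classical corrector / environment-seen-from-the-particle method of Kipnis--Varadhan and De Masi--Ferrari--Goldstein--Wick, adapted to the Palm framework for marked point processes. The DTRW case is handled first, and the CTRW statement together with the relation \eqref{relaz} is deduced from it by a time change.

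\textbf{Step 1 (environment process).} Consider $\Omega_0=\{\omega:0\in\xi\}$ equipped with the Palm measure $\bbP_0$. The process $\omega(t)=\tau_{X_t}\omega$, where $\tau$ denotes the natural $\bbR^d$-translation of marked configurations, is a Markov process on $\Omega_0$. Using the symmetries $r(x,y)=r(y-x)$, $u(E_x,E_y)=u(E_y,E_x)$ and the Mecke-type identity for Palm measures, one verifies that $\bbP_0$ is reversible for the CTRW environment process and that $w(0)\bbP_0/\bbE_0 w(0)$ is reversible for the DTRW one; note that $\bbE_0 w(0)<\infty$ follows from \eqref{paletti1}--\eqref{paletti2} together with $\r_2<\infty$. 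Ergodicity of these processes under the $\bbR^d$-translation action is part of the standing hypotheses in Section \ref{ambiente.1} and is standard for the PPP and diluted lattices.

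\textbf{Step 2 (corrector and martingale CLT).} For a fixed direction $e\in\bbR^d$, one constructs a \emph{corrector} $\chi_e$ as the usual resolvent limit $\chi_e^\lambda\to\chi_e$ in the closure of gradient fields with respect to the Dirichlet inner product. The second-moment bound
\begin{equation*}
\bbE_0\sum_{y\in\xi}r(0,y)\,\nep{-u(E_0,E_y)}\,|y|^2<\infty
\end{equation*}
needed for this construction follows from \eqref{paletti1}--\eqref{paletti2} and $\r_2<\infty$ by a Mecke computation. The process
\begin{equation*}
M_t^{e}:=e\cdot X_t-\chi_e\bigl(\tau_{X_t}\omega\bigr)+\chi_e(\omega)
\end{equation*}
is then a square-integrable $P_{0,\omega}$-martingale with stationary, ergodic increments and an explicit quadratic variation, and the quenched martingale functional CLT yields convergence of $\e\, M_{t/\e^2}^{e}$ to a Brownian motion with variance $t\,(e\cdot D e)$ for a non-negative symmetric matrix $D$.

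\textbf{Step 3 (quenched sublinearity of the corrector).} To transfer this CLT from the martingale $M^e$ to the walk $X$ itself, one must prove
\begin{equation*}
\e\,\chi_e\bigl(\tau_{X_{t/\e^2}}\omega\bigr)\longrightarrow 0
\qquad\text{in }P_{0,\omega}\text{-probability, for }\bbP_0\text{-a.e.\ }\omega.
\end{equation*}
I expect this to be the main obstacle. The standard route is to combine the pointwise sublinearity $\e\,\max_{x\in\xi\cap B_{R/\e}(0)}|\chi_e(\tau_x\omega)|\to 0$ with a quenched exit-time / heat-kernel estimate confining the walk to $B_{R/\e}$ up to time $t/\e^2$. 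In the point-process setting this typically requires a renormalization step: one identifies a mesoscopic scale and a class of ``good'' boxes on which the empirical point density and the energy marks are close to their $\bbP$-averages, so that the induced coarse-grained walk is comparable to a uniformly elliptic random walk admitting Nash-type heat-kernel bounds. A stochastic-domination argument then shows that the good boxes percolate; this is precisely what the assumptions on $\xi$ in Section \ref{ambiente.1} are designed to supply. Sublinearity of $\chi_e$ on the infinite good cluster then follows from standard ergodic and Poincar\'e-type estimates.

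\textbf{Step 4 (positive definiteness and time change).} If $e\cdot D e=0$ for some unit vector $e$, then $M^e$ vanishes identically, forcing the cocycle identity $\chi_e(\tau_y\omega)-\chi_e(\omega)=e\cdot y$ for $\bbP_0$-a.e.\ $\omega$ and every $y\in\xi$. Since $r(0,y)>0$ for all $y$ and $\xi$ almost surely contains points arbitrarily far in direction $e$, this contradicts the quenched finiteness of $\chi_e$, and hence $D$ is positive definite. The identity \eqref{relaz} is finally obtained from the time change $X_t^{CTRW}=X_{N_t}^{DTRW}$, where by the ergodic theorem applied to the DTRW environment process (whose invariant measure has density proportional to $w(0)$ with respect to $\bbP_0$) one has $N_t/t\to\bbE_0 w(0)$ $P_{0,\omega}$-a.s. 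Plugging this into the DTRW SIP of Step 3 yields the CTRW SIP and the relation $D_{CTRW}=\bbE_0 w(0)\,D_{DTRW}$.
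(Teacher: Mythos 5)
Your overall architecture coincides with the paper's: corrector via an $L^2$ projection onto potential forms, martingale functional CLT for $\Phi(\xi,X_n)=X_n+\chi(\xi,X_n)$, quenched sublinearity of $\chi$ as the crux, and the time change $n^*(t)/t\to\bbE_0 w(0)$ for \eqref{relaz}. Two small remarks on the parts you do carry out. First, in Step 2 you write the corrector as a function $\chi_e(\tau_{X_t}\omega)$ on $\Omega_0$; it is not a coboundary but a shift--covariant cocycle $\chi(\omega,x)$, and this matters because if it \emph{were} an $L^2$ function of the environment alone, Step 3 would be immediate from stationarity. Second, in Step 4 the cocycle identity $\chi_e(\tau_y\omega)-\chi_e(\omega)=e\cdot y$ does not contradict ``quenched finiteness'' (a finite cocycle can grow linearly); what it contradicts is precisely the sublinearity of Theorem \ref{piccolo}, which is how the paper argues. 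Note also that the paper transfers the CLT from $M$ to $X$ without a separate exit-time estimate, via the inequality $\max_{k\leq n}|\chi(\xi,X_k)|\leq 2\kappa(\xi)+2\delta\max_{k\leq n}|M_k|$ obtained by writing $x=\Phi(\xi,x)-\chi(\xi,x)$.

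The genuine gap is Step 3. You correctly identify quenched sublinearity as the main obstacle, but the paragraph you offer in its place (``good boxes where the density is close to its average, Nash-type heat kernel bounds for a coarse-grained elliptic walk, standard ergodic and Poincar\'e-type estimates'') is not a proof and, more importantly, does not address the actual obstructions this model presents. (i) The weight $w(x)$ receives non-negligible contributions from arbitrarily distant overcrowded regions, so ``goodness'' of a box is a non-local event; the paper must prove that the resulting infinite-range $\{0,1\}$ field of white boxes stochastically dominates a supercritical Bernoulli field (Section \ref{PPP}), which is hypothesis (H1) and not a routine density estimate. (ii) Because of the long-range jumps the walk exits the good cluster in a single step, so one must work with the walk restricted to its visits to $\xi\cap\cC^*_\infty$ and control the excursions via the enlargement-of-holes construction (Section \ref{enlarge}) and the uniform tail bounds of Lemmas \ref{lecN}--\ref{lecda}; heat-kernel and expected-distance bounds are then proved for the \emph{restricted} walk (Proposition \ref{mirta}), not for a coarse-grained elliptic walk. (iii) The passage from sublinearity along a direction to sublinearity on average and then to pointwise sublinearity follows the Berger--Biskup induction, but the absence of a lattice structure under the Palm measure forces the introduction of the ``bridge'' distributions of Sections \ref{mediano1}, \ref{mediano} and \ref{mandarino}. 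None of this is supplied or even correctly anticipated by your sketch, so as it stands the proposal establishes Theorem \ref{atreiu} only conditionally on Theorem \ref{piccolo}.
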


\bigskip
The desired result is then a consequence of Theorem \ref{atreiu}
together with standard properties of the Palm distribution:
\begin{corollary}\la{strong}
Under the assumptions of Theorem \ref{atreiu}, the strong invariance principle holds for both DTRW and CTRW, with the same diffusion matrices appearing in Theorem \ref{atreiu}.
\end{corollary}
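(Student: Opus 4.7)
The task is to bridge the gap between Theorem \ref{atreiu}, which gives a.s.\ convergence under the Palm measure $\bbP_0$ with the walk started at $0$, and the SIP as stated in Definition \ref{invprin}, which requires $\bbP$-a.s.\ convergence \emph{for every} $x_0\in\xi$. The plan has two ingredients: (i) a Palm-to-stationary transfer via the Campbell--Mecke formula, and (ii) the translation covariance of the walk, which lets us reduce a walk started at an arbitrary point $x_0\in\xi$ to a walk started at the origin in a shifted environment.

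First, let $A\subset\Omega$ be the set of environments $\omega$ (with $0\in\xi$) for which the rescaled law $P^{(\e)}_{0,\omega}$ converges weakly in $\cD$ to Brownian motion with the matrix from Theorem \ref{atreiu}. By that theorem, $\bbP_0(A)=1$. Let $\theta_x\omega$ denote the environment shifted so that $x$ becomes the origin; by translation invariance of $\bbP$ and the definition of the Palm distribution, the Campbell--Mecke identity gives, for every bounded Borel $K\subset\bbR^d$,
\[
\bbE\Bigl[\,\sum_{x\in\xi\cap K}\mathbf{1}_{\{\theta_x\omega\notin A\}}\Bigr] = \rho_1\,\mathrm{Leb}(K)\,\bbP_0(A^c)=0.
\]
Taking $K=[-n,n]^d$ and letting $n\to\infty$, we conclude that $\bbP$-a.s.\ $\theta_x\omega\in A$ for every $x\in\xi$.

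Second, since $r(\cdot,\cdot)$ is translation invariant and $u$ depends only on the marks carried by the points, the jump probabilities \eqref{proba1} and the generator \eqref{generatore} are translation covariant: the process $X_t(x_0,\omega)-x_0$ has the same law as $X_t(0,\theta_{x_0}\omega)$. Consequently,
\[
X^{(\e)}_{x_0,\omega}(t)\;=\;\e\,x_0 \;+\; X^{(\e)}_{0,\theta_{x_0}\omega}(t)\,.
\]
For any fixed $x_0$ the deterministic shift $\e x_0$ tends to $0$ as $\e\to 0$, and addition of a vanishing shift is continuous for the Skorohod topology on $\cD$. Therefore weak convergence of $P^{(\e)}_{0,\theta_{x_0}\omega}$ to Brownian motion forces the same weak convergence of $P^{(\e)}_{x_0,\omega}$. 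Combining this with the previous paragraph yields, $\bbP$-almost surely, the convergence of $P^{(\e)}_{x_0,\omega}$ for \emph{every} $x_0\in\xi$ simultaneously, which is exactly the SIP with the same matrices $D_{DTRW},D_{CTRW}$.

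No serious obstacle is expected: the only subtle point is to verify that the event $A$ is genuinely measurable on the Palm space and that the translation covariance extends to the marked Bernoulli/diluted lattice setting, both of which are routine consequences of the stationarity assumptions imposed in Section \ref{ambiente.1} and of the fact that the marks are i.i.d.\ and attached to points.
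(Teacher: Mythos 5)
Your proposal is correct and follows essentially the same route as the paper: the Campbell--Mecke computation in your first step is exactly the content (and the proof) of Lemma \ref{francoforte} in the appendix, which the paper invokes with $\cA_0$ the set of configurations for which the walks started at the origin converge, and the translation-covariance step plus the vanishing shift $\e x_0\to 0$ is the part the paper leaves implicit. No gaps.
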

As a consequence of the above result, for the model \eqref{mottvrh} the quenched diffusion
matrix $D_{CTRW}$ satisfies stretched exponential estimates as $\b\to\infty$, in agreement with Mott's law.
This follows from the bounds of
 \cite{FSS} and \cite{FM} on the annealed diffusion matrix and the fact that the quenched and annealed
 diffusion matrices must coincide.

\subsection{Background and discussion}
To illustrate the kind of difficulties
encountered in the proof of Theorem \ref{atreiu},
let us briefly recall the standard approach
(see \cite{Kozlov,KV,DFGW} and references therein) for the invariance
principle in the case of reversible random walks in random
environment. The main idea
is to consider the environment as seen from the moving particle, and
to use this new Markov process to define a displacement field
$\chi(x)=\chi(\o,x)$ that compensates the local drift felt by the
random walk $X_t$ in such a way that the process
$M_t:=X_t+\chi(X_t)$ defines a martingale. The displacement field
$\chi$ is usually referred to as the \emph{corrector}. A strong
invariance principle for the martingale $M_t$ can be obtained in a
rather standard way, so that what remains is to show that the
corrector's contribution is negligible. In particular, one needs
that for every $t>0$: \be\la{neglig} \lim_{\e\to 0} \,\e\,\chi
(X_{t/\e^2},\o) = 0\,,\quad\;
\text{in\;$P_{x_0,\o}$-\,probability}\,.
\end{equation}
Roughly speaking, the $L^2$-theory developed in \cite{KV,DFGW} allows to obtain
the statement \eqref{neglig}
in probability with respect to the random environment. This
approach can then be used to prove the WIP, as
detailed in \cite{FSS}. Moreover, this approach provides
an expression for the limiting diffusion matrix in terms of a
variational principle.
However, to have the strong invariance principle,
\eqref{neglig} must hold almost surely with respect
to the environment. This turns out to be related to a highly
non trivial ergodic property of the field $\chi$.

The same difficulty appears in
analogous investigations for the random conductance model in $\bbZ^d$.
In this model,
one has i.i.d.\ non-negative weights $r(x,y)$ on the nearest neighbor
edges $\{x,y\}$ of $\bbZ^d$, so that the random walk with generator
\eqref{generatore} becomes
a reversible nearest neighbor lattice walk. When the weights $r(x,y)$
are uniformly positive and bounded, the SIP
for this model has been proved in \cite{SS}; see also \cite{Kozlov,Bo,BoDe}. In the case of
super-critical Bernoulli
weights,
\cite{SS} proved the SIP for $d\geq 4$. Later, \cite{MP,BB} proved the
SIP for all $d\geq 2$. These results were recently extended in
\cite{M,BP} to the general case of bounded but possibly vanishing
weights, under the only assumption that positive weights percolate.
More recent
developments include the case of unbounded weights \cite{BD}.
All these works prove the SIP using the approach outlined above, although
the techniques used may differ.
Following \cite{SS,BB,MP},
an important ingredient for the proof of estimate \eqref{neglig} is
represented
by suitable heat kernel and isoperimetric
estimates. However, it is known that such
estimates cannot hold if the system lacks ellipticity, i.e.\ if
arbitrarily small weights are allowed; see \cite{FoMa,BBHK}.
An important idea of \cite{M,BP}
to overcome this problem was then to consider the random walk embedded
in an elliptic cluster and to control the corrector for this
restricted process.

Our random walk on random point process has several features in common
with the random conductance model. The lack of ellipticity corresponds
to the existence in the point process of 
regions of isolated points, where the walk can be trapped. For
instance, it was shown in \cite{CF1} that the existence of these
traps is responsible for the loss of diffusive isoperimetric and
Poincar\'e inequalities, as soon as the power $\a$ in
\eqref{paletti2} is larger than the dimension $d$.


On the other hand, there are some important differences with respect to previous work on the
random conductance model: the long-range nature of the
jumps, the existence of overcrowded regions (i.e.\ regions with atypically high density of points)
and the intrinsically non-deterministic nature of the state
space, that is the lack of a natural lattice structure for the point process.
As we will see, these
are the source of new technical difficulties.

As in \cite{M,BP}, we are forced to work with a
suitable cluster of good points. 
In our setting this good set has to be defined in such a way that:
(i) good points $x$ must have uniformly bounded
weights $w(x)$, (ii) given two  good points $x,y $  there must exist
a path from $x$ to $y$ visiting only good points with uniformly
bounded jump lengths. The requirement (ii) alone could be
achieved by a simple local construction as in \cite{M,BP}. On the
other hand, due to long jumps, non negligible contributions to the
weights $w(x)$ may come from arbitrarily far overcrowded regions.
Therefore, requirement (i)  forces a  nonlocal construction  of
the family of good points, making harder any  quantitative control
on its geometry. For the homogeneous PPP,
 this problem is solved by showing that a suitable discretized $\{0,1\}$--random
 field with
infinite--range
 spatial correlations
 stochastically dominates a supercritical Bernoulli field on $\bbZ^d$; see Section \ref{ambiente}.

In addition,  the ability of the walk
to take long jumps has led us to the development of an extended
version of the analysis of ``holes'' in the cluster of good points
needed in \cite{M,BP}. In particular, a suitable \emph{enlargement
of holes} is required in Section \ref{enlarge} 
to gain some control on the number of jumps and the distance traveled
by the walk between successive visits
to the cluster of good points.

A convenient way to deal with the lack of a lattice structure, and
to obtain statements valid for every starting point
$x_0\in\xi$, is to work with the Palm distribution  of the point process, as
in the statement of Theorem \ref{atreiu}. 
On the other hand, the method
developed in \cite{BB} to establish sublinearity of the corrector
is intrinsically based on a lattice structure
and, at a first analysis,  the  Palm distribution and the lattice
strategy of \cite{BB}  seem   to collide. To overcome this
conceptual obstacle, in several steps of the proof,  we have
introduced  intermediate ``bridge'' distributions (cf.\ Sections
\ref{mediano1}, \ref{mediano}, and \ref{mandarino}).
%
These
distributions are probability measures on the space of the
environments, having both a  lattice structure and an absolutely
continuous Radon--Nykodim derivative with respect to the Palm
distribution (or other related distributions that appear along the proof).
An alternative option would be to follow \cite{MP,M} rather than \cite{BB,BP} to establish (\ref{neglig}).
This approach is more naturally adapted
to the continuum setting.  However, it is more demanding in terms of heat kernel and tightness estimates and
more extra work would be needed to establish the bounds used there.

It is worthy of note that similar problems are encountered when
analyzing random walks on Vorono\u\i\ tessellations or random walks
on the infinite cluster of the supercritical continuous percolation
for Poisson processes (this last case is treated in a work in
progress of S.\ Buckley \cite{Bu}). 
Some of the methods developed here are likely to find applications in the analysis of these other models.

\subsection{Outline of the
paper} As we mentioned, the proof of Theorem \ref{atreiu} is
entirely based on a suitable control of the corrector field. Since
the energy marks play a very minor role in such an estimate, for the
sake of simplicity we set $u(E_x,E_y)=0$, throughout most of the
paper, and we identify the environment $\o$ with the point process
$\xi$. The extension to non trivial energy marks will be discussed
only in Section
\ref{estensione}. 
Another simplification which causes very little loss of generality
is obtained by setting $r(x)=\nep{-|x|^\a}$, for some $\a>0$,
throughout the rest of the paper.

In Section \ref{ambiente} we take a close look at the random
environment, state our main assumptions and define the cluster of
good points. In particular, in Section \ref{PPP} we verify that the
homogeneous PPP satisfies the main assumptions. The corrector field
is introduced in Section \ref{ascolipiceno}. The main sublinearity
estimate for the corrector is stated in Section \ref{theproof}, cf.\
Theorem \ref{piccolo}. There, this estimate is shown to imply
Theorem \ref{atreiu} and Corollary \ref{strong}. The rest of the
paper is then devoted to the proof of the sublinearity estimate.
Section \ref{restricted} introduces the restricted random walk,
i.e.\ the random walk $X_n$ embedded in the cluster of good points.
In particular, we state two crucial estimates: the heat kernel bound
and the expected distance bound. This section also contains the
analysis of ``holes'' in the cluster of good points. The heat kernel
bound is proved in Section \ref{refrigerio}, while the expected
distance bound is proved in Section   \ref{distante}. Section
\ref{sublinear} is entirely devoted to the proof of the sublinearity
estimate. Finally, Section  \ref{estensione} deals with the slight
modifications needed in the presence of energy marks. The appendix
collects several technical results used in the main text.

\section{The random environment
}\label{ambiente} Since we have set $u(\cdot,\cdot)=0$, we may
disregard   the energy marks, and the random environment coincides
with the state space $\xi$ of the random walk, i.e.\ the point
process.

\subsection{Stationary, ergodic simple point process and Palm measure}
 We denote by  $\cN$   the family of
  locally finite subsets $\xi$  of
 $\bbR^d$ endowed of the $\s$--algebra generated by the sets
 $\{ \xi(A_1)=n_1,\dots, \xi (A_k)=n_k\}$,
$A_1, \dots, A_k$ being  disjoint bounded Borel subsets of $\bbR^d$,
$n_1,\dots, n_k$ varying  in $\bbN=\{0,1,\dots\}$ and $\xi(A):= |\xi\cap A|$.
Elements $\xi\in \cN$ are usually identified with the counting
measure on $\xi$. Moreover, given $\xi \in \cN$ and $x \in \bbR^d$,
we denote by $\t_x \xi$ the translated set $\xi -x$. A simple point
process is a measurable map from a probability space to the
measurable space $\cN$.

\smallskip

Fix a  simple point process on $\bbR^d$ with law $\bbP$, ergodic and
stationary w.r.t.\ space translations, having finite density $\rho=\r_1=
\bbE( \xi ([0,1)^d) )$.  Due to stationarity $\rho$ can also be
expressed as $\bbE( \xi (A))/\ell(A)$ for any bounded Borel subset
$A \subset \bbR^d$ having positive Lebesgue measure $\ell(A)$. We
denote by $\bbP_0$ the Palm distribution associated to $\bbP$.
Considering the measurable subset $\cN_0= \{ \xi \in \cN\,:\, 0 \in
\xi\}$, $\bbP_0$ is a probability law on $\cN_0$
    coinciding, roughly speaking, with ``$ \bbP
(\cdot |0 \in \xi)$" (cf.\ Theorem 12.3.V in \cite{DV}).  A key
relation between $\bbP$
 and $\bbP_0$ is given by the Campbell identity \cite{DV}:
for any nonnegative measurable function $f$ on $ \bbR^d
 \times \cN_0$ 
 \begin{equation}\label{campbell}
 \int_{\bbR^d} dx \int_{\cN_0} \bbP_0(d \xi) f(x,\xi)= \frac{1}{\rho}
\int_{\cN} \bbP (d \xi) \int _{\bbR^d} \xi(dx) f(x, \t_x \xi) \,.
\end{equation}

\subsection{Black and white boxes}
For
any $K>0$ we write $B_K=[0,K)^d$ for the cube of side $K$ in
$\bbR^d$. Boxes $B(z):= B_K+Kz$, $z \in \bbZ^d$, are generically
called $K$--boxes. We also use the notation $B_z=B(z)$, for the $K$-box at $z\in\bbZ^d$.
 A $K$--box $B(z)$ is called \emph{occupied} if
$\xi \cap B(z) \not = \emptyset$. We encode this information in the
field $\s= ( \s_z\,:\, z \in \bbZ^d)$ defined on $\cN$ by
\begin{equation}\la{si_z}
\s_z (\xi) =
\begin{cases}
1 & \text{ if $B(z)$ is occupied}\,,\\
0 & \text{ otherwise}\,.
\end{cases}
\end{equation}

Let us now introduce another parameter $T_0>0$. A $K$--box $B(z)$ is
called \emph{overcrowded} if the number of points of $\xi$ in $B(z)$,
$n_z:= \xi ( B(z) )$, satisfies $n_z \geq T_0$. We define
\begin{equation}\la{rzs}
R_z (\xi)= \begin{cases} (\log n_z) ^{2/\a} & \text{ if $B(z)$ is
overcrowded}\,,\\
0 & \text{ otherwise}\,.
\end{cases}
\end{equation}
Next, we define $\cG= \cup_{z \in \bbZ^d} Q(z,R_z) $, where
$Q(z,r)=\{z' \in \bbZ^d\,:\, |z-z'|_\infty < r\}$. Note that
$Q(z,0)=\emptyset$.
Of course, $\cG$
contains all points $z$ such that $B(z)$ is overcrowded. The
interest in the set $\cG$ comes from the following simple estimate.

\begin{lemma}\la{leT}
There exists a positive constant $T= T(\a,K,T_0)$ such that $w(x)
\leq T$, for all $x \in \xi \cap B(z)$ with $z\in \bbZ^d \setminus
\cG$.
\end{lemma}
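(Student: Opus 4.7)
My plan is to use the pointwise bound $e^{-u(E_x,E_y)}\leq 1$ from \eqref{paletti1} together with the Gaussian-type tail in \eqref{paletti2} to reduce the task to estimating
\[
w(x) \leq c\sum_{y\in\xi} e^{-c^{-1}|x-y|^\alpha},
\]
and then to partition this sum according to which $K$--box $B(z')$ contains the point $y$. For $x\in B(z)$ and $y\in B(z')$ with $|z-z'|_\infty\geq 2$ one has $|x-y|^\alpha \geq c_1 K^\alpha |z-z'|_\infty^\alpha$ for a constant $c_1=c_1(\alpha)>0$, so the contribution of a box $B(z')$ is at most $n_{z'}\,e^{-c_2 |z-z'|_\infty^\alpha}$ with $c_2=c_2(\alpha,K,c)>0$. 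The few ``near'' boxes $|z-z'|_\infty\leq 1$ together contribute only a bounded amount, since $z\notin\cG$ in particular means $B(z)$ and its $3^d$-neighborhood contain no overcrowded box (as $\cG$ contains all overcrowded $z'$ together with their $Q(z',R_{z'})$--neighborhoods), so the number of points in those boxes is at most $C(d)T_0$.

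I would then split the remaining sum over $z'$ with $|z-z'|_\infty\geq 2$ into two pieces, depending on whether $B(z')$ is overcrowded. The non-overcrowded boxes contribute at most
\[
T_0\sum_{z'\in\bbZ^d} e^{-c_2|z-z'|_\infty^\alpha},
\]
which is a finite constant depending only on $\alpha,K,T_0$.

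The key step concerns the overcrowded boxes. Here I would exploit the fact that $z\notin\cG$ forces $z\notin Q(z',R_{z'})$, that is, $M:=|z-z'|_\infty\geq R_{z'}=(\log n_{z'})^{2/\alpha}$, equivalently $n_{z'}\leq \exp(M^{\alpha/2})$. Plugging this into the per-box bound yields
\[
n_{z'}\,e^{-c_2 M^\alpha}\leq \exp\bigl(M^{\alpha/2}-c_2 M^\alpha\bigr),
\]
and for $M$ beyond some threshold $M_0=M_0(\alpha,K,c,T_0)$ this is bounded by $e^{-\frac{c_2}{2}M^\alpha}$. Summing over $z'$ (with at most $C M^{d-1}$ sites at distance $M$) gives a finite total, while the finitely many terms with $M\leq M_0$ contribute a constant. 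Collecting all bounds yields the desired $T=T(\alpha,K,T_0)$.

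The main (and only) subtle point is the overcrowded sum, and the reason it works is the precise calibration of the exponent $2/\alpha$ in the definition \eqref{rzs} of $R_z$: it makes the growth of $n_{z'}$ exactly beat-able by the exponential $e^{-c_2|z-z'|_\infty^\alpha}$, since $(\log n_{z'})^2 \leq |z-z'|_\infty^\alpha$ when $z\notin Q(z',R_{z'})$. Any weaker calibration would leave the series divergent; with this calibration everything else reduces to a standard exponential-sum estimate.
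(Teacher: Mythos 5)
Your argument is correct and is essentially the paper's own proof: reduce $w(x)$ to $c_1\sum_{z'} n_{z'}\,e^{-c_2|z-z'|_\infty^\alpha}$ and use that $z\notin\cG$ forces $n_{z'}\leq\exp\bigl(|z-z'|_\infty^{\alpha/2}\bigr)$ for overcrowded boxes, which the exponential tail absorbs. The only nitpick is your parenthetical claim that no box in the $3^d$-neighborhood of $B(z)$ can be overcrowded: that follows from $z\notin\cG$ only when $T_0>e$ (so that $R_{z'}>1$ for every overcrowded $z'$); otherwise an overcrowded neighbor is still possible, but then $|z-z'|_\infty\geq R_{z'}$ forces $n_{z'}\leq e$, so the near-box contribution is bounded either way.
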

\begin{proof}
Note that if $x \in B(z)$ and $y \in B(v)$ then $|x-y|_\infty \geq
K|z-v|_\infty-2K$. Therefore we can find positive constants $c_1,c_2$ (depending
on $\a,K,T_0$) such that, for any $x \in
B(z)\cap \xi $ we have
\begin{equation}\label{monte}
w(x) \leq c_1\sum _{v \in \bbZ^d}
n_v \,e^{- c_2 |z-v|_\infty^\a}\,.
\end{equation}
Since $z\in\bbZ^d \setminus \cG$, it must be that all points $v \in
\bbZ^d$ satisfy $|z-v|_\infty \geq (\log n_v)^{\a/2}$. Therefore
$n_v\leq \exp\{ |z-v|_\infty^{\a/2}  \}$
  and using this in \eqref{monte} one has
$w(x)\leq c_3$ for some new constant $c_3=c_3(\a,K,T_0)$.
\end{proof}

We  call a point $z \in \bbZ^d$ \emph{black} if $z$ belongs to $\cG$
or if the box $B(z)$ is unoccupied. If $z$ is not black, we call it
\emph{white}. From Lemma \ref{leT}, if $z$ is white then $w(x)\leq T$, $x
\in\xi \cap B(z) $, for some constant $T$.
Finally, we introduce the field $\vartheta =
(\vartheta_z\,:\, z \in \bbZ^d) $ defined on $\cN$ as
\begin{equation}\la{blawhi}
\vartheta_z (\xi)=
\begin{cases}
0 &\text{ if $z$ is black}\,,\\
1 & \text{ if $z$ is white}\,.
\end{cases}
\end{equation}
The random
fields $ \s(\xi)$ and $\vartheta(\xi)$,  where $\xi$ is sampled with
law $\bbP$, are often denoted simply $\si,\vartheta$.
We shall write $ \s^K$, $\vartheta ^{K,T_0}$, when the
dependence on the parameters $K,T_0$ needs to be emphasized.
Clearly, these random fields are stationary w.r.t.\
$\bbZ^d$--translations due to the stationarity of $\bbP$.

\subsection{Main assumptions on the point process} \la{ambiente.1}
Given a stationary, ergodic point process with finite
density $\rho$ and law $\bbP$, we shall make the following assumptions:
\begin{enumerate}[({H}1)]
\item For each $p \in (0,1)$ there exist $K, T_0>0$ such that
the random field of white points $\vartheta ^{K,T_0}$ stochastically dominates the
independent Bernoulli process $Z(p)$ on $\bbZ^d$ with parameter $p$.

\item For each $K>0$ and for each
  vector $e \in \bbZ^d$ with $|e |_1 =1$, consider the product  probability space $\Theta:=\cN \times
  ( [0,K)^d \cup \{ \partial \} )^{\bbZ} $ whose elements $\bigl(\xi, (a_i : i \in \bbZ) \bigr)$ are sampled as follows:
  choose $\xi$ with law  $\bbP$, and then  choose independently  for  each index $i$  a point $b_i \in \xi \cap B( i e)$ with uniform probability
  and set $a_i:= b_i - i K e\in   [0,K)^d$. If $\xi \cap B(ie)=\emptyset$, set
  $a_i = \partial$.
We assume that  the resulting law $P^{(K, e)} $ on
 $\cN \times
  ( [0,K)^d \cup \{ \partial \} )^{\bbZ} $
  is ergodic w.r.t.\ the transformation
 \begin{equation}\t :\bigl(\xi, (a_i : i \in \bbZ) \bigr) \to \bigl( \t_{Ke } \xi , ( a_{i+1} : i \in \bbZ ) \bigr) \,.
 \end{equation}
\end{enumerate}

\subsubsection{Remarks}\label{equoesolidale}
Since $\vartheta_z=1$ implies $\si_z=1$, it is clear that
assumption (H1) implies the following statement, which we shall refer to as Property (A):
\begin{enumerate}[({A})]
\item For all $p\in(0,1)$, there exists $K>0$ such that
the random field $ \si^{K}$  stochastically dominates the
independent Bernoulli process $Z(p)$ on $\bbZ^d$ with parameter $p$.
\end{enumerate}

Also, it is not hard to check that if $p(K)$ is the largest $p$ such that $\si^K$ stochastically dominates $Z(p)$, then
$p(mK)\geq (1 - (1-p(K))^{m^d})$, and therefore $p(mK)\to 1$ as $m\to\infty$.

\smallskip

Observe that  the law $P^{(K,e)} $ is invariant w.r.t.\
the transformation $\t$ (due to the stationarity
of $\bbP$).  Assumption (H2) means that, for each $K>0$
and for each
  vector $e \in \bbZ^d$ with $|e |_1=1$, any measurable
  subset $\cA \subset \Theta$ such that $ \t \cA= \cA$ must have
  $P^{(K,e)}$--probability $0$ or $1$.
  We point out that assumption (H2) alone
   implies that   $\bbP$ is ergodic.

When working with the energy marks, assumption (H2)
will be slightly modified as discussed in Section \ref{estensione}.

\subsection{The homogeneous PPP 
satisfies (H1)--(H2)}\label{PPP} The homogeneous PPP with density
$\rho$ is plainly an ergodic, stationary simple point process with
finite moments of any order. In order to prove assumption (H2), we
fix $\cA\subset \Theta $ such that $\t\cA= \cA$ and set
$P=P^{(K,e)}$. If $\cA$ depends only on  $\xi $ restricted to
$[-\ell K,\ell K ]$ and on
 $\{ a_i : |i| \leq \ell -1 \}$ for some integer $\ell$, then
$\cA$ and $ \t^{m \ell} \cA$ are independent for $m$ large and
therefore
$
 P(\cA) = P( \cA \cap \t^{m \ell} \cA)
 = P(\cA) P( \t^{m \ell} \cA) =  P( \cA)^2$. This implies that
 $P(\cA)\in \{0,1\}$. The general case can be treated by a standard
 approximation argument.
The rest of this section is concerned with the proof of assumption
(H1), which we reformulate as follows.
\begin{theorem}\la{poisa3}
For every $p\in(0,1)$ and $\r>0$, there exist constants $K,T_0$,
depending on $p$ and $\r$, such that, for the homogeneous PPP with
density  $\r$, the random field $\vartheta = \vartheta^{K,T_0}$
defined in \eqref{blawhi} stochastically dominates the Bernoulli
field $Z(p)$ with parameter $p$.
\end{theorem}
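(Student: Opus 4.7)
\medskip
\noindent\textbf{Plan.} The plan exploits that in a homogeneous PPP the box counts $n_z := \xi(B(z))$ are iid Poisson random variables with parameter $\lambda := \rho K^d$, whose super-exponential tails drive all the estimates.

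\smallskip
\noindent\emph{Step 1 (Reformulation).} Unwinding the definitions of $R_z$ and $\cG$, $\vartheta_z = 1$ iff $1 \leq n_z \leq T_0-1$ and $n_{z'} \leq M(|z-z'|_\infty)$ for every $z' \neq z$, where
\[
M(r) := \max\bigl\{T_0-1,\; \exp(r^{\alpha/2})\bigr\}.
\]
Indeed, $z \in \cG$ iff some $z'$ (possibly $z$ itself) has $n_{z'} \geq T_0$ and $|z-z'|_\infty < R_{z'} = (\log n_{z'})^{2/\alpha}$, i.e.\ $n_{z'} > \exp(|z-z'|_\infty^{\alpha/2})$.

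\smallskip
\noindent\emph{Step 2 (Marginal estimate).} By independence and the Poisson tail bound $\bbP(\mathrm{Poi}(\lambda) \geq N) \leq (e\lambda/N)^N$ for $N \geq \lambda$, a union bound gives
\[
\bbP(\vartheta_z = 0) \leq e^{-\lambda} + \bbP(n \geq T_0) + \sum_{z'\neq z}\bbP\bigl(n > M(|z-z'|_\infty)\bigr).
\]
Splitting the sum at the scale $r_0 := (\log T_0)^{2/\alpha}$, the near part contributes $O((\log T_0)^{2d/\alpha})\bbP(n\geq T_0)$ and the far part sums doubly-exponentially-small terms. Hence $\bbP(\vartheta_z = 0) \leq \eta$ for any prescribed $\eta > 0$ by choosing $K$ and then $T_0$ large enough.

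\smallskip
\noindent\emph{Step 3 (Stochastic domination).} The real challenge is upgrading the marginal estimate to domination of Bernoulli$(p)$ when $\vartheta$ has infinite-range dependence. For a further auxiliary parameter $L$ I use the two-scale decomposition $\vartheta_z = \vartheta^{\mathrm{loc}}_z \bigl(1 - \cE^L_z\bigr)$, where
\[
\vartheta^{\mathrm{loc}}_z := \mathbf{1}[1\leq n_z\leq T_0-1]\prod_{\substack{z'\in Q(z,L)\\ z'\neq z}}\mathbf{1}[n_{z'} \leq M(|z-z'|_\infty)]
\]
depends only on $(n_{z'})_{z'\in Q(z,L)}$, while $\cE^L_z := 1 - \prod_{z'\notin Q(z,L)}\mathbf{1}[n_{z'} \leq M(|z-z'|_\infty)]$ depends only on the remaining box counts. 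The field $\vartheta^{\mathrm{loc}}$ is $(2L+1)$-dependent with marginal $\geq 1-\eta$, so by the Liggett--Schonmann--Stacey theorem it stochastically dominates a Bernoulli field of parameter $p_1 = p_1(\eta,L,d)$ with $p_1 \to 1$ as $\eta \to 0$. The defect probability $\bbP(\cE^L_z = 1)$ is doubly-exponentially small in $L$ thanks to the Poisson tails; iterating the same two-scale scheme at a larger scale $L' \gg L$ (or, equivalently, applying an extended Liggett--Schonmann--Stacey-type result for fields whose random dependence radius has super-exponential tails) shows that $1 - \cE^L$ likewise dominates a high-density Bernoulli field. Combining the two via a joint coupling, one obtains $\vartheta \succeq \mathrm{Bernoulli}(p)$ for any prescribed $p < 1$, on tuning $K$, $T_0$, $L$ in the appropriate order.

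\smallskip
\noindent\textbf{Main obstacle.} The hardest step is Step 3: turning pointwise marginal control into genuine stochastic domination of a high-density Bernoulli field in the presence of unbounded-range dependence. The finite-range factor $\vartheta^{\mathrm{loc}}$ is dispatched by Liggett--Schonmann--Stacey, but the nonlocal defect $\cE^L$ requires an iterated renormalization exploiting the doubly exponential Poisson tails, and one must track how the Bernoulli parameters stack up across scales. A direct Dobrushin-type bound on $\bbP(\vartheta_z = 1 \mid (\vartheta_{z'})_{z'\neq z})$ seems delicate because of the nonlocal halos $Q(z',R_{z'})$ in the definition of $\cG$.
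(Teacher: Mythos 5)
Your Steps 1 and 2 are correct, and your overall strategy (a multiscale decomposition combined with Liggett--Schonmann--Stacey, with the doubly exponential Poisson tails making the per-scale defects summable) is the same one the paper uses. The gap sits exactly where you flag the main obstacle, and your proposed resolution does not close it. The fields you produce at different scales are \emph{not} independent of one another: for a fixed $z$ the factors $\vartheta^{\mathrm{loc}}_z$ and $1-\cE^L_z$ involve disjoint box counts, but as fields indexed by $z$ they are built from overlapping collections of the $n_{z'}$ (for $z\neq w$, $\vartheta^{\mathrm{loc}}_z$ and $\cE^L_w$ both depend on $n_{z'}$ for $z'\in Q(z,L)\setminus Q(w,L)$), and the same is true of the successive annular pieces of $\cE^L$. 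A pointwise product of \emph{dependent} $\{0,1\}$-fields, each of which separately dominates a high-density Bernoulli field, need not dominate the product-parameter Bernoulli field, so ``combining the two via a joint coupling'' is unjustified; FKG (all factors are decreasing in the $n$'s) recovers the one-site marginal of the product but not field domination. Moreover the iteration $L\to L'\to\cdots$ never terminates: at every stage the remainder is a nonlocal halo field of exactly the original form.

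The missing idea --- and the way the paper proceeds --- is to manufacture genuine independence across scales \emph{before} decomposing. The radii $R_z$ are i.i.d.\ across $z$, and Lemma \ref{rober} dominates them stochastically by $\wt R_z=\sup\{m\geq m_0:\, Z(\g_m)_z=1\}$, where the $Z(\g_m)$, $m\geq m_0$, are \emph{independent} Bernoulli fields with $\g_m=\exp(-\exp(m^{\a/4}))$. Then $\cG\subset\wt\cG=\cup_m\cup_{a:\,Z(\g_m)_a=1}Q(a,m)$ is a union over $m$ of finite-range fields driven by mutually independent randomness; LSS applies to each layer (Lemma \ref{kdep}), giving domination of the $m$-th layer by the sparse field $Z(2^{-m+1})$, and the independence across $m$ lets one sum the parameters to conclude $\cG\preceq Z(\e)$ with $\e\leq 2^{-m_0+2}$. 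This is precisely the ``extended LSS for a random dependence radius with super-exponential tails'' that you invoke as an alternative, but it has to be proved, and the independent-layer domination of $(R_z)$ is the device that proves it. The final combination of $\{z\notin\cG\}$ with occupancy of $B(z)$ is then handled as in your local factor.
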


\subsubsection{Preliminary estimates}
Before we start the proof of Theorem \ref{poisa3} we shall establish a few
preliminary facts.
\begin{lemma}\la{poisest}
A Poisson variable $N$ with mean $\l$ satisfies
\[
 \bbP(N>t)
 \leq \exp\{ - t(\log t -\log \l )+t -\l\} \leq \exp\{-t\} \,, \qquad
 \forall t \geq e^2 \l.
\]
\end{lemma}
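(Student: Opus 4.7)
The plan is to use the standard Chernoff/exponential Markov bound with the moment generating function of the Poisson distribution, optimize the tilting parameter, and then simplify under the assumption $t \geq e^2 \l$.

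First I would recall that if $N$ is Poisson with mean $\l$, then the moment generating function is $\bbE[e^{sN}] = \exp\{\l(e^s-1)\}$ for all $s\in\bbR$. By Markov's inequality applied to the non-decreasing map $x\mapsto e^{sx}$, for any $s>0$,
\[
\bbP(N>t) \leq e^{-st}\,\bbE[e^{sN}] = \exp\{\l(e^s-1)-st\}.
\]
The optimal $s$ is found by setting the derivative of the exponent with respect to $s$ equal to zero: $\l e^s = t$, yielding $s = \log(t/\l)$, which is positive precisely under the mild assumption $t>\l$ (which is certainly satisfied when $t\geq e^2\l$). Substituting back gives
\[
\bbP(N>t) \leq \exp\bigl\{\l(t/\l - 1)- t\log(t/\l)\bigr\} = \exp\bigl\{-t(\log t-\log\l)+t-\l\bigr\},
\]
which is the first asserted inequality.

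For the second inequality it suffices to verify that the exponent is bounded above by $-t$ whenever $t\geq e^2\l$. Equivalently, I need $t\log(t/\l)\geq 2t-\l$. But $t\geq e^2\l$ gives $\log(t/\l)\geq 2$, hence $t\log(t/\l)\geq 2t\geq 2t-\l$, which is exactly what is needed. No step here is substantive; the whole lemma is an explicit Chernoff estimate, and the only point requiring minor care is the positivity of the optimal $s$, which is automatic under the hypothesis $t\geq e^2\l$.
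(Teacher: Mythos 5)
Your proof is correct and follows exactly the same route as the paper: the exponential Chebyshev/Markov bound with the Poisson moment generating function, evaluated at the optimal tilt $s=\log(t/\l)$, followed by the elementary verification that $t\log(t/\l)\geq 2t-\l$ when $t\geq e^2\l$. The paper's proof is a one-line version of the same argument.
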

\begin{proof} Take $s = \log (t/\l )$ in the
following expression, valid for all $s\geq0$:
\[
 \bbP( N>t)=\bbP( e^{sN} > e^{st})
 \leq e^{-s t}\bbE(e^{s N})= \exp \{ -s t +\l e^s-\l\}\,.\qedhere
\]
\end{proof}
Next, recall the definition \eqref{rzs} of the set $\cG$. The random
variables $n_z$ are i.i.d.\ Poisson variables with mean $\r K^d$, and using Lemma
\ref{poisest}, the variables $R_z$ satisfy
\begin{equation}\label{tonsilla}
\bbP( R_z > r) = \bbP \big( n_z \geq \exp \big(r^{\frac{\a}{2}}\big)\,,\, n_z
\geq T_0 \big) \leq \exp \big( -\exp \big( r^{\frac{\a}{2}} \big) \big)
\end{equation}
whenever $\exp \big(r^{\frac{\a}{2}} \big) \geq e^2 \r K^d$.

Set $\g_m=\exp\big( -\exp ( m^{\frac{\a}{4}} ) \big)$, $m\in\bbN$, and
consider the Bernoulli random field $Z(\g_m)$ on $\bbZ^d$ with
parameter $\g_m$. Next, let $\{Z(\g_m), m\in\bbN\}$ denote an
independent sequence of the random fields $Z(\g_m)$ on some
probability space with law $P$  and set
\[ \wt R_z:= \sup \{ m \geq m_0\,:\, Z(\g_m)_z=1\}\,, \]
with the convention that the supremum of the emptyset is $0$. Here
and below $m_0$ is a constant related to $T_0$ by \be\la{m0T0}
T_0=\exp \(m_0^{\a/2} \) \,.
\end{equation}
Note that the random variables $\wt R_z$, $z \in \bbZ^d$, are
independent. Moreover,  $\wt R_z $ is finite $P$--a.s.\ since
\[
 E \Big( \sum_{m=m_0}^\infty Z (\g_m)_z \Big)
 = \sum_{m=m_0}^\infty \g_m < \infty\,.
\]
\begin{lemma} \la{rober}
For all $\r,K>0$, there exists a constant $T_0$ such that, for all $z\in\bbZ^d$:
\begin{equation}\label{roberto}
 P ( \wt R_z <t) \leq \bbP( R_z <t)\,,
\qquad \forall t >0\,.
\end{equation}
 \end{lemma}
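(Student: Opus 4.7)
\textbf{Proof proposal for Lemma \ref{rober}.} The plan is to compare the distributions of $R_z$ and $\tilde R_z$ directly, case by case in $t$, exploiting the fact that both variables take values in $\{0\}\cup[m_0,\infty)$.

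The key structural observation is that by the definition of $R_z$ and the choice $T_0=\exp(m_0^{\a/2})$, the event $\{R_z>0\}$ equals $\{n_z\geq T_0\}$ and forces $R_z=(\log n_z)^{2/\a}\geq m_0$; hence $R_z\in\{0\}\cup[m_0,\infty)$. By construction $\tilde R_z\in\{0\}\cup\{m_0,m_0+1,\dots\}$. Thus for every $t>0$ the events $\{R_z\geq t\}$ and $\{\tilde R_z\geq t\}$ coincide with their ``ceilings'': $\{R_z\geq \max(t,m_0)\}$ and $\{\tilde R_z\geq \lceil t\rceil\vee m_0\}$.

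I would then split into three cases. For $t\leq 0$ the inequality is trivial since both sides equal $0$. For $0<t\leq m_0$ the inequality reduces to
\[
 \mathbb{P}(R_z>0) \,=\, \mathbb{P}(n_z\geq T_0) \,\leq\, P(\tilde R_z\geq m_0) \,=\, 1-\prod_{m\geq m_0}(1-\gamma_m).
\]
The right-hand side is at least $\gamma_{m_0}=\exp(-\exp(m_0^{\a/4}))$, while by Lemma~\ref{poisest} applied to $n_z\sim\mathrm{Poisson}(\rho K^d)$ the left-hand side is at most $\exp(-T_0)=\exp(-\exp(m_0^{\a/2}))$, once $T_0\geq e^2\rho K^d$. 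Since $m_0^{\a/2}\geq m_0^{\a/4}$ for $m_0\geq 1$, this comparison holds for $m_0$ large enough. For $t>m_0$, let $m=\lceil t\rceil\vee(m_0+1)$. Using independence of the fields $Z(\gamma_{m'})$,
\[
 P(\tilde R_z\geq t)=P(\tilde R_z\geq m)\geq P\bigl(Z(\gamma_m)_z=1\bigr)=\gamma_m=\exp(-\exp(m^{\a/4})),
\]
whereas \eqref{tonsilla}, combined with the Poisson tail of Lemma~\ref{poisest}, gives
\[
 \mathbb{P}(R_z\geq t)\leq \exp(-\exp(t^{\a/2})+O(1))
\]
as soon as $\exp(t^{\a/2})\geq e^2\rho K^d$. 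Since $m\leq t+1$, the ratio $\exp(t^{\a/2})/\exp(m^{\a/4})$ tends to $+\infty$ with $t$, so the comparison holds for $m_0$ (equivalently $T_0$) sufficiently large depending on $\a,\rho,K$.

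The proof contains no conceptual obstacle; the only care needed is to choose $m_0$ large enough so that all three estimates above hold simultaneously, which amounts to requiring $T_0=\exp(m_0^{\a/2})\geq e^2\rho K^d$ together with the elementary inequality $\exp(t^{\a/2})\geq \exp((t+1)^{\a/4})+C$ for $t\geq m_0$. Once $m_0$ is fixed, $T_0$ is determined via \eqref{m0T0} and the stated stochastic domination $P(\tilde R_z<t)\leq\mathbb{P}(R_z<t)$ follows uniformly in $t$ and $z$.
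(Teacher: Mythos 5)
Your proof is correct and follows essentially the same route as the paper's: split at $t=m_0$, bound $\bbP(R_z\geq t)$ by the Poisson tail as in \eqref{tonsilla}, lower-bound the tail of $\wt R_z$ using a single Bernoulli field $Z(\g_m)$, and win because the exponent $\a/2$ beats $\a/4$ once $m_0$ (hence $T_0$) is large. The only cosmetic difference is that you compare survival functions directly ($\g_m$ versus $\exp(-\exp(t^{\a/2}))$), whereas the paper compares the distribution functions via the bound $1-x\geq e^{-2x}$; the two are interchangeable here.
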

\begin{proof}
For every $t>0$,
\[
 P(\wt R_z < t)=
 \prod _{k\geq  \lceil t\rceil \lor m_0 }^\infty(1-\g_k)
 \leq \exp\Big\{-\sum_{k\geq  \lceil t\rceil \lor m_0 }\g_k\Big\}\,.
\]
If $t\leq m_0$, then
$
 P (\wt R_z<t)=
 P( \wt R_z=0)=\prod _{k= m_0 }^\infty (1- \g_k) $ which can be bounded by  $e
 ^{-\g_{m_0}}$.
Now, take $m_0,T_0$ as in \eqref{m0T0} and assume $T_0 \geq e^2 \r
K^d$. In particular, $R_z<m_0$ is equivalent to $R_z=0$, cf.\
\eqref{rzs}, and \eqref{tonsilla} holds for all $r\geq m_0$.
%
Therefore, for all
$t \leq m_0$ one has
$$
\bbP( R_z <t) = \bbP( R_z =0) = 1-\bbP(R_z\geq m_0) \geq 1 - \exp
\bigl( -\exp \bigl( m_0^{\a/2} \bigr) \bigr). $$ Using that $\nep{-2
x}\leq 1-x$ for $x\in[0,\frac12]$, and that $
 \exp \bigl( -\exp \bigl( m_0^{\a/2} \bigr) \bigr)
 \leq  \g_{m_0}/2
$ for  $m_0$  sufficiently large, we conclude that
$
 \bbP( R_z <t) \geq 1 - \g_{m_0}/2
 \geq e^{ - \g_{m_0} }
$ for all $t \leq m_0$. This concludes the proof of \eqref{roberto}
for $0<t\leq m_0$.

Suppose now that $m_0 \leq m-1 < t \leq m $. Then $ P(\wt R_z<t)=
\prod _{k=m}^\infty (1- \g_k) \leq   e ^{- \g_{m}} $. On the other
hand, reasoning as above we see that  $\bbP(R_z\geq m-1)\leq
\frac12\g_{m}$ and therefore
$$
\bbP( R_z <t) \geq  \bbP( R_z \leq m-1) =1-\bbP(R_z\geq m-1)
 \geq 1- \frac12\g_{m}
 \geq e^{ - \g_{m} }\,.
$$
This ends the proof of the lemma.
\end{proof}

Lemma \ref{rober} implies that the random field $R=(R_z:z \in
\bbZ^d)$ is stochastically dominated by the random field $\wt R=
(\wt R_z : z \in \bbZ^d)$. 
Taking a coupling between $R$ and $\wt R$ on an enlarged probability
space such that $ R_z \leq \wt R_z$ for all $z \in \bbZ^d$ a.s.\ we
get that \be\la{gbound} \cG=\cup _{a \in \bbZ^d} Q(a,R_a) \subset
\wt \cG :=\cup_{a \in \bbZ^d} Q(a, \wt R_a)\,.
\end{equation}
The random set $\wt \cG$ can be described by the random field
$Y=(Y_z\,:\, z \in \bbZ^d)$ (in the sense that $ z \in \wt \cG $ if
and only if $ Y_z =1$)
where
\[
 Y_z:=\max \{ Y^{(m)}_z\,:\, m \geq m_0\}\,,
\]
and, for each $m$,
\[
 Y^{(m)}_z:=
\begin{cases}
 1 & \text{ if } \exists a \in \bbZ^d:\, Z(\g_m)_a=1,   z \in Q(a,m)\,,\\
 0 & \text{ otherwise}\,.
\end{cases}
\]
\begin{lemma}\label{kdep}
For every $K,\r>0$, there exist a constant $T_0$ such that
for each $m \geq m_0$, the random field $Y^{(m)}$
is stochastically dominated by the Bernoulli
random field $Z(q_m)$ with parameter $q_m:=2^{-m+1}$.
\end{lemma}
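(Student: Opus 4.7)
The plan is to invoke the classical Liggett-Schonmann-Stacey stochastic domination theorem for $k$-dependent Bernoulli fields. All the necessary structure is in place: $Y^{(m)}$ is a local function of the i.i.d.\ Bernoulli seed $Z(\g_m)$, and the parameter $\g_m = \exp(-\exp(m^{\a/4}))$ is extremely small.

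First, I would establish the marginal bound and the dependence range. Since $Y^{(m)}_z = 1$ iff at least one of the $(2m-1)^d$ independent Bernoulli$(\g_m)$ variables $\{Z(\g_m)_a : a \in Q(z,m)\}$ equals $1$,
$$ P(Y^{(m)}_z = 1) = 1 - (1 - \g_m)^{(2m-1)^d} \leq (2m-1)^d\, \g_m =: p_m. $$
Moreover, $Y^{(m)}_z$ depends on $Z(\g_m)$ only through $Q(z,m)$, so the field $(Y^{(m)}_z)_{z\in\bbZ^d}$ is $(2m-1)$-dependent: $Y^{(m)}_z$ and $Y^{(m)}_{z'}$ are independent whenever $|z-z'|_\infty \geq 2m-1$.

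Next, I would apply the Liggett-Schonmann-Stacey theorem: every $k$-dependent $\{0,1\}$-valued field on $\bbZ^d$ with marginals bounded by $p$ (for $p$ below a threshold depending on $k, d$) is stochastically dominated by the product Bernoulli measure with parameter $\tilde q = \tilde q(p,k,d)$, where $\tilde q \to 0$ as $p \to 0$ for fixed $k,d$. Applied with $k = 2m-1$ and $p = p_m$, this yields $Y^{(m)} \preceq Z(\tilde q_m)$, with $\tilde q_m$ controlled by $p_m$ up to at most polynomial-in-$m$ losses coming from the dependence range.

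Finally, I would verify that $\tilde q_m \leq q_m = 2^{-m+1}$ for $m \geq m_0$, choosing $m_0$ (equivalently $T_0 = \exp(m_0^{\a/2})$) sufficiently large. The seed probability $\g_m = \exp(-\exp(m^{\a/4}))$ decays faster than any inverse polynomial in $m$ for any fixed $\a > 0$; consequently $p_m$ does so too, and the LSS output $\tilde q_m$ inherits super-polynomial decay, easily beating the singly-exponential $2^{-m+1}$. The main bookkeeping point is that the LSS-loss factors grow at most polynomially in the dependence range $2m-1$, whereas the smallness of $\g_m$ is doubly-exponential, so all polynomial factors are absorbed; no conceptual obstacle beyond choosing $m_0$ large enough is expected.
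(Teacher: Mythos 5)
Your approach is the same as the paper's: bound the marginal of $Y^{(m)}$ by $\ell^d\g_m$ with $\ell$ of order $m$, observe that $Y^{(m)}$ is $O(m)$-dependent, and invoke the Liggett--Schonmann--Stacey domination theorem (the paper uses the version in Theorem (7.65) of Grimmett), then check that the resulting parameter beats $2^{-m+1}$ for $m\geq m_0$. The one point you get wrong is the size of the LSS loss: it is \emph{not} polynomial in the dependence range. The LSS/Grimmett conditions force the dominating parameter to be comparable to an $\ell^d$-th root of the marginal probability; concretely, the paper must choose $u=1-\g_m\,\ell^d\,2^{m\ell^d}$ and so has to absorb a factor $2^{m\ell^d}=\exp\{c\,m(2m+3)^d\}$, i.e.\ an exponential of a polynomial in $m$, not a polynomial. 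Your conclusion nevertheless survives, because $\g_m=\exp(-\exp(m^{\a/4}))$ is doubly exponentially small and $\g_m\,\exp\{c\,m(2m+3)^d\}\to 0$ far faster than $2^{-m}$; but a reader carrying out your "bookkeeping" literally with only polynomial factors would write down an incorrect intermediate bound, so this step should be corrected rather than waved through.
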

\begin{proof}
We apply a result of \cite{LSS} on stochastic domination, in the form
which appears in Theorem (7.65) in \cite{G}.
Namely, set $\ell= 2 (m +1)+1$, so that $Y^{(m)}$
is a $\ell$--dependent field taking values in $\{0,1\}$. Note that
\begin{equation}\label{mangio}
P( Y^{(m)}_0 =1) \leq \ell^d \g_m \,. \end{equation} Suppose that
there exist two parameters $u,v>0$ such that
\begin{align*}
& (1-u) (1-v) ^{\ell^d} \geq  
\ell^d \g_m \,,
\\
 & (1-u ) u^{\ell^d}
\geq
\ell^d \g_m\,.
\end{align*}
Then, by Eqs.\ (7.114)--(7.115) in \cite{G}, we know that $Y^{(m)}$
is stochastically dominated by an independent Bernoulli random field
with parameter $1-uv$. Therefore, we have to prove that $u$ and $v$
can be taken in such a way that $1-uv\leq 2^{-m+1}$. This can be
achieved by the choice $ u = 1-\g_m\,\ell^d \,2^{m\ell^d}$ and $  v
= 1-2^{-m}.$  Indeed, $(1-u) (1-v) ^{\ell^d} =\ell^d \g_m $ and
$(1-u ) u^{\ell^d} \geq \g_m\,\ell^d$ if $m$ is large
enough (using that $1-x \leq e^{-x}$). 
Moreover, by definition $1-uv\leq \g_m\,\ell^d \,2^{m\ell^d} +
2^{-m} \leq 2^{-m+1}$ if $m$ is large enough. This concludes the
proof.
\end{proof}

Since all random fields $Y^{(m)}$ are independent, thanks to the
above lemma we can build, on a suitable probability space, the
random fields $(Y^{(m)},Z(q_m) )$, $m \geq m_0$, such that they are
all independent and
\[
 Y^{(m)}_z \leq Z(q_m) _z  \qquad \forall z \in \bbZ^d\,,\qquad\text{a.s.}\,
\]
In particular we have that
$Y= \max \{ Y^{(m)}\,:\, m \geq m_0\}$ is
stochastically dominated by the random field $Z:= \max \{
Z(q_m)\,:\, m \geq m_0\}$.
Note that $Z$ is a Bernoulli random field, with parameter
\be\la{qbound}
q= \bbP( Z_0=1)
\leq \sum _{m=m_0}^\infty q_m
= \sum _{m=m_0}^\infty  2^{-m+1} = 2^{-m_0+2}\,.
\end{equation}

\subsubsection{Proof of Theorem \ref{poisa3}}
The results discussed above can be summarized as follows.
\begin{proposition}\la{Gdom}
For every $K,\r>0$, and $\e>0$, there exists $T_0$ such that,
for the homogeneous PPP with intensity $\r$, the
random set $\cG=\cG(K,T_0)$ is stochastically
dominated by the Bernoulli field $Z(\e)$ with parameter $\e$.
\end{proposition}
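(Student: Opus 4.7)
The claim is essentially a bookkeeping consequence of the chain of stochastic-domination results established just above (Lemma \ref{poisest}, equation \eqref{tonsilla}, Lemma \ref{rober}, Lemma \ref{kdep}), so my plan is simply to assemble them on a single probability space and tune the parameter $T_0$ at the end.

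The plan is to represent the set $\cG$ through its indicator random field $X=(X_z:z\in\bbZ^d)$, where $X_z=1$ iff $z\in\cG$, and to show that $X$ is stochastically dominated by a Bernoulli field whose parameter can be made $\leq\e$ by choosing $T_0$ large enough. First I would fix $K,\r>0$ and an arbitrary $\e>0$, choose an integer $m_0$ such that $2^{-m_0+2}\leq \e$ and also large enough for Lemma \ref{kdep} (and the inequality $T_0=\exp(m_0^{\a/2})\geq e^2\r K^d$ used in \eqref{tonsilla} and Lemma \ref{rober}) to apply, and then define $T_0$ via \eqref{m0T0}.

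With these parameters fixed, by Lemma \ref{rober} the i.i.d.\ field $R=(R_z)$ is stochastically dominated by the independent field $\wt R=(\wt R_z)$, and since $\cG$ is a monotone functional of $R$ (adding points only enlarges the union of cubes $Q(z,R_z)$), a standard coupling gives \eqref{gbound}, namely $\cG\subset \wt\cG$ a.s. Next, by Lemma \ref{kdep} each of the $\ell$--dependent fields $Y^{(m)}$ is dominated by a Bernoulli field $Z(q_m)$ with $q_m=2^{-m+1}$, and the fields $\{Y^{(m)}\}_{m\geq m_0}$ are mutually independent, so a product coupling yields that $Y=\max_{m\geq m_0}Y^{(m)}$ is dominated by $Z=\max_{m\geq m_0}Z(q_m)$, which is Bernoulli with parameter $q\leq \sum_{m\geq m_0}2^{-m+1}=2^{-m_0+2}\leq \e$, as in \eqref{qbound}.

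Composing the two couplings on a common enlarged probability space and using that $z\in\wt\cG \iff Y_z=1$, one obtains $X_z\leq Y_z\leq Z_z$ pointwise, which is precisely the asserted stochastic domination of $\cG$ by $Z(\e)$. I do not expect any real obstacle here: all the nontrivial tail estimates have been absorbed into the preceding lemmas, and the only nonroutine point is observing that the map $R\mapsto \cG$ is monotone (so that domination of fields transfers to domination of the induced sets) and that the countable supremum over $m\geq m_0$ is compatible with the joint coupling, which follows from the independence of the $Y^{(m)}$.
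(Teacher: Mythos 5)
Your argument is correct and is essentially identical to the paper's proof, which likewise just combines \eqref{gbound}, Lemma \ref{kdep} and \eqref{qbound} and chooses $T_0$ (equivalently $m_0$ via \eqref{m0T0}) large enough that $q\leq\e$. Your explicit remarks on the monotonicity of $R\mapsto\cG$ and on coupling the independent fields $Y^{(m)}$ jointly are exactly the (implicit) steps the paper absorbs into the sentence surrounding \eqref{gbound} and the discussion after Lemma \ref{kdep}.
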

\begin{proof}
From Eq.\ \eqref{gbound}, Lemma \ref{kdep} and Eq.\ \eqref{qbound}, it suffices
to take $T_0$ (and therefore, by \eqref{m0T0}, $m_0$) so large that $q\leq \e$.
\end{proof}
We can now conclude the proof of Theorem \ref{poisa3}.
Let us fix $p\in (0,1)$ and $\e=1-\sqrt p$.
Then fix $K=K(\r)$
such that $\bbP (\xi(B(0))=0)\leq\e/2$. Also, let $t_0>0$ be so large that
$\bbP (\xi(B(0))=0\tc \xi(B(0))<t )\leq\e$ for all $t\geq t_0$.

Next, choose $T_0=T_0(K,\r)$ so large that $\cG$ is stochastically
dominated by $Z(\e)$ as in Proposition \ref{Gdom}. Let $\o_z=1$ if
$z\in\bbZ^d\setminus \cG$ and $\o_z=0$ otherwise. For fixed $z$, let
$A_z\subset \{0,1\}^{\bbZ^d}$ be an arbitrary measurable set such
that $A_z\subset\{\o_z=1\}$. If $T_0\geq t_0$ then, by independence
of the Poisson field, one has
\[
\bbP\bigl(\xi(B(z))>0\tc \o\in A_z\bigr)
= \bbP\bigl(\xi(B(z))>0\tc \xi(B(z))<T_0\bigl) \geq 1-\e\,.
\]
Since this bound is uniform over all possible values of $\o_{z'}$,
$z'\neq z$, it follows that the set of white boxes $\vartheta$,
i.e.\ $\vartheta_z=\o_z\si_z$, $z\in\bbZ^d$,
stochastically dominates the
Bernoulli field with parameter $(1-\e)^2=p$. This ends the proof.

\section{The corrector field $\chi$}\label{ascolipiceno}
Let $\mu$ be the measure on $\cN _0\times \bbR^d$ such that  the
scalar product in $L^2(\cN_0\times \bbR^d, \mu)$ is given by
\begin{equation}\la{scalare}
(u,v)_\mu
= \bbE_0\Bigl[\sum_{x\in\xi} r(x)  \,u(\xi,x) \,v (\xi,x)\Bigr]\,.
\end{equation}
Since $\bbP$ has a finite second moment, by Lemma \ref{techn_lemma} in
the appendix
\[
 (1,1)_\mu=
 \bbE_0\Bigl[\sum_{x\in\xi} r(x)
 \Bigr]
 = \bbE_0[w(0)]<\infty\,.
\]

\subsection{Potential vs.\  solenoidal forms}
We call $ u \in L^2 (\mu)$ a \emph{square integrable form}. In what
follows we shall study this space in some detail. In general, we
will call a \emph{form} any measurable function $u\colon\cN_0 \times
\bbR^d \to \bbR$.

\medskip

Given $\psi\colon\cN_0 \to \bbR$ we define the
\emph{gradient form} $\nabla \psi$ as
\begin{equation}
\nabla \psi (\xi, x)\coloneq
\psi (\t_x \xi ) -\psi (\xi)
\,.
\end{equation}
Hence, $\nabla \psi \in L^2 (\mu)$ whenever $\psi$ is bounded
(written  $\psi \in B(\cN_0) $).

The space $\cH_\nabla \subset L^2 (\mu)$  of \emph{potential forms}
is defined as the closure of the subspace given by the gradient
forms $\nabla \psi$ with $\psi \in B(\cN_0)$.
Its orthogonal complement $\cH_\nabla^\perp$ is the space  
of \emph{solenoidal forms}.

A form $u\colon\cN_0 \times \bbR ^d\to \bbR $  is called
\emph{curl--free} if for any $\xi \in \cN_0$, $n\geq 1$ and any
family of $n$ points $x_0, x_1 , \dotsc , x_n\in\xi$ with $x_0=x_n$,
we have
\begin{equation}\label{pappa}
 \sum _{j=0}^{n-1} u (\t_{x_j}\xi, x_{j+1}-x_j )=0\,.
\end{equation}
 A square integrable form $u\in L^2(\mu)$ is
called curl--free if
this 
holds for $\bbP_0$--a.e.\ $\xi$.

\begin{lemma}\label{idraulico}
Each potential form $u\in \cH_\nabla$ is curl--free.
\end{lemma}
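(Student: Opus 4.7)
The plan is to establish the curl-free identity pointwise for gradient forms and then upgrade it to general $u \in \cH_\nabla$ by extracting a pointwise-convergent subsequence, exploiting the strict positivity of $r$ and Campbell's formula to propagate the convergence from the Palm origin to every point of the configuration.

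For $\psi \in B(\cN_0)$ one has $\nabla\psi(\tau_{x_j}\xi, x_{j+1}-x_j) = \psi(\tau_{x_{j+1}}\xi) - \psi(\tau_{x_j}\xi)$, so whenever $x_0 = x_n$ the sum in \eqref{pappa} telescopes to $\psi(\tau_{x_n}\xi) - \psi(\tau_{x_0}\xi) = 0$; hence every gradient form is curl-free pointwise. Now let $u \in \cH_\nabla$ and choose $\psi_k \in B(\cN_0)$ with $\nabla\psi_k \to u$ in $L^2(\mu)$; extract a subsequence $(\psi_{k_l})$ with $\nabla\psi_{k_l} \to u$ pointwise $\mu$-a.e., and call $N$ the exceptional $\mu$-null set. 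The definition \eqref{scalare} of $\mu$ gives $\bbE_0\bigl[\sum_{x \in \xi} r(x)\mathbf{1}_N(\xi, x)\bigr] = 0$, and since $r(x) > 0$ for every $x \in \bbR^d$ by \eqref{paletti2}, for $\bbP_0$-a.e.\ $\xi$ one has $(\xi, x) \notin N$ simultaneously for every $x \in \xi$. Writing $B \subset \cN_0$ for the $\bbP_0$-null set where this fails, Campbell's identity \eqref{campbell} applied to $f(x, \xi) = \mathbf{1}_{[0,1)^d}(x)\mathbf{1}_B(\xi)$ yields
$$\bbE\Bigl[\sum_{x \in \xi \cap [0,1)^d} \mathbf{1}_B(\tau_x \xi)\Bigr] = \rho\,\bbP_0(B) = 0,$$
and paving $\bbR^d$ with unit cubes gives $\bbP$-a.s.\ that $\tau_x \xi \notin B$ for every $x \in \xi$. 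Since this event is translation-invariant, a second application of \eqref{campbell} shows it also holds $\bbP_0$-a.s.; call the corresponding $\bbP_0$-full set $\Omega_1$, on which the convergence $\nabla\psi_{k_l}(\tau_y \xi, z - y) \to u(\tau_y \xi, z - y)$ holds for every pair $y, z \in \xi$.

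Finally, fix $\xi \in \Omega_1$ and any cycle $x_0, \ldots, x_n \in \xi$ with $x_n = x_0$. Each of the finitely many summands of $\sum_{j=0}^{n-1}\nabla\psi_{k_l}(\tau_{x_j}\xi, x_{j+1}-x_j)$ converges to the corresponding summand with $u$, while the total sum vanishes identically in $l$ by the telescoping step; hence $\sum_j u(\tau_{x_j}\xi, x_{j+1}-x_j) = 0$, which is \eqref{pappa}. The set $\Omega_1$ depends neither on $n$ nor on the particular cycle, so the curl-free identity holds for all cycles on this single $\bbP_0$-full set. The main subtlety is the Campbell-based transfer: $L^2(\mu)$-convergence only controls edges emanating from the Palm origin, but \eqref{pappa} requires control at every cycle vertex, so stationarity must be invoked to lift the convergence simultaneously across all shifts $\tau_{x_j}$.
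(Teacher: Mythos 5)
Your proof is correct and follows the same route as the paper's: verify \eqref{pappa} by telescoping for gradient forms, pass to a $\mu$-a.e.\ convergent subsequence of $\nabla\psi_k$, and take the limit in \eqref{pappa}. The paper's version is terser—it does not spell out the Campbell/stationarity step you use to ensure the convergence holds simultaneously at all shifted configurations $\tau_{x_j}\xi$ (which is essentially the content of its Lemma~\ref{francoforte})—but your more careful handling of that point is sound and matches the intended argument.
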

\begin{proof}
This is trivial to check for $u=\nabla \psi$, $\psi\in B(\cN_0)$. In
the general case, let $\psi_n$ be a sequence in $B(\cN_0)$ such that
$\nabla \psi _n$ converges to $ u$ in $L^2 (\mu)$. By taking a
subsequence we can assume that the convergence holds also $\mu$--a.s.
Since each $\nabla \psi_n$ satisfies  \eqref{pappa} $\bbP_0$--a.s.\
by taking the limit in \eqref{pappa} we conclude that the same
identity holds for $u$.
\end{proof}

A form $u$ is called \emph{shift--covariant} 
if
\begin{equation}\label{alzatipresto}
 u(\xi,x) = u(\xi,y) + u(\t_y\xi,x-y)\,,\qquad \forall\,x,y\in\xi\,.
\end{equation}
If $u$ is a square integrable  form, we call it shift--covariant if
the above property holds for $\bbP_0$--a.a.\ $\xi$.
\begin{lemma}\label{implicazioni}
Each 
curl--free form is shift--covariant.
\end{lemma}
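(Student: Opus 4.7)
The plan is to deduce shift-covariance as a direct algebraic consequence of the curl-free identity \eqref{pappa} applied to three well-chosen cycles of points taken from $\xi$. Since $\xi\in\cN_0$ contains the origin, for any $x,y\in\xi$ the points $0,x,y$ are all available as vertices of cycles.

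First, I would pin down the value at the origin. Applying \eqref{pappa} to the trivial one-point cycle $x_0=x_1=0$ (which is legitimate since $0\in\xi$) gives $u(\xi,0)=0$ for $\bbP_0$-a.e.\ $\xi$. Next, applying \eqref{pappa} to the two-point cycle $x_0=0,\ x_1=x,\ x_2=0$ for any $x\in\xi$ yields the antisymmetry
\begin{equation*}
u(\xi,x)+u(\tau_x\xi,-x)=0.
\end{equation*}
Finally, applying \eqref{pappa} to the three-point cycle $x_0=0,\ x_1=y,\ x_2=x,\ x_3=0$ (with $x,y\in\xi$) gives
\begin{equation*}
u(\xi,y)+u(\tau_y\xi,x-y)+u(\tau_x\xi,-x)=0.
\end{equation*}
Substituting the antisymmetry relation into the last identity removes the term $u(\tau_x\xi,-x)$ and produces exactly \eqref{alzatipresto}.

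In the pointwise case (no integrability), this argument works for every $\xi\in\cN_0$ and all $x,y\in\xi$, and no approximation is needed. In the $L^2(\mu)$ case, the curl-free property is by definition the assertion that \eqref{pappa} holds, for every finite cycle of points of $\xi$, on a $\bbP_0$-full-measure set of environments. On that same set the three identities above hold simultaneously for all choices of $x,y\in\xi$, so shift-covariance in the sense required for square-integrable forms follows. There is no real obstacle here; the only thing to be mindful of is the order of quantifiers, namely that for $\bbP_0$-a.e.\ $\xi$ all cycles are handled at once, which is exactly how curl-freeness was defined just after \eqref{pappa}.
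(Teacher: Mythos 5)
Your argument is correct and coincides with the paper's own proof: both use the two-point cycle $0,x,0$ to get antisymmetry and the three-point cycle $0,y,x,0$ to get the three-term identity, then combine them to obtain \eqref{alzatipresto}. The extra observation that $u(\xi,0)=0$ is harmless but not needed.
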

\begin{proof}
Let $u : \cN_0 \times \bbR^d \to \bbR$ be a curl--free form.
Taking  in \eqref{pappa} $n=3$, $x_0=x_3=0$ (recall that $\xi \in
\cN_0$), $x_1=y$ and $x_2=x$, we get that
\begin{equation}\label{ariafresca}
 u(\xi, y)+ u ( \t_y \xi, x-y)+ u(\t_x
\xi, -x)=0\,. \end{equation}
 On the other hand, taking in
\eqref{pappa} $n=2$, $x_0=x_2=0$ and $x_1=x$, we obtain
\begin{equation}\label{ariaviziata}
u(\xi, x)+ u(\t_x \xi, -x)=0
\end{equation}
for any $\xi$. From \eqref{ariafresca} and \eqref{ariaviziata} one
obtains \eqref{alzatipresto}.
\end{proof}

\smallskip

Given $u\in L^2 (\mu)$ we define the \emph{divergence}
as
$\Div u \,(\xi) =
\sum_{x\in\xi} r(x) u(\xi, x)$.
Since $\bbE_0\lvert\Div u\rvert
\leq (u,u)_\mu^{1/2}(1,1)_\mu^{1/2}$, we have that $\Div u\in
L^1(\mu)$.
By these definitions, 
we have a key relation between the gradient and divergence.

\begin{lemma}\label{carattere}
 For each $\psi\in B(\cN_0)$ and each  curl--free $u\in L^2 (\mu)$ 
 \begin{equation}\label{integro}
  (u,\nabla\psi)_\mu
  = -2\bbE_0\left[ \psi \,\Div u \right]\,.
 \end{equation}
 In particular, a form $u \in L^2 (\mu)$ is solenoidal (that is,
 $u \in \cH_\nabla^\perp$) if
 and only  if $\Div u =0$, $\bbP_0$--a.s.
\end{lemma}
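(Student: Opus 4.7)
The plan is to prove the identity \eqref{integro} by expanding the inner product, applying a Palm exchange formula to one of the two resulting terms, and then using the antisymmetry that curl--free forms enjoy. First, expand:
\begin{equation*}
(u, \nabla\psi)_\mu = \bbE_0\Bigl[\sum_{x\in\xi} r(x)\, u(\xi,x)\, \psi(\tau_x\xi)\Bigr] - \bbE_0[\psi \Div u].
\end{equation*}
The second term is already in the desired form; only the first requires work.

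The key ingredient is the mass--transport/Neveu--type exchange identity for the Palm measure: for any measurable $g : \cN_0 \times \bbR^d \to \bbR_+$,
\begin{equation*}
\bbE_0\Bigl[\sum_{x\in\xi} g(\xi, x)\Bigr] = \bbE_0\Bigl[\sum_{x\in\xi} g(\tau_x \xi, -x)\Bigr].
\end{equation*}
This follows from the Campbell identity \eqref{campbell} together with stationarity of $\bbP$ (apply Campbell to $F(y,\xi)=g(\xi,-y)\mathbf{1}_{[0,1)^d}(y)$ run twice, or use the translation invariance of the two-point correlation). Plugging in $g(\xi, x) = r(x)\, u(\xi,x)\, \psi(\tau_x\xi)$ and using the symmetry $r(-x) = r(x)$ yields
\begin{equation*}
\bbE_0\Bigl[\sum_{x\in\xi} r(x)\, u(\xi,x)\, \psi(\tau_x\xi)\Bigr] = \bbE_0\Bigl[\sum_{x\in\xi} r(x)\, u(\tau_x\xi, -x)\, \psi(\xi)\Bigr].
\end{equation*}
Now invoke the curl--free hypothesis on $u$: identity \eqref{ariaviziata}, established inside the proof of Lemma \ref{implicazioni}, reads $u(\xi,x) + u(\tau_x\xi, -x) = 0$, so the right--hand side equals $-\bbE_0[\psi \Div u]$. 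Combining with the expansion above,
\begin{equation*}
(u, \nabla\psi)_\mu = -\bbE_0[\psi \Div u] - \bbE_0[\psi \Div u] = -2\bbE_0[\psi \Div u],
\end{equation*}
which is \eqref{integro}. The factor $2$, which at first sight looks suspicious, arises because the $\psi(\tau_x\xi)$ term contributes the same quantity as the $-\psi(\xi)$ term after applying the exchange.

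For the ``in particular'' clause, with $u$ curl--free (needed to invoke \eqref{integro}): if $\Div u = 0$ $\bbP_0$--a.s., then \eqref{integro} gives $(u, \nabla\psi)_\mu = 0$ for every $\psi \in B(\cN_0)$, and since $\cH_\nabla$ is the closure of $\{\nabla\psi : \psi \in B(\cN_0)\}$ one concludes $u \in \cH_\nabla^\perp$. Conversely, if $u \in \cH_\nabla^\perp$, then $\bbE_0[\psi \Div u] = 0$ for every $\psi \in B(\cN_0)$, and taking bounded approximations of $\operatorname{sgn}(\Div u)$ on truncation sets where $|\Div u| \leq N$ forces $\Div u = 0$ $\bbP_0$--a.s.

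The only technical points are (i) justifying the exchange formula from \eqref{campbell}, which is routine but worth spelling out since this is the only place where Palm-specific structure enters, and (ii) checking $L^1$--integrability of all sums involved, which follows from Cauchy--Schwarz under $\mu$: $\bbE_0|\psi \Div u| \leq \|\psi\|_\infty\, \|u\|_\mu\, (1,1)_\mu^{1/2}$, finite by Lemma \ref{techn_lemma}. Neither is a serious obstacle; the hardest conceptual point is simply recognizing that the curl--free property \eqref{pappa} is precisely the pointwise antisymmetry needed to turn the exchange identity into the factor $-2$.
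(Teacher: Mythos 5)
Your proof is correct and follows essentially the same route as the paper's: both arguments combine the Palm exchange identity $\bbE_0[\sum_{x\in\xi}g(\xi,x)]=\bbE_0[\sum_{x\in\xi}g(\t_x\xi,-x)]$ (the paper's Lemma \ref{techn_lemma}(i), derived from the Campbell formula) with the antisymmetry \eqref{ariaviziata} coming from curl--freeness, merely applying the two ingredients in the opposite order. Your explicit remark that the ``in particular'' equivalence is established only for curl--free $u$ is a fair and slightly more careful reading than the paper's ``the last statement is then obvious.''
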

\begin{proof}
We only need to prove \eqref{integro}, since the last statement is
then obvious. Due to \eqref{ariaviziata} (which holds for all $x \in
\xi$, $\bbP_0$--a.s.)  we can rewrite the l.h.s.\ of \eqref{integro}
as
\begin{equation}\label{fucile}
(u, \nabla \psi)_\mu= -\bbE_0 \Big[ \sum _{x \in\xi} r(x)
u(\t_x\xi,-x) \psi (\t_x\xi )\Big] - \bbE_0 \Big[\sum _{x \in \xi}
r(x) u(\xi,x) \psi (\xi )\Big]\,.
\end{equation}
We define the function $f$ on $ \cN_0\times \bbR^d
$ as $ f(\xi,x )= r(x) u ( \xi, x) \psi (\xi) $. Then  it holds $ f(
\t_x \xi,-x )=r(x) u(\t_x \xi, -x) \psi (\t_x \xi)$. In addition,
$$ \bbE_0\Big[ \sum_{x \in \xi} f(\xi, x)\Big] \leq \| \psi\|_\infty
\bbE_0  [ w(0)]^{1/2} (u,u)_\mu^{1/2} <\infty\,.
$$
  This allows us to apply
Lemma \ref{techn_lemma} (i) in the Appendix to the function $f$  and
to conclude that
\begin{equation}
\bbE_0 \Big[\sum _{x \in \xi} r(x) u(\xi,x) \psi (\xi )\Big]= \bbE_0
\Big[ \sum _{x \in\xi} r(x) u(\t_x\xi,-x) \psi (\t_x\xi )\Big]
\end{equation}
(by Lemma \ref{techn_lemma} (i) we know that  the integrand in the
r.h.s.\  belongs to $L^1(\bbP_0)$). The above identity allows then
to rewrite the r.h.s.\ of \eqref{fucile} as
\[
 - 2\bbE_0 \Big[\sum _{x \in \xi} r(x) u(\xi,x) \psi (\xi )\Big]=
 -2 \bbE_0 [ \psi \Div u ]\,.\qedhere
\]

\end{proof}

\begin{lemma}\label{rino}
   Let $u \in \cH_\nabla^\perp$.
 Then for $\bbP_0$--a.a.\ $\xi$
\begin{equation}\label{maradona}
\sum _{y \in \xi} r(y) |u(\xi,y)|<\infty\,, \qquad  \sum _{y \in
\xi} r(y) u(\xi,y)=0\,.
\end{equation}
In particular, for $\bbP$--a.a.\ $\xi$ and for all $x \in \xi$
\begin{equation}\label{benedetto}
\sum _{y \in \xi} r(y-x) |u(\t_x\xi,y-x)|<\infty\,, \qquad  \sum _{y
\in \xi} r(y-x) u(\t_x\xi,y-x)=0\,.
\end{equation}
\end{lemma}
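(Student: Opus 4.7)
The plan is to establish \eqref{maradona} in two steps and then transfer it to \eqref{benedetto} by a standard Palm argument.

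For the absolute convergence, I would apply the pointwise Cauchy--Schwarz inequality
\[
 \sum_{y\in\xi} r(y)\,|u(\xi,y)|
 \;\leq\;\Bigl(\sum_{y\in\xi} r(y)\Bigr)^{1/2}\Bigl(\sum_{y\in\xi} r(y)\,u(\xi,y)^2\Bigr)^{1/2}
 = w(0)^{1/2}\Bigl(\sum_{y\in\xi} r(y)\,u(\xi,y)^2\Bigr)^{1/2}.
\]
By Lemma \ref{arancia} (since $\r_2<\infty$) one has $w(0)<\infty$ for $\bbP_0$--a.a.\ $\xi$, and by definition of the norm on $L^2(\mu)$,
\[
 \bbE_0\Bigl[\sum_{y\in\xi} r(y)\,u(\xi,y)^2\Bigr]=(u,u)_\mu<\infty,
\]
so the inner sum is finite $\bbP_0$--a.s. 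This yields the first assertion in \eqref{maradona}.

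For the vanishing of the signed sum, I would invoke Lemma \ref{carattere} directly: since $u\in\cH_\nabla^\perp$, that lemma gives $\Div u=0$, $\bbP_0$--a.s., which is precisely
\[
 \sum_{y\in\xi} r(y)\,u(\xi,y)=0\qquad\text{for }\bbP_0\text{--a.a.\ }\xi.
\]
This completes \eqref{maradona}.

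Finally, to pass from \eqref{maradona} to \eqref{benedetto}, let
\[
 F:=\Bigl\{\xi\in\cN_0:\sum_{y\in\xi} r(y)\,|u(\xi,y)|<\infty\text{ and }\sum_{y\in\xi} r(y)\,u(\xi,y)=0\Bigr\},
\]
so that $\bbP_0(F)=1$. I would then apply the Campbell identity \eqref{campbell} (equivalently, the Mecke formula) with $f(x,\xi)=\mathbf{1}_{[0,1)^d}(x)\,\mathbf{1}_{F^c}(\xi)$ to obtain
\[
 \bbE\Bigl[\sum_{x\in\xi\cap[0,1)^d}\mathbf{1}_{F^c}(\t_x\xi)\Bigr]
 =\r\int_{F^c}\bbP_0(d\xi)=0.
\]
Combining this with stationarity of $\bbP$ under shifts by $\bbZ^d$, we conclude that $\bbP$--a.s.\ $\t_x\xi\in F$ for every $x\in\xi$, which is exactly \eqref{benedetto}.

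The only genuinely non-routine input is Lemma \ref{carattere}; once that characterisation of $\cH_\nabla^\perp$ as the kernel of $\Div$ is in hand, the main technical care is in the Cauchy--Schwarz bound (to ensure that the series in \eqref{maradona} is not merely conditionally convergent) and in the Palm-to-stationary transfer for the pointwise-in-$x$ statement \eqref{benedetto}.
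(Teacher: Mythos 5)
Your proposal is correct and follows essentially the same route as the paper: Cauchy--Schwarz plus $(u,u)_\mu<\infty$ and $\bbE_0[w(0)]<\infty$ for the absolute convergence, Lemma \ref{carattere} for $\Div u=0$, and the Campbell identity to transfer the $\bbP_0$--a.s.\ statement to the pointwise statement \eqref{benedetto}. The only cosmetic differences are that you apply Cauchy--Schwarz pointwise in $\xi$ rather than under the expectation, and that you re-derive inline the Palm-to-stationary transfer which the paper delegates to Lemma \ref{francoforte} (whose proof is exactly your Campbell argument).
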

\begin{proof} The second statement \eqref{benedetto} follows from
the first one \eqref{maradona} by Lemma \ref{francoforte} in the appendix.
 In order to prove \eqref{maradona}, we first observe that
$$ \int \bbP_0(d\xi) \sum _{y \in \xi}r(y)|u(y)|\leq
 \bbE_0 \bigl[ w(0) \bigr]^{\frac 12} \Big\{  \int
\bbP_0(d \xi) \sum _{y \in \xi} r(y) |u(\xi,y)|^2\Big\}^{\frac 12}
=C (u,u)^{\frac 12}_\mu\,,
$$
and  the last member is finite. This implies the upper  bound in
\eqref{maradona}. The identity in \eqref{maradona} is equivalent to
$\Div u=0$ $\bbP_0$--a.s.\,, which follows from the previous lemma.
\end{proof}

\subsection{Corrector field}
 We can now define the corrector field $\chi$ following the construction of
 \cite{MP}.
Consider the form
$u_i\colon\cN_0 \times \bbR^d\to \bbR^d$, $i=1,\dots,d$,
defined by $u_i(\xi,x)=x_i$ (the $i$--th coordinate of
$x\in\bbR^d$). Note that, since $\bbP$ has finite second moment,
Lemma \ref{techn_lemma} assures us that $u_i\in L^2(\mu)$.
 Let
$\pi\colon L^2 (\mu) \to \cH_\nabla$ be the orthogonal
projection on potential forms and
define
\begin{equation*}
 \chi_i\coloneq\pi (-u_i)\,,\quad i=1,\dotsc,d\,.
\end{equation*}
Setting
$\Phi_i:=x_i+\chi_i\in \cH_{\nabla}^\perp\,,$ from Lemma \ref{rino}
we see that $\Phi_i$ is {\em harmonic}, i.e.\ for $\bbP_0$-a.a.\
$\xi$, $\Phi_i\in L^1(P_{0,\xi})$ and $E_{0,\xi} \Phi_i = 0$, for
all $i=1,\dots,d$. The vector form $\chi=(\chi_1,\dotsc,\chi_d)$ is
the so called \emph{corrector field}.

\ignore{ Given $\xi \in \cN$,  a function $u: \xi \to \bbR,
 \bbR^d$ is called  \textsl{harmonic} w.r.t.\  the random walk $X_n$ if
for all $x \in \xi$
 $u(X_1)\in L^1( P_{x,\xi})$ and moreover
 $E_{x,\xi} \bigl(u(X_1) \bigr)= u(x) $. In other words, it must be $
\sum_{y \in\xi} r(y-x) |u(y)|<\infty$ and $\sum_{y \in\xi} r(y-x)
\bigl( u(y)-u(x)\bigr)$ for all $x \in \xi$.

\begin{proposition}\label{armonia}
 For $\bbP$--a.a.\ $\xi$, 
 the map
 \begin{equation*}
  \xi \ni x \mapsto \varphi(\xi,x): =
  x+ \chi (\xi,x) \in \bbR ^d\,,
 \end{equation*}
 is harmonic  w.r.t.\ the random walk  $X_n$.
\end{proposition}
\begin{proof}
By definition, $u_i+\chi_i \in \cH_\nabla^\perp$. On the other hand,
it is straightforward to check that $u_i$ is curl--free, so that
$u_i+\chi_i$ must be curl--free.
Hence it is enough to apply Lemma \ref{rino} with $\Phi= u_i+\chi_i$
for each $i$.
\end{proof}
}

Up to now $\chi$ has been defined as element of $L^2( \mu)^d $,
hence as a pointwise function it is defined  modulo a set of zero
$\mu$--measure. It is convenient to work with a special
representative of $\chi$, which is  everywhere defined
  on $\cN_0 \times \bbR^d$ and has good properties:
\begin{lemma}\label{festicciola}
There exists a representative $\bar \chi: \cN_0 \times \bbR^d\to
\bbR^d$ of the corrector $\chi \in L^2 (\mu)^d $ such that
\begin{equation}\label{coop1}
\bar \chi (\xi,x)= \bar  \chi (\xi, y)+\bar   \chi (\t_y \xi,
x-y)\,, \qquad \forall \xi \in \cN_0, \; \forall x,y \in \xi\,.
\end{equation}
In particular, $\bar  \chi (\xi ,0)=0$ for all $ \xi \in \cN_0$.
\end{lemma}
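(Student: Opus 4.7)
The plan is to start from any pointwise representative $\chi\colon\cN_0\times\bbR^d\to\bbR^d$ of the $L^2(\mu)$--class, and to redefine it on a $\bbP_0$--null set so as to upgrade the almost sure shift--covariance to the everywhere identity \eqref{coop1}. By Lemmas \ref{idraulico} and \ref{implicazioni}, there is a measurable set $\cN^*\subseteq\cN_0$ with $\bbP_0(\cN^*)=1$ such that, for every $\xi\in\cN^*$, the cocycle identity $\chi(\xi,x)=\chi(\xi,y)+\chi(\t_y\xi,x-y)$ holds for all $x,y\in\xi$ (and in particular $\chi(\xi,x)=-\chi(\t_x\xi,-x)$).

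The core step is to produce a $\bbP_0$--full measure subset $\Omega^*\subseteq\cN^*$ that is stable under the shifts $\t_x$ for $x\in\xi$. I take
\begin{equation*}
 \Omega^*:=\{\xi\in\cN^*:\t_x\xi\in\cN^*\text{ for every }x\in\xi\}
\end{equation*}
and verify $\bbP_0(\Omega^*)=1$ via the Campbell identity \eqref{campbell} applied to $f(x,\xi)=\mathbf{1}_{|x|\leq R}\,\mathbf{1}_{\xi\in\cN_0\setminus\cN^*}$: the left--hand side vanishes, forcing $\bbE\sum_{x\in\xi,\,|x|\leq R}\mathbf{1}_{\t_x\xi\notin\cN^*}=0$ for every $R>0$, and hence $\bbP$--a.s.\ $\t_x\xi\in\cN^*$ for all $x\in\xi$. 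Since this event is translation invariant, a Palm inversion argument transfers it to $\bbP_0$--a.s. Moreover, $\Omega^*$ is automatically closed under shifts by points of $\xi$: for $\xi\in\Omega^*$ and $x\in\xi$, every $y\in\t_x\xi$ satisfies $x+y\in\xi$, so $\t_y(\t_x\xi)=\t_{x+y}\xi\in\cN^*$, whence $\t_x\xi\in\Omega^*$.

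I then set
\begin{equation*}
 \bar\chi(\xi,x):=\chi(\xi,x)\,\mathbf{1}_{\xi\in\Omega^*}\,\mathbf{1}_{x\in\xi}\,,
\end{equation*}
with $\bar\chi(\xi,x)=0$ otherwise. Since $\Omega^*$ has full $\bbP_0$--measure, $\bar\chi$ represents $\chi$ in $L^2(\mu)$. For $\xi\in\Omega^*$ and $x,y\in\xi$ the relation \eqref{coop1} coincides with the shift--covariance of $\chi$ on $\cN^*$, the crucial point being that $\bar\chi(\t_y\xi,x-y)=\chi(\t_y\xi,x-y)$ because $\t_y\xi\in\Omega^*$ and $x-y\in\t_y\xi$; for $\xi\notin\Omega^*$ both sides of \eqref{coop1} vanish by definition. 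Finally, $\bar\chi(\xi,0)=0$ follows by setting $x=y$ in \eqref{coop1} on $\Omega^*$ (using $0\in\xi$), and by convention for $\xi\notin\Omega^*$. The only non--routine step is the Palm--theoretic construction of the shift--stable full--measure set $\Omega^*$; once this is available, everything reduces to rearranging the a.s.\ identities supplied by Lemmas \ref{idraulico} and \ref{implicazioni}.
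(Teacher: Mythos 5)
Your proof is correct, and the overall architecture (multiply $\chi$ by the indicator of a shift--stable full--measure set, then check \eqref{coop1} by cases) matches the paper's. The genuine difference is in how the shift--stable set is produced. The paper takes as its good set $\cB=\cap_i\cB_i$, where $\cB_i$ is the set of $\xi$ on which the chosen representative $\hat\chi_i$ is curl--free for \emph{all} cycles in $\xi$; it then observes that this set is \emph{automatically} stable under $\xi\mapsto\t_x\xi$, $x\in\xi$, by a purely algebraic argument (cycles in $\xi$ and in $\t_x\xi$ are in bijection and the curl sums coincide), so no measure theory beyond $\bbP_0(\cB_i)=1$ is needed. You instead start from an arbitrary full--measure set $\cN^*$ carrying the cocycle identity and \emph{force} stability by saturation, $\Omega^*=\{\xi\in\cN^*:\t_x\xi\in\cN^*\ \forall x\in\xi\}$, paying for it with a Campbell--identity/Palm--inversion argument (essentially Lemma \ref{francoforte}) to show $\Omega^*$ still has full $\bbP_0$--measure. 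Your route is more generic — it upgrades \emph{any} a.s.\ property to an everywhere property on a shift--stable set, without exploiting the algebraic structure of the curl--free condition — at the cost of invoking \eqref{campbell}; the paper's route is more economical but specific to the cycle condition. One small point you should make explicit: the claim that for $\xi\notin\Omega^*$ ``both sides of \eqref{coop1} vanish by definition'' needs the \emph{converse} stability, namely $\t_y\xi\in\Omega^*\Rightarrow\xi\in\Omega^*$ for $y\in\xi$, so that $\bar\chi(\t_y\xi,x-y)=0$; this follows from the same computation you give, since $-y\in\t_y\xi$ and $\t_{-y}(\t_y\xi)=\xi$ (the paper records exactly this parenthetical remark).
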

\begin{proof}
The conclusion of the Lemma follows from \eqref{coop1} by taking
$x=y=0$. Let us therefore concentrate on \eqref{coop1}.  Due to
Lemma \ref{idraulico}, $\chi_i$ is a curl--free square integrable
form. We fix a representative $\hat \chi _i$   of $\chi_i$ as
pointwise function on $\cN_0 \times \bbR^d$ and  call $\cB_i \subset
\cN_0$ the set of $\xi$ satisfying \eqref{pappa} w.r.t.\ the form
$\hat \chi_i$,  for any family of $n $ points $x_0, x_1, \dots, x_n$
in $\xi$. By definition, it must be $\bbP_0 (\cB_i )=1$.

We claim that if $\xi \not\in \cB_i$ then $\t_x \xi \not\in \cB_i$ for all
$x\in\xi$.
Suppose for the sake of contradiction that  $ \t_x\xi \in \cB_i$ and fix a
family of $n $ points $x_0, x_1, \dots, x_n$ in $\xi$. Then the
points $y_0,y_1, \dots ,y_n$ defined as $y_k=x_k-x$ lie in $\t_x
\xi$. Because $\t_x \xi \in \cB_i$, we conclude that
\[
 0=
 \sum _{j=0}^{n-1} \hat \chi_i ( \t_{y_j}( \t_x \xi), y_{j+1}-y_y)=
 \sum_{j=0} ^{n-1} \hat \chi _i ( \t_{x_j} \xi, x_{j+1}-x_j) \,,
\]
thus implying that $\xi \in \cB_i$, which is a contradiction. This concludes
the proof of our claim.

 At this point we define $\cB= \cap _{i=1}^d
\cB_i$ and
\[
 \bar \chi_i  (\xi,x):=
 \begin{cases}
  \hat \chi_i  (\xi, x) & \text{ if } \xi \in \cB\,,\\
  0 & \text{ otherwise}\,.
 \end{cases}
\]
Let us check \eqref{coop1}. If $\xi \in \cB$ and $x\in \xi$, then
also $\t_y \xi$ must belong to $\cB$ (if it was not in $\cB_i$ for
some $i$, since $-y \in \t_y \xi$ we would conclude that $\xi=
\t_{-y}( \t_y \xi) $ does not belong to $\cB_i\supset \cB$). In
particular, the identity \eqref{coop1} can be rewritten as
\[
 \hat \chi _i (\xi, x)
 = \hat \chi_i (\xi,y)+ \hat \chi_i( \t_y\xi, x-y)\,,
 \qquad \forall i= 1, \dots, d\,,
\]
which is trivially true by definition of $\cB$ and $\cB_i$. Take now
$\xi \not \in \cB$ and $x,y \in \xi$. By  definition of $\bar
\chi$ we get $\bar  \chi (\xi, x)= \bar \chi (\xi, y)=0$. Since
for some $i$ it must be $\xi \not \in \cB_i$, we know that  also
$\t_y \xi $ does not belong to $\cB_i\supset \cB$. As consequence,
it must be  $\bar \chi (\t_y \xi , x-y)=0$ and the identity in
\eqref{coop1} reduces to $0=0+0$.
\end{proof}

From now on, when working with the corrector field $\chi$ we will
always refer to the pointwise function $\tilde \chi : \cN_0 \times
\bbR^d \to \bbR^d$ of the above lemma.

\subsection{Sublinearity and the proof of Theorem \ref{atreiu}  and
  Corollary \ref{strong}}\la{theproof}
The core of the proof of Theorem \ref{atreiu} lies in the
following result:
\begin{theorem}\label{piccolo}
Under the assumptions of Theorem \ref{atreiu}: for $\bbP_0$--a.a.\ $\xi$
\begin{equation}
\lim _{n\to \infty} \,\frac{1}{n}\, \max_{\substack{x \in \xi\,:\\
|x|_\infty \leq n } }  \bigl| \chi (\xi, x) \bigr|=0 \,.
\end{equation}
\end{theorem}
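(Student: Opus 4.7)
I follow the Berger--Biskup/Mathieu--Piatnitski blueprint, adapted to the continuum setting with long-range jumps. First, fix $p$ close to $1$ so that by (H1) the white-box field $\vartheta^{K,T_0}$ stochastically dominates a supercritical Bernoulli field $Z(p)$ on $\bbZ^d$. For $p$ sufficiently close to $1$, standard percolation theory yields $\bbP$--a.s.\ a unique infinite cluster of white boxes with positive density; let $\cC=\cC(\xi)\subset\xi$ be the set of points of $\xi$ lying in the boxes of this cluster. By Lemma \ref{leT}, $w(x)\le T$ for every $x\in\cC$, so $\cC$ plays the role of the ``elliptic cluster'' of \cite{M,BP}. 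Let $X^{\cC}$ denote the random walk $X_n$ observed on its successive visits to $\cC$ (the restricted walk of Section \ref{restricted}).

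\emph{Step 1 (mean sublinearity).} Using the $L^2$--theory of \cite{KV,DFGW} together with assumption (H2) and the bridge distributions of Sections \ref{mediano1}, \ref{mediano}, \ref{mandarino} (needed to transfer ergodicity from the Palm measure to objects indexed by $\bbZ^d$), I would prove that $\bbP_0$--a.s.
\begin{equation*}
\lim_{n\to\infty}\; \frac{1}{n^{d+1}} \sum_{x\in \cC,\, |x|_\infty \leq n} |\chi(\xi,x)| \;=\; 0.
\end{equation*}
This follows from $\chi\in L^2(\mu)^d$ together with the spatial ergodic theorem applied after pulling back to the $K$--lattice structure of the white boxes.

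\emph{Step 2 (pointwise sublinearity on $\cC$).} Recall that $\Phi_i(\xi,x)=x_i+\chi_i(\xi,x)$ is harmonic for $X_n$. For $x_0\in\cC$ with $|x_0|_\infty\le n$, apply the heat kernel bound stated in Section \ref{restricted} (proved in Section \ref{refrigerio}) to $X^\cC$: after $\ell$ steps the distribution of $X^\cC_\ell$ starting from $x_0$ concentrates in a ball of radius $O(\sqrt\ell)$ inside $\cC$ with density $O(\ell^{-d/2})$. Writing schematically
\begin{equation*}
\chi(\xi,x_0)\;=\;E_{x_0,\xi}\bigl[\chi(\xi,X^\cC_\ell)\bigr] \;+\; E_{x_0,\xi}\bigl[x_0 - X^\cC_\ell\bigr],
\end{equation*}
the second term is controlled by the expected distance bound of Section \ref{distante}, while the first is bounded by the heat kernel density times the $\ell^1$ mass of $|\chi|$ on a ball of radius $O(\sqrt\ell)$, which is $o(\sqrt\ell)$ by Step 1. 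Choosing $\ell=\delta^2 n^2$ and then letting $n\to\infty$ followed by $\delta\to 0$ yields $\max_{x_0\in\cC,\,|x_0|_\infty\le n}|\chi(\xi,x_0)|=o(n)$.

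\emph{Step 3 (extension to $\xi$ and main obstacle).} For $x\in\xi\setminus\cC$, shift-covariance of $\bar\chi$ (Lemma \ref{festicciola}) gives $\chi(\xi,x)=\chi(\xi,y)+\chi(\t_y\xi,x-y)$ for any $y\in\xi$; choosing $y\in\cC$ in a nearby white box one has $\chi(\xi,y)=o(n)$ by Step 2, and the residual term $\chi(\t_y\xi,x-y)$ is bounded by a local quantity depending only on the hole containing $x$. I expect the main obstacle to lie precisely in Step 2 combined with this extension: long-range jumps allow $X^\cC$ to escape a ball of radius $O(\sqrt\ell)$ through atypical far points adjacent to holes, so the heat kernel and expected-distance estimates cannot be obtained by the purely local hole-closing of \cite{M,BP}, and instead require the nonlocal \emph{enlargement of holes} procedure of Section \ref{enlarge}. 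Packaging Steps 1--3 then yields the claimed uniform sublinearity.
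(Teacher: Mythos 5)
Your overall architecture (sublinearity on average on the good cluster, then pointwise sublinearity via harmonicity together with the heat-kernel and expected-distance bounds, then extension to all of $\xi$) is the same as the paper's, and you correctly identify the enlargement of holes as the ingredient forced by the long-range jumps. But two of your steps have genuine gaps. Step 1 does not follow from ``$\chi\in L^2(\mu)^d$ plus the spatial ergodic theorem''. The $L^2(\mu)$ norm controls only the \emph{increments} $\chi(\t_x\xi,y-x)$ weighted by $r(y-x)$; applying the ergodic theorem (or Campbell's identity plus Borel--Cantelli) to those gives only the polynomial bound $\max|\chi|\leq n^{\theta}$, $\theta>d+1$, of Lemma \ref{polgrowthbis} --- nowhere near $o(n)$ on average. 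The value $\chi(\xi,x)$ at distance $n$ is a telescoping sum of order $n$ increments, and the essential point is that the ergodic average of these increments along a lattice direction \emph{vanishes}; this uses that $\chi$ is an $L^1(\mu)$-limit of exact gradients $G_m(\t_x\xi)-G_m(\xi)$, so the stationary increment on the bridge space $\O$ has zero mean (Lemmas \ref{torlonia} and \ref{mamma}). Even granting that, one only obtains sublinearity along a single coordinate direction (Corollary \ref{plasmon}); upgrading to sublinearity on average over the full $d$-dimensional box is the hardest part and requires the two-scale construction of Proposition \ref{barbapapa} (the analogue of Section 5.2 of \cite{BB}). Your plan omits both the zero-mean mechanism and the $d$-dimensional patching.

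Step 3 would fail as stated. For $x\in\xi\setminus\cC^*_\infty$ and a nearby good point $y$ at distance $O(\log n)$, the residual $\chi(\t_y\xi,x-y)$ is \emph{not} a local quantity determined by the hole containing $x$: the corrector is defined only as an $L^2(\mu)$ projection, and the best pointwise control on such an increment is Lemma \ref{leles}, of order $u_{\g,c}(n)=n^{\g}\exp[c(\log n)^{\a}]$, which is super-linear. The paper instead extends sublinearity to bad points through harmonicity: by the Optional Stopping Theorem (Proposition \ref{pizza}), $x+\chi(\t_{x_0}\xi,x-x_0)=E_{x,\xi}\bigl[X_{T_1}+\chi(\t_{x_0}\xi,X_{T_1}-x_0)\bigr]$, so the value at a bad point is an \emph{average} of values at good points reached at the first visit $T_1$ to $\cC^*_\infty$, with the tail of $|X_{T_1}-x|$ controlled by Lemma \ref{moltobuono}; some such averaging is unavoidable. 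A similar remark applies inside your Step 2: to bound $E_{x_0,\xi}[\chi(\xi,X^{\cC}_\ell)]$ you must also control the contribution of the event that the restricted walk has left the ball of radius $O(\sqrt{\ell})$, which requires the a priori polynomial bound as input to the recursion $R_n\leq\e n+\d R_{5n}$ (the term \eqref{kafka}, which the paper notes is absent from \cite{BP} and is created by the long jumps); you flag the escape issue but do not supply the a priori bound that closes it.
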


The proof of Theorem \ref{piccolo} is completed in Section \ref{sublinear}.
Here, we show how Theorem \ref{atreiu} follows from Theorem
\ref{piccolo}. The argument is standard; see \cite{SS,BB,BP,BD}
for very similar arguments. We only sketch the main steps.

\subsubsection{Proof of Theorem \ref{atreiu}}
Let us start with the discrete parameter case.
Set $\Phi(\xi,x):=x+\chi(\xi,x)$, so that $M_n = \Phi(\xi,X_n)$,
$n\in\bbN$, is a martingale (see Lemma \ref{rino}).
Also, define $M^v_n:=v\cdot M_n$, for
$v\in\bbR^d$, and, for every $K\geq 0$:
\[
F_K(\xi)=E_{0,\xi}\(|M^v_1|^2\,;\;|M^v_1|\geq K\)
\]
Let $\wt\bbP_0$ denote the probability on $\cN_0$ with density
$\frac{w(0)}{\bbE_0 w(0)}$ with respect to
$\bbP_0$, and let $\wt \bbE_0$ denote the associated expectation.
Recall that the Markov chain on environments,
$n\mapsto \t_{X_n}\xi$, is ergodic with reversible invariant
distribution given by $\wt \bbP_0$; see \cite[Proposition 2]{FSS}.
Therefore, for every
$K\geq 0$:
\be\la{ergo1}
\frac1n\sum_{k=0}^n F_K\circ\t_{X_k}\xi
\to
\wt \bbE_0 F_K\,,
\end{equation}
$P_{0,\xi}$--a.s., for $\bbP_0$--a.a.\ $\xi$, as $n\to\infty$.
As in \cite[Section 6.1]{BB} and \cite[Section 5]{BP}, using
the above convergence together with the monotonicity of $K\mapsto F_K$,
allows us
to verify the assumptions of the Lindeberg-Feller Martingale
Functional CLT (as in, e.g., \cite[Theorem 7.7.3]{Durrett}).
It follows that, for every $v\in\bbR^d$,
$\bbP_0$--a.s.\ $t\mapsto \e\,M^v_{\lfloor t/\e^2\rfloor}$ converges
weakly, as $\e\to 0$,
to one-dimensional Brownian motion with diffusion coefficient
\begin{equation}\la{dtrw}
\left\langle v,D_{DTRW}v
\right\rangle = \wt\bbE_0 E_{0,\xi}\(|M^v_1|^2\)\,.
\end{equation}
In particular, with the notation \eqref{scalare}, it holds $
[D_{DTRW}]_{i,j} = (\Phi_i,\Phi_j)_\mu /(1,1)_\mu $, where $\Phi_i =
x_i + \chi_i$, $i,j=1,\dots,d$. That $D_{DTRW}$ is positive definite
follows from the fact that if \eqref{dtrw} is zero for some
coordinate axis $v=e_j$, $j=1,\dots,d$, then $x_j = -
\chi_j(\xi,x)$, $\bbP_0$--a.s.\ for every $x\in\xi$, and this is not
compatible with Theorem \ref{piccolo}.

To conclude the proof we 
argue as in \cite[Section 6.2]{BB}. Namely, from Theorem
\ref{piccolo} we have that $\bbP_0$--a.s.\ for $\d\in(0,1/2)$, there
exists $\k(\xi)<\infty$ such that for all $x\in\xi$ it holds $
|\chi(\xi,x)|\leq \k(\xi) + \d\,|x|$.  Writing
$x=\Phi(\xi,x)-\chi(\xi,x)$, one has
\[
 \max_{k\leq n}|\chi(\xi,X_k)|\leq 2\k(\xi) + 2\d\,\max_{k\leq n}\,|M_k|\,,
\]
which implies, by the arbitrariness of $\d$, that
$\max_{k\leq n}|\chi(\xi,X_{n})|=o(\sqrt n)$ in
$P_{0,\xi}$ probability, for $\bbP_0$--a.a.\ $\xi$. This ends the proof
of Theorem \ref{atreiu} for the DTRW.

To treat the CTRW, observe that it consists of a time change of the
DTRW. Indeed,  the CTRW waits at site $x$ an exponential time of
parameter $w(x)$ and then jumps to $y\in\xi$ with probability
$p(x,y)$ (it could be $y=x$). With this notion of ``jump'',   if
$n^*(t)$ denotes the number of jumps of the CTRW up to time $t$,
then the CTRW at time $t$ coincides with the DTRW at $n^*(t)$.
Therefore, arguing as in \cite[Theorem 4.5]{DFGW}, it is sufficient
to show that the limit
\begin{equation}\la{ergos}
\lim_{t\to\infty} n^*(t)/t = \bbE_0 w(0)\,,
\end{equation}
holds $P_{0,\xi}$--a.s., for $\bbP_0$--a.a.\ $\xi$. Let $\si_i$,
$i=0,1,2,\dots$ denote an independent family of i.i.d.\ exponentials
of parameter $1$ and write $T_i:=\si_i/w(X_i)$ for the waiting time
after the $i$-th jump of the discrete-time chain. Then, setting
$R_0=0$, and $R_n = T_{n-1} + \cdots + T_0$, we have that $n^*(t)=n$
if and only if $R_{n-1}\leq t < R_n$. Observing that
$w(x)=w(x,\xi)=w(0,\t_x\xi)$ for every $x\in\xi$, and invoking the
ergodicity of the environment process $n\mapsto \t_{X_n}\xi$ as in
\eqref{ergo1} above, we see that $\bbP_0$--a.s.\ and
$P_{0,\xi}$--a.s.
\[
\lim_{n\to\infty} R_n/n = \wt \bbE_0 \bigl(1/w(0)\bigr) = 1/ \bbE_0
w(0)\,.
\]
This implies \eqref{ergos}. Moreover, this also shows that $
[D_{CTRW}]_{i,j} = (\Phi_i,\Phi_j)_\mu\,, $ and the relation
\eqref{relaz} must hold. \qed

\subsubsection{Proof of Corollary \ref{strong}}
This corollary is a direct
consequence of Theorem \ref{atreiu} and Lemma \ref{francoforte} in the appendix, taking in Lemma \ref{francoforte} $\cA_0$
as
the  set of configurations $\xi \in \cN_0$ such that both the DTRW and the CTRW starting at the origin converge  under diffusive rescaling  to the Brownian motions described in Thereom \ref{atreiu}.

\section{Restricted random walk}\label{restricted}
Recall the definition of occupied boxes and white boxes, as in Section
\ref{ambiente}.
As consequence of (H1), see also property (A), once $p$ is taken large enough,
$\bbP$--a.s.\ the random sets $\{x \in \bbZ^d\,:\, \s_x=1\}$ and
$\{x \in \bbZ^d\,:\, \vartheta _x=1\}$ have a unique infinite
connected component, the infinite cluster; see e.g.\ \cite{G}.
Here points $x,y$ are thought
of as connected if there exist points $x_0,x_1, \dots, x_n$ in the
above random sets such that $x_0=x, x_n=y$ and
$|x_i-x_{i+1}|_1=1$ 
for all $i=0,\dots, n-1$. We call
$C_\infty$
the
infinite clusters in $\{x \in
\bbZ^d\,:\, \s_x=1\}$, and $C_\infty ^\ast$ the infinite clusters in
$\{x \in \bbZ^d\,:\, \vartheta  _x=1\}$.
By taking $p$ large,  from the domination assumptions (H1) we also know that there exists $c<\infty$ such that $\bbP$--a.s.\ the holes of $C_\infty$ and $C^*_\infty$ intersecting the box $[-n,n]^d$ have diameter at most $c\log n$ \cite[Prop. 2.3]{BP} (in particular, all holes have finite cardinality).
 Finally, we define
\[
 \cC_\infty= \cup _{x \in C_\infty} B(x)\,, \qquad
 \cC_\infty ^\ast = \cup _{x \in C^\ast_\infty} B(x)\,.
\]
The dependence on the parameters $K,T_0$ is understood. The set
$\cC_\infty ^\ast$ is often referred to as the cluster of white
boxes, while $\cC_\infty$ is called the cluster of occupied
points. Clearly, $\cC_\infty \supset \cC_\infty ^\ast$. The points $\xi\cap\cC^*_\infty$ are sometimes
referred to as the good points.

Given a starting point in  $ \xi \cap \cC^\ast _\infty$, the random
walk $Y_n$ is the discrete--time  random walk made by the
consecutive visits of $X_n$ to the set $\xi \cap \cC^\ast_\infty$:
setting
\begin{equation}\label{tao}
T_1\coloneq\min \{n\geq 1\,\colon X_n \in \xi \cap
\cC^\ast_\infty\}\,,
\end{equation}
 the transition probability from
$x$ to $y$ of $Y$ is given by
\begin{equation}\label{eq_omegaxy}
\o_{x,y}(\xi) \coloneq P_{x,\xi}( X_{T_1} =y)\,.
\end{equation}
Thus $Y_n=X_{T_n}$, where $T_n$ is the time of the $n$-th visit to
$\xi \cap \cC^\ast_\infty$.
A poissonization of $Y_n$ yields the
continuous--time random walk $\wt Y_t$. Equivalently, $\wt Y_t$ is the
continuous--time Markov chain on $ \xi \cap \cC^\ast_\infty$ whose
infinitesimal generator is given by
\begin{equation}\label{restrictedGenerator}
 \bbL_* f (x) = \sum_{y\in\xi\cap\cC^\ast_\infty}
 \o_{x,y}(\xi) \bigl ( f(y) - f(x) \bigr)\,.
\end{equation}
In order to simplify the notation, we simply write $Y_t $ instead of
$\wt Y_t$  when no confusion can be generated. It is simple to check
that $( w(x) : x \in \xi \cap \cC^*_\infty)$ is a reversible measure
both for $Y_n$ and for $Y_t$.

Following \cite{BP},
a crucial step towards the proof of Theorem \ref{piccolo}
consists
in establishing the following bounds on the distance and heat kernel of the
restricted walk.

\begin{proposition}\label{mirta}
For a deterministic sequence $b_n=o(n^2)$ and for $\bbP$--a.a.\ $\xi$:
\begin{align}
 &
 \limsup_{ n \to \infty} \max_{\substack{ x \in \xi \cap \cC^\ast_\infty : \\
 |x|_\infty \leq n  }} \sup _{t \geq n}  t^{d/2} P_{x,\xi}
 (Y_t=x)<\infty
 \,.\label{mirtillo2}\\
& \limsup_{n \to \infty } \max_{\substack{ x \in \xi \cap \cC^\ast_\infty  : \\
 |x|_\infty \leq n  }} \sup _{t \geq b_n} \frac{ E_{x,\xi} |Y_t-x|}{
 \sqrt{t} } <\infty
\,,\label{mirtillo1}
 \end{align}
\end{proposition}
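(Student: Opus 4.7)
\medskip

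\noindent\textbf{Proof plan for Proposition \ref{mirta}.}

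The overall strategy follows the template developed for the random conductance model in \cite{BP}, adapted to the continuum, long-range setting. The walk $Y_n$ lives on the good points $\xi\cap\cC^*_\infty$, has bounded reversible measure $w(x)\leq T$ (by Lemma \ref{leT}, since good points belong to white boxes), and its transition kernel $\omega_{x,y}$ is obtained by integrating out the excursions of $X$ through the bad set $\cC_\infty\setminus\cC^*_\infty$. The plan has three ingredients: (a) a good-cluster isoperimetric/Sobolev inequality with Euclidean exponent; (b) control of the jump law of $Y$ via the enlargement-of-holes analysis of Section \ref{enlarge}; (c) a Nash/Moser iteration to extract the on-diagonal bound, followed by moment estimates for the distance bound.

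First I would combine assumption (H1) (stochastic domination of $\vartheta^{K,T_0}$ by a supercritical Bernoulli field) with the logarithmic hole-size estimate of \cite[Prop.~2.3]{BP}, valid here because the same domination is assumed, to conclude that the good cluster $C^*_\infty$ satisfies, inside any box $[-n,n]^d$, a Sobolev inequality with exponent $d/(d-1)$ up to a correction by $\log n$. Transferred to $\xi\cap\cC^*_\infty$, and using that the number of points per unit $K$-box is bounded above by a variable with finite Laplace transform (by (H1) and the definition of white boxes), this gives a weighted Nash inequality
\[
  \|f\|_2^{2+4/d}\leq C(\xi,n)\,\mathcal{E}(f,f)\,\|f\|_1^{4/d},
\]
for $f$ supported on $\xi\cap\cC^*_\infty\cap[-n,n]^d$, where $\mathcal E$ is the Dirichlet form of $Y$ with weights $\omega_{x,y}w(x)$, and $C(\xi,n)$ is a random constant which, by ergodicity, is $\bbP$-a.s.\ bounded. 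The key technical input is that the enlargement-of-holes construction of Section \ref{enlarge} converts excursions through holes into jumps of $Y$ with bounded range up to polylog factors and with weights $\omega_{x,y}$ bounded below by stretched exponentials in the jump distance, so that $\mathcal E$ controls an effectively short-range Dirichlet form to which the Bernoulli-percolation isoperimetric inequality applies.

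Once the Nash inequality is in hand, standard arguments (e.g.\ the semigroup method of Nash or the Davies perturbation technique) yield the on-diagonal bound $P_{x,\xi}(Y_t=x)\leq Ct^{-d/2}$ for all $t\geq n$, where the constraint $t\geq n$ reflects that the Sobolev constant degrades only by a $\log n$ factor, absorbed once $t$ is at least $n$. This proves \eqref{mirtillo2}. For the distance bound \eqref{mirtillo1}, I would use the decomposition
\[
  E_{x,\xi}|Y_t-x|\;\leq\;\sqrt t\;+\;\sum_{y:\,|y-x|>\sqrt t} |y-x|\,P_{x,\xi}(Y_t=y),
\]
and bound the tail by Cauchy--Schwarz using the heat kernel estimate just proved together with a tail estimate $P_{x,\xi}(|Y_t-x|>r)\leq C\exp(-c\,r^{\alpha'})$ obtained from an exponential Chebyshev argument against the Laplace transform of a single jump of $Y$, which is finite (uniformly in the starting good point, on a full $\bbP$-measure set) thanks to the enlargement-of-holes bound on excursion lengths. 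The choice $b_n=o(n^2)$ accommodates the $\log n$-scale degradation; $b_n$ is taken so that $b_n\gg (\log n)^{C}$ for a suitable $C$ coming from the isoperimetric constants.

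The main obstacle is step (b): controlling the jump distribution of $Y$. A jump $x\to y$ of $Y$ is produced by the chain $X$ starting at $x$, exiting the good cluster into some hole $H$, wandering inside $H$, and returning to a good point $y$ that may be far from $H$ because $X$ can take long jumps directly to a far good point from inside $H$. One must show that, $\bbP$-a.s., both the expected number of jumps of $X$ spent in the excursion and the expected distance $|Y_1-x|$ have stretched-exponential tails, uniformly over the starting good point in a box of side $n$. This is precisely the content of the enlargement-of-holes analysis of Section \ref{enlarge}, and it is the only place where the nonlocal, long-range nature of the point process genuinely enters; everything else is a percolation-theoretic adaptation of the lattice arguments. \qed
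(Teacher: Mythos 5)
Your plan for the on-diagonal bound \eqref{mirtillo2} is broadly compatible with the paper's: where you propose a Nash inequality plus Nash/Davies iteration, the paper runs the isoperimetric profile of \cite{CF1} through the evolving-sets theorem of \cite{MoPe} (Proposition \ref{congiuntivite}); the boundary term is controlled simply by $w(x)\o_{x,y}\geq r(y-x)$, so no lower bound on $\o_{x,y}$ via hole analysis is needed there. What your sketch omits, and what is not cosmetic, is the finite-volume truncation: the isoperimetric input is only available on boxes, and a Nash constant ``bounded by ergodicity'' uniformly over all starting points in $[-n,n]^d$ and all $t\geq n$ is not something ergodicity gives you. The paper proves the bound for a cut-off walk on $\xi\cap[-L,L]^d$ with $L=t^{u}$ and then couples it to $Y_t$, paying for the discrepancy with the exit estimate of Lemma \ref{moltobuono}; some version of this truncation-plus-comparison is unavoidable (the paper even remarks that the statement fails if $L$ grows exponentially in $t$).

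The distance bound \eqref{mirtillo1} is where your proposal breaks down. The tail estimate $P_{x,\xi}(|Y_t-x|>r)\leq C\exp(-c\,r^{\alpha'})$, uniform in $t$, is false: $Y_t$ is diffusive, so $P_{x,\xi}(|Y_t-x|>\sqrt t)$ is of order one for large $t$, whereas your bound would make it $\exp(-c\,t^{\alpha'/2})$. More to the point, the mechanism you invoke --- exponential Chebyshev against the Laplace transform of a single jump --- can only control $|Y_t-x|$ at the ballistic scale $r\gtrsim t$ (the sum of $\sim t$ jump lengths has mean of order $t$), and therefore cannot feed a bound on $\sum_{|y-x|>\sqrt t}|y-x|\,P_{x,\xi}(Y_t=y)$ at the diffusive scale. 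The paper avoids this entirely by running Barlow's differential-inequality scheme \cite{barlow} relating the mean displacement $M(x,t)$ to the entropy $Q(x,t)$ (Lemma \ref{distanceBounds} and Proposition \ref{esausta}), whose only probabilistic input is that the \emph{second moment of a single jump of $Y$} is uniformly bounded. Even that is false in Euclidean distance (it grows like $(\log|x|)^2$ because holes near $x$ can have diameter $\log|x|$), which is the real reason for the collapsed metric $\bar d$ of Section \ref{enlarge}: Lemma \ref{lecda20} gives $\sup_x E_{x,\xi}[\bar d(x,Y_1)^2]<\infty$, Barlow's argument then yields $E_{x,\xi}[\bar d(x,Y_t)]\leq C\sqrt t$, and Corollary \ref{coroscaz} converts this back to Euclidean distance at the cost of a $\log$ factor absorbed by $t\geq b_n$. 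Your sketch treats the enlargement of holes as a device for bounding excursion lengths, but misses that its essential role here is to produce a metric in which single jumps of $Y$ have uniformly bounded moments; without that (or an equivalent substitute) neither your route nor Barlow's closes.
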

Before going to the proof of Proposition \ref{mirta}, which is given
in Section \ref{refrigerio} and
Section \ref{distante},  we start by developing some tools that will
be repeatedly used in the sequel.

\subsection{Enlargement of holes}
\la{enlarge}
 Connected components in the complement of $ C^*_\infty$ and in the complement of $\cC^*_\infty$
are called \emph{discrete holes} and \emph{holes}, respectively.
 A generic discrete hole  $C$  is thus a
finite set, while a subset $C'\subset \bbR^d$ is a hole if and only
if it  can be written as $ C'= B(z_1)\cup\cdots\cup B(z_m) $, where
$\{z_1, \dots, z_m\}$ is a discrete hole.
%
For the moment we restrict our analysis to discrete holes.

Given $z \in \bbZ^d$, we
 use $G(z)$ to denote the unique discrete hole $C$
such that $z\in C$.
For a vertex $z\in C^*_\infty$, we set $G(z)=\emptyset$. Let
$d_1(z,z')=|z-z'|_1$, $z,z'\in\bbZ^d$, denote the $\ell_1$ distance,
i.e.\ the graph distance in $\bbZ^d$. Also, we use
$d_2(z,z')=|z-z'|$ for the usual $\ell_2$ distance. Given an
arbitrary $D\subset\bbZ^d$ and $i=1,2$, we write $d_i(z,D)$ for the
point-to-set $\ell_i$ distance and $\diam_i(D)=\sup_{z,z'\in D}
d_i(z,z')$ for the $\ell_i$ diameter of $D$.
We write $|D|$ for the cardinality of $D$.

The \emph{enlargement} of $C$ is given by the set \be\la{enlar} \wt C
= \{ z\in\bbZ^d: d_2(z,C)\leq \diam_2(C)\}\,.
\end{equation}
 Note that an enlarged discrete hole $\wt C$ will in
general
contain some vertices $z\in C_\infty^*$, 
and a vertex $z\in C^*_\infty$ can be covered by
several enlarged holes. When $z\notin C^*_\infty$, we use the
notation $\wt G(z):=\wt{G(z)}$ for the enlargement of $G(z)$. When
$z\in C^*_\infty$, we set $\wt G(z)=\emptyset$.


Two 
vertices $z,z'\in\bbZ^d$ are said to be \emph{related} if
they both belong to some enlarged discrete hole $\wt C$.
This notion induces in an
obvious way an equivalence relation between 
vertices:
two 
vertices $z,z'$ are \emph{equivalent} (written $z \sim z'$)  if and only if there exist 
vertices $z_0,\dots,z_n$ such that $z_0=z,z_n=z'$, and $z_i,z_{i+1}$
are related for all $i=0,\dots,n-1$.
Consider now the graph obtained from $\bbZ^d$ identifying
all equivalent vertices. Call $\bar d(z,z')$ the associated graph
distance (each vertex is at distance $0$ from any member of its
equivalence class).
Note that according to this definition,
two distinct vertices $z,z'\in C^*_\infty$ may well have distance
$0$ (if there exists a nearest neighbor path $\g$ connecting $z,z'$
such that $\g$ is fully contained in the union of all enlarged
holes).

Clearly, $\bar d(z,z')\leq d_1(z,z')$ for any pair of vertices. Our
assumptions allow us to compare the two distances in the opposite
direction as well.

\begin{proposition}\la{dist0}
For all $a>0$, there exist $K,T_0$ 
such that for all $z\in\bbZ^d$:
 \begin{equation}\la{probdbar}
  \bbP\Big (\bar d(0,z)
\leq \frac12\,d_1(0,z)\Big)\,\leq\, \nep{-a\,d_1(0,z)}\,.
 \end{equation}
\end{proposition}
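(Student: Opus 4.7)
The approach is to exploit (H1) for large $K, T_0$---so that $\vartheta^{K,T_0}$ stochastically dominates a supercritical Bernoulli field $Z(p)$ with $p$ as close to $1$ as desired---and combine supercritical percolation estimates with a Peierls-type union bound on ``chains of classes,'' in the spirit of \cite{BP,MP}.

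\emph{Step 1 (exponential tails for class diameters).} By (H1), choose $K, T_0$ so that $\vartheta^{K,T_0}$ dominates $Z(p)$ with $p$ close to $1$. For such $p$, vacant sites form a subcritical Bernoulli field, so standard supercritical percolation \cite{G} yields $\bbP(\diam_2 G(y)\geq k)\leq C_1 e^{-\lambda(p)k}$ with $\lambda(p)\to\infty$ as $p\to 1$. Since $\diam_2 \wt C \leq 3\diam_2 C$ by the definition of the enlargement, enlarged holes inherit exponential decay. An equivalence class is a maximal connected union of overlapping enlarged holes, and a Peierls-type summation over such chains upgrades this to
\[
 \bbP\bigl(L(y)\geq l\bigr) \leq C_2 e^{-\lambda'(p) l}, \qquad \lambda'(p)\to\infty \text{ as } p\to 1,
\]
where $L(y):=\diam_1(\text{equiv.\ class of } y)$.

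\emph{Step 2 (geometric reduction).} Set $n := d_1(0,z)$ and assume $\bar d(0,z)=m\leq n/2$. Then a shortest path in the contracted graph consists of distinct classes $E_0,\dots,E_m$ with $0\in E_0$, $z\in E_m$ and each pair $E_i,E_{i+1}$ joined by a nearest-neighbor edge of $\bbZ^d$. Joining consecutive class-representatives via $\ell_1$-shortest $\bbZ^d$-paths yields a walk from $0$ to $z$ of total $\ell_1$-length at most $m+\sum_{i=0}^m L_i$, where $L_i:=\diam_1 E_i$. Since this length is at least $n$, we obtain
\[
 \sum_{i=0}^m L_i \, \geq \, n-m \, \geq \, n/2.
\]
Encoding the chain by the ``entry vertices'' $y_0=0,y_1,\dots,y_m$ (with $y_i\in E_i$ adjacent in $\bbZ^d$ to some vertex of $E_{i-1}$), we have $|y_{i+1}-y_i|_1\leq L_i+1$, so there are at most $\prod_{i=0}^{m-1}(2L_i+3)^d$ choices of $(y_1,\dots,y_m)$ once $(L_0,\dots,L_{m-1})$ are fixed.

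\emph{Step 3 (union bound).} The probability of the Step~2 event is then bounded by
\[
 \sum_{m=0}^{\lfloor n/2\rfloor}\sum_{\substack{L_0,\dots,L_m\geq 0\\ \sum_i L_i \geq n/2}}\, \prod_{i=0}^{m-1}(2L_i+3)^d \cdot \bbP\bigl(L(y_i)\geq L_i \text{ for all } i\bigr).
\]
Applying the Chernoff-type bound $\mathbf 1(\sum L_i\geq n/2)\leq e^{\alpha(\sum L_i-n/2)}$ with $\alpha=\lambda'/2$, absorbing polynomials via $(2L+3)^d e^{-\lambda' L/2}\leq C_3$, and invoking the one-point tail of Step~1 at each $y_i$, the double sum collapses to at most $C\cdot C_4^{n/2}e^{-\lambda' n/4}$ for a geometric constant $C_4=C_4(d)$. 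For $p$ close enough to $1$, $\lambda'$ is large enough that $\lambda'/4$ exceeds $a+\tfrac12\log C_4$, giving $\bbP(\bar d(0,z)\leq d_1(0,z)/2) \leq e^{-a\, d_1(0,z)}$ for any prescribed $a>0$. The \emph{main obstacle} is controlling the joint event $\{L(y_i)\geq L_i \text{ for all } i\}$: distinct equivalence classes are disjoint subsets of $\bbZ^d$ (suggesting near-independence), but the diameter events have support of size $O(L_i)$ and can overlap when the $y_i$'s are close. This is handled by observing that $\{L(y)\geq L\}$ is determined by the environment within an $\ell_2$-ball of radius $O(L)$ about $y$: overlaps between such balls contribute only a bounded multiplicative factor absorbable into $C_3$, while ``coincidences'' (two $y_i$'s in the same class) force $m$ down but preserve $\sum L_i$, so the exponential bound persists.
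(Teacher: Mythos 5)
Your overall strategy is the same as the paper's in outline: use (H1) to reduce, by monotonicity, to a supercritical Bernoulli field $Z(p)$ with $p$ close to $1$, and then run a Peierls-type union bound showing that a short $\bar d$-distance forces an improbably large total of hole diameters. The organization differs, though: the paper sums over nearest-neighbour $\bbZ^d$-paths $\g_n$ from $0$ to $z$ and bounds $\bbE_p[\nep{\l X(\g_n)}]$ by a \emph{sequential} conditional estimate $\bbE_p[\exp\{\l\diam_1(\wt G(z_i))1_{\{\text{new hole}\}}\}\mid\cF_{i-1}]\leq 2$, which disposes of the dependence between the hole diameters along the path in one stroke; you instead sum over chains of equivalence classes, which requires as inputs (a) an exponential tail for the class diameter $L(y)$ and (b) a product bound for the joint event $\{L(y_i)\geq L_i\ \forall i\}$.

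Both (a) and (b) are asserted rather than proved, and the justification you offer for (b) is not valid as stated: saying that the witnessing balls ``overlap only on a bounded multiplicative factor'' confuses a counting issue with a correlation issue — overlapping supports mean the events are dependent, and you cannot multiply their probabilities on that basis. The correct repair is available along the lines you hint at: on the event of Step~2 the classes $E_0,\dots,E_m$ are \emph{distinct}, hence their constituent holes are disjoint sets of vacant sites; each $\{L(y_i)\geq L_i\}$ is increasing in the vacant configuration and is witnessed by the vacant sites of the class of $y_i$, so the joint event is contained in the disjoint occurrence $\{L(y_0)\geq L_0\}\circ\cdots\circ\{L(y_m)\geq L_m\}$ and the BK inequality gives the product bound. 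Similarly, (a) is not a citation to standard percolation facts: the one-hole tail is standard, but upgrading it to the diameter of an equivalence class (a chain of holes with overlapping \emph{enlargements}) is itself a Peierls/chaining estimate of exactly the same nature as the main claim, and needs the same disjointness input. So your scheme can be completed, but the two steps where you say ``Peierls-type summation'' and ``absorbable into $C_3$'' are precisely where the work lies; the paper's path-plus-conditioning formulation avoids having to isolate either statement.
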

\begin{proof}
Due to assumption (H1) we can find $K,T_0$ such that  the field of
white points $\vartheta$ dominates a supercritical Bernoulli field
$Z(p)$ with parameter $p$. Therefore,   the probability appearing in
\eqref{probdbar} is smaller than $ \bbP_p\bigl(\bar d(0,z) \leq
\frac12\,d_1(0,z)\bigr)$, where $\bbP_p$ is the law of the Bernoulli
random field $Z(p)$ (and $\bar d(0,z)$ is accordingly defined as a
function of $Z(p)$ and its unique infinite cluster instead of
$\vartheta$ and $C_*^\infty$, respectively).

Let us first observe that if $\bar d(0,z)=0$ then there exist
discrete  holes $C_1,\dots,C_m$ such that the union of the enlarged
discrete holes $\wt
C_1,\dots,\wt C_m$ 
contains a nearest neighbor path from $0$ to $z$. In particular,
there must exists a nearest neighbor path $(z_0=0,\dots,z_n=z)$ in
$\bbZ^d$ of length $n\geq d_1(0,z)$ such that
\[
n\leq \sum_{i=0}^n\diam_1(\tilde G(z_i))1_{\{z_i\notin
G(z_j)\,,\;\forall
  j<i\}} \,.
\]
More generally, by pasting together different paths as in the
example above, one obtains that the event $\bar
d(0,z)\leq \frac12\,d_1(0,z)$ is contained in the event: there exist
$n\geq d_1(0,z)$ and a nearest neighbor path
$\g_n=(z_0=0,\dots,z_n=z)$ in $\bbZ^d$ such that
\[
 \frac{n}2\,
 \leq X(\g_n)
 :=\sum_{i=0}^n\diam_1(\tilde G(z_i))1_{\{z_i\notin G(z_j)\,,\;\forall j<i\}\,.}
\]
Therefore, using the exponential Chebyshev estimate and a union
bound, for any $\l>0$:
\[
 \bbP_p\Big(\bar d(0,z)\leq \frac12\,d_1(0,z)\Big)
 \leq \sum_{n\geq d_1(0,z)} \sum_{\g_n}
 \nep{-\l\,n/2}\,\bbE_p[\nep{\l  X(\g_n)}]\,,
\]
where $\bbE_p$ denotes expectation w.r.t.\ $\bbP_p$. We claim that
for every $\l>0$ there exists $p\in(0,1)$
such that \be\la{clams}
 \bbE_p[\nep{\l  X(\g_n)}]\leq 2^n\,,
\end{equation}
for all nearest neighbor paths $\g_n$ of length $n$. Once
\eqref{clams} is proved, estimating by $(2d)^n$ the total number of
paths $\g_n$ connecting $0$ and $z$, one obtains that for all $a>0$,
there exist suitable constants $\l>0$ and $p\in(0,1)$
such that
\[
 \bbP_p\Big (\bar d(0,z)\leq \frac12\,d_1(0,z)\Big)
 \leq \sum_{n\geq d_1(0,z)} (4d)^n
 \nep{-\l\,n/2} \leq \nep{-a\,d_1(0,z)}\,,
\]
and the proposition follows.

We turn to the proof of \eqref{clams}. Let the path
$\g_n=(z_0=0,\dots,z_n=z)$ be fixed. Let $\cF_i$ denote the
$\si$--algebra generated by the random variables
$G(z_0),\dots,G(z_i)$. To prove \eqref{clams}, it is sufficient to
establish the uniform estimate: For any $\l>0$ there exists
$p\in(0,1)$ such that
\begin{equation}\la{lapla}
 \bbE_p \Big[
 \exp\bigl\{\l\,\diam_1(\wt G(z_i))1_{\{z_i\notin G(z_j)\,,\forall j<i\}}\bigr\}
 \,\bigl|\, \cF_{i-1}\Big]
 \leq 2\,.
\end{equation}
Note that the definition \eqref{enlar} implies that $\diam_1\wt
G(0)\leq c\diam_1 G(0)$, for some finite constant $c=c(d)$.
Therefore, it suffices to show that for any $\l>0$ there exists
$p\in(0,1)$ such that \eqref{lapla} holds with $\wt G(z_i)$ replaced
by $G(z_i)$. At this point the conclusion follows from a standard
Peierls argument, as in \cite[Lemma 3.1; proof of Eq.\ (3.12)]{BP}.
\end{proof}


\bigskip

We extend the definition of $\bar d$ to all points in
the process $\xi$ using the corresponding $K$-boxes. Namely,
for any $x\in\xi$, let $z(x)$ denote
the unique point of $\bbZ^d$ such that $x\in B(z(x))$. Then, we set
\be\la{dbardef} \bar d(x,y) := \bar d(z(x),z(y))\,,\quad    \;
x,y\in\xi\,.
\end{equation}
The next estimate is a useful corollary of Proposition
\ref{dist0}.

\begin{corollary}
\la{coroscaz} Take $K, T_0$ satisfying \eqref{probdbar} for some
$a>0$. Then, $\bbP$--a.s.\ there exists $\kappa =
\kappa(\xi,K)<\infty$ such that for all $x,y\in\xi$: \be\la{scaz1}
|x-y|\leq \kappa\(1+\log(1+|x|)+ \bar d(x,y)\)\,.
\end{equation}
\end{corollary}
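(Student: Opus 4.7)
The plan is to combine Proposition \ref{dist0} with a Borel--Cantelli argument over pairs of lattice points in order to obtain, $\bbP$--a.s., the inequality $d_1(z,z')\leq 2\bar d(z,z')+M\log(|z|_\infty+1)$ for all $z'\in\bbZ^d$ and all sufficiently large $z$. One then transfers this from $\bbZ^d$ to $\xi$ through the trivial estimates $|x-y|\leq K d_1(z(x),z(y))+K\sqrt d$ and $\log(|z(x)|_\infty+1)\leq C_K+\log(1+|x|)$, both of which follow from the definition of the boxes $B(z)$.

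By translation invariance of $\bbP$, the estimate \eqref{probdbar} extends immediately to arbitrary pairs: $\bbP(\bar d(z,z')\leq \tfrac12 d_1(z,z'))\leq e^{-a d_1(z,z')}$ for all $z,z'\in\bbZ^d$. Fix $M$ with $aM>d+2$, and let $A_n$ denote the event that there exist $z,z'\in\bbZ^d$ with $|z|_\infty\leq n$, $d_1(z,z')\geq M\log n$, and $\bar d(z,z')\leq \tfrac12 d_1(z,z')$. A union bound over the at most $(2n+1)^d$ choices of centre $z$, and over the polynomially many $z'$ at each $\ell_1$--distance $k\geq M\log n$ from $z$, yields
\[
 \bbP(A_n)\,\leq\, C\,n^d(\log n)^d\,n^{-aM}\,\leq\, C\,n^{-2}(\log n)^d,
\]
which is summable in $n$. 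Borel--Cantelli then produces a random $N=N(\xi,K)<\infty$, defined $\bbP$--almost surely, such that $A_n$ fails for every $n\geq N$. Applied with $n=\max(N,|z|_\infty+1)$, this is precisely the comparison $d_1(z,z')\leq 2\bar d(z,z')+M\log(|z|_\infty+1)$ valid for all $z'\in\bbZ^d$ and every $z\in\bbZ^d$ with $|z|_\infty\geq N$.

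To conclude, I would fix $x,y\in\xi$ and set $z=z(x),\,z'=z(y)$. When $|z|_\infty\geq N$, plugging the $\bbZ^d$--comparison into $|x-y|\leq K d_1(z,z')+K\sqrt d$, and bounding $\log(|z|_\infty+1)\leq C_K+\log(1+|x|)$, gives the stated inequality for a suitable $\kappa=\kappa(\xi,K)$. The only real subtlety is the exceptional case $|z(x)|_\infty<N$: by local finiteness of $\xi$ there are only finitely many such $x$, each with $|x|$ bounded, so for each fixed such $x$ it suffices to bound $|x-y|$ by a constant times $1+\bar d(x,y)$ as $y$ varies. For $y$ with $|z(y)|_\infty\geq N$ this follows by applying the case already treated with the roles of $x,y$ swapped and using the symmetry of $\bar d$, producing a bound involving $\log(1+|y|)$ instead of $\log(1+|x|)$; a one-line bootstrap based on $|y|\leq |x|+|x-y|$ together with $\log(1+t)=o(t)$ absorbs this term. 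Pairs with both $|z(x)|_\infty,|z(y)|_\infty<N$ form a finite set and are handled by further enlarging $\kappa$.

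The probabilistic core here is an essentially routine Borel--Cantelli computation made possible by the superexponential decay in Proposition \ref{dist0}; the main bookkeeping obstacle is the asymmetry in $x,y$ of the target inequality, which forces the swap-plus-bootstrap step to dispose of the handful of points near the origin.
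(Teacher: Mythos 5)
Your proposal is correct and follows essentially the same route as the paper: reduce to a lattice statement via the boxes $B(z)$, apply Proposition \ref{dist0} with a Borel--Cantelli union bound to get the a.s.\ comparison $d_1\lesssim \bar d + \log$ outside a logarithmic window, and then dispose of the finitely many exceptional points near the origin by the same swap-plus-bootstrap using $|y|\leq|x|+|x-y|$ and $\log(1+t)=o(t)$. The only (harmless) difference is that you enlarge the threshold to $M\log n$ with $aM>d+2$ rather than relying on $a$ being large, which if anything makes the summability step more robust for the given $a$.
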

\begin{proof}
From the definition \eqref{dbardef}, and the fact that
\[ K|z(x)-z(y)|-c_1(K)\leq |x-y|\leq K|z(x)-z(y)|+c_1(K)\,, \]
for some
constant $c_1(K)$, we see that it suffices to prove for $\bbZ^d$  an
estimate similar to \eqref{scaz1}: $\bbP$--a.s.\ there exists $\kappa
= \kappa(\xi)<\infty$ such that for all $z,z'\in\bbZ^d$:
\be\la{scaz} |z-z'|\leq \kappa\(1+\log(1+|z|)+ \bar d(z,z')\)\,.
\end{equation}
Combining the estimate of Proposition \ref{dist0} and the Borel
Cantelli argument shows that $\bbP$--a.s.\ there exists
$n_0=n_0(\xi)<\infty$ such that whenever $n\geq n_0$, $|z|\leq n$,
then \be\la{n0n} |z-z'|\leq \frac12\,\bar d(z,z')\,,\quad \text{for
all}\; |z-z'|\geq \log n\,.
\end{equation}
Let us verify that this implies \eqref{scaz}.
We may suppose first that $|z-z'|\geq \log (1+|z|)$.
If $|z|\geq n_0$ then we may take $n=\lceil |z|\rceil$ and
the claim follows from \eqref{n0n}.
Thus, assume that $|z|\leq n_0$.
Clearly, we may further assume that $|z'|>n_0$,
since otherwise $|z-z'|\leq 2n_0$.
Therefore, if $|z-z'|\geq\log (1+|z'|)$ the claim follows from \eqref{n0n} by
taking $n=\lceil |z'|\rceil$ and exchanging the roles of $z$ and $z'$.
In conclusion, the only case remaining is when
$|z|\leq n_0, |z'| > n_0$ and $|z-z'|< \log (1+|z'|)$.
\ignore{
But if $t_0$
is such that $\log(1+t)\leq t/2$ for all $t\geq t_0$, then we must
have either $|z-z'|\leq t_0$ (and in this case \eqref{scaz} is
fulfilled) or
\[
 |z'|\leq |z|+ |z-z'| \leq  |z|+\log (1+|z-z'|) \leq  |z|+ |z-z'|/2
 \leq n_0 + |z'|/2\,,
\]
which implies $|z'|\leq 2n_0$. Therefore,
$|z-z'|\leq |z|+|z'|\leq 3n_0
+ t_0$. This concludes the proof of \eqref{scaz}.
}
But if $t_0$
is such that $\log(1+t)\leq t/2$ for all $t\geq t_0$, then we must
have either $|z'|\leq t_0$ (and in this case $|z-z'|\leq n_0+t_0$) or
\[
 |z'|\leq |z|+ |z-z'| \leq  |z|+\log (1+|z'|) \leq  |z|+ |z'|/2\,,
\]
which implies $|z'|\leq 2n_0$. Therefore,
$|z-z'|\leq |z|+|z'|\leq 3n_0$. This concludes the proof of \eqref{scaz}.
\end{proof}

\subsection{Some uniform estimates}
The enlarged discrete holes allow us to obtain some useful estimates
that we collect here. It is first convenient to extend our notation.
Consider the map $\Psi (A)$ defined on finite subsets $A\subset
\bbZ^d$ as $\Psi (A)= \cup _{z \in A} B(z) $. Then, given a hole $C=
\Psi (C')$ ($C'\subset \bbZ^d$ being a discrete hole), its
enlargement $\wt C$ is defined as $\Psi ( \wt C') $. Given $x \not
\in \cC_\infty ^*$ we write now  $G(x)$ for the unique hole $C$
containing $x$. If $x \in \cC_\infty ^*$, we set $G(x)=\emptyset$.

Given a hole $C$ we call $[C]$ - the \emph{class} of $C$ - the union
of all holes $C'$ such that $\bar d(C,C')=0$, i.e.\ such that there
exists a chain of holes $C=C_0,\dots,C_m=C'$ such that $\wt C_i\cap
\wt C_{i+1}\neq\emptyset$. We stress that $[C]$ is not the family of
points $x \in \xi$ such that $ \bar d (x, C)=0$, in particular
$ [C]\cap \cC_\infty^*=\emptyset$.

Finally, we define
\[
\G_n=\sum_{i=1}^{n} \bbI_{\{X_i\notin
\cC_\infty\cup[G(X_{i-1})]\}}\,.
\]
$\G_n$ represents the number of jumps into a new class of holes up
to time $n$ (to be distinguished from the number of different
classes of holes visited up to time $n$). As an example, suppose
$X_0\in\cC^*_\infty, X_1\notin\cC^*_\infty$ and
$X_2\in\cC^*_\infty$. In this case, $\G_2 = \G_1 = 1$.

Let \be\la{tt11} T_1=\inf \{n\geq 1:\; X_n\in\cC^*_\infty\}\,,
\end{equation}
and call $\G=\G_{T_1}$ the number of jumps into new classes
before the return to $\cC_\infty^*$.
\begin{lemma}\la{lecN}
There exists $\d>0$ such that uniformly in $\xi\in\cN$, $x\in\xi$:
\be\la{phip} P_{x,\xi}(\G\geq k)\leq (1-\d)^k\,,\quad \;
k\in\bbN\,.
\end{equation}
\end{lemma}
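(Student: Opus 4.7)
The plan is to reduce the geometric tail bound \eqref{phip} to a one-step ratio estimate via the strong Markov property. Let $\tau_k$ denote the $k$-th time at which the indicator in the definition of $\G$ equals $1$, so that $\{\G\geq k\}=\{\tau_k\leq T_1\}$. Applying the strong Markov property at the stopping time $\tau_{k-1}$ (on the event $\{\tau_{k-1}\leq T_1\}$, restarting the walk from $X_{\tau_{k-1}}$) and iterating, one obtains
\[
P_{x,\xi}(\G\geq k) \leq \Bigl(\sup_{y\in\xi}P_{y,\xi}(\tau_1\leq T_1)\Bigr)^{\!k},
\]
so it suffices to establish $\sup_{y\in\xi} P_{y,\xi}(\tau_1\leq T_1)\leq 1-\d$ for some $\d>0$ uniform in $\xi\in\cN$.

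To that end, introduce the first decision time
\[
N:=\inf\bigl\{n\geq 1 : X_n\in\cC_\infty^\ast \text{ or } X_n\notin\cC_\infty\cup[G(X_{n-1})]\bigr\}.
\]
Since $\cC_\infty^\ast\subset\cC_\infty$, the two alternatives in the definition of $N$ are disjoint, and on $\{N<\infty\}$ one has $\{\tau_1\leq T_1\}=\{X_N\notin\cC_\infty\}$. Setting $p_A(z):=P_{z,\xi}(X_1\in\cC_\infty^\ast)$ and $p_B(z):=P_{z,\xi}(X_1\notin\cC_\infty\cup[G(z)])$, a decomposition over the value of $N$ combined with the Markov property yields
\[
P_{y,\xi}(X_N\in\cC_\infty^\ast) \geq \d \cdot P_{y,\xi}(N<\infty),\qquad \d:=\inf_{\xi,z}\frac{p_A(z)}{p_A(z)+p_B(z)}.
\]
Under the standing hypothesis that $\cC_\infty^\ast\neq\emptyset$ (so that $P_{y,\xi}(N<\infty)=1$ by irreducibility of the jump kernel), the lemma reduces to the following key one-step ratio estimate: there exists $\d>0$ such that for every $\xi\in\cN$ and $z\in\xi$,
\begin{equation*}
\sum_{v\in\xi\cap\cC_\infty^\ast} r(v-z) \geq \frac{\d}{1-\d}\sum_{v\in\xi\setminus(\cC_\infty\cup[G(z)])} r(v-z). \tag{$\star$}
\end{equation*}

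The proof of $(\star)$ is the crux and uses the separation inherent in the enlargement construction of Section \ref{enlarge}: if $C'\notin[G(z)]$ is a discrete hole, then $\wt C'\cap\wt C=\emptyset$ for every $C\in[G(z)]$, which forces $d_2(C',G'(z))>\diam_2 G'(z)+\diam_2 C'$. Consequently, for any $v$ lying in a box $z'\in C'$, the distance $|v-z|$ is at least of order $K\,\diam_2 C'$, and the rate $r(v-z)$ is suppressed by the factor $e^{-(K\diam_2 C')^\a}$. In the opposite direction, the external boundary of every discrete hole in $\bbZ^d\setminus C_\infty^\ast$ lies in $C_\infty^\ast$, because two adjacent discrete holes would have intersecting enlargements and therefore belong to the same class; since each white box contains at least one point of $\xi$, this yields, at distance $O(K\,\diam_2 C')$ from $v$, points of $\xi\cap\cC_\infty^\ast$ whose $r$-weight dominates, up to a uniform multiplicative factor, the contribution of $C'$.

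The obstacle to implementing this heuristic is the possible presence of overcrowded boxes inside far holes, where the multiplicity $n_{z'}$ can be arbitrarily large. To handle this one uses the pointwise estimate $n_{z'}\leq\exp(|z'-z(y)|_\infty^{\a/2})$ from the proof of Lemma \ref{leT} for boxes $z'\notin\cG$, while overcrowded boxes $z'\in\cG$ are automatically absorbed into the enlarged class $[G(z)]$ by the construction of $\cG$ in Section \ref{ambiente}. Summing the resulting bound on $n_{z'}\,e^{-(KL)^\a}$ over distance scales $L=|z'-z(y)|_\infty$ then produces a convergent series dominated by the $\cC_\infty^\ast$-contribution at the smallest relevant scale, yielding $(\star)$ with a universal $\d>0$.
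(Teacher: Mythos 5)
Your reduction is sound and matches the paper's: the strong Markov property at the successive exit times from classes of holes reduces \eqref{phip} to a uniform one--step ratio estimate, which is exactly the paper's \eqref{topre}--\eqref{topre1}, and your $(\star)$ is the right target (the case $z\in\cC^*_\infty$ being trivial since then $w(z)\leq T$ while some good point lies within distance $O(K)$). The gap is in your proof of $(\star)$. You compare the contribution of a far hole $C'$ with good points on the boundary of $C'$, ``at distance $O(K\diam_2 C')$ from $v$''. This comparison does not close. A near--side boundary box of $C'$ is at distance roughly $d_2(z,C')$ from $z$, while the boxes $z'\in C'$ are at distance between $d_2(z,C')$ and $2d_2(z,C')$; the gain in the exponent is therefore only of order $K^\a\bigl[|z-z'|^\a-d_2(z,C')^\a\bigr]$, which for a box at the near edge of $C'$ is $O(d_2(z,C')^{\a-1})$ and cannot absorb either the multiplicity $n_{z'}$ (which can be as large as $e^{|z-z'|^{\a/2}}$) or the polynomial number of boxes and of holes at comparable distance. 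The paper's argument is structured differently and this is essential: it compares \emph{everything} to the single nearest good point $y$ to $x$ (with $x\in B(z)$, $y\in B(\zeta)$), and the enlargement is engineered precisely so that every box $z'$ in a different class satisfies $|z-z'|\geq 2|z-\zeta|-2$ (Eq.\ \eqref{santiago}). The resulting factor $2^\a>1$ in the exponent, i.e.\ $e^{-(1-\e)K^\a(2|z-\zeta|-2)^\a}$ versus $e^{-K^\a|z-\zeta|^\a}$ with $(1-\e)2^\a>1$, is what absorbs the subexponential multiplicity and the entropy of the sum over $z'$.

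The second concrete error is your treatment of overcrowding: you assert that overcrowded boxes $z'\in\cG$ in far holes ``are automatically absorbed into the enlarged class $[G(z)]$''. They are not --- a heavily overcrowded box far from $z$ simply generates its own (large) hole and class. What is true, and what the paper uses as \eqref{bnz}, is the contrapositive: \emph{because} $z'$ lies in a class distinct from that of $z$, one must have $z\notin Q(z',R_{z'})$, hence $n_{z'}\leq e^{|z-z'|^{\a/2}}$. This bound holds for every box in a far hole, overcrowded or not, and it is exactly the estimate that the factor--$2^\a$ separation is designed to beat. To repair your argument you should replace the hole--by--hole comparison with the global comparison against the nearest good point and insert the separation inequality $|z-z'|\geq 2|z-\zeta|-2$; as written, $(\star)$ is not established.
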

\proof
Define the times at which the walk jumps out of a class of
holes: \be\la{tauii} \t_i=\inf\{n>\t_{i-1}: \; X_n\notin
[G(X_{\t_{i-1}})]\}\,,\quad \; \t_0=0\,.
\end{equation}
Note that jumps within the cluster are included. From the strong
Markov property applied to the stopping times $\t_i$, the claim
\eqref{phip} follows from the estimate: for some $\d>0$, for all
$i$, uniformly in $\xi$ it holds $
P_{X_{\t_i},\xi}\left(X_{\t_{i+1}}\in\cC^*_\infty \right)\geq \d$.
Thus, it suffices to show that for some $\d>0$, uniformly in $\xi$
and $x\in\xi$: \be\la{topre} P_{x,\xi}\left(X_1\in\cC^*_\infty\tc
X_1\notin [G(X_0)]\right)\geq \d\,.
\end{equation}
If $x=X_0\in\cC^*_\infty$ the bound \eqref{topre} is easy: here
$[G(X_0)]=\emptyset$ and all we have to show is that
$P_{x,\xi}\left(X_1\in\cC^*_\infty\right)\geq \d$; this follows
from the fact that $w(x)\leq T(K,T_0)$ (see Lemma \ref{leT}) and
$r(x,y)\geq \d_1=\d_1(K)$ for some $y\in\cC^*_\infty$, so that, with
e.g.\ $\d=\d_1/T$.
\[
P_{x,\xi}\left(X_1\in\cC^*_\infty\right)\geq
p(x,y)=\frac{r(x,y)}{w(x)}\geq \d\,.
\]

If $x=X_0\notin\cC^*_\infty$, then $x\in C$ for some hole
$C=G(X_0)$. Then
\[
P_{x,\xi} \left(X_1\in\cC^*_\infty\tc X_1\notin [G(X_0)]\right)
=  \frac{P_{x,\xi} ( X_1 \in \cC_\infty^* )}{P_{x,\xi}
(X_1\in\cC^*_\infty)+P_{x,\xi} (X_1\notin\cC^*_\infty \cup [C])}
\]
It is sufficient to prove that uniformly, for some $\d>0$:
\be\la{topre1} P_{x,\xi} (X_1\notin\cC^*_\infty \cup [C]) \leq
\d^{-1}\,P_{x,\xi} (X_1\in\cC^*_\infty)\,.
\end{equation}
To prove \eqref{topre1} we write \be\la{ppto} P_{x,\xi}
(X_1\notin\cC^*_\infty \cup [C])= \sum_{C':\;C'\notin
[C]}\sum_{z'\in\bbZ^d:B(z')\subset C'} P_{x,\xi}(X_1\in B(z'))\,,
\end{equation}
where the sum is over holes $C'$ in a different class than $C$. Let
$y$ denote the closest point $y\in\xi\cap\cC^*_\infty$ to $x$. Note
that $P_{x,\xi} (X_1\in\cC^*_\infty) \geq p(x,y) = r(y-x)/w(x)$.

If $z,\zeta\in\bbZ^d$ denote the vertices such that $x\in B(z)$ and
$y\in B(\zeta)$, then $|x-y|\geq K|z-\zeta| - c_1(K)$.
By construction, if $x'\in B(z')$ with $z'\in C'$ for some $C'\notin [C]$,
then $|x-x'|\geq K|z-z'| - c_1(K)$ and  $|z-z'|\geq 2|z-\zeta|-2$.
To justify the last inequality, let $C= \Psi ( \underline{C})$,
$\underline{C}$ being  a discrete hole in $\bbZ^d$. Since $C'\notin
[C]$ and  therefore $ z' \not \in \widetilde{\underline{C}}$, by
\eqref{enlar} we conclude that
\begin{equation}\label{santiago}
|z-z'|\geq |z-\zeta|+d_2 (z',\underline{C})>
2\text{diam}_2(\underline{C})\geq
2|z-\z|-1 \,,
\end{equation}
 thus concluding the proof of our claim.
 In addition, we observe
that
 \be\la{bnz} n_{z'}\leq \nep{|z-z'|^{\a/2}}\,,
\end{equation}
since otherwise
$z\in Q(z',R_{z'})$, cf.\ \eqref{rzs}, and $z,z'$ would
not belong to distinct classes of holes.

Define the function $\varphi:(0,\infty)\times\bbN \to(0,\infty)$ as
$ \varphi(a,m)=\sum_{v\in\bbZ^d:\;|v|\geq m} \nep{-a\,|v|^\a} $. It
is not hard to check that, for all fixed $a>0,\a>0$, as
$m\to\infty$:
\begin{equation}\la{boub}
\varphi(a,m) = O(\nep{-a\,m^\a}m^{d-\a})\,.
\end{equation}

To bound \eqref{ppto} we write, for all $\e>0$:
\begin{multline}
w (x)\,P_{x,\xi} (X_1\notin\cC^* _\infty \cup [C]) \leq
c(K)\sum_{z'\in\bbZ^d:\;|z-z'|\geq  2|z-\zeta|-2}
n_{z'}\nep{-K^\a|z-z'|^\a}  \\
\leq c'(\e,K) \sum_{v\in\bbZ^d:\;|v|\geq  2|z-\zeta|-2}
\nep{-(1-\e)\,K^\a\,|v|^\a}  = c'(\e,K)\varphi((1-\e)K^\a,
2|z-\zeta|-2)\,, \la{ppto1}
\end{multline}
where we have used \eqref{bnz}. On the other hand
\begin{equation}\la{ppto2}
w(x)\,P_{x,\xi} (X_1\in\cC^* _\infty)\geq r (y-x)\geq
c''(K)\,\nep{-K^\a\,|z-\zeta|^\a}\,.
\end{equation}
From \eqref{boub}, for all $\a,K>0$, taking e.g.\ $\e>0$ such that
$(1-\e)2^\a>1$, the ratio
\[
\frac{\varphi((1-\e)K^\a,2|z-\z|-2)}{\nep{-K^\a\,|z-\zeta|^\a}}
\]
is uniformly bounded in $z,\zeta\in\bbZ^d$. Using \eqref{ppto1} and
\eqref{ppto2}, this proves \eqref{topre}. \qed

\begin{lemma}\la{lecda}
For $c,\g\geq 1$, set $u_{\g,c}(t)=t^\g\exp[c(\log (t+1))^\a]$.
Uniformly in $\xi$ and $x\in\xi$:
\be\la{phipo} E_{x,\xi} \Big[\sup_{1\leq j\leq T_1} u_{\g,c}(\bar
d(x,X_j))\Big] <\infty\,.
\end{equation}
\end{lemma}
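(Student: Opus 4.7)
The plan exploits the fact that $\bar d(x,\cdot)$ is constant on each $\sim$-equivalence class (two points at $\bar d$-distance $0$ give identical $\bar d$ to any third point), so the supremum $V\coloneq\sup_{1\leq j\leq T_1}\bar d(x,X_j)$ is attained at class-transition times. Denote by $\tau_1<\tau_2<\cdots<\tau_N\leq T_1$ the successive class-transition times (as in \eqref{tauii}), and set $J_i\coloneq\bar d(X_{\tau_i-1},X_{\tau_i})$, so that by the triangle inequality $V\leq \sum_{i=1}^N J_i$. A positive-$\bar d$ transition is either an entry into a new hole class (counted by $\Gamma$) or an exit from a hole class into $\cC_\infty$; a combinatorial inspection combined with Lemma~\ref{lecN} yields $N\leq C(\Gamma+1)$, and hence a uniform exponential tail on $N$.

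The central technical ingredient is a uniform unconditional single-jump tail: for all $\ell\geq 1$, uniformly in $\xi$ and $x\in\xi$,
\begin{equation*}
 P_{x,\xi}\bigl(\bar d(x,X_1)\geq \ell\bigr)\leq C\,e^{-c\ell^\alpha}.
\end{equation*}
To see this, observe that $r(0)=1$ implies $w(x)\geq 1$, so $P_{x,\xi}(X_1=y)\leq r(y-x)$ for every $y\in\xi$. The condition $\bar d(x,y)\geq\ell$ forces $z(y)\not\sim z(x)$ and, since $\bar d\leq d_1$, also $|z(x)-z(y)|_\infty\geq\ell/d$, so $|x-y|\geq K\ell/d-K$. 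The same mechanism as in the proof of Lemma~\ref{lecN} then yields $n_{z(y)}\leq \exp\bigl(|z(x)-z(y)|_\infty^{\alpha/2}\bigr)$, and grouping target points by their box $v$:
\begin{equation*}
 P_{x,\xi}(\bar d(x,X_1)\geq\ell)\leq \sum_{v:\,|v-z(x)|_\infty\geq\ell/d}n_v\,e^{-c_0|z(x)-v|_\infty^\alpha}\leq C\,e^{-c\ell^\alpha}.
\end{equation*}
By the strong Markov property, the same uniform bound applies to $P(\bar d(X_{n-1},X_n)\geq\ell\mid \cF_{n-1})$ for every $n$.

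Combining via $V\leq N\cdot\max_i J_i$, together with the exponential tail on $N$ and a union bound based on the conditional single-jump tail (the normalization by the class-exit probability being controlled along the lines underlying \eqref{topre1}), optimization at the threshold $N\sim\ell^{1/2}$ yields
\begin{equation*}
 P_{x,\xi}(V\geq\ell)\leq C\exp\bigl(-c\min(\ell^{1/2},\ell^{\alpha/2})\bigr)
\end{equation*}
uniformly in $\xi$ and $x$. Since $u_{\gamma,c}(t)=t^\gamma\exp(c(\log(t+1))^\alpha)$ grows slower than $\exp(t^\beta)$ for every $\beta>0$, integration by parts yields the desired uniform bound on $E_{x,\xi}[u_{\gamma,c}(V)]$. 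The main obstacle is controlling the conditional tail of $J_i$ given that the $i$-th class transition occurs---that is, verifying that normalizing by the (possibly small) class-exit probability does not spoil the unconditional exponential decay---which is precisely the mechanism underlying \eqref{topre1}.
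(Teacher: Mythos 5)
Your proposal follows the same skeleton as the paper's proof: decompose the supremum over the class--transition times $\t_i$ of \eqref{tauii} (using, as you note, that $\bar d$ is constant on equivalence classes), control the number of class exits by the uniform geometric tail of Lemma \ref{lecN}, and control the single jump made at each exit. The difference is only in the bookkeeping: the paper estimates $E_{x,\xi}\bigl[u_{\g,c}\bigl(\sum_i J_i\bigr)\bigr]$ directly via convexity of $u_{\g,c}$ (with a case split $\a>1$ versus $\a\leq1$) and Cauchy--Schwarz against $P_{x,\xi}(\G=N)^{1/2}$, whereas you derive a uniform stretched--exponential tail on $V=\sup_{j\leq T_1}\bar d(x,X_j)$ and integrate; your packaging is arguably cleaner and avoids the case split. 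The caveat is that the estimate you prove in detail --- the \emph{unconditional} tail $P_{x,\xi}(\bar d(x,X_1)\geq\ell)\leq Ce^{-c\ell^\a}$ --- is not the estimate that carries the proof: what is needed is the tail of the exit jump \emph{conditioned on exiting the current class}, and the exit probability can be as small as order $e^{-K^\a|z-\zeta|^\a}/w(x)$, with $\zeta$ the box of the nearest good point. You correctly flag this as the main obstacle and name the right mechanism, but you do not execute it; for completeness one must run the computation of \eqref{asinp} together with \eqref{ppto15}: by \eqref{santiago} any jump with $\bar d>0$ lands in a box at distance at least $2|z-\zeta|-2$, so the total weight of such jumps is $O\bigl(e^{-(1-\e)K^\a(2|z-\zeta|-2)^\a}\bigr)$, and choosing $\e$ with $(1-\e)2^\a>1$ makes the ratio to the exit probability uniformly bounded and yields the conditional tail $Ce^{-c\ell^\a}$ for $\ell$ large compared to $|z-\zeta|$. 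With that step supplied, your argument closes.
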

\proof
With the definition \eqref{tauii}
we have that $\G = \max\{n\geq 0:\; \t_n < T_1\}$ and
\be\la{lecda1} \sup_{1\leq j\leq T_1} \bar d(x,X_j)\leq
\sum_{i=0}^\G \bar d(X_{\t_i},X_{\t_{i+1}})\,,
\end{equation}
where $\G$ is defined as in Lemma \ref{lecN}.

Let us first suppose that $\a>1$. In this case it is sufficient to
show the claim with $u_{\g,c}(t)$ replaced by
$\exp[c(\log(t+1))^\a]$, which is a convex function. Since, for
every $N\in\bbN$:
\[
\log\Bigl [1+\sum_{i=0}^N \bar d(X_{\t_i},X_{\t_{i+1}})\Bigr] \leq
\log (N +1)+ \log\Big[\frac1{N+1}\sum_{i=0}^N (1+ \bar
d(X_{\t_i},X_{\t_{i+1}}))\Big]\,,
\]
simple estimates yield
\begin{multline}
\exp\Big\{2c\Big(\log \Big[1+\sum_{i=0}^N \bar
d(X_{\t_i},X_{\t_{i+1}})\Big]\Big)^\a \Big\}
\\
\leq \exp\Big\{ c_1(\log (N+1))^\a
\Big\}\,\frac1{N+1}\sum_{i=0}^N\exp\Big\{ c_1(\log[1+ \bar
d(X_{\t_i},X_{\t_{i+1}})] )^\a \Big\}\,, \la{lecda2}
\end{multline}
for some constant $c_1=c_1(\a,c)$. Suppose that, for some constant
$c_2$, uniformly in $\xi$ and $x\in\xi$ and $i\in\bbN$
\be\la{suplec} E_{x,\xi} \exp\left\{ c_1(\log[1+ \bar
d(X_{\t_i},X_{\t_{i+1}})] )^\a \right\}\leq c_2\,.
\end{equation}
Then, taking expectation in \eqref{lecda2}, using Schwarz'
inequality:
\begin{align*}
E_{x,\xi} & \Big[\exp\Big\{c\Big(\log \Big[1+\sum_{i=0}^\G \bar
d(X_{\t_i},X_{\t_{i+1}})\Big]\Big)^\a \Big\}\Big]
\\
&\leq \sum_{N=0}^\infty P_{x,\xi}(\G=N)^\frac12 \,E_{x,\xi}\Big[
\exp\Big\{2c\Big(\log \Big[1+\sum_{i=0}^N \bar
d(X_{\t_i},X_{\t_{i+1}})\Big]\Big)^\a \Big\} \Big]^\frac12
\\
& \leq \sqrt{c_2} \sum_{N=0}^\infty P_{x,\xi}(\G=N)^\frac12
\exp\Big\{ c_1(\log (N+1))^\a \Big\}\,.
\end{align*}
The last sum  above is uniformly finite by Lemma \ref{lecN}. It
remains to show the validity of \eqref{suplec}. To this end, it
suffices to show  that, for some constant $c_2$, uniformly in $\xi$
and $x\in\xi$: \be\la{suplec2} E_{x,\xi} \exp\left\{ c_1(\log[1+
\bar d(x,X_{\t_1})] )^\a \right\}\leq c_2\,.
\end{equation}
By summing over all possible ways of jumping out of the starting
class of holes $[G(x)]$ one obtains that \eqref{suplec2} follows
from \be\la{suplec3} E_{x,\xi} \left[\exp\left\{ c_1(\log[1+ \bar
d(x,X_1)] )^\a \right\}\,\tc\,X_1\notin [G(x)]\right] \leq c_2\,.
\end{equation}
Let $x,y$ and $z,\zeta$ be as  in the proof of Lemma \ref{lecN},
i.e.\ $x \in B(z)$, $y \in B(\z)$ and $y$ is the closest point $y \in
\xi \cap \cC_\infty^*$ to $x$. As in \eqref{ppto2} we have
\be\la{asinp}P_{x,\xi} (X_1\notin [G(x)])\geq P_{x,\xi}
(X_1\in\cC_\infty) \geq c_3\,\frac{\nep{-K^\a\,|z-\zeta|^\a}}{w
(x)}\,,
\end{equation}
for some constant $c_3$. Let $z'\in\bbZ^d$ be such that $X_1$ is in
the $K$-box $B(z')$. Note that $\bar d(x,X_1)=0$ for all $z'$ such
that $|z-z'|\leq 2|z-\zeta|-2$. For other values of $X_1$ we simply
bound $\bar d(x,X_1)\leq c_4|z-z'|$, for some $c_4=c_4(d,K,\a)$.
Reasoning as in \eqref{ppto1}, to bound \eqref{suplec3} we write,
for all $\e>0$:
\begin{align}
&w_\a(x)\,E_{x,\xi}\left[ \exp\left\{ c_1(\log[1+ \bar d(x,X_1)]
)^\a \right\}\,;\,X_1\notin [G(x)]
\right] \nonumber\\
&\qquad\leq 1+ c(K) \sumtwo{z'\in\bbZ^d:}{|z-z'|\geq 2|z-\zeta| -2}
n_{z'}\nep{-K^\a|z-z'|^\a} \,\nep{c_5(\log [1+|z-z'|])^\a} \nonumber
\\ &\qquad
\leq 1+c'(\e,K) \sumtwo{v\in\bbZ^d:}{|v|\geq 2|z-\zeta|-2}
\nep{-(1-\e)\,K^\a\,|v|^\a} \nonumber
\\
 &\qquad= 1+c'(\e,K)\varphi((1-\e)K^\a,2|z-\zeta|-2)\,,
\la{ppto15}
\end{align}
where we have used \eqref{bnz}. From \eqref{ppto15} and
\eqref{asinp} we can conclude, as in the proof of Lemma \ref{lecN},
that the left hand side of \eqref{suplec3} is uniformly bounded.
This ends the proof of the case $\a>1$.

To prove the claim for $\a\leq 1$, observe that it is sufficient to
prove the estimate \eqref{phipo} with $u_{\g,c}(t)$ replaced by
$t^\g$. Here $\g\geq 1$ and $t^\g$ is convex. Thus,
$$
E_{x,\xi}\Big[\Big(\sum_{i=0}^\G\bar
d(X_{\t_i},X_{\t_{i+1}})\Big)^\g\Big] \leq \sum_{N=0}^\infty
P_{x,\xi}(\G=N)^\frac12 \, N^\g \Big[\frac1N \sum_{i=0}^N
E_{x,\xi}\Big( \bar d(X_{\t_i},X_{\t_{i+1}})^{2\g}\Big)
\Big]^\frac12\,.
$$
A uniform estimate of the expectation in the right hand side above
can be obtained exactly as in the proof of \eqref{suplec}. This
ends the proof.
\qed

\medskip

We turn 
to a simple corollary of our previous
results.
\begin{lemma}\la{lecda20}
For every $p\geq 1$, there exists $c>0$
such that uniformly in $\xi$ and $x\in\xi$, and for
all $n$:
\be\la{phiposa} E_{x,\xi} \Big[\sup_{1\leq j\leq T_n} \bar
d(x,X_j)^p\Big] \leq c\, n\,.
\end{equation}
\end{lemma}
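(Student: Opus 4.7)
The plan is to reduce the bound on $\sup_{j\leq T_n}$ to $n$ uniform applications of Lemma \ref{lecda}, one per excursion of the walk between successive visits to the good cluster $\xi\cap\cC^*_\infty$, using the strong Markov property at the hitting times $T_1,\dots,T_n$. Set $T_0:=0$ and $Y_k:=X_{T_k}$ for $k\geq 0$, and assume first that $x\in\cC^*_\infty$ so that $Y_0=x$; the general case is handled by treating the initial segment $[1,T_1]$ separately through Lemma \ref{lecda}. For each $k\geq 1$, introduce the per-excursion functional
\[
A_k\,:=\,\sup_{T_{k-1}\leq l\leq T_k}\bar d(Y_{k-1},X_l)\,.
\]
The triangle inequality for $\bar d$ and the telescoping bound $\bar d(x,Y_{k-1})\leq \sum_{i=1}^{k-1}\bar d(Y_{i-1},Y_i)\leq \sum_{i=1}^{k-1}A_i$ then yield the pathwise estimate
\[
\sup_{1\leq j\leq T_n}\bar d(x,X_j)\,\leq\,\sum_{k=1}^n A_k\,.
\]

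Next, by the strong Markov property at the stopping time $T_{k-1}$ together with Lemma \ref{lecda} applied with $u_{q,0}(t)=t^q$, one obtains, for every $q\geq 1$, the uniform conditional moment bound
\[
E_{x,\xi}\bigl[A_k^q\,\big|\,\cF_{T_{k-1}}\bigr]\,\leq\, M_q\,,\qquad k=1,\dots,n\,,
\]
with $M_q<\infty$ independent of $\xi$, $x$, and $k$. These per-excursion bounds will then be combined with the pathwise estimate above to control $E_{x,\xi}[\sup_{j\leq T_n}\bar d(x,X_j)^p]$.

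The main obstacle is to sharpen the straightforward Jensen-based estimate
\[
E_{x,\xi}\Bigl[\Bigl(\sum_{k=1}^n A_k\Bigr)^p\Bigr]\,\leq\, n^{p-1}\sum_{k=1}^n E_{x,\xi}[A_k^p]\,\leq\, n^p\,M_p
\]
from the natural $n^p$ scaling down to the linear $n$ scaling claimed in the lemma, which is the correct behaviour only for $p=1$. To obtain the stated bound for general $p\geq 1$, one has to exploit the uniformity of $M_q$ across all orders $q$ together with the reversibility and diffusive character of the chain $(Y_k)$ on the good cluster $\xi\cap\cC^*_\infty$, so that the sum of the nonnegative functionals $A_k$ concentrates strongly enough to deliver a linear-in-$n$ bound rather than the crude $n^p$ estimate. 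I expect this concentration step to be the most delicate part of the argument, while the excursion decomposition and the per-excursion moment control via Lemma \ref{lecda} are quite direct.
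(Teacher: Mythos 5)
Your excursion decomposition is exactly the paper's: your $A_k$ is the paper's $\Delta_i:=\sup_{1\leq j\leq T_i-T_{i-1}}\bar d(X_{T_{i-1}},X_{T_{i-1}+j})$, and the uniform per-excursion moment bounds via the strong Markov property and Lemma \ref{lecda} are the same second half of the paper's argument. But your write-up stops precisely where the content of the lemma lies: you note that the reduction $\sup_{1\leq j\leq T_n}\bar d(x,X_j)\leq\sum_{k=1}^n A_k$ only yields $n^p$, and you defer the improvement to $c\,n$ to an unspecified ``concentration'' argument. That step is never carried out, so the lemma is not proved; and, more importantly, the route you sketch cannot close the gap. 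Once the supremum is bounded by $\sum_{k\leq n}A_k$, concentration is of no help: if $\sum_k A_k$ concentrates around its mean $m_n=E_{x,\xi}\bigl[\sum_{k\leq n}A_k\bigr]$, then $E_{x,\xi}\bigl[(\sum_k A_k)^p\bigr]\approx m_n^p$, so a bound of order $n$ would require $m_n=O(n^{1/p})$. Since $\sum_{k\leq n}A_k\geq\bar d(x,Y_n)$, this is a much stronger statement than anything available at this stage, and for $p>2$ it is actually incompatible with the diffusive behaviour of $Y$ that you want to invoke (there $\bar d(x,Y_n)$ is of order $\sqrt n\gg n^{1/p}$). Invoking reversibility and diffusivity of $(Y_k)$ here is in any case circular: the diffusive estimates of Proposition \ref{mirta} are proved later and rely on Lemma \ref{lecda20} itself (finiteness of the constant $D$ in Section \ref{distante}, and Lemma \ref{moltobuono} in the heat-kernel comparison), so they cannot be used as input.

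For comparison, the paper's proof does not pass through $\sum_k A_k$ at all: it asserts the pointwise domination $\sup_{1\leq j\leq T_n}\bar d(x,X_j)^p\leq c(p)\sum_{i=1}^n\Delta_i^p$ and then applies the strong Markov property and Lemma \ref{lecda} to each term, which gives \eqref{phiposa} at once. Note that this display is strictly stronger than the triangle inequality you use (it interchanges the $p$-th power and the sum, trading the $\ell^1$ norm of the excursion sups for their $\ell^p$ norm), so it cannot be recovered from your reduction; your ``main obstacle'' is thus pointing at the one genuinely nontrivial step of the proof, and a complete argument must either justify that pointwise bound or replace it by a different estimate (for instance, proving the statement only with $n\vee n^{p/2}$ on the right-hand side, or only for the cases $p=1$ and $n=1$, and checking that this suffices for the downstream uses), rather than appeal to a concentration argument that, as explained above, cannot work.
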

\begin{proof}
Setting $\D_i:=\sup_{1\leq j\leq T_{i}-T_{i-1}}
\bar d(X_{T_{i-1}},X_{T_{i-1}+j})$, we have
$$
\sup_{1\leq j\leq T_n}
\bar d(x,X_j)^p\leq c\sum_{i=1}^n \D_i^p\,,
$$
for some constant $c=c(p)$.
The strong Markov property at time $T_{i-1}$ together with the
uniform estimate of Lemma \ref{lecda} imply that for some $c'$,
for all $x$ and $i$ one has $E_{x,\xi} \left[\D_i^p\right]\leq
c'$.
\end{proof}

\subsection{Some almost sure estimates}
We describe some more consequences of the estimates derived above.
\begin{lemma}\label{moltobuono}
$\bbP$--a.s.\,,  for every $p\geq 1$, there exists $\k=\k(p,\xi)<\infty$
such that for all $x\in\xi$, and for
all $n\in\bbN$:
\begin{equation}\label{fifona}
 E_{x,\xi} \Big[ \sup_{1\leq j \leq T_n} |X_j-x  |^ p \Big]\leq \k \bigl[1+\log (1+|x|)\bigr]^p
 n\,.
\end{equation}
\end{lemma}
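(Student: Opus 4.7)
The lemma will follow from a direct combination of Corollary \ref{coroscaz} (which controls Euclidean distance by $\bar d$ plus a logarithmic correction in $|x|$) and Lemma \ref{lecda20} (which controls $\bar d$-excursions up to time $T_n$). The role of Corollary \ref{coroscaz} is precisely to produce the $\log(1+|x|)$ factor appearing in the statement, and Lemma \ref{lecda20} is precisely responsible for the linear dependence on $n$.

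Fix $p\geq 1$. By Corollary \ref{coroscaz}, there is a $\bbP$-a.s.\ finite constant $\kappa_1=\kappa_1(\xi,K)$ such that for every pair $x,y\in\xi$,
\[
|x-y|\leq \kappa_1\bigl(1+\log(1+|x|)+\bar d(x,y)\bigr).
\]
Applying this with $y=X_j$ and using the elementary inequality $(a+b+c)^p\leq 3^{p-1}(a^p+b^p+c^p)$, I obtain pointwise, for every $j\geq 1$,
\[
|X_j-x|^p\leq 3^{p-1}\kappa_1^p\bigl(1+[\log(1+|x|)]^p+\bar d(x,X_j)^p\bigr).
\]

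Now take the supremum over $1\leq j\leq T_n$ and the expectation under $P_{x,\xi}$. The first two terms inside the brackets are deterministic, and Lemma \ref{lecda20} bounds the expectation of $\sup_{1\leq j\leq T_n}\bar d(x,X_j)^p$ by $c\,n$ with $c=c(p)$ independent of $x$ and $\xi$. Combining,
\[
E_{x,\xi}\Bigl[\sup_{1\leq j\leq T_n}|X_j-x|^p\Bigr]\leq 3^{p-1}\kappa_1^p\bigl(1+[\log(1+|x|)]^p+c\,n\bigr).
\]
Since $n\geq 1$ and $1+[\log(1+|x|)]^p\leq 2[1+\log(1+|x|)]^p\leq 2[1+\log(1+|x|)]^p\,n$, all three terms on the right are dominated by a common multiple of $[1+\log(1+|x|)]^p\,n$. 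Absorbing constants into a single $\k=\k(p,\xi)$ yields \eqref{fifona}.

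There is essentially no obstacle: the only point to be careful about is that both inputs are sufficiently uniform. The constant $\kappa_1$ from Corollary \ref{coroscaz} depends on $\xi$ but not on the particular pair of points, and Lemma \ref{lecda20}'s bound is uniform in both $x\in\xi$ and $n$. Hence both error sources can be collapsed into a single almost sure constant $\k(p,\xi)$ without introducing any further dependence on $x$ or $n$.
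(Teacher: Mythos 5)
Your proof is correct and follows exactly the paper's own argument: apply Corollary \ref{coroscaz} to replace $|X_j-x|$ by $\kappa\bigl(1+\log(1+|x|)+\bar d(x,X_j)\bigr)$, raise to the power $p$, and invoke the uniform bound of Lemma \ref{lecda20} on $E_{x,\xi}\bigl[\sup_{1\leq j\leq T_n}\bar d(x,X_j)^p\bigr]$. The only difference is that you spell out the absorption of constants, which the paper leaves implicit.
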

\begin{proof}
Using Corollary \ref{coroscaz}, we can write
\[
 |X_j-x|^p
 \leq  \k\,\bigl[ 1 + \log ( 1 + |x|) \bigr]^p +\k\, \bar d (X_j , x)^p\,.
\]
The conclusion then  follows from Lemma \ref{lecda20}.
\end{proof}

\begin{lemma}\la{t1lemma}  Take $K, T_0$ as in Proposition
  \ref{dist0}.
Then,
$\bbP$-almost surely, for all $x\in\xi$ 
$P_{x,\xi}(T_1<\infty)=1$.
\end{lemma}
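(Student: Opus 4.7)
The plan is to combine Proposition \ref{dist0} with Lemma \ref{lecN}. The key intermediate fact is that, under a suitable choice of $K,T_0$, $\bbP$-almost surely every $\bar d$--equivalence class in $\bbZ^d$ is finite, so that the walk can only spend $P_{x,\xi}$-a.s.\ finite time in any single class and, by Lemma \ref{lecN}, visits only finitely many classes before entering $\xi\cap\cC^*_\infty$.

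For the a.s.\ finiteness of classes, I choose $K,T_0$ so that Proposition \ref{dist0} holds with some $a>0$. Since $\sum_{z\in\bbZ^d}\nep{-a|z|_1}<\infty$, the first Borel--Cantelli lemma applied to \eqref{probdbar} yields, $\bbP$--almost surely, an $n_0(\xi)<\infty$ such that $\bar d(0,z)>\tfrac12 d_1(0,z)\geq 1$ for every $z$ with $|z|_1\geq n_0(\xi)$; in particular the class of the origin is finite. Stationarity of $\bbP$ together with a countable union bound over $z_0\in\bbZ^d$ then shows that $\bbP$--almost surely every $\bar d$--equivalence class is finite. By the definition \eqref{dbardef} this implies that $\xi\cap[G(x)]$ is finite for every $x\in\xi$.

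Now fix such a typical $\xi$ and $x\in\xi$, and set $\sigma=\inf\{n\geq 1:X_n\notin[G(x)]\}$. If $x\in\xi\cap\cC^*_\infty$ we have $[G(x)]=\emptyset$ and $\sigma=1$. Otherwise, the set $F:=\xi\cap[G(x)]$ is finite and nonempty; since $[G(x)]$ is bounded and $\xi$ is infinite we may fix some $y^*\in\xi\setminus[G(x)]$. By \eqref{proba1}--\eqref{paletti2} we have $p(y,y^*)>0$ for each $y\in F$, so that minimising over the finite set $F$ produces a constant $p_0=p_0(\xi,x)>0$ with
\[
P_{y,\xi}(X_1\notin[G(x)])\;\geq\; p_0,\qquad \forall\,y\in F.
\]
Hence $P_{x,\xi}(\sigma>k)\leq(1-p_0)^k$ and $\sigma<\infty$ $P_{x,\xi}$--a.s. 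Iterating this estimate at the stopping times $\t_i$ defined in \eqref{tauii} via the strong Markov property shows by induction that every $\t_i$ is $P_{x,\xi}$--a.s.\ finite. Lemma \ref{lecN} then applies and gives $\Gamma<\infty$ $P_{x,\xi}$--a.s., and by the construction of the $\t_i$ there must exist some $i\leq\Gamma+1$ with $X_{\t_i}\in\xi\cap\cC^*_\infty$, which yields $T_1\leq\t_i<\infty$ $P_{x,\xi}$--a.s.

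The only genuinely delicate point is the Borel--Cantelli step establishing a.s.\ finiteness of classes; the remainder of the argument is a routine strong Markov iteration combined with Lemma \ref{lecN}.
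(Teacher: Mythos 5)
Your proof is correct, but it takes a genuinely different route from the paper's. The paper argues quantitatively inside a box: it splits $P_{x,\xi}(T_1>m)$ according to whether the walk exits $[-n,n]^d$ before time $m$, bounds $w(v)\leq c'\nep{c(\log n)^{\a/2}}$ for points in the box using the $O(\log n)$ bound on hole diameters, deduces a one-step probability at least $\nep{-c(\log n)^\a}$ of landing in $\cC^*_\infty$, and controls the exit event via Markov's inequality and Lemma \ref{moltobuono}; then it sends $m\to\infty$ with $n=n(m)$ chosen suitably. You instead give a softer, structural argument: Proposition \ref{dist0} plus Borel--Cantelli and stationarity yield that a.s.\ every $\bar d$--equivalence class (hence every class of holes $[C]$) is finite, so the exit time from any single class is a.s.\ finite by a crude uniform-over-a-finite-set bound, and Lemma \ref{lecN} caps the number of class-to-class transitions before hitting $\cC^*_\infty$. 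Your route bypasses Lemma \ref{moltobuono} and the weight estimates entirely, at the cost of yielding no quantitative tail bound on $T_1$ (the paper's proof gives one, which is harmless here but reflects the spirit of the surrounding estimates). All the individual steps check out: the class of $[G(y)]$ is indeed contained in the boxes of a single $\sim$--class, $w(y)<\infty$ a.s.\ guarantees $p(y,y^*)>0$, and the induction over the $\t_i$ via the strong Markov property is sound even though the exit probability $p_0$ is not uniform over classes. One caveat worth flagging: your final step (``some $i\leq\G+1$ has $X_{\t_i}\in\cC^*_\infty$'') needs every exit landing outside $\cC^*_\infty$ to increment $\G$, which holds under the reading $\G_n=\sum_i\bbI\{X_i\notin\cC^*_\infty\cup[G(X_{i-1})]\}$ (consistent with the paper's worked example and with the proof of Lemma \ref{lecN}) but not under the literal displayed definition with $\cC_\infty$; you should state explicitly which definition you are using.
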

\begin{proof}
For every $m,n$, write
\[
P_{x,\xi}(T_1>m)=P_{x,\xi}(T_1>m\,,\;S_{n}>m) +
P_{x,\xi}(T_1>m\,,\;S_{n}\leq m)\,,
\]
where $S_n$ denotes the first time $k\geq 1$ such that
$|X_k|_\infty\geq n$.

  Given $y \in \xi $, let
$z(y) \in \bbZ^d$ be the unique point such that $y \in B(z)$. Fix
$v\in\xi \setminus \cC^*_\infty$ such that $|z(v)|\leq n$. Then
 $z(v)$ is in a discrete hole (i.e.\ $z \not \in C_\infty^*$).
We know that $\vartheta$ dominates a supercritical Bernoulli field
with large parameter $p$, and for the latter it is well known that
a.s.\ holes intersecting $[-n,n]^d$ have diameter at most $O(\log n
)$ (see \cite{BP}[Prop.\ 2.3]). By the stochastic domination, the
same property still holds for the the discrete holes, which are the
holes in $\vartheta$. We claim that
$w(v) \leq c' \nep{c(\log n)^{\frac{\a}2}}$.
To prove our claim, we write
\[
w(v)=\sumtwo{y\in\xi:}{|z(y)-z(v)|\leq n} r(y-v)
 + \sumtwo{y\in \xi:}{|z(y)-z(v)|>n} r(y-x)\,.
\]
For every $z\in \bbZ^d$ such that $|z-z(v)|\leq n$, if $n_z \geq
T_0$ (i.e.\ if $B(z)$ is overcrowded) then it must be  $n_z\leq
\nep{c(\log n)^{\frac{\a}2}}$ because a hole around $v$ can have
diameter at most $O(\log n)$. Taking $n$ large enough that $ T_0
\leq \nep{c(\log n)^{\frac{\a}2}}$ we have
\[
\sumtwo{y\in \xi :}{|z(y)-z(v)|\leq n} r(y-v)
\leq \sumtwo{z\in\bbZ^d\,:}{|z-z(v) |\leq n }
\nep{c(\log n)^{\frac{\a}2}}\,\nep{-c_1(K)|z-z(x)|^\a}
\leq c_2 \nep{c(\log n)^{\frac{\a}2}}\,.
\]
On the other hand, if $|z(y)-z(v)|>n$, then $z(y)$ cannot belong to
the same hole of $z(v)$, and therefore $n_z\leq
\nep{|z(y)-z(v)|^{\frac{\a}2}}$ whenever $n_z \geq T_0$.
It follows that
\[
\sumtwo{y\in \xi :}{|z(y)-z(v)|> n} r (y-v)
\leq \sumtwo{z\in \bbZ^d:}{|z-z(v)|> n} T_0 \nep{|z-z(v)|^{\frac{\a}2}}\,
\nep{-c_1(K)|z-z(v)|^\a}
\leq c_3\,.
\]
The above estimates trivially imply our claim.

Due to this claim and using again the fact that the hole containing
$v$ has diameter at most $O(\log n)$  we can estimate
\[
 P_{v,\xi} ( X_1 \in \cC_\infty^*)
 \geq \frac{e^{- c_4 (\log n)^\a }}{w(v)}
 \geq  e^{- c_5 (\log n )^\a }\,.
\]
 From this observation we infer that
\begin{equation}
P_{x,\xi}(T_1>m\,,\; S_{n}>m) \leq \(1-\nep{-c_5(\log n)^\a}\)^m
\leq \exp{\left[-m\,\nep{-c_5(\log n)^\a}\right]}\,. \la{ob3}
\end{equation}
On the other hand, for any $\g>0$, we can estimate by Markov's
inequality
\begin{equation}\label{bernini}
P_{x,\xi}(T_1>m\,,\;S_{n}\leq m)\leq P_{x,\xi}\Big(\sup_{1\leq j\leq
T_1}
  |X_j| > n\Big)
 \leq n^{-\g } E_{x,\xi}\Big(\sup_{1\leq j\leq T_1}
  |X_j| ^\g\Big) \,.
\end{equation}
By  Lemma \ref{moltobuono} we conclude that \be\la{ob4}
P_{x,\xi}(T_1>m\,,\;S_{n}\leq m) \leq c_x(\xi)\,n^{-\g}\,,
\end{equation}
for some $\bbP$--a.s.\ finite constant $c_x(\xi)$.
Taking $m,n=n(m)$ such that $n(m)\to\infty$ and
$m\exp[-(\log n(m))^\a]\to \infty$, as $m\to\infty$, \eqref{ob3} and
\eqref{ob4} imply the conclusion.
\end{proof}

\smallskip

\subsection{Harmonicity with respect to the restricted random walk}\label{amoreodio}
%
\begin{proposition}\label{pizza}
Let $\Phi (\xi,x)= x + \chi (\xi,x)$. Then for  $\bbP$--a.a.\ $\xi$ and for any $x \in \xi \cap \cC^*_\infty$:
\begin{equation}\label{calabrese}
\Phi( \t_x \xi, X_{T_1}-x)\in L^1 ( P_{x,\xi})\,, \qquad  E_{x,\xi}
\bigl( \Phi( \t_x \xi, X_{T_1}-x)\bigr)=0\,.
\end{equation}
\end{proposition}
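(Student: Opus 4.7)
The plan is to realize $\Phi(\tau_x\xi, X_{T_1}-x)$ as a stopped martingale of the full walk $X_n$. By the shift-covariance of the corrector (Lemma \ref{festicciola}), $\Phi(\tau_x\xi, y-x) = \Phi(\xi, y) - \Phi(\xi, x)$ for every $y \in \xi$, so \eqref{calabrese} is equivalent to
\[
 \Phi(\xi, X_{T_1}) \in L^1(P_{x,\xi}),\qquad E_{x,\xi}[\Phi(\xi, X_{T_1})] = \Phi(\xi, x).
\]
This identity will follow by optional stopping applied to the martingale $M_n := \Phi(\xi, X_n)$ of the \emph{full} walk, together with an $L^2$ bound to pass the cut-off to infinity.

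First I would verify the $L^1$ martingale property of $M_n$. Since $u_i + \chi_i$ lies in $\cH_\nabla^\perp$, Lemma \ref{rino} yields, for $\bbP$-a.a.\ $\xi$ and \emph{every} $y \in \xi$, the absolute convergence $\sum_{z \in \xi} r(z-y)|\Phi(\tau_y\xi, z-y)| < \infty$ and the vanishing sum $\sum_{z \in \xi} r(z-y)\Phi(\tau_y\xi, z-y) = 0$. Combined with shift-covariance this is exactly $E_{y,\xi}[\Phi(\xi, X_1)\mid X_0 = y] = \Phi(\xi, y)$, so $M_n$ is an $L^1$-martingale under $P_{x,\xi}$. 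Optional stopping at the bounded time $T_1 \wedge n$ gives $E_{x,\xi}[M_{T_1 \wedge n}] = \Phi(\xi, x)$ for every $n$, and since $T_1 < \infty$ $P_{x,\xi}$-a.s.\ by Lemma \ref{t1lemma}, $M_{T_1 \wedge n} \to M_{T_1}$ almost surely.

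The main obstacle is to pass the convergence through the expectation, which requires uniform integrability of $(M_{T_1 \wedge n})_n$. At this point in the paper, Theorem \ref{piccolo} is not yet available, so pointwise sublinearity of $\chi$ cannot be invoked. Instead, I would apply Doob's $L^2$ maximal inequality to the stopped martingale:
\[
 E_{x,\xi}\Bigl[\sup_{k \leq T_1 \wedge n}\bigl(M_k - \Phi(\xi, x)\bigr)^2\Bigr]
 \leq 4\, E_{x,\xi}\Bigl[\sum_{j=0}^{T_1 \wedge n - 1} V(\xi, X_j)\Bigr],
\]
where $V(\xi, y) := w(y)^{-1}\sum_{z \in \xi} r(z-y)|\Phi(\tau_y\xi, z-y)|^2$, which is $\bbP$-a.s.\ finite on all of $\xi$ because $\Phi \in L^2(\mu)$ (using Lemma \ref{francoforte} to upgrade the Palm statement). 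To bound the Green-function-type sum, I would split the excursion before $T_1$ according to successive jumps into new classes of holes in the sense of Section \ref{enlarge}: the number $\Gamma$ of such classes has geometric tails by Lemma \ref{lecN}; a hole intersecting the ball of radius $|x|$ has $\bbP$-a.s.\ diameter $O(\log(1+|x|))$ (as used in Lemma \ref{t1lemma}); and on each such finite set $V(\xi, \cdot)$ is bounded by a $\xi$-measurable constant. Combining these ingredients with the moment estimate of Lemma \ref{lecda20} on the distance traveled within each sub-excursion yields a finite bound uniform in $n$. This gives $\sup_{k \leq T_1} |M_k| \in L^2(P_{x,\xi})$, hence uniform integrability, and dominated convergence completes the proof.
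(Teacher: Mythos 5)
Your skeleton coincides with the paper's: realize $\Phi(\t_x\xi,X_n-x)$ as a martingale via Lemma \ref{rino} and shift--covariance, apply optional stopping at $T_1\wedge n$, use Lemma \ref{t1lemma} for the a.s.\ convergence, and then face the problem of passing to the limit in the expectation. The divergence is in the last step, and that is where your argument has a genuine gap.

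Doob's $L^2$ inequality reduces the problem to bounding $E_{x,\xi}\bigl[\sum_{j=0}^{T_1\wedge n-1}V(\xi,X_j)\bigr]$, a sum with $T_1$ terms, i.e.\ one term per \emph{step} of the walk. The ingredients you invoke control something weaker: Lemma \ref{lecN} bounds the number $\Gamma$ of hole \emph{classes} entered, and Lemma \ref{lecda20} bounds the distance traveled, but neither bounds the occupation time of a hole class, and the paper nowhere establishes $E_{x,\xi}[T_1]<\infty$ (Lemma \ref{t1lemma} only gives $T_1<\infty$ a.s., with tails that are not summable when $\a>1$). Inside an overcrowded region the per-step escape probability to $\cC^*_\infty$ can be as small as $\exp\{-c(\log R)^{\a}\}$ for a hole at distance $R$, so the time spent there has moments growing faster than any polynomial in $R$ when $\a>1$, while the probability of reaching distance $R$ before $T_1$ is only controlled polynomially by Lemma \ref{moltobuono}; the Cauchy--Schwarz balancing you would need does not close. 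So "a finite bound uniform in $n$" is asserted but not obtainable from the cited lemmas.

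The paper sidesteps the step count entirely. It dominates the stopped martingale pointwise: by Lemma \ref{leles} (a forward reference, but its proof uses only $\chi\in L^2(\mu)$, the Campbell identity and Borel--Cantelli, so there is no circularity) one has $|\Phi(\xi,y)|\leq c(\xi)\,u_{\g,c}(|y|)$ a.s., whence
$|\Phi(\t_x\xi,X_{T_1\wedge m}-x)|\leq c_x(\xi)\,u_{\g,c}\bigl(\max_{1\leq j\leq T_1}|X_j-x|\bigr)$ for all $m$; Corollary \ref{coroscaz} converts $|X_j-x|$ into $\bar d(x,X_j)$ plus a logarithmic term, and Lemma \ref{lecda} gives precisely $E_{x,\xi}\bigl[\sup_{1\leq j\leq T_1}u_{\g,c}(\bar d(x,X_j))\bigr]<\infty$, so dominated convergence applies. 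Your remark that sublinearity (Theorem \ref{piccolo}) is unavailable is correct, but the missing idea is that a much cruder a.s.\ growth bound on the corrector \emph{is} available at this stage and is exactly what makes the dominated-convergence route work; replacing it by an $L^2$ maximal inequality creates the need for a time-moment estimate that the paper's machinery does not supply.
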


\begin{proof}
We recall that $x_i + \chi_i(\xi, x) \in \cH_\nabla^\perp$, for each
coordinate $i$. Hence, by Lemma \ref{rino}, there exists a Borel
subset $\cA$ having  $\bbP$--probability $1$ such that  for all $\xi
\in \cA$ and for all $z \in \xi$, 
\[
 \sum _{y \in \xi} r(y-z)|\Phi ( \t_z \xi, y-z)|< \infty\,,
 \qquad \sum _{y \in \xi} r(y-z)\Phi ( \t_z \xi, y-z)=0\,.
\]
This implies that the process $(M^\xi _n)_{n\geq 0}$ defined in terms of
$(X_n)_{n \geq 0}$ as
\[
M_0^\xi=0\,, \qquad  M_n^\xi= \sum_{j=0}^{n-1} \Phi( \t_{X_j} \xi,
X_{j+1}-X_j)\;\; \text{ for } n \geq 1
\]
is a martingale w.r.t.\ $P_{x,\xi}$. By
shift covariance we have $M^\xi _n= \Phi ( \t_{X_0} \xi, X_n- X_0)$
for all $n\geq 0$. In particular, given $m \in \bbN$ and $\xi \in
\cA$, from the Optional Stopping Theorem, for any $m\in\bbN$,
we have that $\Phi( \t_x
\xi,X _{T_1 \wedge m }-x ) \in L^1 (P_{x,\xi})$ and
\begin{equation}\label{vaccinobis}
 E_{x,\xi}
 \Phi(\t_x \xi, X _{T_1 \wedge m }- x)  =
0  \,, \quad x\in\xi\,.
\end{equation}
 Since  $T_1$ is a.s.\ finite (see Lemma \ref{t1lemma}), we
 have
\[
\lim_{m\to\infty}\Phi(\t_x \xi,X_{T_1\wedge m} -x)  =
\Phi(\t_x\xi,X_{T_1}-x)\,,\qquad  P_{x,\xi}\text{--a.s}\,,
\]
In addition, using Lemma \ref{leles} below, we know that a.s.\
$|\Phi(\xi,y)|\leq c(\xi)\,u_{\g,c}\(|y|\)$ for suitable constants
$c,\g>0$ and $c(\xi)<\infty$.
We have
\begin{equation}\label{vitasnella}
\bigl| \Phi(\t_x \xi,X_{T_1\wedge m} -x)\bigr| \leq c_x(\xi)
\,u_{\g,c}\Big(\max_{1\leq j\leq T_1} |X_{j} -x|\Big)\,
, \qquad 
m \geq
1\,.
\end{equation}
Therefore, Corollary \ref{coroscaz} and Lemma \ref{lecda} allow us to use the
Dominated Convergence Theorem to conclude.
\end{proof}
Proposition \ref{pizza} shows that $\Phi(\t_x\xi,Y_n-x)$, with $Y_n=X_{T_n}$, is a martingale for every $x\in\xi\cap\cC_\infty^*$.
Since $Y_t=X_{T_{N_t}}$ for an independent Poisson process with mean $1$, Proposition \ref{pizza}
also implies 
\begin{corollary}\label{mariasole}
For $\bbP$--a.a.\ $\xi$, the process
$\bigl( \Phi( \t_z \xi, Y_t- z)\,:\, t \geq 0\bigr)$ is a
con\-tinuous--time martingale w.r.t.\ to the law of the  restricted random walk $Y_t$  starting at
$z \in \xi \cap \cC^*_\infty$.
\end{corollary}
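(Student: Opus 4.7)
The plan is to work in two stages: first establish the discrete--time martingale property for $Y_n=X_{T_n}$, then pass to the continuous--time walk $\wt Y_t = Y_{N_t}$ via Poisson subordination. The discrete--time assertion, already sketched in the remark preceding the statement, is obtained by applying the strong Markov property of $X_n$ at $T_{n-1}$, using the shift--covariance of $\Phi$ from Lemma \ref{festicciola} to write
\[
 \Phi(\t_z\xi,Y_n-z) - \Phi(\t_z\xi,Y_{n-1}-z) = \Phi(\t_{Y_{n-1}}\xi,Y_n-Y_{n-1}),
\]
and then invoking Proposition \ref{pizza} at the good point $Y_{n-1}\in\xi\cap\cC_\infty^*$ to conclude that the right--hand side is integrable with zero conditional mean. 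The real content of the corollary is therefore the passage to continuous time.

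For this I would use the explicit representation $\wt Y_t = Y_{N_t}$ with $(N_t)$ an independent rate--one Poisson process. For $0\leq s<t$, setting $\cG_s=\sigma(N_u,u\leq s)\vee\sigma(Y_k,k\leq N_s)$, the independence of the Poisson skeleton from the discrete walk together with optional stopping of the discrete martingale at the bounded stopping time $N_t$ gives
\[
 E_{z,\xi}\bigl[\Phi(\t_z\xi,\wt Y_t-z)\mid\cG_s\bigr] = \Phi(\t_z\xi,\wt Y_s-z).
\]
The only genuine obstacle, and the main technical point of the proof, is to verify that $\Phi(\t_z\xi,\wt Y_t-z)\in L^1(P_{z,\xi})$ for each $t$, since otherwise the optional stopping step is not justified.

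For this integrability check I would combine three ingredients from earlier in the paper: the a.s.\ pointwise growth bound $|\Phi(\xi,y)|\leq c(\xi)\,u_{\gamma,c}(|y|)$ of Lemma \ref{leles} (already used in the proof of Proposition \ref{pizza}); the translation of Euclidean distance into the coarse distance $\bar d$ given by Corollary \ref{coroscaz}; and the polynomial moment bound $E_{z,\xi}\bigl[\sup_{1\leq j\leq T_n}\bar d(z,X_j)^p\bigr]\leq c\,n$ of Lemma \ref{lecda20}. Conditioning on the value of $N_t$ and summing over $n$ weighted by the super--polynomially decaying Poisson tail yields a finite bound, since $u_{\gamma,c}\circ\bar d$ grows only polynomially--times--logarithmically in $n$. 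Once this integrability is in place, the martingale identity is immediate, and the monotone growth of $\cG_s$ shows that the resulting process is a genuine continuous--time martingale in its own natural filtration.
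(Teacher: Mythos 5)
Your proposal is correct and follows essentially the same route as the paper, which gives no written proof beyond the remark that the discrete-time martingale property of $\Phi(\t_x\xi,Y_n-x)$ (from Proposition \ref{pizza} and shift-covariance) together with the Poisson subordination $Y_t=X_{T_{N_t}}$ yields the continuous-time statement. Your explicit verification of the $L^1$ bound via Lemma \ref{leles}, Corollary \ref{coroscaz} and Lemmas \ref{lecda}--\ref{lecda20} is a detail the paper leaves implicit (note only that for $\a>1$ the relevant growth in $n$ is of order $\exp[c(\log n)^\a]$ rather than polynomial-times-logarithmic, which is still harmless against the Poisson tail).
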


\section{Heat kernel bound}\label{refrigerio}
In this section we prove the heat kernel bound \eqref{mirtillo2}.
In order to avoid confusion, in this section we restore the
convention to write $Y_n$ for the discrete--time restricted RW and
$\widetilde{Y}_t$ for the continuous--time restricted RW. The proof
of the heat kernel bound \eqref{mirtillo2} is divided in two parts:
in the first one we derive a  similar bound for a cut--off
restricted random walk  (see Proposition \ref{congiuntivite}) by
applying together the isoperimetric estimates of \cite{CF1} and the
method developed in \cite{MoPe}. In the second part
(see the proof of  Proposition
\ref{guerra_banane}), we show that the above cut-off gives an
approximation which is good enough to maintain
 the diffusive heat kernel bound.
In particular, \eqref{mirtillo2} follows immediately from Proposition
 \ref{guerra_banane}.
\subsection{Cut--off of the restricted random walk}
 We fix $L>0$ and  introduce the discrete--time RW $\bigl( X_n^{(L)}\,:\, n\geq 0\bigr)$ on $\xi_L:= \xi \cap [-L,L]^d$
jumping from $x$ to $y $ in $\xi_L$ with probability
\begin{equation} p^{(L)}(x,y)= \frac{r(y-x)}{w^{(L)}(x)} \,, \qquad w^{(L)} (x)= \sum
_{z \in \xi_L} r(z-x)\,.
\end{equation}
 We call $ C_L\subset \bbZ^d $ the largest connected component of  the field
 $\vartheta$ (defined in (\ref{blawhi}))
 inside $[-L, L]^d$. Then,  we set $\cC_L = \cup _{z \in C_L} B(z)$
 and
   $\z_L= \xi \cap \cC_L $.  Let $Y^{(L)}_n$ be the restricted
   random walk associated to $X^{(L)}_n$ when visiting the good points
   $\zeta_L$ (similarly to the definition of $Y_n$ as the  restricted random walk associated to
   $X_n$ when visiting the good points $\cC^*_\infty$).  
We define $\widetilde{Y}^{(L)}_t =
Y^{(L)}_{N_t}$ where $N_t$ is a Poisson process of parameter $1$,
independent of the random walk $Y^{(L)}_n$.
 The following heat kernel bound holds:
\begin{proposition}\label{congiuntivite}
Take $L=L(t)= t^{u}$ with $u>1/2$. Then for $\bbP$--a.a.\ $\xi$
\begin{equation}
\limsup_{t \to \infty}\, \max_{\substack{ x \in \z_L}}\, t^{d/2}    P_{x,\xi}  \bigl( \widetilde{Y}_t ^{(L)}=x\bigr)
<\infty\,.
\end{equation}
\end{proposition}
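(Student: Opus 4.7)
The plan is to combine the isoperimetric inequality for the good-box cluster established in \cite{CF1} with the evolving-set heat kernel bound of \cite{MoPe}. The restricted walk $Y_n^{(L)}$ on $\zeta_L$ is reversible with respect to $\pi(x):=w^{(L)}(x)$, and Poissonization preserves both reversibility and the heat kernel structure. By Lemma \ref{leT} each good point satisfies $w(x)\leq T$, and by definition of white box there is a point of $\zeta_L$ within bounded distance of any $x\in\zeta_L$, so $w^{(L)}(x)\geq c>0$ as well; hence $\pi$ is comparable to the counting measure on $\zeta_L$, and it suffices to prove
\[
\max_{x,y\in\zeta_L}\frac{P_{x,\xi}\bigl(\widetilde Y_t^{(L)}=y\bigr)}{\pi(y)}\leq \frac{C}{t^{d/2}}
\]
for $t$ large, with a constant that is $\bbP$-almost surely finite and independent of $L$.

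Morris--Peres yields exactly such a bound provided the Cheeger-type profile
\[
\Phi(r)=\inf\Big\{\frac{\sum_{x\in S,\,y\in S^c}\pi(x)\,\omega_{x,y}^{(L)}(\xi)}{\pi(S)}\;:\;S\subset\zeta_L,\ \pi(S)\leq r\Big\}
\]
satisfies $\Phi(r)\geq c\,r^{-1/d}$ for $1\leq r\leq \pi(\zeta_L)/2$, and the resulting bound is valid in the time range $1\leq t\leq c\,\pi(\zeta_L)^{2/d}$. The first step of the proof is therefore to establish this $d$-dimensional isoperimetric profile. Assumption (H1) ensures that the white-box field $\vartheta^{K,T_0}$ stochastically dominates a supercritical Bernoulli field, so the cluster $C_L$ inside $[-L,L]^d$ inherits, $\bbP$-a.s.\ for all $L$ large, a lattice isoperimetric inequality of the form $|\partial S|\geq c\,|S|^{(d-1)/d}$, uniformly in $L$; this is precisely the type of estimate supplied by \cite{CF1}. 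One then transfers the bound from $C_L$ to $\zeta_L$ by bounding $\omega_{x,y}^{(L)}$ from below by a constant for pairs $x,y$ lying in adjacent white boxes (which follows from $r(x-y)\geq c\exp(-cK^\alpha)$ together with the uniform upper bound on $w$), so that the Cheeger flow across any set $S\subset\zeta_L$ dominates a constant multiple of the lattice boundary of the corresponding box-set.

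The second step is a matching of scales. With $L=t^u$ and $u>1/2$ one has $\pi(\zeta_L)\asymp L^d=t^{ud}$, hence $\pi(\zeta_L)^{2/d}\asymp t^{2u}\gg t$ and the Morris--Peres bound is available precisely at time $t$. Combining this with $\pi(y)\leq T$ gives the claimed estimate uniformly over $x\in\zeta_L$, concluding the proof. The main obstacle is the first step: because the effective transitions $\omega_{x,y}^{(L)}$ of the restricted walk are nonlocal (they encode excursions of $X^{(L)}$ through arbitrarily far holes), the Cheeger sums a priori receive contributions from long jumps that could corrupt the clean lattice isoperimetric bound. Controlling these long-range contributions is precisely where the enlargement-of-holes construction of Section \ref{enlarge} is needed, as it reduces the relevant geometric quantities to those of a nearest-neighbor walk on the white-box cluster, a setting in which the isoperimetric technology of \cite{CF1} applies directly.
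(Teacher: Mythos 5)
Your overall strategy is the same as the paper's: reversibility of $\widetilde Y^{(L)}$ with respect to $\pi_L=w^{(L)}$, the two-sided bound $1\leq\pi_L(x)\leq c$ on good points, the evolving-set heat kernel bound of Morris--Peres driven by the isoperimetric estimate of \cite{CF1}, and the scale matching $L=t^u$, $u>1/2$, via $|\zeta_L|\gtrsim L^d$. Two points, however, need correction.

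First, your diagnosis of the ``main obstacle'' is backwards. Long-range contributions to $\omega^{(L)}_{x,y}$ cannot corrupt a \emph{lower} bound on the isoperimetric profile: extra transitions only increase the boundary flow $\sum_{x\in S,\,y\in S^c}\pi_L(x)\omega^{(L)}_{x,y}$, and the denominator $\pi_L(S)$ is already controlled by the uniform upper bound $\pi_L(x)\leq c$ coming from Lemma \ref{leT}. The correct (and very short) comparison is simply $\pi_L(x)\,\omega^{(L)}_{x,y}\geq \pi_L(x)\,P_{x,\xi}(X_1^{(L)}=y)=r(y-x)$, which reduces the profile of the restricted walk to that of the direct walk with rates $r(y-x)/m_L(x)$ on $\zeta_L$ --- exactly the object treated by \cite[Lemma 2.1]{CF1}. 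The enlargement-of-holes construction of Section \ref{enlarge} plays no role in this proposition; it is used elsewhere (to control excursions of $X_n$ between visits to $\cC^*_\infty$, and in the comparison of $\widetilde Y_t$ with $\widetilde Y_t^{(L)}$). Deferring the key step to that machinery leaves the actual argument incomplete, even though you do state the right elementary bound for adjacent white boxes.

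Second, the isoperimetric input is not the clean profile $\Phi(r)\geq c\,r^{-1/d}$ down to $r=1$ that you assert. What \cite[Lemma 2.1]{CF1} gives (and what is true for the giant cluster in a box) is the two-branch bound $\psi_L(u)\geq\delta\min\{L^{-\gamma},\,u^{-1/d}L^{-1}\}$, i.e.\ the $d$-dimensional behaviour is degraded to a flat cap $\delta L^{-\gamma}$ for sets of relative mass below $L^{-d(1-\gamma)}$. This forces the Morris--Peres time integral to be split into two pieces and produces an extra term of order $L^{2\gamma}$, which must then be absorbed by choosing $\gamma$ small relative to $u$; your version skips this and therefore does not justify that the required time condition is met at time $t$.
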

The fact that $L(t)$ is polynomial in $t$ in the above heat kernel
estimate is essential. Indeed, because of the $\max_{\substack{ x
\in \z_L}}$, one cannot expect the result to be true for functions
$L(t)$ with an exponential growth in $t$, since it would contradict
well known phenomena for the simple random walk on the supercritical
percolation cluster \cite{barlow}.
\begin{proof}
Clearly, $\( w^{(L)} (x)  \,,\, x \in \xi_L \)$ is a
reversible measure for the random walk $  X _n ^{(L)}$. Let
\[
 \o_{x,y}^{(L)}
 = P_{x,\xi}\bigl( Y^{(L)}_1=y \bigr)\,, \qquad x,y \in \z _L \,.
\]
Note that
$ w^{(L)} (x)   \o^{(L)}_{x,y} =  w^{(L)} (y)  \o^{(L)}_{y,x}$ for
all  $x,y \in \z_L $, i.e.\  $\bigl( w^{(L)}(x) \,,\, x \in \z_L )$
 is a reversible measure both for $Y^{(L)}_n$ and
for $\widetilde{Y}^{(L)}_t$ (recall that $\o^{(L)}_{x,y}$ coincides
also with the probability rate of  a jump of $\wt Y ^{(L)}$ from $x$
to $y$).

Let us denote  by $\p_L$ the measure $w^{(L)} (x)$ on $\z_L $ and
call $\varphi _L (t)$, $t>0$, the isoperimetric profile of the RW
$\widetilde{Y}^{(L)}$ w.r.t.\ $\p_L$:
\[
 \varphi _L(t):=
  \inf\Big\{I_U\,:\,U\subset\z_L,\;\p_L(U)\leq\bigl(t\wedge\frac12\bigr)\p_L(\z_L)\Big\}\,,
\]
where $
 I_U:=
 \p_L(U)^{-1}\sumtwo{x\in U,}{y\in\z_L\setminus U}
 \p_L(x)\o^{(L)}_{x,y}$.
 Note that due to the definition of $\z_L $
it holds
\begin{equation}\label{clowngiallo}
 1 \leq \p_L (x) \leq c \,, \qquad x \in
\z_L\end{equation} for some positive constant $c$ independent of
$\xi$ and $L$ (the upper bound follows from Lemma \ref{leT}, the
lower bound is trivial: $w^{(L)}(x)\geq r(0)$).

In order to estimate $P_{x,\xi} \bigl( \widetilde{Y}_t^{(L)}= x\bigr
)$, $x\in \z_L$, we apply Theorem 13 in \cite{MoPe} which states that,
given $\e,t >0$, if
\begin{equation}\label{MP1}
 t\geq \int _{4\p_L(x)/ \p_L(\z_L) }^{4/\e}\frac{8 du }{u \varphi _L^2 (u)}
\end{equation}
then
\begin{equation}\label{MP2}
 P_{x,\xi}\bigl ( \widetilde{Y}^{(L)}_t =x) \leq \frac{\p_L(x)}{\p_L(\z_L)} (
 1+\e)\,.
\end{equation}
To get a bound from below of the isoperimetric profile
$\varphi_L$ we observe that, given $x\not = y $ in $\z_L$,
\begin{equation}\label{neve}
\p_L(x)  \o _{x,y} ^{(L)} \geq \p_L(x) P_{x,\xi} ( X _1^{(L)}  =y ) =
 r(y-x) =  m_L(x) \frac{r(y-x)}{m _L (x) }\,,
\end{equation} where
$
 m _L (x) =\sum _{y\in \z_L} r(y-x) $.
Since $m_L(x) \leq w^{(L)}(x)$,  $m_L(x)$ satisfies a bound of the
same form of \eqref{clowngiallo}. In particular,
\begin{equation}\label{nevebis}
\frac{m_L(U)}{m_L(\z_L)} \leq \k \frac{ \p_L(U) }{\p_L (\z_L)}\,,
\qquad \forall U \subset \z_L \,.\end{equation} We stress that $\k$
is a   positive constant that does  not depend on   $\xi,L$.

Due to \eqref{neve} and \eqref{nevebis} we conclude that
\begin{equation}\label{pecorino}
\varphi _L ( t) \geq   \psi_L(\k t)\,, \qquad \forall t \in (0,1/2)
: \k t \in (0,1/2)\,,
\end{equation}
where $\psi_L$ denotes the isoperimetric profile of the
continuous--time  random walk  on $\z_L$ with generator
\[
 \cL f(x)
 =\sum_{y\in\z_L}\frac{r(y-x)}{m_L(x)} (f(y)-f(x))\,, \qquad x \in \z_L\,,
\]
with reversible measure $m_L$. We
take $\g>0$. The value will be fixed at the end. Due to assumption
(H1) and \cite[Lemma 2.1]{CF1},
there
exists a constant $\d>0$ such that $\bbP$--a.s.\ it holds
\begin{equation}\label{porchetta} \psi_L (u) \geq \d \min \left\{
\frac{1}{L^\g}, \frac{ 1}{ u^{1/d}L } \right\}, \qquad 0 < u \leq
1/2, \, L \geq L_0 (\xi ) \,.
\end{equation}
Since $L$ diverges with $t$,    we have $L\geq L_0(\xi)$ eventually
in $t$. Let us choose $  \e = L ^d/t^{d/2}$. Due to our assumption $
4/\e$ goes to $0$ as $t\to \infty$. In particular, we can take $t$
large enough that $4/\e \in (0,1/2)$ and $4\k/\e\in (0,1/2)$. This
together with \eqref{pecorino} implies that $\varphi _L(u) \geq
\psi_L(\k u)$ for $u$ as in the  r.h.s.\ of \eqref{MP1}. In
addition, note that
\begin{equation}\label{stimauni}
 4\p_L(x)/ \p_L(\z_L)^{d} \geq
\tilde c\, L^{-d}\,,
\end{equation}
for some new constant $\tilde c>0$.
Since the bound \eqref{porchetta} reads
\[
\psi_L (u)\geq
\begin{cases}
\frac{\d}{L^\g}\,, & \text{ if } 0<u\leq L^{-d(1-\g)}\,,\\
\frac{\d}{u^{1/d }L}\,, & \text{ if } u \geq L^{-d(1-\g)}\,,
\end{cases}
\]
taking $\g$ small enough we can assume that  $ 4 t^{d/2} \gg L^{d\g}
$, thus implying that (see \eqref{stimauni})
\begin{multline}
\text{r.h.s.\ of } \eqref{MP1}
\leq \int_{\tilde c L^{-d}}^{4t^{d/2} L^{-d}} \frac{8 du }{u \psi _L^2 (\k u)}
= \int_{\k\tilde c L^{-d}}^{4\k t^{d/2} L^{-d}}\frac{8 ds }{s \psi _L^2(s)}
\leq \\
\frac{8 L^{2\g}}{\d^2}\int_{\k\tilde c L^{-d}}^{L^{-d+d\g}}s^{-1}ds
+ \frac{8L^2}{\d^2}\int_{L^{-d+d\g}} ^{4 \k t^{d/2} L^{-d}}
  s^{2/d-1} ds= c (L^{2\g} + t)\,.
\label{signora1}
\end{multline}
Taking $\g$ small, we get $L^{2\g} <t$.  At the cost of changing the
definition of $\e$ by setting $\e=c'\,L ^d/t^{d/2}$ with $c'$ small
enough, we can assume that the last expression in \eqref{signora1}
is smaller than $t$, thus implying \eqref{MP1} and therefore that
\[
 P_{x,\xi}( \widetilde{Y}^{(L)}_t=x )
 \leq  \frac{c}{|\z_L|}( 1+ L^d t^{-d/2} )\,.
\]
At this point the claim follows from the fact that $\bbP$--a.s.\
$|\z_L|\geq c_1 L^d $ for some positive 
constant $c_1$ and for
all $L$.
%
Indeed, defining $C^*_L$ as the maximal connected component in
$[-L,L]^d\cap\bbZ^d$ for the Bernoulli field $Z(p)$, it is known
e.g.\ that if $p$ is large enough, then a.s.\
$|C^*_L| \geq \frac12\, L^d $ for all $L$ sufficiently large.
 Due to  the stochastic domination assumption (H1), the same holds
for $C_L$ as well. Since $\xi$, and therefore $\z_L$, has at least
one point in each box $B(z)$ with $z \in C_L$, it must be $ |\z_L|
\geq c_1 L^d$ for some $c_1=c_1(K)$. This concludes the proof.
\end{proof}

\subsection{Comparison between $\widetilde{Y}_t$ and
$\widetilde{Y}_t^{(L)} $} We first define a coupling $P_x$ between
the random walks $X_n^{(L)}$ and $X_n$, starting at  the same point
$x$ in $\z_L$, as follows. We realize $(X_n\,:\, n \geq 0)$ starting
at $x$, and call $\t =\inf\{ n\geq 1 \,:\, X_n \not \in \xi_L\}$.
Then we set $X^{(L)}_n= X_n$ for $n <\t$, while on $[\t, \infty)$
the random walk $X^{(L)}_n$ evolves independently from $X_n$ with
jump probabilities $p^{(L)}(\cdot, \cdot)$.
To check the validity of the coupling, let $A$ be the event that  $X_n= X^{(L)}_n$ for $ n\leq N$
(i.e.\ $A=\{ N <\t\} $).
 Note that, given $y,z \in \z_L$, the probability $P_x( X^{(L)}_{N+1}= z| X^{(L)}_N=y ,
 A)$ can be written as
\begin{multline*}
 P_x( X_{N+1}=z|X_N=y, A)
+   P_x ( X_{N+1} \not \in \z_L| X_N=y ,A) p^{(L)} (y,z)=\\  \frac{
r(z-y) }{w(y)}+ \frac{w(y)-w^{(L)}(y) }{w(y)} \frac{ r(y-z)}{
w^{(L)}(y)}= \frac{ r(y-z)}{ w^{(L)}(y)}=p^{(L)}(y,z) \,.
\end{multline*}

Introduce   a Poisson process  $N_t$  of parameter $1$
independent from $X_n^{(L)}$ and $X_n$, and therefore also from
$Y^{(L)}_n$ and $Y_n$.
Recall the  continuous--time random
walks
$ \wt{Y} ^{(L)}_t= Y^{(L)}_{N_t}$, $\wt{Y} _t = Y_{N_t}$.
We denote again by $P_x$ the  probability measure of the space where all the above processes  are defined, and
we write $E_x$ for the associated expectation.
An important consequence of this coupling is the following observation. There exists $\e=\e(d)>0$ such that, for $m\in\bbN$:
 \be\la{awross}
P_x\(\exists \,n\leq m\,: \;Y_n\neq Y^{(L)}_n \) \leq
P_x\Big(\max_{1\leq j\leq T_m} |X_{j}| > \e\, L\Big)\,.
 \end{equation}
Here $T_m$ is, as usual, the time of the $m$-th visit to
$\cC_\infty^*$ for the walk $X_n$. The above claim, in turn, is an
immediate consequence of the following
\begin{lemma}\label{sonno}
There exists $\e=\e(d)>0$ such that $\bbP$--a.s., for all
sufficiently large $L$, it holds $
\z_L\cap [-\e L, \e L ]^d=
\xi\cap\cC^*_\infty\cap[-\e L, \e L]^d$.
\end{lemma}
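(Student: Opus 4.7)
The strategy is to translate the claim from the marked point process down to $\bbZ^d$ and invoke standard supercritical percolation facts for $\vartheta$, which are at our disposal thanks to (H1). For $x\in\xi$ let $z(x)\in\bbZ^d$ denote the unique box label with $x\in B(z(x))$; then $x\in\cC_L\Leftrightarrow z(x)\in C_L$ and $x\in\cC^*_\infty\Leftrightarrow z(x)\in C^*_\infty$, while $|x|_\infty\leq \e L$ forces $|z(x)|_\infty\leq \e L/K+1$. Thus it is enough to produce $\e'>0$ such that $\bbP$--a.s.\ for all large $L$,
\[
C_L\cap\{|z|_\infty\leq \e' L\}=C^*_\infty\cap\{|z|_\infty\leq \e' L\},
\]
and then take $\e<K\e'$.

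The input from (H1) is that, for $K,T_0$ large, $\vartheta$ stochastically dominates a Bernoulli field $Z(p)$ with $p$ arbitrarily close to $1$. Two consequences for $\vartheta$ that have already appeared in the paper are: (i) $\bbP$--a.s.\ there is a unique infinite cluster $C^*_\infty$ and every hole of $C^*_\infty$ meeting $[-L,L]^d$ has diameter at most $c\log L$ for $L$ large (cited from \cite{BP}), and (ii) $\bbP$--a.s.\ for $L$ large, any two $z_1,z_2\in C^*_\infty\cap[-L/2,L/2]^d$ are joined by a $\vartheta$-white path lying in $[-L,L]^d$. Property (ii) is the Antal--Pisztora / Penrose--Pisztora local connectivity of a supercritical cluster; it follows from (i) by the standard detour argument: along a nearest-neighbour approximation of the segment $[z_1,z_2]$, reroute around each hole met, paying at most $O(\log L)$ per hole, so that the new path stays in $[-L/2-c\log L,L/2+c\log L]^d\subset[-L,L]^d$.

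With (i) and (ii) in hand, fix any $\e'<1/2$ and verify the two inclusions. For $C^*_\infty\cap\{|z|_\infty\leq \e' L\}\subset C_L$: by (ii) all such vertices lie in a single connected component of $\vartheta|_{[-L,L]^d}$; by ergodicity this component has size $\geq cL^d$, while every other component of $\vartheta|_{[-L,L]^d}$ is either a hole of $C^*_\infty$ of diameter $O(\log L)$ or a small $C^*_\infty$-fragment severed by the box boundary, and hence is strictly smaller, so this component equals the maximum $C_L$. For the converse $C_L\subset C^*_\infty$: by the previous step $C_L$ contains some $v\in C^*_\infty$, and any $z\in C_L$ is joined to $v$ by a $\vartheta$-path inside $[-L,L]^d$, hence inside $\bbZ^d$, so $z\in C^*_\infty$.

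The main obstacle is establishing (ii) uniformly in $L$. For honest Bernoulli $Z(p)$ with $p$ close to $1$ this is a classical Peierls / Penrose--Pisztora estimate, and the relevant event (``every pair of vertices of the infinite cluster in the inner box is linked through the outer box'') is increasing in the underlying field, so stochastic domination transfers it to $\vartheta$; (H1) lets us take $p$ as close to $1$ as needed by enlarging $K,T_0$, which is what validates the exponential Peierls tail and the Borel--Cantelli step required for the property to hold for all large $L$ almost surely.
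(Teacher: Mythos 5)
Your reduction to $\bbZ^d$ and the identification of $C_L$ with the component of $\vartheta|_{[-L,L]^d}$ containing $C^*_\infty\cap[-\e'L,\e'L]^d$ parallel the paper's proof, but the mechanism you use for the local connectivity step (your property (ii)) has a genuine gap. The ``detour'' argument does not produce a path inside $C^*_\infty$: when the straight path meets a hole $H$ (a component of $\bbZ^d\setminus C^*_\infty$), the natural reroute is along the outer vertex boundary of $H$, which does consist of vertices of $C^*_\infty$, but is in general only $*$-connected, not nearest-neighbour connected. Bridging two $*$-adjacent boundary vertices by a nearest-neighbour path inside $C^*_\infty$ is itself a local-connectivity statement of exactly the type you are trying to prove, so the argument is circular; this is why the literature establishes such statements via renormalization or chemical-distance estimates rather than naive detours. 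Your fallback justification also fails: the event ``every pair of vertices of $C^*_\infty$ in the inner box is linked through the outer box'' is \emph{not} increasing in the field (turning a black vertex white can add to $C^*_\infty\cap[-L/2,L/2]^d$ a vertex whose only connection to the rest of the infinite cluster exits $[-L,L]^d$), so stochastic domination does not transfer it from $Z(p)$ to $\vartheta$ as you claim.

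The paper's route avoids both problems by using the Antal--Pisztora chemical distance bound: if $x,y\in C^*_\infty\cap[-\e\ell,\e\ell]^d$ are not connected within $[-\ell,\ell]^d$, then any $C^*_\infty$-path joining them has length at least $2(1-\e)\ell$, while $d_1(x,y)\leq 2d\e\ell$; for $\e$ small this forces $d_C(x,y)\geq(1+\g)d_1(x,y)$, an event of probability $\leq a^{-1}\nep{-a\,d_1(x,y)}$. A union bound (after replacing $y$ by a point $y'$ at macroscopic distance from $x$, so that the exponential beats the polynomial number of pairs) and Borel--Cantelli then give the claim. The underlying ``good'' event there --- being joined by an open path of prescribed length --- is increasing, which is what makes the domination transfer legitimate. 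To repair your proof you would need to replace the detour step by this (or an equivalent) estimate; the hole-diameter bound alone is not enough.
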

\begin{proof}
Let us prove the equivalent statement for the corresponding white $K$-boxes. Namely, setting $\ell=L/K$, we want to prove
\be\la{awro}
C_L \cap [-\e \ell,\e \ell]^d = C_\infty^*\cap  [-\e \ell,\e \ell]^d \,.
\end{equation}
By the stochastic domination assumption, and well known facts about Bernoulli percolation with large $p$ (see e.g.\
\cite[Proposition B.2]{CF1}), we may assume that $C_L$ coincides with the largest connected component of $[-\ell,\ell]^d\cap
C_\infty^*$. Thus, the only thing that can go wrong in checking (\ref{awro}) is that
there exist two vertices $x,y\in [-\e \ell,\e \ell]^d\cap
C_\infty^*$ that are not connected within $[-\ell,\ell]^d$.
Call $F_\ell$ this event. Using the stochastic domination assumption, and the fact that
$p$ is large, one can check that this event has exponentially (in $\ell$) small probability for a suitable $\e>0$.
To see this, let $d_C(x,y)$ denote the graph distance of two vertices $x,y\in C_\infty^*$
in the graph $C_\infty^*$ (this is often called the chemical distance). From known estimates \cite{AnPi}, for $\g>0$,
if $p$ is large, there exists $a>0$ such that
\be\la{awros}
\bbP\(d_C(x,y)\geq (1+\g)d_1(x,y)\tc x,y\in C_\infty^*\)\leq a^{-1}\,\nep{-a\,d_1(x,y)}\,.
\end{equation}
Let $x,y$ be two vertices as in the event $F_\ell$. Note the bounds
$d_1(x,y)\leq d\e\ell$ and $ d_C(x,y)\geq 2(1-\e)\ell$.
Moreover, one can find  $y'$ such that $|y'|_\infty \leq 3\e\,\ell$,
$4d\e\ell\geq d_1(x,y')\geq \e\ell$, and
$d_C(x,y')\geq 2(1-2\e)\ell\geq 2(1-2\e)(4d\e)^{-1}d_1(x,y')$.
Therefore, taking $\e$ small enough, a union bound and (\ref{awros})
imply, for some constant $c$, the   exponential bound
$$
\bbP(F_\ell)\leq \sum_{\;|x|\leq \e\ell\;}\sum_{\;|y'|\leq 3\e\ell\;}
 a^{-1}\,\nep{-a\,\e\,\ell}\leq c\,\,\nep{-c^{-1}\,\e\,\ell}\,.
$$
 The identity (\ref{awro}) then follows from
the Borel-Cantelli lemma.
\end{proof}

We can finally prove \eqref{mirtillo2}, an immediate consequence of
\begin{proposition}\label{guerra_banane} For $\bbP$--a.a.\ $\xi $, 
\begin{equation}\label{caciara}
\limsup_{t \to \infty} \max _{\substack{ x \in  \xi\cap\cC_\infty^* \,:\\\, |x|_\infty \leq t  } }
t^{d/2} P_x(\wt{Y} _t=x) < \infty\,.
\end{equation}
\end{proposition}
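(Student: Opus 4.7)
The plan is to use Proposition \ref{congiuntivite} as the main input and control the error coming from replacing $\wt Y_t$ with its cut-off version $\wt Y_t^{(L)}$. Concretely, I will choose $L = L(t) = t^{u}$ for some fixed $u > 1/2$, use the coupling defined above between $X_n$ and $X_n^{(L)}$, and show that the coupling error is $o(t^{-d/2})$ on the event $|x|_\infty \leq t$. The Proposition then follows immediately since for $t$ large we have $|x|_\infty \leq t \leq \e L$, so by Lemma \ref{sonno} the starting point $x$ lies in $\zeta_L$ and Proposition \ref{congiuntivite} applies.

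First I would set up the comparison. By Lemma \ref{sonno}, for all $t$ large enough (depending on $\xi$) one has $\zeta_L \cap [-\e L,\e L]^d = \xi \cap \cC_\infty^* \cap [-\e L,\e L]^d$, so as long as $X_j = X_j^{(L)}$ and both stay inside $[-\e L,\e L]^d$ up to time $T_m$, the restricted walks $Y_j$ and $Y_j^{(L)}$ agree on $\{0,\dots,m\}$. Together with \eqref{awross} and the fact that we are starting at $x$ with $|x|_\infty \leq t \ll \e L$, this gives
\begin{equation*}
 P_x(\wt Y_t = x) \leq P_x(\wt Y^{(L)}_t = x) + P_x\bigl(\exists\, j \leq N_t : Y_j \neq Y^{(L)}_j\bigr) \leq P_x(\wt Y^{(L)}_t = x) + P_x\Bigl(\max_{1\leq j \leq T_{N_t}}|X_j - x| > \e L / 2\Bigr).
\end{equation*}

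Next I would control the error term. Using a standard Poisson tail bound, $P(N_t > 2t) \leq e^{-ct}$, so it suffices to bound $P_x\bigl(\max_{1\leq j\leq T_{2t}}|X_j - x| > \e L/2\bigr)$. For this, apply Markov's inequality with an arbitrary large exponent $p \geq 1$ together with Lemma \ref{moltobuono}:
\begin{equation*}
 P_{x,\xi}\Bigl(\max_{1\leq j\leq T_{2t}}|X_j-x| > \e L/2\Bigr) \leq (\e L/2)^{-p}\, E_{x,\xi}\Bigl[\max_{1\leq j\leq T_{2t}}|X_j-x|^p\Bigr] \leq \k(\xi)\, (\e L/2)^{-p}\, (1+\log(1+t))^p\, 2t.
\end{equation*}
Since $L = t^u$, this is at most $c(\xi)\,t^{1 - up}(\log t)^p$. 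Choosing $p$ large enough that $up > 1 + d/2$ (possible since $u > 1/2$), this bound is $o(t^{-d/2})$.

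Finally I would combine the two ingredients. By Proposition \ref{congiuntivite}, with $L = t^u$, we have $\bbP$-a.s.\ $\limsup_t \max_{y\in\zeta_L} t^{d/2} P_{y,\xi}(\wt Y^{(L)}_t = y) < \infty$, and for $|x|_\infty \leq t \leq \e L$ our starting point $x$ is in $\zeta_L$ by Lemma \ref{sonno}, so the cut-off heat kernel bound applies uniformly to such $x$. The error bound above gives a contribution of order $t^{d/2} \cdot o(t^{-d/2}) = o(1)$, yielding
\begin{equation*}
 \limsup_{t\to\infty} \max_{x\in\xi\cap\cC^*_\infty,\,|x|_\infty\leq t} t^{d/2} P_x(\wt Y_t = x) < \infty,
\end{equation*}
as required. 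The main obstacle is really bookkeeping: ensuring that the coupling truly implies $Y_j = Y_j^{(L)}$ (which requires both Lemma \ref{sonno} to identify the good points and the coupling statement to identify the paths), and choosing $u > 1/2$ small enough that Proposition \ref{congiuntivite} still applies while $p$ large enough that the polynomial decay of the exit probability beats $t^{d/2}$.
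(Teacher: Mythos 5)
Your overall strategy is exactly the paper's: reduce to the cut-off walk $\wt Y^{(L)}_t$ via the coupling and \eqref{awross}, invoke Proposition \ref{congiuntivite} for the cut-off heat kernel, and kill the coupling error by Markov's inequality plus Lemma \ref{moltobuono}. The only substantive variation is cosmetic: the paper takes $p=1$ in Markov's inequality and compensates by choosing $u$ large ($u=2+d/2$), whereas you keep $u$ moderate and take a high moment $p$ with $up>1+d/2$. Either trade-off is legitimate in principle.

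However, as written your parameter choice is inconsistent, and the inconsistency is load-bearing. You assert that ``for $t$ large we have $|x|_\infty\leq t\leq \e L$'' with $L=t^u$ and only $u>1/2$ assumed. For $1/2<u<1$ this is false: $\e t^u\ll t$, so a starting point with $|x|_\infty\leq t$ need not lie in $[-\e L,\e L]^d$ at all. Without that inclusion you cannot invoke Lemma \ref{sonno} to place $x$ in $\z_L$, Proposition \ref{congiuntivite} (whose maximum runs only over $x\in\z_L$) does not cover your starting point, and the identification of $Y_j$ with $Y^{(L)}_j$ breaks down. Your closing remark that one should choose ``$u>1/2$ small enough that Proposition \ref{congiuntivite} still applies'' compounds the error: that proposition holds for \emph{every} $u>1/2$ with no upper restriction, so there is no tension pushing $u$ downward, and pushing it below $1$ is precisely what invalidates the argument. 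The fix is one line: fix any $u>1$ (so that $t\leq\e L$ eventually, as the paper does with $u=2+d/2$) and then choose $p$ with $up>1+d/2$; with that correction your estimate $c(\xi)\,t^{1-up}(\log t)^p=o(t^{-d/2})$ and the rest of the argument go through.
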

\begin{proof}
Since $P_x(|N_t-t| \geq t/2) \leq e^{-c t}$ for some positive constant
$c$, we can write
$$
 \bigl|  P_x(\wt{Y} _t=x ) -  \sum _{n = \lfloor t/2\rfloor}
^{\lfloor 3t/2 \rfloor } P_x ( Y_n =x )P( N_t=n) \bigr|\leq
e^{-ct}\,.
$$
A similar expression holds  for $\wt{Y}^{(L)}_t$ and $Y_n^{(L)}$.
 On the other hand, thanks to (\ref{awross})
we can use (for $n\leq \lfloor 3t/2\rfloor$)
\[ P_x (Y_n=x) \leq P_x \( Y^{(L)}_n=x\) + P_x \bigl( \max_{1\leq j \leq  T_{\lfloor 3t/2\rfloor} }  |X_j|>\e \,L \bigr)\,.
\]
Together with the above observations, this gives:
\begin{multline}\label{anagnina}
P_x\(\wt{Y} _t=x \) \leq e^{-ct} +\sum _{n = \lfloor t/2\rfloor}
^{\lfloor 3t/2 \rfloor } P_x ( Y_n =x )P( N_t=n)\\
\leq 2 e^{-ct} + P_x\(\wt{Y}^{(L)} _t=x \)+P_x \bigl( \max_{1\leq j
\leq T_{\lfloor 3t/2\rfloor} }  |X_j|>\e\,L \bigr) \,.\end{multline}
To bound the last expression, we use
Markov inequality and Lemma \ref{moltobuono}: 
\begin{multline}\label{battistini}
P_x \( \max_{1\leq j \leq  T_{\lfloor 3t/2\rfloor} }  |X_j|>\e\,L \)
\leq (\e\,L) ^{-1} E_x
\( \max_{1\leq j \leq  T_{\lfloor 3t/2\rfloor} }  |X_j|  \) \\ \leq
\k\,\e^{-1} L^{-1}  \(|x|+ [1+\log (1+|x|)]  \) \,\lfloor 3t/2 \rfloor\,.
\end{multline}
If $L=t^u$, and $u>1$, then we can assume $t\leq\e\,L$, and
collecting \eqref{anagnina}, \eqref{battistini} and invoking
Proposition \ref{congiuntivite} (using again Lemma \ref{sonno})
 we get for $\bbP$--a.a.\ $\xi$ that
$ t^{d/2} P_x(\wt{Y} _t=x )\leq  c\( 1+ \,t^{d/2 -u +2}\) $ for all
$x \in \xi\cap\cC_\infty^*$ such that $|x|_\infty \leq t $, for some
finite constant $c=c(\xi)$. Taking e.g.\ $u=2+d/2$ concludes the
proof.
\end{proof}

\section{Expected distance bound}\label{distante}
In this section we prove the distance estimate \eqref{mirtillo1}.
Given $x,y \in \xi \cap \cC_\infty^*$ define the heat kernel by
$q_t(x,y)=P_{x,\xi}(Y_t=y)/w(y)$. Given  $\d>0$,  define also 
\begin{align}
&  D  = \sup_{\xi\in\cN}\sup_{x\in\xi \cap \cC^* _\infty}\max_{1\leq
j\leq T_1}
 E_{x, \xi }\bigl[\bar d(x,X_j)^2 \bigr]\\
&  M(x,t) = E_{x,\xi} \bigl[\bar d(x,Y_t)\bigr] = \sum_{y\in \xi \cap \cC^*_\infty} \bar d(x,y)q_t(x,y)w(y) \\
&  Q(x,t)  = -E_{x, \xi} \bigl[ \log q_t(x,Y_t) \bigr]= - \sum_{y\in \xi \cap \cC^*_\infty} q_t(x,y)w(y)\log q_t(x,y) \\
& \Cvol(x, \d )
  = \sup_{0<s<\delta}\biggl\{s^d\sum_{y\in\xi \cap \cC^*_\infty}w(y)\nep{-\bar d(x,y)s}\biggr\}\,.
\end{align}
(By continuity, $Q(x,0)=\log w(x)$.) By Lemma \ref{lecda20}, we know
that $D<\infty$.

\begin{lemma}\label{distanceBounds}
 For all $x$,
\begin{align}
&    M'(x,t)^2\leq D Q'(x,t)\,, \qquad \forall t\geq 0\,,\label{upperbound} \\
&   M(x,t)^d\geq\Exp{-1-\Cvol(x, \d)+Q(x,t)}\,,  \qquad \text{ if
}\; M(x,t)>\delta^{-1}\,. \label{lowerbound}
\end{align}
\end{lemma}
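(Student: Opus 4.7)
The plan is to treat the two inequalities separately, both along classical Nash-type lines for reversible Markov chains, using the reversibility $w(y)\omega_{y,z}=w(z)\omega_{z,y}$ of the restricted walk.

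For \eqref{upperbound}, I would first differentiate both quantities. The forward equation for $\wt Y_t$ gives $\partial_t[w(y)q_t(x,y)]=\sum_z w(y)\omega_{y,z}(q_t(x,z)-q_t(x,y))$, and after symmetrizing the double sum in $(y,z)$ one obtains
\[
 2M'(x,t)=\sum_{y,z}w(y)\omega_{y,z}\bigl(q_t(x,y)-q_t(x,z)\bigr)\bigl(\bar d(x,z)-\bar d(x,y)\bigr),
\]
\[
 2Q'(x,t)=\sum_{y,z}w(y)\omega_{y,z}\bigl(q_t(x,y)-q_t(x,z)\bigr)\bigl(\log q_t(x,y)-\log q_t(x,z)\bigr),
\]
(the $+1$ produced by differentiating $\log q_t$ drops out since $\sum_y\partial_t[w(y)q_t(x,y)]=0$). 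Next I would apply Cauchy--Schwarz to the $M'$ sum with the split $(q_t(x,y)-q_t(x,z))=\tfrac{q_t(x,y)-q_t(x,z)}{\sqrt{q_t(x,y)+q_t(x,z)}}\sqrt{q_t(x,y)+q_t(x,z)}$, yielding
\[
 4|M'(x,t)|^2\leq\!\!\left[\sum_{y,z}w(y)\omega_{y,z}\frac{(q_t(x,y)-q_t(x,z))^2}{q_t(x,y)+q_t(x,z)}\right]\!\!\left[\sum_{y,z}w(y)\omega_{y,z}(q_t(x,y)+q_t(x,z))(\bar d(x,y)-\bar d(x,z))^2\right].
\]
The elementary inequality $(a-b)(\log a-\log b)\geq 2(a-b)^2/(a+b)$ (a Jensen application to the integral representation of $\log$) shows the first bracket is bounded by $Q'(x,t)$. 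For the second bracket, the $(y,z)$-symmetry of $w(y)\omega_{y,z}$ reduces it to $2\sum_y q_t(x,y)w(y)\sum_z\omega_{y,z}(\bar d(x,y)-\bar d(x,z))^2$; now the triangle inequality $|\bar d(x,y)-\bar d(x,z)|\leq \bar d(y,z)$ together with $\sum_z\omega_{y,z}\bar d(y,z)^2=E_{y,\xi}[\bar d(y,X_{T_1})^2]\leq D$ (by the very definition of $D$) bounds this bracket by $2D$. Combining gives $|M'(x,t)|^2\leq \tfrac12 D\,Q'(x,t)\leq D\,Q'(x,t)$.

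For \eqref{lowerbound} I would use entropy duality. Setting $\mu(y):=w(y)q_t(x,y)$, which is a probability on $\xi\cap\cC^\ast_\infty$, and for $s\in(0,\delta)$ introducing the reference probability $\nu_s(y):=w(y)e^{-s\bar d(x,y)}/Z_s$ with $Z_s:=\sum_y w(y)e^{-s\bar d(x,y)}$, the inequality $D(\mu\|\nu_s)\geq 0$ rearranges to
\[
 Q(x,t)=-\sum_y\mu(y)\log q_t(x,y)\leq s M(x,t)+\log Z_s.
\]
Under the hypothesis $M(x,t)>\delta^{-1}$ the choice $s=1/M(x,t)$ lies in $(0,\delta)$, so $sM(x,t)=1$ and $Z_s=s^{-d}(s^dZ_s)\leq M(x,t)^d\,\Cvol(x,\delta)$ by the definition of $\Cvol$. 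Hence
\[
 Q(x,t)\leq 1+d\log M(x,t)+\log \Cvol(x,\delta)\leq 1+d\log M(x,t)+\Cvol(x,\delta),
\]
where the last step uses $\log t\leq t$. Exponentiating gives \eqref{lowerbound}.

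Neither step is particularly subtle; the only point that requires care is the bookkeeping of the symmetrization in \eqref{upperbound}, which must be arranged so that the Cauchy--Schwarz split lines up exactly with $Q'(x,t)$ on one side and with the $D$-bound on the other. The role of the enlarged-holes distance $\bar d$ (rather than Euclidean distance) is precisely to make $D<\infty$, which is guaranteed by Lemma~\ref{lecda20}; without this uniformity the second Cauchy--Schwarz factor could not be controlled.
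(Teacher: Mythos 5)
Your proof is correct and follows essentially the same route as the paper: for \eqref{upperbound} it is the Nash-type computation of \cite[Proposition 3.4]{barlow} (symmetrize $M'$, Cauchy--Schwarz against the logarithmic Dirichlet form, and bound the $\bar d$-factor by $D$ via $\sum_z\o_{y,z}\bar d(y,z)^2=E_{y,\xi}[\bar d(y,X_{T_1})^2]\leq D$), which is exactly what the paper does. For \eqref{lowerbound} the paper only cites \cite[Lemma 3.3]{barlow}, and your entropy-duality argument (Gibbs inequality $Q\leq sM+\log Z_s$ with $s=1/M$ and $s^dZ_s\leq\Cvol$) is precisely that adaptation, correctly carried out.
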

\begin{proof}
 The proof of \eqref{lowerbound}  is  an adaptation of \cite[Lemma 3.3]{barlow}; see also \cite[Lemma
 6.3(a)]{BP} for a similar argument.
To prove \eqref{upperbound},
 recall that $\o_{x,y}$ denotes  the jump rate  of the restricted
random walk (cf \eqref{eq_omegaxy}).
 For \eqref{upperbound},
 following \cite[Proposition 3.4]{barlow}
 almost exactly, we use the triangle inequality for $\bar d$ and then
 Schwarz' inequality  to arrive at:
 \begin{multline}
    M'(x,t)^2
   \leq \frac 12\biggl(\sum_{y,z} q_t(x,y) \o_{y,z}  \bar d(y,z)^2\biggr)\times \\
   \biggl(\sum_{y,z}
  \o_{y,z} \left(q_t(x,y)-q_t(x,z)\right)(\log q_t(x,y)-\log q_t(x,z))\biggr)\,,\label{patroclo}
 \end{multline}
where $\sum_{y,z}$ corresponds to $\sum _{y\in \xi \cap \cC_\infty^*
}\sum _{z \in \xi \cap \cC_\infty^*}$.

 The conclusion is now very different from \cite{barlow}  and the use of the distance $\bar d$ instead of the euclidean distance becomes crucial.
We observe that
\begin{multline*}\sum_{y,z} q_t(x,y) \o_{y,z} \bar d(y,z)^2
 =\sum _{y} q_t(x,y) \sum_z P_{y, \xi}(X_{T_1} =z)\bar d(y,z)^2\\
 =
 \sum_{y} P_{x, \xi}( Y_t=y) E_{y,\xi}  \bigl[\bar d(y,X_{T_1})^2\bigr]\leq D\,,
 \end{multline*}
so that the first factor inside the brackets in the last member of
\eqref{patroclo}  is bounded by $D$.
 Exactly as in \cite[Proposition 3.4]{barlow}, the second factor is equal to
 $2Q'(x,t)$.
 We therefore have $M'(x,t)^2\leq D Q'(x,t)$, as desired.
\end{proof}


\begin{lemma}\label{mezzanotte} Take $K,T_0$ satisfying \eqref{probdbar} for some $a>0$.
 For $\bbP$--a.a. $\xi$, 
 \begin{equation}\label{cts}
\Cvol(x, \d )\leq  C(\xi)  \d^d [1+\log (1+|x| )
]^d+C(\xi)e^{C(\xi) \d}  \,, \qquad x \in \xi \cap \cC^*_\infty
 \end{equation}
 for some positive constant $C(\xi)$. In particular,
for $\bbP$--a.a. $\xi$, 
\begin{equation}\label{ctsbis}
\max _{n \geq 1}   \max _{\substack{  x\in \xi\cap  \cC^*_\infty:\\
|x|_\infty \leq n } }\;\, \sup_{ t \geq  n } \Cvol (x,1/\sqrt{t} ):=
C_1 (\xi) < \infty \,.
\end{equation}
\end{lemma}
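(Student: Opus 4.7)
The plan is to estimate $\Cvol(x,\d)$ by bounding the sum $\sum_{y \in \xi \cap \cC^*_\infty} w(y) e^{-\bar d(x,y)s}$ uniformly for $s \in (0,\d)$, then multiplying by $s^d$ and taking the supremum. The argument uses two main inputs: the uniform bounds $w(y)\leq T$ and $n_z\leq T_0$ on the good set (from the definitions in Section \ref{ambiente}), and the almost-sure linear comparison between $\bar d$ and the Euclidean distance that was established within the proof of Corollary \ref{coroscaz}.

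First, I would reduce the sum to one over white boxes. For any $y \in \xi \cap \cC^*_\infty$, Lemma \ref{leT} yields $w(y) \leq T$. Moreover, if $y \in B(z)$ with $z \in C^*_\infty$, then $z \notin \cG$ forces $R_z = 0$, so $B(z)$ is not overcrowded and $n_z < T_0$. Letting $z_x$ denote the white box containing $x$ and recalling that $\bar d(x,y)=\bar d(z_x,z)$,
\[
\sum_{y \in \xi \cap \cC^*_\infty} w(y) e^{-\bar d(x,y)s} \;\leq\; T\,T_0\sum_{z \in C^*_\infty} e^{-\bar d(z_x, z)s}.
\]

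Second, I would split the sum over $z$ using the key Borel--Cantelli consequence of Proposition \ref{dist0} established in the proof of Corollary \ref{coroscaz}: $\bbP$--a.s.\ there exists a finite $n_0(\xi)$ such that whenever $n \geq n_0$, $|z_x|_\infty \leq n$, and $|z_x-z| \geq \log n$, then $\bar d(z_x,z) \geq 2|z_x-z|$. Taking $n := \max(n_0,\lceil|z_x|_\infty\rceil)$ and setting $L = L(\xi,x) = \log n \leq C(\xi)\bigl(1+\log(1+|x|)\bigr)$, this gives two regimes. For $|z_x-z|\leq L$, trivially $e^{-\bar d(z_x,z)s}\leq 1$, and the number of such $z$ is at most $(2L+1)^d$. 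For $|z_x-z|> L$, the linear comparison yields $e^{-\bar d(z_x,z)s}\leq e^{-2|z_x-z|s}$, and extending the sum to all of $\bbZ^d$ gives $\sum_{v\in\bbZ^d} e^{-2|v|s}\leq C s^{-d}$ for every $s>0$.

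Third, combining these estimates,
\[
s^d \sum_{z \in C^*_\infty} e^{-\bar d(z_x, z)s}
\;\leq\; C(\xi)\,s^d\,\bigl(1+\log(1+|x|)\bigr)^d + C(\xi),
\]
and taking the supremum over $s\in(0,\d)$ yields
\[
\Cvol(x,\d) \;\leq\; C(\xi)\,\d^d\bigl(1+\log(1+|x|)\bigr)^d + C(\xi),
\]
which is dominated by the right-hand side of \eqref{cts} since $e^{C(\xi)\d}\geq 1$. For \eqref{ctsbis}, I simply specialize \eqref{cts} to $\d = 1/\sqrt{t}$ with $t\geq n$ and $|x|_\infty\leq n$; the first term is bounded by $C(\xi)(1+\log(1+n))^d/n^{d/2}$, which is uniformly bounded in $n$, and the second is bounded by $C(\xi)e^{C(\xi)}$.

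The main technical point --- really the heart of the argument --- is ensuring the effective lower bound $\bar d(z_x,z)\gtrsim |z_x-z|$ beyond a logarithmic threshold. All the analytic estimates (the exponential sums over $\bbZ^d$) are routine once this comparison is in place, so the Borel--Cantelli step already carried out in Corollary \ref{coroscaz} is what makes the proof essentially immediate.
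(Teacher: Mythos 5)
Your proof is correct and follows essentially the same route as the paper's: both reduce the point sum to a sum over white boxes via the uniform bounds $w(y)\leq T$ and $n_z< T_0$, and both split at a logarithmic radius, using the almost-sure linear comparison between $\bar d$ and Euclidean distance (Proposition \ref{dist0} via Corollary \ref{coroscaz}) to control the far part by a routine exponential sum over $\bbZ^d$. The only slip is cosmetic: the claim $\sum_{v\in\bbZ^d}e^{-2|v|s}\leq Cs^{-d}$ for \emph{every} $s>0$ fails for large $s$ (the $v=0$ term alone contributes $1$); for $s\in(0,\d)$ one gets instead $s^d\sum_{v}e^{-2|v|s}\leq C(1+s^d)\leq Ce^{C\d}$, which is precisely why the second term in \eqref{cts} carries the factor $e^{C(\xi)\d}$ rather than a plain constant, and in the application \eqref{ctsbis} one has $\d\leq 1$ anyway.
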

\begin{proof}
The last bound \eqref{ctsbis} trivially follows from \eqref{cts}.
Therefore, we concentrate  on \eqref{cts}. Due to Lemma \ref{leT}
and the definition of the random field  $\vartheta$, we know that
$w(y) \leq c=c(K,T_0)$ for all $y \in \xi \cap  \cC_\infty ^*$.
Moreover, we know that all $K$--boxes $B(z)$ with $z \in C^*_\infty$
are not overcrowded, i.e.\ $\xi ( B(z) ) < T_0$. In particular, we
can bound
 $ \sum _{y \in \xi \cap B(z) } e^{-s |x-y|} $ from above by $T_0 e^{-s K (|v-z|-c(d) ) }$ if $x \in B(v)$.
 Let $\k= \k(\xi,K)$ be the positive constant appearing in Corollary \ref{coroscaz}. We define
 $$ W(x) =\bigl \{y \in \xi \cap \cC^*_\infty:  |x-y|/\k  \leq 2[1+\log (1+|x| ) ]\bigr\}\,.$$
Then, applying also  Corollary \ref{coroscaz},
 we conclude that
\begin{multline*}
\Cvol(x,\d ) \leq c \sup_{0 <s<\d} \Big\{
  s^d |W(x)|+ s^d
\sum_{ y \in \cC^*_\infty \setminus W(x) }
 e^{- \frac{s|x-y|}{2\k} }
\Big\}
\\\leq C \d^d [1+\log (1+|x| ) ]^d +
%
Ce^{C\d} \sup_{0<s<\d} \bigl\{  s^d \sum _{z   \in \bbZ^d}    e^{-s
\frac{K|z|}{2\k} }\bigr\}
%
 \,,
\end{multline*}
for a positive constant $C$ independent from $x$. The last term  can
be estimated by $$
c(d) C \int _1 ^\infty s^d e^{-s
\frac{ K r}{ 2 \k}} r^{d-1} dr=c(d) C \int _s^\infty e^{-y} y^{d-1}
dy \leq c'(d) C \,,
$$
thus concluding the proof of \eqref{cts}.
\end{proof}
Let us now come back to the heat kernel estimate of Proposition
\ref{guerra_banane}.
We know that 
$ t^{d/2} \bbP_x^\xi (Y_t =x )$ is bounded from above uniformly as
$t \geq 1$,  $ x \in \xi \cap \cC^*_\infty$ and  $|x|\leq t$,  for
$\bbP$--a.a.\ $\xi$.
Since $\sup_{y \in \xi \cap \cC^*_\infty} w(y) < \infty$ (see Lemma
\ref{leT}) and $q_t(x,y) \leq q_t(x,x)$, this implies that there
exists a  (finite) positive constant $A=A(\xi)$ (that we take larger
than $1$) such that
\begin{equation}\label{sibillini}
 \sup _{t\geq |x| \lor 1  }  \;\sup_{y\in \xi\cap  \cC^*_\infty}\; t^{d/2 } q_t (x,y) \leq A  \, , \qquad \forall x \in \xi \cap \cC^\infty_*\,.
\end{equation}
\begin{proposition}\label{esausta} Let $t(x)= |x| \lor e$ and set
$ T(x)=   t(x)\log t(x)  \vee \frac{c^2}{dD}$,
 where the positive constant $c=c(\xi)$ is the same appearing in \eqref{phiposa} of Lemma \ref{lecda20} with $n=p=1$.
 Then for $\bbP$--a.a. $\xi$  there exists a  constant $C_0(\xi)\geq 1 $ such that
 \begin{equation}\label{sfinita}
  M(x,t)\leq  C_0(\xi) \sqrt{t} \qquad \forall x \in \xi \cap \cC^*_\infty\,, \; \forall t \geq T(x)\,.
 \end{equation}
\end{proposition}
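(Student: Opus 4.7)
\smallskip\noindent
My plan is a bootstrap (continuity) argument combining the two opposing bounds of Lemma~\ref{distanceBounds} with the diagonal heat-kernel bound \eqref{sibillini}. The heuristic is: if $M(x,t)/\sqrt t$ were unbounded, then \eqref{upperbound} would force $Q(x,\cdot)$ to grow faster than what \eqref{lowerbound}, together with the $\Cvol$-control of Lemma~\ref{mezzanotte}, allows once the heat-kernel lower bound $Q(x,s)\geq \tfrac d2\log s-\log A(\xi)$ (valid for $s\geq |x|\vee 1$) is inserted at a comparable earlier time $s$.

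\smallskip\noindent
The workhorses are, firstly, Cauchy--Schwarz applied to \eqref{upperbound}: for every $0\leq s\leq t$,
\begin{equation}\label{CSeq}
\bigl(M(x,t)-M(x,s)\bigr)^{2}\leq (t-s)\,D\,\bigl(Q(x,t)-Q(x,s)\bigr);
\end{equation}
and secondly, \eqref{lowerbound} with $\d=1/\sqrt t$, which, by \eqref{ctsbis} (applicable since $t\geq T(x)\geq |x|_\infty$), yields $Q(x,t)\leq d\log M(x,t)+1+C_{1}(\xi)$ whenever $M(x,t)>\sqrt t$.

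\smallskip\noindent
I would then run the contradiction at the first crossing time. Fix $C_0=C_0(\xi)\geq 1$ large and set $t^{*}:=\inf\{t\geq T(x):M(x,t)>C_{0}\sqrt t\}$; assuming $t^{*}<\infty$, continuity of $t\mapsto M(x,t)$ gives $M(x,t^{*})=C_{0}\sqrt{t^{*}}$. In the main case $t^{*}\geq 2T(x)$, pick $s=t^{*}/2\geq T(x)$, so that $M(x,s)\leq C_{0}\sqrt s$ by definition of $t^{*}$. Inserting the upper bound on $Q(x,t^{*})$ and the heat-kernel lower bound on $Q(x,s)$ into \eqref{CSeq} gives
\[
M(x,t^{*})-M(x,t^{*}/2)\leq \sqrt{\tfrac{t^{*}}{2}\,D\,\bigl[d\log C_{0}+C''(\xi)\bigr]},
\]
while by monotonicity of $M$ the left-hand side is at least $C_{0}\sqrt{t^{*}}(1-1/\sqrt 2)$. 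For $C_{0}$ chosen large enough depending only on $D,d$ and $C''(\xi)$, these estimates contradict each other, ruling out $t^{*}\geq 2T(x)$.

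\smallskip\noindent
The main obstacle is the remaining range $t^{*}\in[T(x),2T(x))$, where $t^{*}/2$ drops below $T(x)$ and the dyadic step breaks down; the specific form $T(x)=t(x)\log t(x)\vee c^{2}/(dD)$ is tailored to handle precisely this step. I would instead apply \eqref{CSeq} with $s=t(x)\vee 1\geq |x|\vee 1$, and control $M(x,t(x))$ by a single self-consistent application of \eqref{CSeq} from $0$ combined with the $Q$-upper bound: this yields $M(x,t(x))^{2}\leq c\,t(x)\log t(x)=c\,T(x)$, hence $M(x,t(x))\leq\sqrt{c\,T(x)}$. The corresponding entropy increment $Q(x,t^{*})-Q(x,t(x))$ is bounded by $d\log C_{0}+\tfrac d2\log(t^{*}/t(x))+O(1)$, and since $t^{*}\leq 2T(x)$ gives $\log(t^{*}/t(x))\leq \log(2\log t(x))$, the logarithm $\log t(x)$ built into $T(x)$ is exactly what is needed to absorb this residual term and close the bootstrap with a constant $C_{0}$ depending only on $\xi$.
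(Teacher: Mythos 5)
Your approach is genuinely different from the paper's (which follows Barlow: it introduces the excess entropy $L(t)=\frac1d(Q(x,t)+\log A-\frac d2\log t)$, integrates $M'\leq\sqrt{DQ'}$ with the split $Q'(x,s)=d\bigl(L'(s)+\tfrac1{2s}\bigr)$, and plays the linear upper bound $M\lesssim(1+L(t))\sqrt{dDt}$ against the exponential lower bound $M\gtrsim e^{L(t)}\sqrt t$ to conclude $L(t)\leq C_3(\xi)$). Your first--crossing argument in the regime $t^*\geq 2T(x)$ is sound: there the entropy increment over one dyadic step is $d\log C_0+\tfrac d2\log 2+O(1)$, and $C_0^2$ beats $\log C_0$.

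The gap is in the boundary case $t^*\in[T(x),2T(x))$, and it is not cosmetic. Applying \eqref{CSeq} once over $[t(x),t^*]$ and bounding $Q(x,t^*)-Q(x,t(x))\leq d\log C_0+\tfrac d2\log(t^*/t(x))+O(1)$ leaves you with $\tfrac d2\log(t^*/t(x))\geq\tfrac d2\log\bigl(\log t(x)\bigr)$ (since $t^*\geq T(x)=t(x)\log t(x)$ in the relevant regime), a quantity that is \emph{unbounded in $x$}. The resulting estimate is $M(x,t^*)\leq M(x,t(x))+\sqrt{t^*D\,(d\log C_0+\tfrac d2\log\log t(x)+O(1))}$, i.e.\ $M(x,t^*)\lesssim\sqrt{t^*\log\log|x|}$, which does not contradict $M(x,t^*)=C_0\sqrt{t^*}$ for any fixed $C_0(\xi)$ once $|x|$ is large. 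Your claim that the $\log t(x)$ built into $T(x)$ absorbs this residual has it backwards: that logarithm is what you need to dominate $M(x,t(x))\leq\sqrt{c\,T(x)}$ by $\sqrt{c\,t^*}$, but it is also what makes the ratio $t^*/t(x)$ diverge. The single-interval Cauchy--Schwarz is too lossy here precisely because it charges the "stationary" entropy production $\tfrac d2\log(t/s)$ at the full rate $\sqrt{(t-s)\cdot\Delta Q}$. The repair is the paper's finer integration: write $M'(x,s)\leq\sqrt{dD}\bigl(L'(s)+\tfrac1{2s}\bigr)^{1/2}\leq\sqrt{dD}\bigl(\sqrt{L'(s)}+\tfrac1{\sqrt{2s}}\bigr)$, integrate the $1/\sqrt{2s}$ part exactly (costing only $\sqrt{2dDt}$ no matter how large $t/s$ is), and apply Cauchy--Schwarz only to $\int\sqrt{L'}$, which costs $\sqrt{dDt}\,\sqrt{L(t^*)}\leq\sqrt{dDt}\,\sqrt{\log C_0+O(1)}$. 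With that substitution your bootstrap closes; without it the boundary case fails.
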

\begin{proof}
 We will follow \cite[Proposition 3.4]{barlow}.
 Since  $C_0(\xi) \geq 1$, we can assume
 that $M(x,t)\geq\sqrt t$ otherwise we have nothing to prove. In this case,
since $t \geq T(x) \geq |x|$, by \eqref{lowerbound} and
\eqref{ctsbis} we can estimate
\begin{equation}\label{lostrozzo}
M(x,t)^d\geq\Exp{-1-C_1(\xi) +Q(x,t)}\,.
\end{equation}
We will use this key lower bound at the end.

Following \cite{barlow}, we  define $L(t)=\frac1d(Q(x,t)+\log
A-\frac d2\log t)$ (recall \eqref{sibillini}).  Note that
 $L(t)\geq 0$ on $t\geq t(x)$ by \eqref{sibillini}.
Then, we define
 \begin{equation}\label{rafaello}
 t_0:=
 \begin{cases}
 1 &  \text{ if } L(t) \geq 0  \text{ on } [1, t(x)]\,,\\
 \sup\{ t \in (0, t(x)]\,:\, L(t)<0 \} & \text{ otherwise} \,.
\end{cases}
\end{equation}
Note that in the second case, it must be $L(t_0)=0$, i.e.\ $
Q(x,t_0)= -\log A+ \frac d2\log t_0$.

We claim that  $M(x,t_0) \leq \sqrt{ d D T(x)}$. To this aim, let us
first assume that $L(t) \geq 0$ for all $t >0$. Then $t_0=1$ and
from the definition of $T(x)$ and Lemma \ref{lecda20} we deduce that
$$M(x,t_0)= E_{x,\xi}  \bigl( \bar d (x, Y^{\text{c.t.}}_1) \bigr)= E_{x,\xi}
\Big[ E_{x,\xi}  \bigl( \bar d (x, Y^{\text{d.t.}}_{N_t} )|N_t)
\Big] \leq c\, E(N_t)= c \leq \sqrt{ dD   T(x) }\,,$$ where $N_t$ is
a Poisson process of parameter $1$ independent from the
discrete--time restricted random walk and were  ``c.t." and ``d.t."
mean continuous--time and discrete--time, respectively (to avoid
ambiguity). Let us now assume that  $L(t) < 0$ for some $t$ (the
second case of \eqref{rafaello}). Since $L(t) \geq 0$ for $t \geq
t(x) $ as already observed, it must be $t_0 < t(x)$.
 By \eqref{upperbound} in Lemma \ref{distanceBounds}  and
 Schwarz' inequality, $M(x,t_0)$ can be bounded from above by
 $ \sqrt{t_0D}\bigl(\int_0^{t_0}Q'(x,s)\dd s\bigr)^{1/2}$.
 By continuity at both endpoints and using that $L(t_0)=0$, this last
 expression
  is bounded from above by
 \begin{equation}
  \sqrt{t_0D}\left(\frac d2\log t_0-\log A-\log w(x)\right)^{1/2}
  \leq\sqrt{t_0Dd\log t_0}
  \leq\sqrt{D d T(x)},
 \end{equation}
 (recall that  
 $A \geq 1$ and  $w(x) \geq r(0)=1$).  This concludes the proof of our claim.

Since $Q'(x,t)= d L'(t)-d/(2t)$ and using \eqref{upperbound}, by the
same computations as in \cite{barlow}
 we get for all $t \geq t_0$
 \begin{multline}
  M(x,t)-M(x,t_0)
   \leq \sqrt{dD}\int_{t_0}^t\Big(\frac1{2s}+L'(s)\Big)^{1/2}\dd s
   \leq \sqrt{2d Dt} + L(t) \sqrt{d D t}\,.
 \end{multline}
 Using now the bound $M(x,t_0) \leq \sqrt{ d D T(x)}$ we
conclude that
 \begin{equation}\label{trenino_pierpi}
  M(x,t)\leq\sqrt{dDT(x)}+\sqrt{2dDt}+L(t)\sqrt{dDt}
  \leq(1+\sqrt2)\sqrt{dDt}+L(t)\sqrt{dDt}.
 \end{equation}
 Conversely, because $ \sqrt t\leq M(x,t)$, we can apply
 \eqref{lostrozzo}  to find that
 \begin{equation}\label{nave_pierpi}
  M(x,t)\geq C_2(\xi)
\nep{L(t)}\sqrt t
 \end{equation}
 for some positive constant $C_2$ depending on $\xi$.
 Combining these last two equations \eqref{trenino_pierpi}, \eqref{nave_pierpi}, and eliminating the common $\sqrt t$,
 we see that
 \begin{equation}
  \nep{L(t)}\leq \bigl[ \sqrt{d D} /C_2(\xi)\bigr ]
  (1+\sqrt2+L(t) )
 \end{equation}
 Since $e ^y \geq 1+y+\frac12\,y^2$ for all $y \geq 0$, the above formula implies that
 $ L(t)^2 \leq a+ b L(t)
$ for suitable constants $a=a(\xi)$, $b=b(\xi)$. This last bound
implies that $L(t) \leq C_3(\xi)$. Coming back to 
\eqref{trenino_pierpi} we get \eqref{sfinita}.
 \end{proof}

\subsection{Proof of \eqref{mirtillo1}}
 We have now all the tools to prove \eqref{mirtillo1}. We take  $K,T_0$ satisfying Corollary \ref{coroscaz}  and we define
 $b_n= n^{\g}$ with $\g \in (1,2)$. If $t \geq b_n$ with $n$ large enough,
 then $t \geq  T(x)$ for all     $x\in \xi \cap
 \cC^*_\infty$ such that   $|x |_\infty \leq n $. In particular, applying Proposition \ref{esausta} we conclude that for $\bbP$--a.a. $\xi$, 
 \begin{equation}\label{catarchico}
 \limsup _{n\uparrow \infty}  \max_{ \substack{ x \in \xi \cap \cC^*_\infty:\\ |x|_\infty \leq n } }\;\;
 \sup_{t \geq b_n} \frac{E_{x,\xi} \bigl[\bar d (Y_t,x ) \bigr]}{ \sqrt{t} }  < \infty\,.
 \end{equation}
We now apply Corollary \ref{coroscaz}, to estimate
$$
\frac{|Y_t-x|}{\sqrt{t} }\leq \k n^{-\g/2} [1 +\log (1+n )]   + \k\frac{\bar d (x,Y_t)}{\sqrt{t} }\,,
$$
The above bound together with \eqref{catarchico} trivially implies
\eqref{mirtillo1}. \qed




\section{Sublinearity of the corrector}\label{sublinear}
This section is devoted to the proof of Theorem \ref{piccolo}.

\subsection{Preliminary bounds}
We start with a polynomial estimate on the size of the corrector for points within
the cluster $\cC_\infty$.
Note that we are now working   with the cluster of occupied boxes $\cC_\infty$ and not
with the (smaller) cluster of white boxes $\cC_\infty^\ast$. We will come back to the latter towards  the end of this section.

\begin{lemma}\label{polgrowthbis} For $\theta > d+1$,
 \begin{equation}\label{brunabis}
  \lim _{n\to\infty} n^{-\theta}
  \max_{\substack{ x,y\in\xi \cap \cC_\infty\\|x|_\infty,|y|_\infty\leq n}} \bigl| \chi(\t_x
  \xi, y-x) \bigr|
  =0\,,\quad \bbP\textrm{--a.s.}
 \end{equation}
\end{lemma}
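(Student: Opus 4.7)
The plan is to combine the $L^2$-integrability of $\chi$ with a path argument inside $\cC_\infty$. By Lemma \ref{festicciola}, fixing any reference point $x_0\in\xi$ and setting $\tilde\chi(\xi,y):=\bar\chi(\tau_{x_0}\xi,\,y-x_0)$ for $y\in\xi$, shift covariance gives $\bar\chi(\tau_x\xi,\,y-x)=\tilde\chi(\xi,y)-\tilde\chi(\xi,x)$ for every $x,y\in\xi$. Hence the maximum in \eqref{brunabis} equals $\max\{|\tilde\chi(\xi,y)-\tilde\chi(\xi,x)|:x,y\in\xi\cap\cC_\infty\cap\L_n\}$, where $\L_n:=[-n,n]^d$. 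I will then derive two ingredients, an ergodic $L^2$-bound and a short-path construction inside the cluster, and combine them via Cauchy--Schwarz.

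\emph{Ergodic $L^2$-bound.} Since $\chi\in L^2(\mu)$, the function $\psi(\xi):=\sum_{y\in\xi}r(y)\,|\bar\chi(\xi,y)|^2$ belongs to $L^1(\bbP_0)$. By Campbell's identity \eqref{campbell}, the functional $\xi\mapsto\sum_{x\in\xi\cap[0,1)^d}\psi(\tau_x\xi)$ belongs to $L^1(\bbP)$, so the ergodicity of $\bbP$ (a consequence of (H2)) together with Birkhoff's theorem yield, $\bbP$-a.s., a finite constant $C(\xi)$ such that
\[
 S_n\,:=\,\sum_{x,y\in\xi\cap\L_{2n}}r(y-x)\,|\bar\chi(\tau_x\xi,\,y-x)|^2\;\leq\;C(\xi)\,n^d
\]
for every $n$ large enough.

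\emph{Path in $\cC_\infty$ with bounded jumps.} By (H1) and the remarks in Section \ref{equoesolidale}, $\sigma$ stochastically dominates $Z(p)$ with $p$ arbitrarily close to $1$; for $p$ large enough, a supercritical-percolation argument in the spirit of Proposition \ref{dist0} ensures that, $\bbP$-a.s., there exists $\kappa_1(\xi)<\infty$ with the property that any two vertices of $C_\infty\cap\L_{n/K}$ can be joined inside $C_\infty$ by a nearest-neighbor path of length at most $\kappa_1(\xi)\,n$, for all $n$ large. Selecting one point of $\xi$ in each occupied box along such a path yields, for $x,y\in\xi\cap\cC_\infty\cap\L_n$, a path $x=x_0,x_1,\dots,x_k=y$ in $\xi\cap\cC_\infty$ with $k\leq\kappa_1(\xi)\,n$, all $x_i\in\L_{2n}$, and uniformly bounded jumps $|x_{i+1}-x_i|\leq M:=2K\sqrt{d}$.

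\emph{Conclusion.} Telescoping along this path and applying Cauchy--Schwarz together with the uniform lower bound $r(v)\geq r_0:=e^{-M^\alpha}$ for $|v|\leq M$,
\[
 |\bar\chi(\tau_x\xi,\,y-x)|^2\,\leq\,k\sum_{i=0}^{k-1}|\bar\chi(\tau_{x_i}\xi,\,x_{i+1}-x_i)|^2\,\leq\,\frac{k}{r_0}\,S_n\,\leq\,C'(\xi)\,n^{d+1},
\]
so that the maximum in \eqref{brunabis} is $O(n^{(d+1)/2})=o(n^\theta)$ for every $\theta>(d+1)/2$, and in particular for every $\theta>d+1$. The step I expect to require most care is the a.s.\ linear chemical distance in $C_\infty$: this is not an explicit hypothesis of the lemma and has to be extracted from (H1) via the stochastic domination by a supercritical Bernoulli field, combined with standard chemical-distance estimates for supercritical site percolation; it is indeed the only place in the argument where the full strength of the domination assumption enters.
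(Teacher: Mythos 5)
Your proof is correct, but it takes a genuinely different route from the paper's in the quantitative accounting step. The paper telescopes along a nearest-neighbor box path exactly as you do, but then bounds the result in $L^1$: since $r(x_{i+1}-x_i)\geq\d$ for points in adjacent occupied boxes, the telescoped sum is dominated by $\d^{-1}\sum_{x\in\xi\cap\L_{\l n}}g(\t_x\xi)$ with $g(\xi)=\sum_z r(z)|\chi(\xi,z)|$, whose expectation is $O(n^d)$ by Campbell; Markov's inequality plus Borel--Cantelli then gives $o(n^\theta)$ precisely for $\theta>d+1$ (which is why the lemma is stated with that threshold). Consequently the paper only needs the weaker geometric input that the connecting path stays inside a dilated box $\L_{\l n}$ (the event $E_{\l,n}$, whose complement is exponentially unlikely). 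You instead apply Cauchy--Schwarz along the path, paying a factor $k\leq\kappa_1(\xi)n$ but cashing in the $L^2$-integrability of $\chi$ through an ergodic-theorem bound $S_{Cn}\leq C(\xi)n^d$; this yields the stronger estimate $O(n^{(d+1)/2})$, at the price of genuinely needing the a.s.\ linear chemical distance in $C_\infty$ (available via the Antal--Pisztora bound \eqref{awros}, which the paper already invokes in Lemma \ref{sonno}). Two small points to tidy up: (i) a path of length $\leq\kappa_1(\xi)n$ with jumps of size $\leq M$ starting in $\L_n$ is only guaranteed to stay in $\L_{(1+M\kappa_1(\xi))n}$, not in $\L_{2n}$, so you should run the $L^2$ bound with $S_{C(\xi)n}$ for a $\xi$-dependent constant (harmless, since the ergodic bound holds for all large radii); (ii) you should take the box path self-avoiding (e.g.\ a geodesic) so that each ordered pair $(x_i,x_{i+1})$ is counted at most once in $S_n$. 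Neither affects the validity of the argument.
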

\begin{proof}
For any  $x,y \in\xi\cap\cC_\infty$ with $|x|_\infty, |y|_\infty \leq n$ there
exists a
path 
$x=x_0,\dotsc,x_m=y\in\xi$, with $x_i$ and $x_{i+1}$ belonging
to adjacent $K$--boxes $B_i,B_{i+1}$ on $\cC_\infty$.
For a fixed $\l<\infty$, let $E_{\l,n}\subset \cN $, denote the event 
that, for any $x,y\in\xi\cap\cC_\infty$ with $|x|_\infty, |y|_\infty\leq n$, there
exists such a path with the additional property that
$\max_i|x_i|_\infty \leq \l\, n$.

Note that there exists $\d=\d(K)>0$ such that $r(x_{i+1}-x_i)\geq
\d$ for every $i$. Therefore, using the shift--covariance property
we get
\begin{equation}\la{chi1bis}
  |\chi(\t_x \xi,y-x )|
 \leq\d^{-1}\sum_{i=0}^{m-1}r(x_{i+1}-x_i)
 |\chi(\t_{x_i}\xi,x_{i+1}-x_i)|\,.
\end{equation}
We shall write $B\subset B(\ell)$ when a $K$--box $B$ is contained
in the $|\cdot|_\infty$--box in $\bbR^d$  centered at the origin, of
size $\ell$. Thus on $ E_{\l,n}$ we can estimate
\begin{multline}\la{chio1bis}
 \cR_n(\xi):=\max_{\substack{ x,y\in\xi \cap \cC_\infty\\|x|_\infty,|y|_\infty \leq n}} \bigl| \chi(\t_x
  \xi, y-x) \bigr|  \leq  \d^{-1}
 \sum_{B\subset B(\l n)}
 \sum_{x\in\xi\cap B}
 \sum_{y\in\xi}
 r(y-x)|\chi(\t_x\xi,y-x)|
  \\=
  \d^{-1} \sum _{\substack{x \in \xi: \\|x|_\infty \leq \l n }}
  \sum _{y \in \xi} r(y-x)|\chi(\t_x\xi,y-x)|= \d^{-1} \sum _{\substack{x \in \xi: \\|x|_\infty \leq \l n }} g(\t_x \xi)
   \,,
\end{multline}
where the function $g: \cN_0 \to [0,\infty)$ is defined as
$g(\xi)= \sum _{z \in \xi} r(z) | \chi (\xi,z)|$. Thus,
\be\label{chio2bis}
 \bbP \Big[\cR_n \,1_{\{E_{\l,n}\}}\geq n^\theta\Big]
  \leq n^{-\theta}\,\bbE\Big[\cR_n \,;\: E_{\l,n}\Big ]
  \leq
  \d^{-1}\,n^{-\theta}\, \bbE\Big[\sumtwo{x\in\xi:}{|x|_\infty\leq\l n}g(\t_x\xi)\Big]\,.
\end{equation}
Applying now the Campbell identity \eqref{campbell}, with the
notation (\ref{scalare}) we can write the last expectation as
\begin{equation}\label{verona} \rho (2 \l n)^d \bbE_0 ( g (\xi) )\leq
\rho (2 \l n) ^d \bbE_0( w(0) )^{1/2} \| \chi \|_{L^2 (\mu)} <
\infty\,.
\end{equation}
Using this bound in \eqref{chio2bis} we obtain
$
 \bbP \bigl[\cR_n\,1_{\{E_{\l,n}\}} \geq n^\theta\bigr]
 \leq C\,n^{d-\theta}$,
for some finite constant $C=C(\l,\r,\d)$. In conclusion,
$$
 \bbP \left[\cR_n\geq n^\theta\right]\leq  \bbP \left[\cR_n\,1_{\{E_{\l,n}\}} \geq n^\theta\right] + \bbP[E_{\l,n}^c]\leq
 C\,n^{d-\theta} + \bbP[E_{\l,n}^c]\,.
$$
 If $\l$ is sufficiently large, the
same argument used in the proof of Lemma \ref{sonno} shows that
$\bbP[E_{\l,n}^c]$ is exponentially decaying in $n$. Taking
$\theta>d+1$, the Borel-Cantelli lemma implies that
$n^{-\theta}\cR_n\,\to 0$, $\bbP$--almost surely.
\end{proof}

The next lemma extends the estimate of Lemma \ref{polgrowthbis} to the case where
$y\notin \cC_\infty$. For $\a\leq 1$ this remains polynomial. When
$\a>1$ the bound is of the form $\exp((\log n)^\a)$. To unify the
notation
we use the function
\[u_{\g,c}(t)=t^\g \exp \bigl[ c (\log (t+1) )^\a\bigr]\]
introduced in Lemma \ref{lecda}.

\begin{lemma}\la{leles} There exist  constants $\g,c $ such that
for $\bbP$--a.a.\ $\xi$ and for all $n \geq n_0(\xi)$,
\begin{equation}\label{dindondan}
\max_{\substack{ x \in \xi\cap \cC_\infty, |x| \leq n\\ y \in \xi\,,
\; |y|\leq n } } | \chi (\t_x \xi , y-x)|\leq u_{\g,c}(n)\,.
\end{equation}
\end{lemma}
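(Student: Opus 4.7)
The plan is to reduce the estimate to Lemma \ref{polgrowthbis} by linking every point $y\in\xi$ to a nearby good point $z\in\xi\cap\cC_\infty$ via shift-covariance, and then to bound the residual ``hole-crossing'' contribution using the fact that $\Phi_i=u_i+\chi_i$ is solenoidal. The extra factor $\exp[c(\log(n+1))^\a]$ in $u_{\g,c}(n)$, compared with the pure polynomial bound of Lemma \ref{polgrowthbis}, is precisely the $1/r(y-z)$ cost of reaching $y$ from a good point $z$ across a hole of $\bbP$-typical diameter $O(\log n)$.

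I would split into cases. If $y\in\xi\cap\cC_\infty$, the bound is immediate from Lemma \ref{polgrowthbis} taken with, say, $\theta=d+2$. If $y\notin\cC_\infty$, the lattice point $z(y)$ sits in a finite connected component of $\bbZ^d\setminus C_\infty$. By assumption (H1) and standard Bernoulli-percolation estimates (already invoked in the proof of Lemma \ref{sonno}), $\bbP$-a.s.\ every such component that meets $[-n/K,n/K]^d$ has $\ell_1$-diameter $\leq c\log n$ for all $n$ large enough. I then pick $z'\in C_\infty$ adjacent to the component containing $z(y)$ and any $z\in\xi\cap B(z')$ (which exists because $z'\in C_\infty$ forces $B(z')$ to be occupied); then $z\in\xi\cap\cC_\infty$, $|y-z|\leq c'\log n$ and $|z|\leq n+c'\log n$. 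The shift-covariance granted by Lemma \ref{festicciola} gives
\[
\chi(\t_x\xi,y-x)=\chi(\t_x\xi,z-x)+\chi(\t_z\xi,y-z),
\]
and applying Lemma \ref{polgrowthbis} on the cube of side $2n$ yields $|\chi(\t_x\xi,z-x)|\leq (2n)^{d+2}$.

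For the hole-crossing term $\chi(\t_z\xi,y-z)$, I would exploit the solenoidal character of $\Phi_i$: by Lemma \ref{rino}, $\bbP$-a.s.\ for every $z\in\xi$,
\[
\sum_{y'\in\xi}r(y'-z)\bigl[(y'-z)+\chi(\t_z\xi,y'-z)\bigr]=0.
\]
Isolating the $y'=y$ term and estimating the rest by its absolute value yields
\[
|\chi(\t_z\xi,y-z)|\leq|y-z|+\frac{F(\t_z\xi)}{r(y-z)},\qquad F(\xi):=\sum_{y'\in\xi}r(y')\bigl|y'+\chi(\xi,y')\bigr|.
\]
Since $u_i,\chi_i\in L^2(\mu)$, Cauchy--Schwarz gives $\bbE_0 F<\infty$; the Campbell identity \eqref{campbell} then produces $\bbE\sum_{z\in\xi,|z|\leq m}F(\t_z\xi)=C m^d$, and a Markov/Borel--Cantelli argument—formally identical to the one at the end of the proof of Lemma \ref{polgrowthbis}—yields $\max_{z\in\xi,|z|\leq 2n}F(\t_z\xi)\leq n^{d+2}$ $\bbP$-a.s.\ eventually. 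Combining with $r(y-z)\geq\exp(-|y-z|^\a)\geq\exp(-(c'\log n)^\a)$ I obtain
\[
|\chi(\t_z\xi,y-z)|\leq c'\log n+n^{d+2}\exp\bigl((c'\log n)^\a\bigr),
\]
which is majorized by $u_{\g,c}(n)$ for appropriate $\g,c$, completing the proof.

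The main obstacle is not the case-splitting nor the Campbell/Markov estimate on $F$, which are standard, but the unavoidable passage of the factor $1/r(y-z)$ through the decomposition: for $\a>1$ this cost genuinely blows up like $\exp((\log n)^\a)$ and cannot be absorbed into any polynomial, which is exactly why the lemma's statement is formulated with $u_{\g,c}(n)=n^\g\exp[c(\log(n+1))^\a]$ rather than a pure power of $n$. The logarithmic diameter bound on holes, inherited from supercritical Bernoulli percolation via (H1), is precisely what keeps the exponent of $\log(n+1)$ equal to $\a$ and nothing worse.
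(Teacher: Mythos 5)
Your proof is correct and follows essentially the same route as the paper: reduce to Lemma \ref{polgrowthbis} via a good point $z$ within $O(\log n)$ of $y$ (using the log-diameter bound on holes), use shift-covariance to split off the hole-crossing term, and pay the price $1/r(y-z)\leq e^{(c'\log n)^\a}$ against an a.s.\ polynomial bound obtained by Campbell--Markov--Borel--Cantelli. The only (harmless) difference is that you route the bound on $|\chi(\t_z\xi,y-z)|$ through the solenoidal identity for $\Phi$, whereas the paper simply isolates the single nonnegative term $r(y-z)|\chi(\t_z\xi,y-z)|$ from the sum $\sum_{y'}r(y'-z)|\chi(\t_z\xi,y'-z)|$, which makes the harmonicity unnecessary.
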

\begin{proof}
As in the proof of Proposition \ref{polgrowthbis}, we get
\begin{equation}\label{ob1}
\bbE\Big[\sum_{x\in\xi: |x|\leq n}\sum_{y\in\xi} |\chi(\t_x\xi,y-x)|
r(y-x)
\Big]\leq c\,n^d\,,
\end{equation}
for some constant $c$. From \eqref{ob1} and Markov's inequality, the
Borel-Cantelli lemma shows that $\bbP$--a.s.\ for all $n$ large
enough: \be\la{ob2} \max_{x\in\xi:\;|x|\leq n}
\sum_{y\in\xi}|\chi(\t_x\xi,y-x)|
r(y-x) \leq n^{d+2}\,.
\end{equation}
Now, take $x,y\in\xi$ such that $|x|,|y|\leq n$ with $n$ large. If
$x\in\xi\cap \cC_\infty$, from Lemma \ref{polgrowthbis},
$|\chi(\t_x\xi,z-x)|\leq n^\theta$ for all $z\in\xi\cap \cC_\infty$,
$|z|\leq n$. However, a.s.\ there exists $z\in\xi\cap \cC_\infty$,
$|z|\leq n$, such that $|z-y|\leq C\log n$ (the distance of $y$ from
$\cC_\infty$ cannot be larger than $C\log n$). Thus, the claim
follows by writing $ |\chi(\t_x\xi,y-x)|\leq |\chi(\t_x\xi,z-x)| +
|\chi(\t_z\xi,y-z)|$ and using \eqref{ob2} on the obvious bound
\[
|\chi(\t_z\xi,y-z)|\leq
\sup_{x\in\xi:\;|x|\leq n} \nep{(C\log n)^\a}\,
\sum_{y'\in\xi}|\chi(\t_x\xi,y'-x)|
r(y'-x)\,. \qedhere
\]
\end{proof}

\subsection{Sublinearity
along a given direction in $\cC_\infty$}\label{mediano1}
Let us fix a coordinate vector $e$ (i.e.\ $e \in \bbZ^d $, $|e|_1=1$).
Recall the notation $B_z=B(z)$, for the $K$-box at $z\in\bbZ^d$.
Omitting  the dependence  on  $e$, we set $ n_0(\xi)=0$
and define inductively
\[
 n_{i+1} (\xi)= \min \{ j>n_i (\xi)\,:\, B_{je} \subset
 \cC_\infty \}\,.
\]
By property (A) in  Section \ref{equoesolidale}, for $K$ large, the
above maps are well--defined $\bbP$--a.s.

\smallskip

We introduce the space $\Omega= \{ \xi: B_0\subset \cC_\infty(\xi)
\} \times B_0^{\bbN}$. A generic element of this space is denoted by
$\o= \bigl(\xi, (v_i\,:\, i\in \bbN  ) \bigr)$.  On the space
$\Omega $ we define a probability measure $\bbP_*$ (depending on the
coordinate vector $e$)  as follows: the marginal distribution of
$\xi$ is given by $\bbP( \cdot|B_0 \subset \cC_\infty) $ while,
conditioned to $\xi$, the sequence $(v_i\,:\, i\in \bbN)$ is
determined by choosing independently for each index $i$ a point $w
_i$ with uniform probability in $B_{ n_i(\xi) e} \cap \xi $ and then
setting
\begin{equation}\label{fabiovoloma}
 v_i= w_i -n_i(\xi)K e \,,
 \end{equation}
 so that $v_i\in B_0$.
Trivially, by \eqref{fabiovoloma}, knowing  $\o= \bigl(\xi,
(v_i\,:\, i\in \bbN  ) \bigr)$ the points  $w_i$ are univocally
determined, hence   we  write $w_i=w_i(\o) $.
 Below we write $\bbE_*$ for the expectation w.r.t.\ $\bbP_*$.
We point out that the
  space $\O$ is an example of the bridge spaces mentioned in the
Introduction.
The following key result is a consequence of assumption (H2):
\begin{lemma}\label{babbo}
Consider the map $\cT: \Omega \to \Omega$ defined as
\[
 \cT\bigl( \xi, (v_i\,:\, i \in \bbN) \bigr )
 = \bigl( \t_{n_1(\xi) K e }  \xi,(v_{i+1}\,:\,  i \in \bbN)\bigr)\,.\]
Then $\bbP_*$ is ergodic and
stationary w.r.t.\ the transformation $\cT$.
\end{lemma}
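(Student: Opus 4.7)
The plan is to realize $(\Omega,\cT,\bbP_*)$ as a factor of an induced dynamical system built from the ergodic system $(\Theta,\t,P^{(K,e)})$ provided by assumption (H2). Put
\[
A := \{(\xi,(a_i)_{i\in\bbZ})\in\Theta:\, B_0\subset\cC_\infty(\xi)\}\,.
\]
Property (A) of Section \ref{equoesolidale}, combined with standard supercritical Bernoulli percolation estimates, gives $\bbP(B_0\subset\cC_\infty)>0$, hence $P^{(K,e)}(A)>0$. I would then observe that the first return time of $\t$ to $A$ is exactly $n_1(\xi)$: indeed $\t^j(\xi,(a_i))\in A$ iff $B_{jKe}\subset\cC_\infty(\xi)$, iff $j\in\{n_1(\xi),n_2(\xi),\dots\}$. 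The induced transformation $\t_A$ on $A$ therefore reads
\[
\t_A(\xi,(a_i)_{i\in\bbZ}) = \bigl(\t_{n_1(\xi)Ke}\xi,\,(a_{i+n_1(\xi)})_{i\in\bbZ}\bigr)\,.
\]
By the standard theory of induced transformations, $\t_A$ preserves $P^{(K,e)}(\,\cdot\,|A)$, and since $\t$ is ergodic by (H2), $\t_A$ is ergodic on $(A,P^{(K,e)}(\,\cdot\,|A))$ as well.

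Next I would introduce the projection $\Pi:A\to\Omega$ defined by
\[
\Pi(\xi,(a_i)_{i\in\bbZ}):=\bigl(\xi,(a_{n_i(\xi)})_{i\in\bbN}\bigr)\,.
\]
By definition of $A$ each box $B_{n_i(\xi)e}$ is occupied, so $a_{n_i(\xi)}\in B_0$ (no $\partial$ symbol appears) and $\Pi$ is well defined. A direct check shows that $\Pi$ pushes $P^{(K,e)}(\,\cdot\,|A)$ forward to $\bbP_*$: the $\xi$-marginal is $\bbP(\,\cdot\,|B_0\subset\cC_\infty)$ by the very definition of $A$, and conditionally on $\xi$ the entries $(a_{n_i(\xi)})_{i\in\bbN}$ are independent uniform on $B_0$, because the full sequence $(a_i)_{i\in\bbZ}$ is such and the indices $n_i(\xi)$ are deterministic given $\xi$. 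The intertwining identity $\cT\circ\Pi=\Pi\circ\t_A$ then reduces to $n_i(\t_{n_1(\xi)Ke}\xi)=n_{i+1}(\xi)-n_1(\xi)$, which is immediate.

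With these ingredients the lemma follows at once. Stationarity of $\bbP_*$ under $\cT$ is the push-forward of the stationarity of $P^{(K,e)}(\,\cdot\,|A)$ under $\t_A$; ergodicity follows because, for any $\cT$-invariant measurable $\cA\subset\Omega$, the pre-image $\Pi^{-1}(\cA)\subset A$ is $\t_A$-invariant and hence has $P^{(K,e)}(\,\cdot\,|A)$-measure $0$ or $1$, which forces $\bbP_*(\cA)\in\{0,1\}$. The only delicate point in the proof is the bookkeeping between the $\bbZ$-indexed coordinate sequence of $\Theta$ and the $\bbN$-indexed extraction producing $\Omega$; the ergodicity itself is a classical consequence of (H2) via the induced-system construction.
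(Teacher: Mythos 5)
Your proposal is correct and follows essentially the same route as the paper: the paper also induces the shift $\t$ of assumption (H2) on the set $\cW=\{B_0\subset\cC_\infty\}$ (citing Lemma 3.3 of \cite{BB} for measure preservation and ergodicity of the induced map), projects to $\O$ via exactly your map $\Pi$, and uses the intertwining identity $\cT\circ\Pi=\Pi\circ\t_A$ to transfer ergodicity and stationarity to $\bbP_*$. The only difference is cosmetic: you verify the push-forward identity $\Pi_\# P^{(K,e)}(\cdot\,|\,A)=\bbP_*$ in slightly more detail than the paper does.
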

\begin{proof}
Consider the space $\Theta$ with probability measure $P:=P^{(K,e)}$ involved in assumption (H2). Define
the subset $\cW \subset \Theta$ as
\[
 \cW
 :=\bigl\{\bigl(\xi,(a_i:i\in\bbZ)\bigr)\in\Theta\,
 :\,B_0\subset\cC_\infty\bigr\}
\]
Then $P(\cW)= \bbP( B_0 \subset \cC_\infty)>0$.

Recall the transformation $\t: \Theta \to \Theta $ introduced in assumption (H2).
It is invertible and ergodic w.r.t.\ $P$ (by assumption (H2)). It is simple to check that it is measure preserving
(using the stationarity of $\bbP$).

 Let $F: \Theta \to \bbN \cup\{ \infty\}$
 be defined as
$$
 F (\vartheta)= \min \{ k \geq 1\,:\, \t ^k \vartheta \in \cW\}= n_1(\xi)\,,
 \qquad\vartheta= \bigl( \xi , (a_i: i \in \bbZ) \bigr)  \in \Theta
$$
and set $ \cS (\vartheta )= \t^{F(\vartheta) } (\vartheta)$ if
$F(\vartheta)<\infty$ (define $\cS$ arbitrarily on the event $\{
F(\vartheta)=\infty\}$, having zero $P$--probability). By the above
observations all the conditions of Lemma 3.3 in \cite{BB} are
satisfied. In particular, we get that $\cS$ (restricted to $\cW$) is
a measure preserving and ergodic transformation with respect to
 $P( \cdot |\cW)$.

Consider now the map $ \p : \cW \to \O $ mapping
 $ (\xi, ( a_i: i \in \bbZ))$  to
 $ \bigl(\xi, ( a_{ n_i ( \xi ) }: i \in \bbN ) \bigr)$.
Note that
\begin{equation}\label{nonnobile} \cT ( \p( \vartheta) ) = \p ( \cS ( \vartheta ) )\qquad  \forall \vartheta \in \cW \,.
\end{equation}
Take  $A \subset \O$ measurable such that
$ \cT (A)=A$. Due to \eqref{nonnobile},
$\cS(\p^{-1}(A))=\p^{-1}(A)$ and therefore
$ P( \p^{-1} (A)|\cW  ) \in \{0,1\}$ by the ergodicity of
$\cS$ w.r.t.\ $P( \cdot | \cW)$.
Since  $\bbP_*( A)= P( \p^{-1} (A) |\cW) $, we conclude that
$\bbP_*(A)\in \{0,1\}$.
\end{proof}

We define the vector function $\tilde \chi: \Omega \times \bbR^d
\to \bbR^d$ as
\begin{equation}\label{mucca0}
\tilde \chi (\o,x )= \chi ( \t_{v_0} \xi, x-v_0)\,, \qquad \o= \bigl(
\xi, (v_i\,:\, i \in \bbN) \bigr) \,.
\end{equation}
Note that from the shift--covariance property of $\chi$ (cf.\ Lemma \ref{festicciola})
\begin{equation}\label{mucca1}
\tilde \chi (\o,x )-\tilde \chi (\o,y)= \chi ( \t_y \xi, x-y)\,,
\qquad \forall \xi \in \cN\,,\; \forall x,y \in \xi\,.
\end{equation}

\begin{lemma}\label{torlonia}
$\bbE_* (|\tilde  \chi( \o, w_1)|) < \infty$ and $ \bbE_* ( \tilde
\chi (\o, w_1))=0$, where $w_1$ is defined as in
(\ref{fabiovoloma}).
\end{lemma}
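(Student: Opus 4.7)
The plan is to first establish the identity on the dense subfamily of gradient forms $\nabla\psi$ with $\psi\in B(\cN_0)$ by exploiting the $\cT$--invariance of $\bbP_*$ from Lemma \ref{babbo}, and then to extend to $\chi\in\cH_\nabla^d$ by an $L^2(\mu)$--continuity estimate from which the integrability also drops out.

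For a bounded $\psi$, the definition of the gradient form gives
\[
 \nabla\psi(\t_{v_0}\xi,w_1-v_0)=\psi(\t_{w_1-v_0}\t_{v_0}\xi)-\psi(\t_{v_0}\xi)=\psi(\t_{w_1}\xi)-\psi(\t_{v_0}\xi).
\]
Using $w_1=n_1(\xi)Ke+v_1$, the first term rewrites as $\psi(\t_{v_1}\t_{n_1(\xi)Ke}\xi)=\psi(\t_{v_0}\xi)\circ\cT$, and the $\bbP_*$--invariance of $\cT$ (Lemma \ref{babbo}) yields $\bbE_*[\psi(\t_{w_1}\xi)]=\bbE_*[\psi(\t_{v_0}\xi)]$. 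Hence $\bbE_*[\nabla\psi(\t_{v_0}\xi,w_1-v_0)]=0$ for every $\psi\in B(\cN_0)$.

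The core analytic estimate is the continuity bound
\begin{equation}\label{contiest}
 \bbE_*\bigl[\,|u(\t_{v_0}\xi, w_1-v_0)|\,\bigr] \leq C\,\|u\|_{L^2(\mu)},\qquad u\in L^2(\mu),
\end{equation}
for some constant $C=C(K,T_0,\bbP)<\infty$. Once \eqref{contiest} is available, approximating each coordinate $\chi_i\in\cH_\nabla$ in $L^2(\mu)$ by gradient forms $\nabla\psi_n$ and applying \eqref{contiest} to $\chi_i-\nabla\psi_n$ transfers the vanishing--expectation property from the approximating sequence to $\chi$, while applying \eqref{contiest} to $u=\chi$ itself yields $\bbE_*[|\tilde\chi(\o,w_1)|]<\infty$. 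To prove \eqref{contiest}, Cauchy--Schwarz with weight $r(w_1-v_0)$ splits the left side as $\bbE_*[r(w_1-v_0)|u|^2]^{1/2}\bbE_*[r(w_1-v_0)^{-1}]^{1/2}$. For the first factor one uses the trivial bounds $|B_0\cap\xi|,|B_{n_1(\xi)e}\cap\xi|\geq 1$ valid on $\{B_0\subset\cC_\infty\}$, drops the constraint $w_1\in B_{n_1(\xi)e}$, and applies the Campbell identity \eqref{campbell} to the inner sum over $v_0\in B_0\cap\xi$, reaching a bound of $\tfrac{\rho K^d}{\bbP(B_0\subset\cC_\infty)}\,\|u\|_{L^2(\mu)}^2$. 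For the second factor one uses $|w_1-v_0|\leq K(n_1(\xi)+\sqrt d)$, reducing the matter to the moment bound $\bbE_*[\exp(cK^\a n_1(\xi)^\a)]<\infty$.

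The main obstacle is precisely this last moment bound on $n_1(\xi)$: the naive exponential tail afforded by Assumption (A) is only of the form $\bbP_*(n_1\geq k)\leq C(1-p(K))^k$, which dominates $\exp(cK^\a k^\a)$ only when $\a\leq 1$ (after taking $K$ large and invoking the quantitative scaling $1-p(mK)\leq(1-p(K))^{m^d}$ recalled in Section \ref{equoesolidale}). In the regime $\a>1$ the crude splitting above is insufficient and one should instead combine the a.s.\ polynomial and poly--logarithmic bounds of Lemmas \ref{polgrowthbis} and \ref{leles} (valid since both $v_0,w_1\in\xi\cap\cC_\infty$ under $\bbP_*$) with the Campbell--based moment estimates implicit in their proofs, or else decompose $\tilde\chi(\o,w_1)$ as a telescoping sum along a chain of short jumps within $\cC_\infty$ linking $v_0$ and $w_1$.
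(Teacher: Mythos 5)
Your reduction of the statement to (i) the identity $\bbE_*[\nabla\psi(\t_{v_0}\xi,w_1-v_0)]=0$ for bounded $\psi$ via the $\cT$--invariance of Lemma \ref{babbo}, and (ii) a continuity bound of the form $\bbE_*[|u(\t_{v_0}\xi,w_1-v_0)|]\leq C\|u\|_{L^2(\mu)}^{\theta}$ for some $\theta>0$, is exactly the right architecture and matches the paper's proof. The Campbell-identity treatment of the first Cauchy--Schwarz factor is also sound. The problem is your proof of the continuity bound itself: weighting by $r(w_1-v_0)$ forces you to control $\bbE_*[r(w_1-v_0)^{-1}]=\bbE_*[\exp(|w_1-v_0|^\a)]$, and, as you yourself observe, the geometric tail of $n_1(\xi)$ cannot beat $\exp(cK^\a n_1^\a)$ once $\a>1$. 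Since Theorem \ref{atreiu} is stated for all $\a>0$, this is a genuine gap, not a technicality, and your two suggested repairs do not close it: the bounds of Lemmas \ref{polgrowthbis} and \ref{leles} are almost-sure growth estimates, which give no integrability under $\bbP_*$ by themselves.

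The correct fix is the second alternative you mention only in passing, and it is what the paper does. One never applies Cauchy--Schwarz across the single long jump from $v_0$ to $w_1$. Instead, since $B_0$ and $B_{n_1(\xi)e}$ both lie in $\cC_\infty$, one connects them by a nearest-neighbour path in $C_\infty$ of length $d(\xi)$ (the chemical distance), telescopes $\chi(\t_{v_0}\xi,w_1-v_0)$ along points $x_i\in\xi$ in consecutive boxes of that path, and uses that each increment satisfies $r(x_{i+1}-x_i)\geq\d(K)$ because adjacent $K$-boxes are at bounded distance; this bounds $|\tilde\chi(\o,w_1)|$ on $\{d(\xi)=j\}$ by $\d^{-1}\sum_{|x|_\infty\leq Kj}\sum_{y}r(y-x)|u(\t_x\xi,y-x)|$, whose moments Campbell controls by $Cj^{d}$ (up to powers of $\|u\|_{L^2(\mu)}$ after Schwarz). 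The exponential decay is then supplied not by the tail of $n_1$ against $e^{|x|^\a}$, but by the percolation estimate $\bbP[d(\xi)\geq j\mid B_0\subset\cC_\infty]\leq e^{-aj}$ (cf.\ Lemma 4.4 of \cite{BB}), which beats the polynomial $j^d$ uniformly in $\a$. This is the missing ingredient: without the chemical-distance tail and the chaining through uniformly short jumps, the estimate cannot be made to work for $\a>1$.
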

\begin{proof}
Let us first show that $\bbE_* (|\tilde  \chi( \o, w_1)|) < \infty$.
For $\xi \in \{ B_0 \subset \cC_\infty\} $, we define $ d ( \xi) $
as the length of the  minimal path in the cluster $C_\infty (\xi)
\subset \bbZ^d$ from $0$ to $n_1 (\xi) e$. Note that $ d (\xi) \geq
n_1(\xi)$.  Then, setting
$g_k(\xi)=\sum_{y\in\xi}r(y)|\chi(\xi,y)|^k$, by the same arguments
leading to \eqref{chio2bis} and applying twice  Schwarz' inequality,
we get
\begin{align*}
\bbE_* &(|\tilde  \chi( \o, w_1)|) = \sum_{j=1}^\infty \bbE_*
(|\tilde \chi( \o, w_1)|\,;\,d(\xi)=j )\nonumber \leq\\
&\frac{1}{\d}  \sum _{j=1}^\infty
 \bbE\Big[
 \sum_{ x\in \xi: |x|_\infty \leq Kj }g_1(\t_x \xi)\,;\, d(\xi)=j \, \Big| \, B_0 \subset \cC_\infty
 \Big]
\leq  \frac{1}{\d   \bbP( B_0 \subset \cC_\infty)^{1/2}}
\times\nonumber
\\ &\sum _{j=1}^\infty \bbP\Big[ d(\xi) =  j\,\Big| \,B_0 \subset
\cC_\infty\Big]^{1/2}\bbE \Big[\sum_{  x\in \xi: |x|_\infty \leq K j
} g_0(\t_x \xi)
 \Big]^{1/4 }
 \bbE  \Big[\sum_{  x\in \xi:|x|_\infty
\leq K j } g_2(\t_x \xi)
 \Big]^{1/4 }\,.
\end{align*}
Using the Campbell identity as in \eqref{verona}, we get that the
last two expectations are bounded by $ C(K,d) j^d \| \chi
\|_{L^2(\mu)} $ and  $C(K,d) j^d$, respectively. Finally, due  to
property (A) in Section \ref{equoesolidale} and standard facts in
percolation theory (see for example Lemma 4.4 in \cite{BB}),
$\bbP\bigl[ d(\xi)
 \geq j| B_0 \subset \cC_\infty] \leq e^{-a j }$ for some positive constant
$a=a(K,d)$. Collecting the above bounds we get that $ \bbE_*
(|\tilde  \chi( \o, w_1)|)<\infty$.


 We know that $\chi $ is the $L^2(\mu)$--limit of a sequence $\chi_n
 $ of functions of the form $\chi_n (\xi, x)= G_n (\t_x \xi)-
 G_n(\xi) $, where $G_n : \cN_0\to \bbR^d$ is  bounded and
 measurable.
 Since $(1,1)_\mu <\infty$, we derive that $ \chi_n \in L^1(\mu)$ and   that $\|\chi_n
  -\chi\|_{L^1(\mu) } $ goes to zero as $n\to \infty$.
  Repeating the above computations with $\tilde \chi$ replaced by
  $\tilde \chi- \tilde \chi_n$ we conclude that
$
 \bbE_*\bigl(|\tilde\chi(\o, w_1)-\tilde\chi_n(\o, w_1)|\bigr)$ goes to zero as $n \to \infty$.
In particular, 
  $ \lim_{n\to \infty} \bbE_*( \tilde \chi_n(\o, w_1))= \bbE_*(\tilde
  \chi(\o, w_1) )$.
  On the other hand, we can write
  \[
   \bbE_*(\tilde\chi_n(\o, w_1))
   = \bbE_*( \chi_n (\t_{v_0} \xi, w_1-v_0))
   = \bbE_*(  G_n (\t_{w_1} \xi) ) - \bbE_*( G_n(\t_{v_0} \xi))\,.
\]
Setting  $\o = \bigl(\xi, (v_i \,:\, i \in \bbN)\bigr) $ and $F_n(\o)=G_n (\t_{v_0}
\xi)$, we can write $G_n (\t_{w_1} \xi)  = F_n (\cT \o) $. Hence,
the conclusion follows from Lemma \ref{babbo}.
\end{proof}

\begin{lemma}\label{mamma} With the notation of Eq.\
  (\ref{fabiovoloma}), one has
\begin{equation}\label{casa}
\lim _{k\to \infty} \frac{ \tilde \chi( \o , w_k)}{k}=0 \,,
\qquad 
\bbP_*\text{--a.s.}
\end{equation}
\end{lemma}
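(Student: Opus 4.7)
The plan is to write $\tilde\chi(\omega, w_k)$ as a telescoping sum of increments, show that the increments form a stationary ergodic sequence under $\cT$ with zero mean, and then apply Birkhoff's ergodic theorem.

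First I would define $f\colon\Omega\to\bbR^d$ by $f(\omega):=\tilde\chi(\omega,w_1)$. By Lemma \ref{torlonia}, $f\in L^1(\bbP_*)$ and $\bbE_*f=0$. The key identification is
\begin{equation*}
f\circ\cT^{j}(\omega)=\tilde\chi(\omega,w_{j+1})-\tilde\chi(\omega,w_j),\qquad j\geq 0.
\end{equation*}
To verify this, I would unwind the definitions: if $\omega=(\xi,(v_i))$ then $\cT\omega=(\xi',(v'_i))$ with $\xi'=\t_{n_1(\xi)Ke}\xi$, $v'_i=v_{i+1}$, so $v_0(\cT\omega)=v_1(\omega)$ and $w_1(\cT\omega)=v_2+n_1(\xi')Ke$ with $n_1(\xi')=n_2(\xi)-n_1(\xi)$. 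Hence $\t_{v_0(\cT\omega)}\xi'=\t_{w_1(\omega)}\xi$ and $w_1(\cT\omega)-v_0(\cT\omega)=w_2(\omega)-w_1(\omega)$, so that
\begin{equation*}
f(\cT\omega)=\chi\bigl(\t_{w_1(\omega)}\xi,\,w_2(\omega)-w_1(\omega)\bigr)=\tilde\chi(\omega,w_2)-\tilde\chi(\omega,w_1),
\end{equation*}
by the shift-covariance relation \eqref{mucca1} of Lemma \ref{festicciola}. Iterating this argument (or equivalently applying it at $\cT^{j-1}\omega$) gives the displayed identity for all $j$.

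Next I would telescope:
\begin{equation*}
\tilde\chi(\omega,w_k)-\tilde\chi(\omega,w_0)=\sum_{j=0}^{k-1}f\circ\cT^j(\omega).
\end{equation*}
Since $w_0=v_0$ is a point of $\xi$ (contained in $B_0$), Lemma \ref{festicciola} yields $\tilde\chi(\omega,w_0)=\chi(\t_{v_0}\xi,0)=0$, so the left-hand side reduces to $\tilde\chi(\omega,w_k)$. By Lemma \ref{babbo}, $\cT$ is measure-preserving and ergodic with respect to $\bbP_*$, and $f\in L^1(\bbP_*)$ with $\bbE_*f=0$ by Lemma \ref{torlonia}. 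Birkhoff's ergodic theorem then gives
\begin{equation*}
\frac{\tilde\chi(\omega,w_k)}{k}=\frac{1}{k}\sum_{j=0}^{k-1}f\circ\cT^j(\omega)\longrightarrow\bbE_*f=0,\qquad\bbP_*\text{-a.s.},
\end{equation*}
which is precisely \eqref{casa}. The only nontrivial point to verify carefully is the cocycle identity $f\circ\cT^j=\tilde\chi(\,\cdot\,,w_{j+1})-\tilde\chi(\,\cdot\,,w_j)$; everything else is a direct consequence of results already established.
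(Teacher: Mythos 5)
Your proof is correct and follows essentially the same route as the paper: telescoping $\tilde\chi(\o,w_k)$ into increments, identifying the $j$-th increment with $\tilde\chi(\cT^j\o, w_1(\cT^j\o))$ via shift-covariance (the paper's Eq.\ \eqref{sole2}), and applying Birkhoff's theorem with the ergodicity of Lemma \ref{babbo} and the integrability and centering from Lemma \ref{torlonia}. The cocycle identity you flag as the one nontrivial step is verified in the paper by exactly the computation you sketch, carried out directly at $\cT^j\o$ rather than by iteration.
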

\begin{proof} Let $\o= \bigl( \xi, (v_i: i \in \bbN) \bigr)$.
Since $ v_0 =w_0$, it holds $\tilde \chi ( \o, w_0) = \chi (\t_{v_0}
\xi , 0 ) =0$. Hence, we can write
$\tilde \chi (\o, w_k)
=\sum _{j=0}^{k-1}\bigl( \tilde \chi (\o, w_{j+1})-\tilde \chi (\o,
w_j) \bigr)$.
Applying now    \eqref{mucca1}, we get $\tilde \chi (\o, w_k)=\sum
_{j=0}^{k-1} \chi ( \t_{w_j} \xi , w_{j+1}-w_j) $.
On the other hand, since  $ \cT^j \o =
 \bigl(\t_{n_j(\xi)K e}\xi,(v_{j+i}:i\in\bbN)\bigr)$, we can write
\begin{multline}\label{sole2}
\chi( \t_{w_j} \xi,w_{j+1}-w_j)=\\
\chi\left( \t_{v_j} (\t_{n_j(\xi) K  e } \xi), v_{j+1}+ n_1(
\t_{n_j(\xi)K  e } \xi)Ke -v_j \right)=  \tilde \chi \bigl(\cT^j\o,
w_1 (\cT^j \o) \bigr) \,.
\end{multline}
Hence, $\tilde \chi(\o, w_k)
 = \sum _{j=0}^{k-1}
 \tilde  \chi \bigl(\cT^j \o, w_1 (\cT^j \o) \bigr)$.
The conclusion follows from   the ergodicity stated in Lemma
\ref{babbo} and the results of  Lemma \ref{torlonia}.
\end{proof}

Next, we state a simple  corollary of the above lemma, which is the starting point of our further investigations. In order to stress
 the dependence of the map $n_i (\cdot)$ from  the vector $e$ we write
 $n_i ^{(e)}$. Below, $\bbN_+= \{ 1,2, \dots\}$.
\begin{corollary}\label{plasmon} Given a vector $e \in \bbZ^d$ with $|e|_1=1$, for $\bbP(\cdot | B_0 \subset \cC_\infty)$--a.a.\ $\xi$
 there exists a random sequence of points $(w_k^{(e)} : k \in \bbN_+)$   such that
$ w^{(e) }_k  \in \xi \cap B (  n^{(e) } _k (\xi)   e)$, $k \in \bbN_+$ and
\begin{equation}\label{arrivoeparto} 
\lim _{ k \to \infty }  \max _{x_0\in \xi \cap B_0} \frac{|\chi (
\t_{x_0} \xi , w^{(e)}_k - x_0 )|}{k} = 0 \,.
\end{equation}
\end{corollary}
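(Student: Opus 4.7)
The plan is to deduce the corollary from Lemma \ref{mamma} via a Fubini argument, exploiting that $\xi\cap B_0$ is almost surely a finite nonempty set. Under $\bbP_*$, conditionally on $\xi$, the coordinates $v_0,v_1,v_2,\dots$ are independent, each $v_i$ being uniform on a translate of the nonempty finite set $\xi\cap B_{n_i^{(e)}(\xi)e}$; in particular $v_0$ is uniform on $\xi\cap B_0$ and is independent of the tail sequence $(v_i:i\geq 1)$. Setting $w_k:=v_k+n_k^{(e)}(\xi)Ke\in\xi\cap B(n_k^{(e)}(\xi)e)$ as in \eqref{fabiovoloma}, Lemma \ref{mamma} reads: for $\bbP(\cdot\mid B_0\subset\cC_\infty)$--a.e.\ $\xi$ and for a.e.\ joint realization $(v_0,v_1,v_2,\dots)$ under this product measure, $\chi(\t_{v_0}\xi,w_k-v_0)/k\to 0$.

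Next I would disintegrate with respect to the factorization $(v_0)\otimes(v_i:i\geq 1)$ and apply Fubini: for $\bbP(\cdot\mid B_0\subset\cC_\infty)$--a.e.\ $\xi$ there exists a full-measure set of tail sequences $(v_i:i\geq 1)$ for which the collection of $v_0\in\xi\cap B_0$ giving the desired convergence has full uniform mass on $\xi\cap B_0$. Since $\xi\cap B_0$ is a.s.\ finite and each of its (finitely many) points carries strictly positive uniform weight, ``full measure'' forces this set to equal all of $\xi\cap B_0$.

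Fix any such tail sequence $(v_i:i\geq 1)$ and declare $w_k^{(e)}:=v_k+n_k^{(e)}(\xi)Ke$, so that indeed $w_k^{(e)}\in\xi\cap B(n_k^{(e)}(\xi)e)$. By the previous step, for every $x_0\in\xi\cap B_0$ one has $|\chi(\t_{x_0}\xi,w_k^{(e)}-x_0)|/k\to 0$ as $k\to\infty$; because $\xi\cap B_0$ is finite, the maximum over $x_0\in\xi\cap B_0$ can be interchanged with the limit, yielding \eqref{arrivoeparto}. There is no real obstacle here: once Lemma \ref{mamma} and the ergodicity underlying it are in hand, the argument reduces to the elementary observation that $\bbP_*$ is a product measure in which the starting point $v_0$ decouples from the tail, so a single tail realization serves simultaneously every starting point in the finite set $\xi\cap B_0$.
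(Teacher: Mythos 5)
Your proof is correct and is essentially the intended derivation: the paper states Corollary \ref{plasmon} as an immediate consequence of Lemma \ref{mamma} without spelling out the details, and the missing details are exactly your observation that under $\bbP_*$ the starting point $v_0$ is (conditionally on $\xi$) independent of the tail $(v_i:i\geq 1)$, so Fubini plus the finiteness of $\xi\cap B_0$ upgrades the a.s.\ statement to one holding simultaneously for every $x_0\in\xi\cap B_0$ along a single tail realization.
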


\subsection{Sublinearity on
average in $\cC_\infty$}\label{mediano} In this section we  derive
from Corollary \ref{plasmon} the sublinearity on average of the
corrector field on $\xi \cap \cC_\infty$. A similar problem is
attacked by Berger and Biskup in Section 5.2 in \cite{BB} for the
random walk on the supercritical percolation cluster. Their method
cannot be applied directly to our context, and the
adaptation of the geometric construction in \cite{BB} would lead to a tremendous technical effort.
We propose here a different construction, based on a two-scale argument, which allows us to give a self--contained
treatment of
the problem.  
The two scales refer to the fact that below the cluster $C_\infty$ is considered
at scale $K$ (i.e.\ $C^{K}_{\infty}$)
and at scale $mK$ (i.e.\ $C^{mK}_{\infty}$), where the key point is that the cluster
$C^{mK}_{\infty}$ can be made arbitrarily dense in $\bbZ^d$ by taking $m$ large.
Our target is to prove the following result:
\begin{proposition}\label{barbapapa}
For each $\e_0>0$, for $\bbP(\cdot|B_0 \subset \cC_\infty)$--a.a.\
$\xi \in \cN$ and for all $x_0 \in \xi \cap B_0$, it holds
\begin{equation}\label{800m} \lim _{n\to \infty} \frac{1}{n^d} \sum
_{x \in \xi \cap \cC_\infty\,: |x|_\infty\leq n}\, \bbI\{ |\chi
(\t_{x_0}\xi, x-x_0 )|\geq \e_0 n\}=0\,.
\end{equation}
\end{proposition}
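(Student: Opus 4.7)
The strategy is a two-scale construction combined with axis-by-axis sublinearity. Fix $\e_0>0$ and a large integer $m$ (to be chosen at the end). Besides the fine scale $K$ used to define $\cC_\infty$, introduce the coarse boxes $B^{mK}(z):=mKz+[0,mK)^d$, $z\in\bbZ^d$. By the remark following Property (A) in Section \ref{equoesolidale}, the coarse white field $\vartheta^{mK}$ stochastically dominates a Bernoulli field of parameter $p(mK)$ with $p(mK)\to 1$ as $m\to\infty$; hence the infinite coarse cluster $C^{mK}_\infty$ has density in $\bbZ^d$ arbitrarily close to one, and $\cC^{mK}_\infty\subset\cC_\infty$. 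This is the sense in which the two scales interact: Corollary \ref{plasmon} controls the corrector along coarse coordinate axes, while Lemma \ref{leles} controls fluctuations of the corrector within a single coarse box by a constant independent of $n$.

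Apply Corollary \ref{plasmon} at the coarser scale $mK$ along each coordinate direction $e_1,\dots,e_d$: $\bbP$-a.s.\ on $\{B_0\subset\cC_\infty\}$, this yields sequences of good points $w_k^{(e_i)}\in\xi\cap B^{mK}(n_k^{(e_i)}e_i)$ with $|\chi(\t_{x_0}\xi,w_k^{(e_i)}-x_0)|/k\to 0$ uniformly in $x_0\in\xi\cap B_0$. Given $x\in\xi\cap\cC_\infty$ with $|x|_\infty\leq n$, let $z_\ast\in\bbZ^d$ be such that $x\in B^{mK}(z_\ast)$, and when $z_\ast\in C^{mK}_\infty$ connect $z_\ast$ to $0$ by an axis-parallel coarse path $z_\ast=z_0,z_1,\dots,z_d=0$ changing one coordinate per step and staying inside $C^{mK}_\infty$. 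Pick good points $y_j\in\xi\cap B^{mK}(z_j)$. Iterating Lemma \ref{festicciola} (shift-covariance),
\begin{equation*}
\chi(\t_{x_0}\xi,x-x_0)\;=\;\chi(\t_{x_0}\xi,y_d-x_0)\;+\;\sum_{j=0}^{d-1}\chi(\t_{y_{j+1}}\xi,y_j-y_{j+1})\;+\;\chi(\t_{y_0}\xi,x-y_0).
\end{equation*}
The boundary term $\chi(\t_{y_0}\xi,x-y_0)$ has argument bounded by $mK$, so by Lemma \ref{leles} it is at most $u_{\gamma,c}(mK)$, a constant in $n$; the term $\chi(\t_{x_0}\xi,y_d-x_0)$ involves only the environment inside $B_0$, hence is $O(1)$ as well. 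Each summand $\chi(\t_{y_{j+1}}\xi,y_j-y_{j+1})$ is an axis-parallel increment in a translated environment, to which a shifted version of Corollary \ref{plasmon} applies; it is $o(|z_j-z_{j+1}|)=o(n/mK)$.

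The proof is completed by a volume count. Call a coarse vertex $z$ \emph{good} if $z\in C^{mK}_\infty$, the full axis-parallel path from $z$ to $0$ can be realized in $C^{mK}_\infty$, and each axis-parallel Plasmon limit is attained along that path at a rate that beats $\e_0 n$. Every $x\in\xi\cap\cC_\infty\cap[-n,n]^d$ sitting in a good coarse box satisfies $|\chi(\t_{x_0}\xi,x-x_0)|<\e_0 n$ for $n$ large. By the multidimensional ergodic theorem applied to $\vartheta^{mK}$, the fraction of bad coarse boxes in $[-n/mK,n/mK]^d\cap\bbZ^d$ is at most $(1-p(mK))+o(1)$ plus the density of axis-exceptional boxes, and the latter is $o(1)$ by Corollary \ref{plasmon}. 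Multiplying the bad-box count by the (uniformly bounded, thanks to non-overcrowdedness) number of $\xi$-points per good box, then choosing $m$ so large that $(1-p(mK))<\e_0/C$, yields \eqref{800m}.

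The main obstacle is upgrading Corollary \ref{plasmon}—which gives sublinearity along a single axis through $B_0$—to a statement holding simultaneously along axis-parallel lines of coarse boxes through arbitrary starting points $y_{j+1}$. I would obtain this from the ergodicity of the $\cT$-dynamics (Lemma \ref{babbo}) together with stationarity of $\bbP$: the set of environments on which the axis-parallel Plasmon limit fails has $\bbP$-measure zero, so by Fubini applied to the product of $\bbP$ and counting measure on the coarse lattice, for $\bbP$-a.e.\ $\xi$ the set of axis-parallel lines of coarse boxes along which sublinearity fails has zero density. A union bound over the $d$ coordinate directions and the finite combinatorics of the path $(z_j)$ then gives the required uniform statement.
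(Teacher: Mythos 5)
Your overall strategy -- two scales $K$ and $mK$, axis-parallel staircase paths between coarse boxes, an ergodic density argument for ``good'' coarse boxes, and a final volume count -- is exactly the architecture of the paper's proof (properties (P1)--(P2), Lemmas \ref{commissione}--\ref{barbaforte}, and the count \eqref{raggio}). However, as written the plan has several concrete gaps.

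First, the claim that the boundary term $\chi(\t_{y_0}\xi,x-y_0)$ is at most $u_{\g,c}(mK)$ ``by Lemma \ref{leles}'' is false. Lemma \ref{leles} bounds $|\chi(\t_x\xi,y-x)|$ by $u_{\g,c}(n)$ where $n$ controls the \emph{positions} $|x|,|y|\leq n$, not the mutual distance $|y-x|$; there is no uniform bound on the corrector between two points of the same far-away coarse box, and indeed the worst box in $[-n,n]^d$ only obeys the polynomial bound of Lemma \ref{polgrowthbis}. This is precisely why the paper introduces property (P2) (a constant bound on within-box fluctuations) as an event of probability $\geq 1-\d$ and then uses the ergodic theorem to show it holds on a density-$(1-3\d)$ set of coarse boxes; the $\d$-fraction of bad boxes is then absorbed into the Ces\`aro statement \eqref{800m}. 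The same quantification issue affects your ``shifted version of Corollary \ref{plasmon}'': the corollary is a qualitative limit along the line through the origin, and your Fubini argument only gives that the exceptional set of lines has density zero, which does not control, at a \emph{fixed} scale $n$, the density of lines where the bound $\e_0 n$ is violated (the rate of convergence varies from line to line). The fix is again to replace the limit by the quantified event $\{|\chi|\leq C+\e N\}$ of probability $\geq 1-\d$ and apply the one-dimensional ergodic theorem of assumption (H2) on each sublattice $\L^\nu_n$, as in Lemma \ref{commissione}.

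Second, your final volume count multiplies the bad-box count by ``the uniformly bounded number of $\xi$-points per good box.'' The cluster $\cC_\infty$ in the statement is the cluster of \emph{occupied} boxes, not white boxes, so overcrowded boxes belong to it and the number of points per box is unbounded (Poisson, in the PPP case). One must truncate at a level $L$ and control the contribution of boxes with $\xi(B^K_z)\geq L$ by a separate ergodic average, which is what \eqref{paolo} and the first sum in \eqref{raggio} accomplish. Finally, a smaller point: your path terminates at the coarse box $B^{mK}_0$, which need not belong to $C^{mK}_\infty$ under the conditioning $B_0\subset\cC_\infty$; the paper instead bridges the fine and coarse scales through a special point on the first coordinate axis that lies simultaneously in $\cC^K_\infty$ and in a good coarse box (the intersection argument \twoeqref{famissima}{setissima}). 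Each of these gaps is repairable within your framework, but each requires an ingredient you have not supplied.
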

For the reader's convenience, we isolate from the proof some
technical lemmata.

\medskip

Call $\bbB:=\{ e \in \bbZ^d\,:\, |e|_1 =1 \}$
and $\L_s:=[-s,
s]^d$. Recall the definition of the random field $\s^K$. In order to
stress the dependence on $K$, we write here $B_x^K$, $C_\infty^K$
and $\cC_\infty^K$.

Given positive numbers $C,\e $ and $m \in \bbN_+$, we consider the
Borel subsets $\cA_{C, \e, m }$ and $ \cA_{C,m}$ in $ \cN$ defined
as the family of
$\xi \in \cN$ satisfying properties (P1) and  (P2), respectively:\\

\begin{enumerate}[(P1)]
\item
 for all  $e \in \bbB$ and $N \in \bbN_+$, if $j\in \bbN_+$
satisfies $B^{mK} _{j e  } \subset \cC^{mK} _\infty\cap \L_{ mK N}$
(i.e.\ $j e \in C_\infty ^{mK}\cap \L_N$) then there exists a point
$x \in B^{mK} _{j e} \cap \xi$ such that
\begin{equation*} \max _{x_0 \in \xi\cap B^{ mK} _0}
|\chi ( \t_{x_0} \xi, x-x_0)| \leq C+ \e N\,.
\end{equation*}

\item for any $x,x' \in \xi \cap B^{ m K }_0$ one has
 \[
  |\chi ( \t_{x} \xi, x'-x)| \leq C.\]
\end{enumerate}

Let us fix $\e,\d \in (0,1)$. Thanks to property (A) (see Subsection
\ref{equoesolidale}), we can fix once for all $m$ so large
 that \begin{equation}\label{ostacoli}
 \bbP( 0 \in  C^{m K} _\infty )
\geq 1- \d\,. \end{equation} We have stated Corollary \ref{plasmon}
working with the $K$--partition of $\bbR^d$,  but trivially  the
conclusion remains valid if we work with the $mK$--partition (recall
property (A)). In particular, having fixed $\e$ and $m$, we can
find $C$ large enough that $ \bbP( \cA_{C, \e, m }|0 \in
C^{mK}_\infty  ) \geq 1- \d$.  Taking $C$ large we also have $ \bbP(
\cA_{C,m} | 0 \in C^{mK}_\infty ) \geq 1- \d$.  In particular,
\begin{equation}\label{gomma}
\bbP( \cA_{C,\e, m}\cap \cA_{C,m} | 0 \in C^{mK}_\infty ) \geq 1-
2\d\,.
\end{equation}

Given an integer $\nu$ with  $1\leq \nu \leq d$,  we call
\[
 \L_n^{\nu} :=\L_n\cap \{ x \in\bbR^d\,:\, x_i =0 \;\forall i >\nu\}\,.
\]

\begin{lemma}\label{commissione}
For $\bbP$--a.a.\ $\xi$ there exists  $ n_0= n_0 (\xi)$, depending
also on $C,m, \e, \d$, such that for all $\nu:1\leq \nu \leq d$ and
for all $n \geq n_0$ one has
\begin{equation}\label{asilonido}
\frac{1}{|\L^\nu_n \cap \bbZ^d|} \sum _{x \in  \L^\nu_n \cap
C^{mK}_\infty} \bbI( \t_{m K x} \xi \in \cA_{C,\e,m} \cap \cA_{C,m }
   )\geq 1-3  \d\,.
\end{equation}
\end{lemma}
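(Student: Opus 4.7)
The plan is to recognise \eqref{asilonido} as a consequence of Birkhoff's multidimensional ergodic theorem for an appropriate indicator function and an appropriate $\bbZ^\nu$-action. Introduce
\[
 h(\xi)\,:=\,\bbI\{0\in C^{mK}_\infty(\xi)\}\,\bbI\{\xi\in \cA_{C,\e,m}\cap \cA_{C,m}\}\,.
\]
Since the event $\{x\in C^{mK}_\infty(\xi)\}$ coincides with $\{0\in C^{mK}_\infty(\t_{mKx}\xi)\}$ by translation, the left hand side of \eqref{asilonido} rewrites as the ergodic average
\[
 S^\nu_n(\xi)\,:=\,\frac{1}{|\L^\nu_n\cap\bbZ^d|}\sum_{x\in\L^\nu_n\cap\bbZ^d} h(\t_{mKx}\xi)\,.
\]

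First I would compute $\bbE[h]$. Combining \eqref{ostacoli} with \eqref{gomma},
\[
 \bbE[h]\,=\,\bbP\bigl(\cA_{C,\e,m}\cap\cA_{C,m}\,\bigl|\,0\in C^{mK}_\infty\bigr)\cdot\bbP(0\in C^{mK}_\infty)\,\geq\,(1-2\d)(1-\d)\,=\,1-3\d+2\d^2\,.
\]
The strict slack $2\d^2$ will absorb the ergodic-theorem error at the end.

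Next I would invoke assumption (H2), applied with $mK$ in place of $K$: for each unit coordinate vector $e_i\in\bbB$, the $\bbZ$-action $\t_{mKe_i}$ on $(\cN,\bbP)$ is ergodic, since $\bbP$ is the marginal on the $\cN$-coordinate of the ergodic measure $P^{(mK,e_i)}$. The key observation is that the $\bbZ^\nu$-invariant $\sigma$-algebra of the commuting transformations $\t_{mKe_1},\dots,\t_{mKe_\nu}$ is contained in the invariant $\sigma$-algebra of each single generator, so ergodicity of any one of these 1D actions already forces ergodicity of the joint $\bbZ^\nu$-action, for every $\nu\leq d$.

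Then the multidimensional Birkhoff theorem applied to $h\in L^\infty(\bbP)$ yields $S^\nu_n(\xi)\to \bbE[h]$ $\bbP$-almost surely. Hence $\bbP$-a.s.\ there exists a finite $n_0(\xi,\nu)$ such that $S^\nu_n(\xi)\geq \bbE[h]-\d^2\geq 1-3\d$ for all $n\geq n_0(\xi,\nu)$; taking $n_0(\xi):=\max_{1\leq \nu\leq d}n_0(\xi,\nu)$ then gives a single threshold valid for all $\nu$, as required. The only conceptually delicate point is the reduction from ergodicity of each 1D shift (provided by (H2)) to ergodicity of the $\bbZ^\nu$-action; this is a soft $\sigma$-algebra inclusion rather than a quantitative estimate, so no hard calculation is involved.
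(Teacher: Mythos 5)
Your proposal is correct and follows essentially the same route as the paper: rewrite the left-hand side as a $\bbZ^\nu$-ergodic average of the indicator of $\cA_{C,\e,m}\cap\cA_{C,m}\cap\{0\in C^{mK}_\infty\}$, apply the ergodic theorem (justified via (H2)), and use \eqref{ostacoli}--\eqref{gomma} to bound the limit strictly above $1-3\d$. Your extra remark that ergodicity of a single coordinate shift forces ergodicity of the joint $\bbZ^\nu$-action is a correct and welcome clarification of a point the paper leaves implicit.
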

\begin{proof}
By the ergodicity assumption (H2) and the bounds \eqref{ostacoli}
and \eqref{gomma}, we have $\bbP$--a.s.\ that
\begin{multline*}\lim _{n \to \infty}\frac{1}{ |\L^\nu_n \cap \bbZ^d |} \sum
_{x \in \L^\nu_n \cap C^{m K }_\infty} \bbI( \t_{m K  x} \xi \in
\cA_{C,\e,m}
\cap \cA_{C,m})= \\
\lim _{n \to \infty}\frac{1}{ |\L^\nu_n \cap \bbZ^d |} \sum _{x
\in \L^\nu_n \cap \bbZ^d } \bbI( \t_{m K x} \xi \in \cA_{C,\e,m}
\cap
\cA_{C,m} \cap \{ 0 \in C_\infty ^{mK}\} )=\\
\bbP( \cA_{C,\e,m} \cap \cA_{C,m} |0 \in C^{mK}_\infty) \bbP(
 0 \in C_\infty ^{mK })\geq (1-2 \d )(1-\d) >1- 3\d \,.\qedhere
\end{multline*}
\end{proof}

\medskip

Suppose now that $\xi$ satisfies  \eqref{asilonido} for all $n \geq
n_0(\xi)$ (below we take   $n\geq n_0(\xi)$). Call
\[
 G^\nu_n
 :=\{x\in\L_n^\nu\cap C^{m K}_\infty\,:\,\t_{m K x}\xi\in
 \cA_{C,\e,m} \cap \cA_{C,m}  \}\subset \bbZ^d \,.
\]
By \eqref{asilonido}, one has 
\begin{equation}\label{tuono1}
|G^\nu_n|/ |\L_n ^\nu \cap \bbZ^d|\geq 1- 3\d\,. \end{equation}
Given $a \in \bbZ^d$ and $1\leq \nu \leq d$, we set $a^\nu=(a_1,a_2,
\dots, a_\nu, 0, \dots, 0 )$.  Then we define
\begin{align*}
& \bbG _n ^\nu= \bigl\{x \in \L_n \cap \bbZ^d\,:\, x^\nu \in
G_n^\nu\}\,,\\
& \bbG_n = \bigl\{x \in \L_n \cap \bbZ^d\,:\, x^\nu \in G_n^\nu \;\;
\forall \nu: 1\leq \nu \leq d\}\,.
\end{align*}
Trivially, $\bbG_n = \cap_{\nu=1}^d \bbG_n ^\nu$.  Moreover, by
\eqref{tuono1} it holds $| \bbG^\nu_n|/ |\L_n \cap \bbZ^d|\geq 1-
3\d$ and therefore, applying   De Morgan's law,
\begin{equation}\label{lampo1} | \bbG_n |/ |\L_n \cap \bbZ^d | \geq
1- 3 d \d \,.
\end{equation}

\medskip

\begin{lemma}\label{barbalalla}
 Suppose that $ \xi$ satisfies \eqref{asilonido} and take $n \geq n_0(\xi)$. If  $x \in \xi \cap B^{mK}_a$ with $a \in \bbG_n$, then there exists
 $x^{(1)}\in \xi \cap  B^{mK}_{a^1}$ such that
$| \chi (\t_x \xi, x^{(1)}-x)| \leq Cd + \e d n$.
\end{lemma}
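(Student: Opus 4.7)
The plan is to construct a chain of points $x = x^{(d)}, x^{(d-1)}, \ldots, x^{(1)}$ with $x^{(\nu)}\in\xi\cap B^{mK}_{a^\nu}$ for each $\nu\in\{1,\ldots,d\}$, and then use iterated shift-covariance (Lemma \ref{festicciola}) to expand
\[
\chi(\t_x\xi, x^{(1)}-x) = \sum_{\nu=2}^{d} \chi(\t_{x^{(\nu)}}\xi, x^{(\nu-1)}-x^{(\nu)}),
\]
controlling each of the $d-1$ increments via property (P1) applied to a translated environment, with (P2) used once to absorb an initial mismatch at the endpoint $\nu=d$.

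The key geometric observation is that $a^\nu$ and $a^{\nu-1}$ differ only in their $\nu$-th coordinate, by $a_\nu$ with $|a_\nu|\le n$. For each $\nu\in\{2,\ldots,d\}$, translating by $mKa^{\nu-1}$ sends the pair $(B^{mK}_{a^\nu}, B^{mK}_{a^{\nu-1}})$ to $(B^{mK}_{a_\nu e_\nu}, B^{mK}_0)$, and the new environment $\xi':=\t_{mKa^{\nu-1}}\xi$ lies in $\cA_{C,\e,m}\cap\cA_{C,m}$ because $a^{\nu-1}\in G_n^{\nu-1}$. When $a_\nu\neq 0$, the inclusion $B^{mK}_{a_\nu e_\nu}\subset\cC^{mK}_\infty(\xi')\cap\L_{mK(n+1)}$ follows from $a^\nu\in C^{mK}_\infty$ together with $|a_\nu|\le n$, so (P1) applies with $e=\operatorname{sgn}(a_\nu)\,e_\nu$, $j=|a_\nu|$ and $N=n+1$, yielding a specific $\tilde y^{(\nu)}\in\xi\cap B^{mK}_{a^\nu}$ such that
\[
|\chi(\t_z\xi, \tilde y^{(\nu)}-z)|\le C+\e(n+1), \qquad \forall\, z\in\xi\cap B^{mK}_{a^{\nu-1}};
\]
when $a_\nu=0$ the two boxes coincide and (P2) applied to $\t_{mKa^\nu}\xi$ handles the step directly with bound $C$.

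Choose $x^{(\nu)}:=\tilde y^{(\nu)}$ for $\nu=d-1,d-2,\ldots,2$ (or an arbitrary point of the box when $a_\nu=0$), and let $x^{(1)}$ be any point of the nonempty set $\xi\cap B^{mK}_{a^1}$ (nonempty since $a^1\in C^{mK}_\infty$). Invoking the sign-flip identity $\chi(\t_y\xi, y'-y)=-\chi(\t_{y'}\xi, y-y')$, which follows from $\bar\chi(\eta,0)=0$ together with shift-covariance, the (P1) bound applied at $z=x^{(\nu-1)}$ gives $|\chi(\t_{x^{(\nu)}}\xi, x^{(\nu-1)}-x^{(\nu)})|\le C+\e(n+1)$ for every $\nu\in\{2,\ldots,d-1\}$. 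The only step that requires extra work is $\nu=d$: since $x=x^{(d)}$ need not equal $\tilde y^{(d)}$, insert $\tilde y^{(d)}$ by shift-covariance, bound the piece inside $B^{mK}_{a^d}$ by $C$ using (P2) on $\t_{mKa^d}\xi$, and bound the piece from $\tilde y^{(d)}$ to $x^{(d-1)}$ by $C+\e(n+1)$ as above, giving $|\chi(\t_x\xi, x^{(d-1)}-x)|\le 2C+\e(n+1)$.

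Summing the $d-1$ contributions yields $|\chi(\t_x\xi, x^{(1)}-x)|\le Cd+(d-1)\e(n+1)\le Cd+\e d n$, the last inequality holding for $n$ large enough (which is absorbed into $n_0(\xi)$). The main obstacle here is bookkeeping rather than conceptual: one must justify every (P1) invocation (checking that the relevant box lies in $\cC^{mK}_\infty$ and within $\L_{mK(n+1)}$), treat the endpoint $\nu=d$ separately using (P2) to bridge $x$ to $\tilde y^{(d)}$, and deal with coordinates where $a_\nu=0$ via (P2) in place of (P1). None of these obstacles is hard individually, but they force the two-source decomposition above.
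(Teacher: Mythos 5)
Your proof is correct and follows essentially the same route as the paper: a telescoping chain $x=x^{(d)},\dotsc,x^{(1)}$ through the boxes $B^{mK}_{a^\nu}$, each increment controlled by (P1) (with (P2) covering the degenerate case $a_\nu=0$) and the total assembled by shift--covariance, with your slightly larger per-step bound $C+\e(n+1)$ still summing to $Cd+\e d n$. The only structural difference is the orientation of (P1): you apply it in the environment $\t_{mKa^{\nu-1}}\xi$ with target box $B^{mK}_{a^\nu}$, whereas the paper applies it in $\t_{mKa^{\nu}}\xi$ with target box $B^{mK}_{a^{\nu-1}}$, which exploits the uniformity of the (P1) bound over all base points of $B^{mK}_{a^\nu}$ --- in particular over the already-fixed $x^{(\nu)}$ --- and thereby dispenses with both your (P2) bridge at $\nu=d$ and the sign-flip identity.
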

\begin{proof}
  Since $a \in G_n^d$ and
 $a^{d-1} \in G^{d-1}_n \subset  C_\infty ^{mK}$, by property (P1) applied to
 $ \t_{mK a} \xi$   with $N=n$ we  know that
there exists $x^{(d-1)}  \in  \xi \cap B^{mK}_{a ^{d-1} } $ such
that
\[|\chi (\t_x \xi, x^{(d-1)} -x)| \leq C+ \e n ,.\]
Repeating the above argument, we obtain that  there exist points
$x^{(i)}$, $2\leq i \leq d$, such that: $x^{(d)}=x$, $x^{(i)}\in
\cap B^{mK}_{a ^i }$ and  $|\chi(\t_{x^{(i)} } \xi, x^{(i-1)}
-x^{(i)})| \leq C+ \e n$.
In particular, by
the shift covariance property, 
\begin{equation*}
| \chi (\t_x \xi, x^{(1)}-x)| \leq \sum _{i=2}^{d} |\chi
(\t_{x^{(i)} } \xi, x^{(i-1)} -x^{(i)})|\leq d C + d \e n\,.
\qedhere
\end{equation*}
\end{proof}

\begin{lemma}\label{barbaforte}
Suppose that $ \xi$ satisfies \eqref{asilonido} and take $n \geq
n_0(\xi)$. Then, for all  $x,y \in \xi \cap ( \cup_{a\in  \bbG_n}
B^{Km }_a)$, it holds $|\chi ( \t_x \xi, y-x)| \leq 4 d C+ 3 d \e
n$.
\end{lemma}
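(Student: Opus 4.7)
The plan is to peel $x$ and $y$ off to points on the first coordinate axis using Lemma \ref{barbalalla}, bridge the two axial points via a single axial application of property (P1), and absorb the residual in-box shift via property (P2). Shift--covariance \eqref{coop1} then glues the four estimates into the desired bound.

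Concretely, pick $a, a' \in \bbG_n$ with $x \in \xi \cap B^{mK}_a$ and $y \in \xi \cap B^{mK}_{a'}$. Lemma \ref{barbalalla} provides $x^{(1)} \in \xi \cap B^{mK}_{a^1}$ and $y^{(1)} \in \xi \cap B^{mK}_{(a')^1}$ satisfying
\begin{equation*}
|\chi(\t_x\xi, x^{(1)} - x)| \leq dC + d\e n, \qquad |\chi(\t_{y^{(1)}}\xi, y - y^{(1)})| \leq dC + d\e n,
\end{equation*}
the second bound using the antisymmetry $\chi(\t_p\xi, q-p) = -\chi(\t_q\xi, p-q)$ that follows from \eqref{coop1}. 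Since $a^1, (a')^1 \in G_n^1$ both lie on the first coordinate axis and $|a_1 - a'_1| \leq 2n$, assuming $a_1 \neq a'_1$ I would apply property (P1) to $\t_{mKa^1}\xi \in \cA_{C,\e,m}$ in direction $e = \mathrm{sgn}(a'_1 - a_1)\, e_1 \in \bbB$ with $j = |a'_1 - a_1|$ and $N = 2n$; the required condition $je \in C_\infty^{mK}(\t_{mKa^1}\xi)\cap \L_N$ is immediate from $(a')^1 \in G_n^1 \subset C_\infty^{mK}$. This produces $\hat y \in \xi \cap B^{mK}_{(a')^1}$ with $|\chi(\t_{x^{(1)}}\xi, \hat y - x^{(1)})| \leq C + 2\e n$ (the degenerate case $a_1 = a'_1$ is trivial: set $\hat y = x^{(1)}$). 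Finally, because $\hat y, y^{(1)} \in \xi \cap B^{mK}_{(a')^1}$ and $\t_{mK(a')^1}\xi \in \cA_{C,m}$, property (P2) gives $|\chi(\t_{\hat y}\xi, y^{(1)} - \hat y)| \leq C$.

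Iterating \eqref{coop1} yields the telescoping identity
\begin{equation*}
\chi(\t_x\xi, y - x) = \chi(\t_x\xi, x^{(1)} - x) + \chi(\t_{x^{(1)}}\xi, \hat y - x^{(1)}) + \chi(\t_{\hat y}\xi, y^{(1)} - \hat y) + \chi(\t_{y^{(1)}}\xi, y - y^{(1)}),
\end{equation*}
and the triangle inequality delivers $|\chi(\t_x\xi, y - x)| \leq (2d+2)C + (2d+2)\e n \leq 4dC + 3d\e n$, the last inequality valid since $d \geq 2$. No real obstacle is anticipated: the only care needed is to check that each translated environment lies in the appropriate event $\cA_{C,\e,m}$ or $\cA_{C,m}$, and that the parameters $j, e, N$ satisfy the structural hypotheses of property (P1); both verifications are immediate from $a, a' \in \bbG_n$ and the definitions of $G_n^\nu$ and $\bbG_n$.
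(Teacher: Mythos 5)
Your proof is correct and follows essentially the same route as the paper's: reduce $x$ and $y$ to axial points $x^{(1)},y^{(1)}$ via Lemma \ref{barbalalla}, bridge the two axial boxes with one application of (P1), correct the in-box discrepancy with (P2), and glue with shift covariance. Your bookkeeping is in fact slightly more careful than the paper's (taking $N=2n$ in the axial application of (P1), since $|a_1-a_1'|$ can be as large as $2n$, and treating the degenerate case $a^1=(a')^1$ separately), and the resulting constant $(2d+2)(C+\e n)$ still fits under $4dC+3d\e n$ for $d\geq 2$.
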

\begin{proof}
Suppose that $x \in \xi \cap B^{mK}_a$ with  $a  \in \bbG_n $ and $y
\in \xi \cap B^{mK}_b$ with  $b  \in  \bbG _n $. Take $x^{(1)}$,
$y^{(1)}$ as in Lemma \ref{barbalalla}. By  shift covariance,
\begin{multline*}
\chi ( \t_x \xi, y-x)= \chi ( \t_x \xi, x^{(1)}-x)+\chi(
\t_{x^{(1)}}
\xi, y-x^{(1)})\\
=\chi ( \t_x \xi, x^{(1)}-x)+\chi( \t_{x^{(1)}} \xi,
y^{(1)}-x^{(1)})+ \chi ( \t_{y^{(1)}} \xi, y-y^{(1)})\,.
\end{multline*}
Again by the shift covariance (see \eqref{ariaviziata}), it holds 
$\chi ( \t_{y^{(1)}} \xi, y-y^{(1)})=- \chi (\t_y \xi , y^{(1)}
-y)$. Hence, by the bound in  Lemma \ref{barbalalla} and the
analogous estimate for  $y$ and $ y^{(1)}$,
\begin{equation}\label{cavallo}
|\chi ( \t_x \xi, y-x)| \leq 2 d C+ 2 d \e n+ |\chi( \t_{x^{(1)}}
\xi, y^{(1)}-x^{(1)}) |\,.
\end{equation}
On the other hand, $a^1 \in G_n^1$ and $b^1 \in C^{mK}_\infty$. By
property (P1) applied to $\t_{m K a^1 } \xi$, we conclude that there
exists $\bar y \in B^{mK}_{b^1}$ such that
$|\chi( \t_{x^{(1)}} \xi, \bar y -x^{(1)}) |\leq C + \e n$,
while by property (P2) applied to $ \t_{m K b^1} \xi$ we get that
$|\chi( \t_{\bar y } \xi, y^{(1)}- \bar y ) |\leq C$. The thesis
then follows observing that
the shift covariance  implies the identity
\[
\chi( \t_{x^{(1)}} \xi, y^{(1)}-x^{(1)})= \chi( \t_{x^{(1)}} \xi,
\bar y -x^{(1)})+ \chi( \t_{\bar y } \xi, y^{(1)}- \bar y
)\,.\qedhere
\]
\end{proof}

We are finally able to conclude:

\begin{proof}[Proof of Proposition \ref{barbapapa}.] We need to
show that, given $\e_0,\d_0>0$, for $\bbP(\cdot|B_0 \subset
\cC_\infty)$--a.a.\ $\xi \in \cN$ and for all $x_0 \in \xi \cap B_0$,
\begin{equation}\label{atletica} \lim _{s\to \infty} \frac{1}{|\L_s\cap \bbZ^d |} \sum
_{\substack{x \in \xi \cap \cC_\infty\,:\\ |x|\leq s}} \bbI\{ |\chi
(\t_{x_0}\xi, x-x_0 )|\geq \e_0 s \}\leq \d_0\,.
\end{equation}
 Fix $L$ such that \begin{equation}\label{paolo} K^{-d} \bbE\bigl ( \xi(B^K_0);
 \xi (B^K_0)  \geq L\bigr) \leq \d_0 /2\,.
\end{equation}
Take
 $\e:= \e_0 / (8d)$ and take $\d>0$ small enough that
\begin{equation}\label{barbamamma}
 \d <
 \frac{\bbP(0 \in C_\infty^K)}{6d}\wedge\frac{\d_0 K^d}{6d L}\,,
\end{equation}
Set $r:= \lceil s/K\rceil$ and $n:= \lceil r/m \rceil$.
 Then choose first $m$ and after that $C$ as
 in the above construction, i.e.\ choose $m$ large enough
 to assure \eqref{ostacoli}, after that choose  $C$ large
 enough to assure \eqref{gomma}.
Finally, take $s$ large enough that  $ r \geq m  n_0(\xi)$, where
$n_0(\xi)$ is  as in Lemma \ref{commissione} (this is meaningfull
for $\bbP$--a.a.\ $\xi$, and in particular for $\bbP( \cdot |B_0
\subset \cC_\infty)$--a.a.\ $\xi$).

By ergodicity, $|\L^1_r|^{-1}\sum _{j \in \L_r^1} \bbI ( j e_1 \in
C^K_\infty)$ converges to $p:= \bbP( 0 \in C^K_\infty)$ as $r \to
\infty$. In particular, for $r$ large enough (we write  $r \geq
r_1(\xi)$) the above average is larger than $p/2$.  Hence, at the
cost of losing a set of zero $\bbP$--probability and taking  $r\geq
r_1(\xi)\lor m n_0(\xi) $,  we can assume that
\begin{equation}\label{famissima}
 |\{j\in\L^1_{r}\,,\; j e_1 \in C^K_\infty\}|\geq (2r +1)p /2\,.
\end{equation}
Call $\p$ the projection of $\bbZ^d$ on its first coordinate axis
$\bbZ e_1$, namely $\p(x)=x^1=(x_1, 0,\dots, 0)$.
 Note that $\p
(\bbG_n) \subset \bbG_n$ and, due to \eqref{lampo1},
\[ 
|\p (\bbG_n) |\geq |\bbG_n|/ (2n+1)^{d-1}\geq (1- 3 d\d)(2n+1)\,. \]
In particular, \begin{equation}\label{setissima} | \{ j\in
\L^1_r\,:\, \lfloor j/m \rfloor  e_1 \in \bbG_n \}| \geq (1- 3
d\d)(2n+1)m \geq (1-3 d \d) (2r +1) \,.
\end{equation}
Since by \eqref{barbamamma}
\[
(1- 3 d \d)(2r+1)  + (2r+1)p /2 = (2r+1)  (1-3 d \d+ p/2)
>2r+1\,,
\]
 we get that the set in
the l.h.s.\ of \eqref{famissima} and the set in the l.h.s.\ of
\eqref{setissima} must intersect. Hence,
 there exists $j \in \L_{r }^1 $ such that $
B_{j e_1}^K \subset \cC_\infty^K $ and $B_{j e_1}^K \subset B^{
mK}_a$ for some $a \in \bbG_n$.

\smallskip

Thanks to Corollary \ref{plasmon} on scale $K$, $\bbP(\cdot|B_0
\subset \cC_\infty)$-a.s.\
 there exists $x \in\xi$ in the above  box $ B_{j e_1}^K$ such
that
\begin{equation}\label{barbazoo}|\chi( \t_{x_0}\xi ,x-x_0) |\leq C(\xi) + \e
r \,, \qquad \forall x_0 \in \xi \cap B_0\,.
\end{equation}
 Note that $x \in  \xi \cap ( \cup_{a\in \bbG_n} B^{mK }_a)$. For
 all
 other points   $ y \in \xi \cap ( \cup_{a\in \bbG_n} B^{mK }_a)$, by
\eqref{barbazoo}, Lemma \ref{barbaforte} and the choice $\e=
\e_0/8d$  we have
\begin{multline}
 |\chi(\t_{x_0}\xi,y-x_0)|
 \leq |\chi(\t_{x_0}\xi,x-x_0)|+|\chi(\t_{x}\xi,y-x)|
 \leq (C(\xi)+\e r)+(4d C+3drm^{-1}\e) \\
 \leq C'(\xi)+4dr\e
 \leq C'(\xi)+\e_0 r/2
 \leq C'(\xi)+\e_0 s/2\,.
 \la{member}
 \end{multline}
In particular, for $s$ large enough (\ref{member}) is smaller than
$\e_0 s$.

\smallskip

Then we can bound
\begin{multline}\label{raggio}
\sumtwo{y\in\xi\cap\cC^K_\infty\,:}{|y|_\infty\leq s}
\bbI\bigl\{|\chi(\t_{x_0}\xi,y-x_0)|\geq\e_0 s \bigr\} \leq
\sum_{z\in\L_r\cap\bbZ^d}\xi(B^K_z) \bbI(\xi(B^K_z)\geq L) \\+
  \sumtwo{a\in\L_n\cap\bbZ^d\,:}{a\not\in\bbG_n}
  \sumtwo{z\in\bbZ^d:}{B^K_z\subset B^{mK}_a}\xi(B^K_z)\bbI\bigl(\xi(B^K_z)\leq L\bigr)
  =: |\L_s\cap \bbZ^d |\bigl(A_1(s)+A_2(s)\bigr) \,.
\end{multline}
Using \eqref{paolo} and  the ergodicity in assumption (H2),
at the cost of removing a set of zero $\bbP$--probability,
we have
\begin{equation}
\lim_{s\to\infty}A_1(s)
= K^{-d}\bbE\bigl(\xi(B^K_0) ; \xi(B^K_0)\geq L\bigr)\leq \d_0/2\,.
\end{equation}
On the other hand,
\[
 A_2(s) \leq
\frac{1}{|\L_s\cap \bbZ^d | }
\sumtwo{a \in \L_n\cap\bbZ^d\,:}{a\not \in \bbG_n}
\sumtwo{z\in \bbZ^d:}{B^K_z \subset B^{mK}_a}L
\leq\frac{L m^d }{|\L_s\cap \bbZ^d | }(| \L_n\cap \bbZ^d|-|\bbG_n|)
\]
Since $s \sim n Km$, by \eqref{lampo1}  we have that $ \varlimsup_{s
\to \infty} A_2(s) \leq L K^{-d} 3 d\d$, which is smaller than
$\d_0/2$ by our choice of $\d$ (cf.\ \eqref{barbamamma}). Coming
back to \eqref{raggio}, we get the thesis.
\end{proof}

\subsection{Sublinearity on average in $\cC^*_\infty$}\label{ponticello}
We now  need  to come back to the set of good points $\xi \cap \cC^*_\infty$.
\begin{lemma}\label{merendinabuona}
 If  $\cA \subset \cN$  is  a measurable set such that $\bbP(\cA| B_0 \subset \cC_\infty)=1$, then
 $\bbP ( \cA | B_0 \subset \cC^*_\infty)=1$.
\end{lemma}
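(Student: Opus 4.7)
The plan is to observe that this lemma follows almost immediately from the monotonicity between the two clusters. By construction $\vartheta_z=1$ implies $\sigma_z=1$, so the infinite cluster of white boxes is contained in the infinite cluster of occupied boxes: $C^*_\infty\subset C_\infty$, and therefore $\cC^*_\infty\subset \cC_\infty$. In particular, the event $\{B_0\subset\cC^*_\infty\}$ is contained in the event $\{B_0\subset\cC_\infty\}$.

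Granted this inclusion, the proof is a one-line monotonicity argument. The hypothesis $\bbP(\cA\,|\,B_0\subset\cC_\infty)=1$ is equivalent to $\bbP(\cA^c\cap\{B_0\subset\cC_\infty\})=0$. Since $\cA^c\cap\{B_0\subset\cC^*_\infty\}\subset\cA^c\cap\{B_0\subset\cC_\infty\}$, we conclude $\bbP(\cA^c\cap\{B_0\subset\cC^*_\infty\})=0$.

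The only remaining point is to ensure $\bbP(B_0\subset\cC^*_\infty)>0$, so that the conditional probability $\bbP(\cA\,|\,B_0\subset\cC^*_\infty)$ is well-defined and equals $1$. This is guaranteed by assumption (H1): for $K,T_0$ chosen so that $\vartheta^{K,T_0}$ stochastically dominates a supercritical Bernoulli field $Z(p)$ with $p$ close enough to $1$, the origin belongs to the infinite cluster $C^*_\infty$ with strictly positive probability, whence $\bbP(B_0\subset\cC^*_\infty)=\bbP(0\in C^*_\infty)>0$.

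I do not foresee any obstacle: the entire content lies in the set-theoretic inclusion $\cC^*_\infty\subset\cC_\infty$ and the positivity of $\bbP(0\in C^*_\infty)$, both of which are already in place from Section~\ref{ambiente}. The argument is short enough that it can simply be written in three lines, with no further technical input.
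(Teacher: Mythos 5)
Your proof is correct and is essentially the paper's argument: the paper proves the contrapositive via the same inclusion $\{B_0\subset\cC^*_\infty\}\setminus\cA\subset\{B_0\subset\cC_\infty\}\setminus\cA$ stemming from $\cC^*_\infty\subset\cC_\infty$. Your additional remark that $\bbP(B_0\subset\cC^*_\infty)>0$ (needed for the conditional probability to make sense) is a point the paper leaves implicit, and is correctly justified by (H1).
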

\begin{proof}
Since  $\cC^* _\infty \subset \cC_\infty$,  the set  
$
 \cB:= \{ B_0 \subset  \cC^*_\infty\} \setminus \cA$ is contained by
 the set $\cD:=
  \{ B_0 \subset \cC_\infty \} \setminus \cA$.
Therefore we have the following sequence of implications
\begin{multline}
\bbP ( \cA | B_0 \subset \cC^*_\infty)<1 \Rightarrow  \bbP( \cB| B_0 \subset \cC^*_\infty) >0 \Rightarrow \bbP(\cB)>0 \Rightarrow
\bbP ( \cD)>0 \\
\Rightarrow  \bbP ( \cD|B_0 \subset \cC_\infty)>0 \Rightarrow \bbP( \cA| B_0 \subset \cC_\infty) <1\,,\hskip2cm
\end{multline}
showing the contrapositive.
\end{proof}

 From Proposition \ref{barbapapa} and Lemma \ref{merendinabuona}
  we easily obtain the following  
 \begin{corollary}\label{winkx} Given $\e>0$,
for $\bbP(\cdot|B_0 \subset \cC^*_\infty)$--a.a.\ $\xi
\in \cN$ and for all $x_0 \in \xi \cap B_0$, 
\begin{equation}\label{8000m} \lim _{n\to \infty} \frac{1}{n^d} \sum
_{x \in \xi \cap \cC^*_\infty: |x|\leq n}\, \bbI\{ |\chi
(\t_{x_0}\xi, x-x_0 )|\geq \e n\}=0\,.
\end{equation}
\end{corollary}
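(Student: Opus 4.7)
The proof is a short transfer argument that combines the two preceding results. The plan is the following.

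First, I would apply Proposition \ref{barbapapa} along a countable sequence $\e_k\downarrow 0$ (say $\e_k=1/k$). For each $k$, there is a Borel set $\cA_k\subset\{\xi\in\cN : B_0\subset\cC_\infty\}$ with $\bbP(\cA_k\mid B_0\subset\cC_\infty)=1$ such that the limit \eqref{800m} holds with $\e_0=\e_k$ for every $\xi\in\cA_k$ and every $x_0\in\xi\cap B_0$. Setting $\cA:=\bigcap_{k\geq 1}\cA_k$, one still has $\bbP(\cA\mid B_0\subset\cC_\infty)=1$, and since the indicator on the left of \eqref{800m} is monotone decreasing in $\e_0$, the limit in \eqref{800m} holds for $\xi\in\cA$ for every $\e>0$ and every $x_0\in\xi\cap B_0$.

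Next, I would apply Lemma \ref{merendinabuona} to the set $\cA$, which upgrades the conditioning from $B_0\subset\cC_\infty$ to $B_0\subset\cC^*_\infty$: namely, $\bbP(\cA\mid B_0\subset\cC^*_\infty)=1$. Note that on the event $\{B_0\subset\cC^*_\infty\}$ every point $x_0\in\xi\cap B_0$ belongs to $\cC^*_\infty$, so the quantifier ``for all $x_0\in\xi\cap B_0$'' is unchanged.

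Finally, for any $\xi\in\cA$ with $B_0\subset\cC^*_\infty$, any $x_0\in\xi\cap B_0$ and any $\e>0$, the inclusion $\cC^*_\infty\subset\cC_\infty$ together with the elementary bound $|x|\leq n\Rightarrow |x|_\infty\leq n$ yields
\[
\frac{1}{n^d}\sumtwo{x\in\xi\cap\cC^*_\infty:}{|x|\leq n}\bbI\{|\chi(\t_{x_0}\xi,x-x_0)|\geq \e n\}
\;\leq\;
\frac{1}{n^d}\sumtwo{x\in\xi\cap\cC_\infty:}{|x|_\infty\leq n}\bbI\{|\chi(\t_{x_0}\xi,x-x_0)|\geq \e n\},
\]
and the right-hand side tends to $0$ by the construction of $\cA$. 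This gives \eqref{8000m} and concludes the proof. There is no genuine obstacle here: the content is entirely in Proposition \ref{barbapapa}; the role of Lemma \ref{merendinabuona} is merely to change conditioning, and the restriction from $\cC_\infty$ to $\cC^*_\infty$ is a trivial domination of sums.
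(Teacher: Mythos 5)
Your proof is correct and is precisely the argument the paper intends: the paper simply states that the corollary is "easily obtained" from Proposition \ref{barbapapa} and Lemma \ref{merendinabuona}, and your write-up (countable intersection over $\e_k\downarrow 0$ using monotonicity in $\e$, transfer of the full-measure set via Lemma \ref{merendinabuona}, and the domination coming from $\cC^*_\infty\subset\cC_\infty$ together with $|x|\leq n\Rightarrow|x|_\infty\leq n$) fills in exactly those routine steps.
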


\subsection{Strong sublinearity in $\cC^*_\infty$}\label{puntiglioso}


\begin{lemma}
\label{pisolino}
For each $\e>0$, for $\bbP(\cdot | B_0 \subset \cC^*_\infty) $--a.a.\
$\xi $ and for all $x_0 \in \xi \cap B_0$, 
\begin{equation}\label{simonetta}
 \lim _{n\to \infty} \frac{1}{n} \max_{ \substack{x \in \xi
\cap \cC^*_\infty\,:\\ |x|_\infty \leq n}} | \chi (\t_{x_0}\xi ,x-x_0 )| =0\,.
\end{equation}
\end{lemma}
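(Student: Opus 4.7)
The proof is a contradiction argument that upgrades the density-sublinearity of Corollary \ref{winkx} to a pointwise bound, by exploiting the harmonicity of $\Phi=x+\chi$ for the restricted walk together with Proposition \ref{mirta} and the a priori polynomial growth of Lemma \ref{leles}; the overall scheme follows \cite{BB,BP,BD}. Suppose, for contradiction, that there exist $\varepsilon>0$, $n_k\uparrow\infty$, $x_0^{(k)}\in\xi\cap B_0$, and $x_k\in\xi\cap\cC^*_\infty$ with $|x_k|_\infty\leq n_k$ and $|\chi(\t_{x_0^{(k)}}\xi,x_k-x_0^{(k)})|\geq\varepsilon n_k$. Since $\xi\cap B_0$ is finite, after passing to a subsequence we may take $x_0^{(k)}=x_0$ fixed; set $\psi(y):=\chi(\t_{x_0}\xi,y-x_0)$. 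The shift-covariance of Lemma \ref{festicciola} gives $\Phi(\t_{x_0}\xi,y-x_0)=\Phi(\t_{x_0}\xi,x_k-x_0)+\Phi(\t_{x_k}\xi,y-x_k)$, so Corollary \ref{mariasole} applied to $Y_t$ started at $x_k$ yields the harmonicity identity
\[
 \psi(x_k)=E_{x_k,\xi}[Y_t-x_k]+E_{x_k,\xi}[\psi(Y_t)],\qquad t\geq 0.
\]

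Next I would take $t_k:=\theta\,n_k^2$ with $\theta>0$ small; for $k$ large $t_k\geq b_{n_k}$, so Proposition \ref{mirta}\eqref{mirtillo1} bounds the first summand by $C(\xi)\sqrt{\theta}\,n_k$, which is below $\varepsilon n_k/4$ once $\theta$ is small. For the second summand, split according to whether $|Y_{t_k}|_\infty\leq An_k$, for a constant $A=A(\theta)$ to be chosen large. On the tail event $\{|Y_{t_k}|_\infty>An_k\}$, combine Markov's inequality applied to a polynomial moment of $|Y_{t_k}-x_k|$ (obtained from Lemma \ref{moltobuono} by Poisson-thinning, via Lemma \ref{lecda20}) with the a priori bound $|\psi(y)|\leq u_{\g,c}(|y|_\infty\vee n_k)$ of Lemma \ref{leles}; choosing $A$ large enough makes this contribution $o(n_k)$. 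On the main event, the heat-kernel bound \eqref{mirtillo2} together with $w(y)\leq T$ on good points (Lemma \ref{leT}) yields $P_{x_k,\xi}(Y_{t_k}=y)\leq CT/t_k^{d/2}$ uniformly in good $y$, hence
\[
 \bigl|E_{x_k,\xi}\bigl[\psi(Y_{t_k})\,\bbI\{|Y_{t_k}|_\infty\leq An_k\}\bigr]\bigr|
 \leq\eta_k\,n_k+\frac{CT}{t_k^{d/2}}\,u_{\g,c}(An_k)\,N_k(\eta_k),
\]
where $\eta_k>0$ and $N_k(\eta):=\#\{y\in\xi\cap\cC^*_\infty:|y|_\infty\leq An_k,\,|\psi(y)|>\eta n_k\}$. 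Corollary \ref{winkx} gives $N_k(\eta)=o_k(n_k^d)$ for every fixed $\eta>0$.

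The principal obstacle is that Corollary \ref{winkx} gives only a qualitative $o_k(n_k^d)$ with no explicit rate, while the a priori bound $u_{\g,c}(n)$ is typically super-linear in $n$ (since Lemma \ref{polgrowthbis} forces $\g>d+1$), so the product $u_{\g,c}(An_k)\cdot o_k(1)$ is not automatically $o(n_k)$. The resolution is to exploit the freedom in the contradicting subsequence: by a diagonal argument we may choose $\eta_k\downarrow 0$ slowly enough and $n_k$ sparsely enough that $u_{\g,c}(An_k)\,N_k(\eta_k)=o(n_k^{d+1})$ using the $\bbP(\cdot|B_0\subset\cC^*_\infty)$-almost sure convergence from Corollary \ref{winkx}; the monotonicity of $\eta\mapsto N_k(\eta)$ makes this consistent. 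Collecting the three bounds one obtains $|\psi(x_k)|\leq(C(\xi)\sqrt\theta+\eta_k)n_k+o(n_k)$, contradicting $|\psi(x_k)|\geq\varepsilon n_k$ once $\theta$ is small. The crucial ingredient that makes this quantitative transfer work is the uniform heat-kernel diagonal bound \eqref{mirtillo2}, which smooths $\psi$ over a ball of radius $\sim\sqrt{t_k}$ and thereby allows the density-sublinearity of Corollary \ref{winkx} to overwhelm the pointwise contribution of the exceptional points.
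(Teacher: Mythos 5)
Your setup is the right one (harmonicity of $\Phi$ for the restricted walk, the expected--distance and heat--kernel bounds of Proposition \ref{mirta}, the averaged sublinearity of Corollary \ref{winkx}, and the a priori growth of Lemma \ref{leles}), and you correctly identify the principal obstacle. But your proposed resolution of that obstacle does not work, and cannot be made to work. Under your contradiction hypothesis the point $x_k$ itself satisfies $|\psi(x_k)|\geq\e n_k>\eta_k n_k$ and $|x_k|_\infty\leq n_k\leq An_k$, so $N_k(\eta_k)\geq 1$ for every admissible choice of $\eta_k$ and of the subsequence. Hence $u_{\g,c}(An_k)\,N_k(\eta_k)\geq u_{\g,c}(An_k)\geq c\,n_k^{\g}$ with $\g>d+1$ (the exponent forced by the Borel--Cantelli argument of Lemma \ref{polgrowthbis}), which is never $o(n_k^{d+1})$. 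No diagonal choice of $\eta_k\downarrow0$ or sparsification of $n_k$ can change this: the quantity you need to be small is bounded below by a quantity that is large. The qualitative $o(n^d)$ of Corollary \ref{winkx} simply cannot beat a super-linear a priori bound in a single step.

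The missing idea is the bootstrap via a \emph{recursive} inequality (the paper follows Y.~Peres's argument as in \cite[Lemma 5.1]{BP}). Set $R_n=\max_{x_0\in\xi\cap B_0}\max_{z\in\xi\cap\cC^*_\infty,\,|z|_\infty\leq n}|\chi(\t_{x_0}\xi,z-x_0)|$. On the exceptional set $\{|\psi(y)|>\eta n\}$ one does \emph{not} invoke the a priori growth $u_{\g,c}$; instead one bounds $|\psi(y)|$ by $R_{5n}$, after confining the walk to a box of size $O(n)$ by applying the Optional Stopping Theorem at $t\wedge S_n$ with $S_n=\inf\{t:|Y_t-z|_\infty\geq 2n\}$ (the long-jump overshoot event $\{S_n<t,\ |Y_{S_n}-z|_\infty>4n\}$ is the only place where $u_{\g,c}$ enters, and there it is killed by the high moment bounds of Lemma \ref{moltobuono}). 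The heat-kernel bound then turns the $o(n^d)$ count of exceptional points into a prefactor that can be made $\leq\d$, yielding $R_n\leq\e n+\d R_{5n}$ for $n\geq n_0(\xi,\e,\d)$. Iterating this recursion and seeding it with the polynomial bound $R_n=o(n^\theta)$ (with $\d$ chosen so that $5^\theta\d<1$) gives $R_n=O(\e n)$, hence the lemma. Your one-step contradiction argument skips exactly this self-improvement mechanism, which is why it cannot close.
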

\begin{proof}
Let us define \begin{equation}\la{errenne} R_n (\xi) =\max _{x_0 \in \xi \cap B_0} \; \max_{ \substack{z \in \xi \cap \cC^*_\infty:\\
|z|_\infty\leq n}}| \chi (\t_{x_0}\xi ,z-x_0 )|\,.\end{equation} Due to
Lemma \ref{polgrowthbis} and Lemma \ref{merendinabuona}, for $\theta
>d+1$
\begin{equation}\label{telaviv}
\lim _{n \to \infty} n^{-\theta} R_n =0 \,, \qquad \bbP(
\cdot |B_0 \subset \cC^*_\infty)\text{--a.s.}
\end{equation}
Following an idea of Y.\ Peres,   we only need to prove  a recursive
bound of the form:
%
for each $\e, \d>0$, there exists an a.s.\ finite
random variable $n_0=n_0(\xi, \e,\d)$ such that
\begin{equation}\label{canegianni}
R_n \leq \e n + \d R_{5n}\,,
\qquad n \geq n_0\,.
\end{equation}
From (\ref{canegianni}), using the input \eqref{telaviv}, it is easy to conclude;
see the explanation after \cite[Lemma 5.1]{BP}.


We turn to the proof of (\ref{canegianni}).
We take   $\xi \in \cN$ such that $\{B_0 \subset \cC^*_\infty\} $ and  satisfying \eqref{telaviv}.
Moreover, assume
$\xi$  and $b_n=o(n^2)$ satisfy
  \eqref{mirtillo1} and  \eqref{mirtillo2} of Proposition \ref{mirta}.
Take $x_0,z $ such that $R_n(\xi)=| \chi
(\t_{x_0}\xi ,z-x_0 )|$, $x_0 \in \xi \cap B_0$, $z \in \xi \cap \cC^*_\infty$, $|z|_\infty \leq n$. Similarly to \cite{BP}, take
 $t= t(n) \geq
b_{4n} \lor n $ (we will specify the function $t(n)$ at the end).
  Fix positive constants $C_1, C_2$ such that
   the expressions $\max_x \sup_t $ in \eqref{mirtillo1} and \eqref{mirtillo2} are bounded by $C_1$ and $C_2$, respectively, if
$n $ is large enough (that is,   $n\geq  n_*(\xi)$). Take $n \geq K
\lor n_* (\xi)$. Finally, define the stopping time
\[ S_n :=\inf\{t \geq 0:|Y_t-z|_\infty\geq 2n\}\,. \]
 Due to  Corollary \ref{mariasole} and the Optional
Stopping Theorem we can write
\begin{equation}\label{eq1}
 E_{z,\xi} \Big[ \chi( \t_z \xi, Y_{t\wedge S_n} -
z)+ Y_{t \wedge S_n}-z\Big] =  \chi( \t_z \xi, 0)=0\,.
\end{equation}
By the shift--covariance property we can write
\begin{equation}\label{eq2}
 \chi (\t_{x_0}\xi ,z-x_0
)=  \chi (\t_{x_0}\xi ,Y_{t\wedge S_n} -x_0 )-\chi( \t_z \xi,
Y_{t\wedge S_n} -z)\,.
\end{equation}
Combining \eqref{eq1} and \eqref{eq2} we get
\begin{equation}\label{eq3}
\chi (\t_{x_0}\xi ,z-x_0 )=E_{z,\xi} \Big[ \chi (\t_{x_0}\xi
,Y_{t\wedge S_n} -x_0 )+Y_{t \wedge S_n}-z\Big]\,,
\end{equation}
thus implying that $R_n (\xi) \leq E_{z,\xi} \Big[ \bigl |F(\xi,
Y_{t\wedge S_n} )\bigr| \Big]$ where
  \begin{equation}
F(\xi, Y_{t\wedge S_n} ):= \chi (\t_{x_0}\xi ,Y_{t\wedge S_n} -x_0
)+Y_{t \wedge S_n}-z\,.
\end{equation}

\medskip

Let us  introduce the event $A=\{S_n <t\,,\, |Y_{S_n}-z|_\infty > 4
n\}$.  Using \eqref{telaviv} we can bound
\begin{multline}\label{kafka}
E_{z,\xi} \Big[ \bigl |F(\xi, Y_{t\wedge S_n} )\bigr|\,;\, A
\Big]\leq \sum_{k=4}^\infty E_{z,\xi} \Big[ \bigl |F(\xi, Y_{t\wedge
S_n} )\bigr|\,;\, A \,;\, |Y_{S_n}-z|_\infty \in [kn,(k+1)n)
\Big]\\
\leq  \sum_{k=4}^\infty\bigl\{C(\xi) k^\theta n ^\theta+ (k+1)n
\bigr\}
P_{z,\xi} \Big[ | Y_{S_n}-z|_\infty  \in[kn,(k+1)n)\,, \; S_n <t \Big] \\
\leq 
C' 
n^{\theta}  \sum_{k=4}^\infty k^{\theta } P_{z,\xi}
\Big[ | Y_{S_n}-z|_\infty  \in[kn,(k+1)n)\,, \; S_n <t \Big] \,.
\end{multline}
Recall that $Y_t$ is the poissonization of the discrete time random
walk $n \mapsto X_{T_n}$.
Taking $t=\vartheta n^2$, with $\vartheta>0$, and using 
Lemma \ref{moltobuono}, we can estimate
\begin{equation*}
\begin{split}
&P_{z,\xi}  \biggl[ | Y_{S_n}-z|_\infty    \in[kn,(k+1)n)\,, \; S_n
<t \biggr] \leq
(kn)^{-\g} E_{z,\xi}\biggl [ |Y_{S_n }-z|^\g ; S_n <t\biggr]\\
& \leq (kn)^{-\g} \sum _{j=1}^\infty P(N_t=j) E_{z,\xi}
\biggl[\max_{ i: 1\leq i \leq T_j } |X_i-z|^\g  \biggr]
 \\ &
\leq  (kn)^{-\g}[1+\log (1+ c(d) n)]^\g  \sum _{j=1}^\infty P(N_t=j)
j
 =
 (kn)^{-\g}[1+\log (1+ c(d) n)]^\g \vartheta n^2 \,.
 \end{split}
\end{equation*}
Coming back to \eqref{kafka} and taking $\g>2+\theta$,  we get
\begin{equation}\label{fuocone}
E_{z,\xi} \Big[ \bigl |F(\xi, Y_{t\wedge S_n} )\bigr|\,;\, A
\Big]\leq  c\vartheta  n^{\theta +2- \g}[1+\log (1+ c(d) n)]^\g \sum
_{k=4}^\infty k^{\theta-\g}  \leq (\e/2)n
\end{equation}
for $n$ large enough.

The above expectation, coming from the presence of long jumps,  does
not appear in \cite{BP}. On the other hand, using Proposition
\ref{mirta} and taking  $t=\vartheta n^2$  with $\vartheta>0$, the
control of $E_{z,\xi} \bigl[ \bigl |F(\xi, Y_{t\wedge S_n}
)\bigr|\,;\, A^c \bigr]$  can be obtained by the same computations
in  the proof of Lemma 5.1 in \cite{BP}. As a final result, one gets
\eqref{canegianni}.
\end{proof}

\subsection{Proof of Theorem \ref{piccolo}}\label{mandarino}
We now have most of  the tools needed to prove the sublinearity of the corrector field stated
in Theorem \ref{piccolo}. First, we need to link the Palm distribution to the probability measures used above. To this end
we introduce a bridge
probability space. We
call  $\bbQ_0$   the distribution on $\cN_0$  of the point process
$\xi$ defined in this way: pick a configuration $\tilde \xi  \in
\cN$ with law $\bbP( \cdot | \tilde \xi (B_0)\geq 1) $, pick a point
 $v_0 \in \tilde \xi \cap B_0$ with uniform probability, and set $\xi=
\t_{v_0} \tilde \xi$.

\begin{lemma}
Let $\cA\subset \cN_0$ be a measurable subset. Then
 $\bbQ_0(\cA)=0,1$ if and only if $\bbP_0 (\cA)=0,1$, respectively.
\end{lemma}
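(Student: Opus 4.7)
The strategy is to express both $\bbP_0(\cA)$ and $\bbQ_0(\cA)$ as integrals against $\bbP$ of the same nonnegative functional of $\tilde\xi$, up to a positive multiplicative constant and an indicator, and to observe that the relevant functional vanishes $\bbP$-a.s.\ on $\{\tilde\xi(B_0)\geq 1\}$ under exactly the same circumstances in both cases.

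First, I would apply the Campbell identity \eqref{campbell} with the choice
\begin{equation*}
 f(x,\xi) = \mathbf{1}_{B_0}(x)\,\mathbf{1}_\cA(\xi)\,,
\end{equation*}
which is a legitimate nonnegative measurable function on $\bbR^d \times \cN_0$. The left-hand side equals $K^d \bbP_0(\cA)$, while the right-hand side equals
\begin{equation*}
 \frac{1}{\rho}\,\bbE\Bigl[\sum_{v \in \tilde\xi \cap B_0} \mathbf{1}_\cA(\t_v \tilde\xi)\Bigr]
 = \frac{1}{\rho}\,\bbE[N_\cA(\tilde\xi)]\,,
\end{equation*}
where I set $N_\cA(\tilde\xi) := \sum_{v \in \tilde\xi \cap B_0} \mathbf{1}_\cA(\t_v \tilde\xi)$. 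Hence $\bbP_0(\cA) = (\rho K^d)^{-1}\bbE[N_\cA]$.

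Next, I would rewrite $\bbQ_0(\cA)$ directly from its definition: conditioning on $\tilde\xi$ and averaging uniformly over the points in $\tilde\xi \cap B_0$ gives
\begin{equation*}
 \bbQ_0(\cA) = \frac{1}{\bbP(\tilde\xi(B_0)\geq 1)}\,
 \bbE\Bigl[\mathbf{1}_{\{\tilde\xi(B_0)\geq 1\}}\,\frac{N_\cA(\tilde\xi)}{\tilde\xi(B_0)}\Bigr]\,.
\end{equation*}
Since $0 < \tilde\xi(B_0) < \infty$ on the event $\{\tilde\xi(B_0)\geq 1\}$ ($\bbP$-a.s., using $\rho_1<\infty$), the integrand $N_\cA/\tilde\xi(B_0)$ is a well-defined nonnegative random variable, strictly positive precisely when $N_\cA > 0$.

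The key observation is that $N_\cA(\tilde\xi) = 0$ iff $N_\cA(\tilde\xi)/\tilde\xi(B_0) = 0$ on $\{\tilde\xi(B_0)\geq 1\}$, and moreover $N_\cA$ already vanishes $\bbP$-a.s.\ on $\{\tilde\xi(B_0)=0\}$. Therefore $\bbP_0(\cA)=0$ is equivalent to $N_\cA = 0$ $\bbP$-a.s., which is equivalent to $\bbQ_0(\cA)=0$. Applying the same reasoning to $\cA^c$ (noting that $\cA^c \subset \cN_0$ is measurable) yields the equivalence for probability $1$. There is no real obstacle here; the argument is a one-line consequence of Campbell once both measures are written in terms of $N_\cA$.
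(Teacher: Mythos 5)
Your proof is correct and follows essentially the same route as the paper's: both apply the Campbell identity with $f(x,\xi)=\mathbf{1}_{B_0}(x)\mathbf{1}_{\cA}(\xi)$ to express $\bbP_0(\cA)$, write $\bbQ_0(\cA)$ out from its definition, and reduce both statements to the same $\bbP$-a.s.\ event concerning the points of $\xi\cap B_0$ (you phrase this via $N_\cA=0$ a.s.\ and then pass to complements, while the paper phrases it via the saturation of the upper bound $1$ and the event $\cB=\{\t_x\xi\in\cA\ \forall x\in\xi\cap B_0\}$; these are complement-duals of one another). No gaps.
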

\begin{proof}
By taking the complements, it is enough to prove that $\bbQ_0(\cA)=1$ if and only if $\bbP_0(A)=1$.
Consider the measurable set
$
 \cB
 :=\{ \xi \in \cN\,:\, \t_x \xi \in \cA\,\; \forall x \in \xi\cap
 B_0\}$.
 By the Campbell identity \eqref{campbell} with $f(x,\xi)= \bbI (x \in
B_0\,; \, \xi\in \cA\}$ we have
\begin{equation*}
 \bbP_0(\cA)= \frac{1 }{\rho K^d } \int _{\cN}  \bbP( d \xi) \int
_{B_0} \xi (dx )  \bbI( \t_x \xi \in \cA)\leq \frac{1 }{\rho K^d }
\int _{\cN}  \bbP( d \xi) \int _{B_0} \xi (dx )  1 =1\,,
\end{equation*}
which implies that $\bbP_0(\cA)=1$ if and only if $\bbP(\cB)=1$.

Similarly, by definition of $\bbQ_0$  we have
\begin{multline*}
\bbQ_0( \cA)= \frac{1}{\bbP( \xi (B_0) \geq 1)} \int
_{ \{ \xi: \xi(B_0) \geq 1\}}\bbP( d \xi) \frac{ \int _{B_0 } \xi
(dx ) \bbI ( \t_x \xi \in \cA) }{\xi (B_0) }\\
\leq \frac{1}{\bbP(
\xi (B_0) \geq 1)} \int _{ \{ \xi: \xi(B_0) \geq 1\}}\bbP( d \xi)
\frac{ \int _{B_0 } \xi (dx ) 1 }{\xi (B_0) }=1 \,,
\end{multline*}
which implies that $\bbQ_0(\cA)=1$ if and only if $\bbP(\cB)=1$.
\end{proof}

Thanks to the above lemma, to prove Theorem \ref{piccolo}
it suffices to show that
 for $\bbP( \cdot | \xi (B_0)\geq 1)$--a.a.\ $\xi$,
\begin{equation}\label{lafine?}
\lim _{n\to \infty} \frac{1}{n} \max _{x_0 \in \xi \cap B_0}\;\; \max _{\substack{ x \in \xi:\\ |x|_\infty \leq n}}
\;| \chi ( \t_{x_0} \xi, x- x_0)| =0\,.
\end{equation}
The plan of the proof is the following: we first   improve Lemma \ref{pisolino} by passing from $\bbP( \cdot |B_0 \subset \cC^*_\infty)$
to $\bbP( \cdot | \xi(B_0) \geq 1)$ (see Lemma \ref{saratoga});
after that we remove the restriction $x \in\cC^* _\infty$ which appears in Lemma \ref{saratoga} by applying
the Optional Stopping Theorem.

\medskip

 We fix a coordinate vector $e$ and define the map $n_*:  \cN \to \bbN_+:=\{1,2, \dots\}$
 as follows:
 \begin{equation}\label{naso} n_*(\xi)= \min\{ n\in \bbN_+\,:\, B_{n e}\subset
\cC^*_\infty\} \,.
\end{equation}
By  assumption (H1), the map 
is well defined
$\bbP$--a.s.
\begin{lemma}\label{mimuovo}
Call $P$ the law on $\cN $ of the
point process $ \t_{n_*(\xi) K e } \xi$, where   $\xi\in \cN $ is
sampled with probability $\bbP ( \cdot | \xi (B_0)\geq 1)$. Then,
for any measurable subset $\cA \subset \cN$, 
\[ \bbP( \cA \,|\, B_0 \subset \cC^*_\infty )=1  \Rightarrow P(\cA)=1\,. \]
\end{lemma}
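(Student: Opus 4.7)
\medskip

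\noindent\textbf{Plan of proof.} The key observation is that the shift $\t_{n_*(\xi)Ke}\xi$ is, by the very definition of $n_*$, supported on configurations satisfying $B_0\subset\cC^*_\infty$: indeed, $B_{n_*(\xi)e}\subset\cC^*_\infty(\xi)$ by definition, and this is equivalent to $B_0\subset\cC^*_\infty(\t_{n_*(\xi)Ke}\xi)$ by the covariance of the cluster under translations, i.e. $\cC^*_\infty(\t_x\xi)=\cC^*_\infty(\xi)-x$. So $P$ is a measure on the event $\{B_0\subset\cC^*_\infty\}$, and the claim is that $P$ is absolutely continuous with respect to $\bbP(\cdot\mid B_0\subset\cC^*_\infty)$.

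The plan is to pass to the complement. Assume $\bbP(\cA^c\cap\{B_0\subset\cC^*_\infty\})=0$; we show $P(\cA^c)=0$. First decompose according to the value of $n_*$: using the definition of $P$,
\begin{equation*}
P(\cA^c)
=\frac{1}{\bbP(\xi(B_0)\geq 1)}\sum_{n\geq 1}
\bbP\bigl(\{n_*(\xi)=n\}\cap\{\t_{nKe}\xi\in\cA^c\}\bigr).
\end{equation*}
Since $\{n_*(\xi)=n\}\subset\{B_{ne}\subset\cC^*_\infty(\xi)\}=\{B_0\subset\cC^*_\infty(\t_{nKe}\xi)\}$, each summand is bounded above by $\bbP\bigl(\t_{nKe}\xi\in\cA^c\cap\{B_0\subset\cC^*_\infty\}\bigr)$.

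The next step is to invoke the stationarity of $\bbP$ under translations of $\bbR^d$, and in particular under the lattice shift by $nKe$, to get
\begin{equation*}
\bbP\bigl(\t_{nKe}\xi\in\cA^c\cap\{B_0\subset\cC^*_\infty\}\bigr)
=\bbP\bigl(\cA^c\cap\{B_0\subset\cC^*_\infty\}\bigr)=0
\end{equation*}
by hypothesis. Summing over $n$ (the well-definedness of $n_*$ $\bbP$-a.s., guaranteed by Property (A) and standard supercritical percolation, ensures the decomposition covers a full-measure subset of $\{\xi(B_0)\geq 1\}$) gives $P(\cA^c)=0$, and hence $P(\cA)=1$.

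This argument is essentially a routine application of stationarity combined with the equivariance of $\cC^*_\infty$; there is no real obstacle, the only subtlety being the verification that $\{n_*(\xi)=n\}$ does indeed imply $B_0\subset\cC^*_\infty$ for the shifted configuration, which follows directly from the definition of $n_*$ and the translation covariance of the good cluster.
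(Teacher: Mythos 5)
Your proof is correct and rests on the same core mechanism as the paper's: decompose according to the value of $n_*(\xi)$ and use stationarity of $\bbP$ under the shift $\t_{nKe}$, together with the covariance $\cC^*_\infty(\t_{nKe}\xi)=\cC^*_\infty(\xi)-nKe$ (which indeed holds because the shift lies in $K\bbZ^d$, so the box partition is preserved). You shortcut the paper's explicit computation of the Radon--Nikodym density $\hat n(\xi)$ of $P$ with respect to $\bbP(\cdot\mid\xi(B_0)\geq 1)$ by arguing directly with null sets, which is all the stated implication requires; the only (harmless) imprecision is that your displayed decomposition of $P(\cA^c)$ omits the intersection with $\{\xi(B_0)\geq 1\}$ and is therefore an upper bound rather than an equality, which does not affect the conclusion.
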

\begin{proof}
 Given a bounded
 measurable  function $f :\cN
\to \bbR$ we can write
\[
 E_{P}(f)
 = \int \bbP\bigl( d \xi\,|\, \xi (B_0) \geq 1\bigr) f( \t_{n_*(\xi)K e}\xi)
 = \frac{\bbE\left[
  f(\t_{n_*(\xi)K e} \xi);\xi (B_0) \geq 1  \right]}{
    \bbP( \xi (B_0) \geq 1)}\,.
\]
 Moreover,
\begin{multline}
 \bbE\left[
f (\t_{n_*(\xi)K e} \xi);\xi (B_0) \geq 1 \right]= \sum _{m=1}
^\infty  \bbE\left[ f( \t_{m K e}  \xi); \xi
(B_0) \geq 1; n_*(\xi) =m  \right]\\
=\sum_{m=1}^\infty \bbE\bigl[ f( \t_{mK e} \xi)\,;\, \xi (B_0)\geq
1\,; B_{ke} \not \subset \cC^*_\infty \; \forall  k:1\leq k \leq
m-1\,,\; B_{m e} \subset \cC^*_\infty\bigr]\,.
\end{multline}
Due to the stationarity of $\bbP$ the last expression can be written
as
\begin{multline}
\sum_{m=1}^\infty \bbE\bigl[  f(  \xi)\,;\, \xi (B_{-m e })\geq 1\,;
B_{ke} \not \subset \cC^*_\infty \; \forall  k:1-m\leq k \leq -1\,;
B_0 \subset
\cC^*_\infty \bigr]\\
=
\bbE\bigl[ f(\xi) \hat n (\xi) \,; B_0 \subset \cC^*_\infty
\bigr]\,,
\end{multline}
where we define
\begin{align*}
& n_-(\xi)= \max \{j\leq -1\,:\, B_{j e} \subset \cC^*_\infty\}\,,\\
& \hat n (\xi) = \sharp \{j\,:\, n_-(\xi) \leq j \leq -1\,, \;
\xi(B_{je}) \geq 1\}\,.
\end{align*}
Collecting the above observations we get
\begin{equation}
E_P(f)= \frac{
\bbE\left[ f(\xi) \hat n (\xi) \,; B_0 \subset \cC^*_\infty  \right]
}{ \bbP( \xi (B_0) \geq 1) } = \bbE \left[ f(\xi) \hat n(\xi)\,;\,
B_0\subset \cC^*_\infty  \,|\, \xi (B_0) \geq 1\right]\,.
\end{equation}
This proves that
\begin{equation}\label{importante}
 P( d \xi)=\hat n(\xi)
\bbI( B_0\subset \cC^*_\infty )  \bbP( d\xi \,|\, \xi (B_0) \geq 1)=
\hat n(\xi)
\frac{ \bbI( B_0\subset \cC^*_\infty ) }{ \bbP ( \xi(B_0) \geq 1) } \bbP( d\xi)
\,.
\end{equation}
Since $P$ is a probability measure it must hold $ 1=\bbE\left[\hat
n(\xi); B_0\subset \cC^*_\infty \right]/\bbP[ \xi (B_0) \geq 1]$.
Take $\cA\subset \cN$ measurable and satisfying $\bbP( \cA\,|\,B_0
\subset \cC^*_\infty )=1$. This implies that
$
 \bbI( \xi \in \cA) \bbI( B_0 \subset \cC^*_\infty)
 =\bbI ( B_0 \subset \cC^*_\infty)$ $\bbP$--a.s.
In particular,
 we
have
\[
 P( \cA)
 =\frac{
  \bbE\bigl[\hat n(\xi)\bbI(B_0\subset\cC^*_\infty)\bbI(\xi\in\cA)\bigr]
  }{P(\xi(B_0)\geq 1)}
 =\frac{\bbE\bigl[\hat n(\xi)\bbI(B_0\subset\cC^*_\infty)\bigr]}{
  P(\xi(B_0)\geq 1)}\,,
\]
while we have already shown that the last member equals $1$.
\qedhere
\end{proof}

Recall the definition (\ref{errenne}) of $R_n$.
\begin{lemma}\label{saratoga}
For $\bbP( \cdot | \xi (B_0) \geq 1 )$--a.a.\ $\xi$,  it holds
$
 \lim _{n\to \infty} R_n/n
 =0$.
\end{lemma}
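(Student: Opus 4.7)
The plan is to combine Lemma \ref{pisolino} with Lemma \ref{mimuovo} to transport the sublinearity conclusion from the law $\bbP(\cdot\tc B_0\subset\cC_\infty^*)$ to the law $\bbP(\cdot\tc \xi(B_0)\geq 1)$, and then use shift covariance of the corrector to pass from the shifted configuration back to the original one. Concretely, set $\cA:=\{\xi\in\cN:\,R_n(\xi)/n\to 0\}$. By Lemma \ref{pisolino}, $\bbP(\cA\tc B_0\subset\cC_\infty^*)=1$, and hence Lemma \ref{mimuovo} immediately gives that $\xi':=\t_{n_*(\xi)Ke}\xi\in\cA$ for $\bbP(\cdot\tc\xi(B_0)\geq 1)$-a.e.\ $\xi$; equivalently, writing $v:=n_*(\xi)Ke$, we have $R_n(\xi')/n\to 0$ a.s.\ under this conditional law. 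It remains to bound $R_n(\xi)$ by $R_n(\xi')$ up to a negligible error.

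Fix such a $\xi$ and pick any $y_0\in\xi\cap B_{n_*(\xi)e}$; this set is non-empty since $B_{n_*(\xi)e}=B_0+v\subset\cC_\infty^*(\xi)$ by definition of $n_*$. For any $x_0\in\xi\cap B_0$ and any $z\in\xi\cap\cC_\infty^*(\xi)$ with $|z|_\infty\leq n$, Lemma \ref{festicciola} together with the antisymmetry relation $\chi(\t_a\xi,b-a)=-\chi(\t_b\xi,a-b)$ (a consequence of \eqref{ariaviziata} applied to the curl--free form $\chi$) yields
\begin{equation*}
\chi(\t_{x_0}\xi,z-x_0)\;=\;-\chi(\t_{y_0}\xi,x_0-y_0)+\chi(\t_{y_0}\xi,z-y_0)\,.
\end{equation*}
The first summand is bounded, uniformly over $x_0\in\xi\cap B_0$, by a finite constant $K_1(\xi)$: since $y_0\in\cC_\infty^*\subset\cC_\infty$ and both $x_0,y_0$ lie inside a bounded $\xi$-dependent region (of size $\lesssim Kn_*(\xi)$), Lemma \ref{leles} evaluated at a fixed $\xi$-dependent scale supplies such a bound. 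For the second summand, set $y_0':=y_0-v\in\xi'\cap B_0$ and $z':=z-v$; translation equivariance of the white cluster ensures $z'\in\xi'\cap\cC_\infty^*(\xi')$, while $|z'|_\infty\leq n+|v|_\infty=n+Kn_*(\xi)$. Hence
\begin{equation*}
|\chi(\t_{y_0}\xi,z-y_0)|\;=\;|\chi(\t_{y_0'}\xi',z'-y_0')|\;\leq\;R_{n+Kn_*(\xi)}(\xi')\,.
\end{equation*}
Taking the maximum over admissible $x_0$ and $z$ gives $R_n(\xi)\leq K_1(\xi)+R_{n+Kn_*(\xi)}(\xi')$, and dividing by $n$ and sending $n\to\infty$ completes the proof, since $n_*(\xi)<\infty$ $\bbP$-a.s.\ by assumption (H1).

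The genuine difficulty is already packaged into the preparatory results: Lemma \ref{mimuovo} provides the essential bridge between the two Palm--like conditional laws, and Lemma \ref{leles} supplies the polynomial-type a priori bound needed to control the ``boundary'' correction term at a fixed $\xi$-dependent scale. With these in hand, what remains is a routine shift-covariance manipulation; no further percolation input or heat kernel estimate is required.
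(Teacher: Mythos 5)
Your proposal is correct and follows essentially the same route as the paper: Lemma \ref{pisolino} plus the bridge Lemma \ref{mimuovo} to get sublinearity for the shifted configuration $\xi'=\t_{n_*(\xi)Ke}\xi$, then shift covariance (the identity through a point $y_0\in\xi\cap B_{n_*(\xi)e}$, with the first term an $n$-independent constant) to transfer the bound back to $\xi$. The only cosmetic difference is that the paper absorbs the shift by working with the window $|y-n_*(\xi)Ke|_\infty\leq 2n$ rather than your $R_{n+Kn_*(\xi)}(\xi')$, which is the same estimate.
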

\begin{proof}

Recall the definition of the  function $n_*(\xi)$ given
in \eqref{naso}.
As a byproduct of
 Lemma \ref{pisolino}  and  Lemma \ref{mimuovo}, we get  for
 $\bbP\bigl( \cdot | \xi(B_0) \geq 1\bigr)$--a.a.\ $\xi$ that, for any $x' \in
 \xi \cap B_{n_*(\xi) e}$, it holds
 \begin{equation}\label{girolamo}
 \lim _{n\rightarrow \infty}\frac{1}{n}
  \max _{
 \substack{y \in\xi \cap \cC_\infty ^*\\
 |y-n_*(\xi) Ke |_\infty \leq 2n }}
 \bigl| \chi \bigl(\t_{x'} \xi, y-x'\bigr)
 \bigr|=0
 \end{equation}
(the above  choice of $2n$ instead of  $n$ is due to later
applications). On the other hand,
  by the shift covariance, given $x_0 \in \xi \cap B_0$, $x \in \xi \cap \cC^*_\infty $ and $x' \in \xi \cap B_{n_*(\xi) e}$, we can write
\begin{equation}\label{lezione}
 \chi( \t_{x_0} \xi, x-x_0)
 =\chi( \t_{x_0} \xi, x' -x_0)+ \chi (\t_{x'}  \xi , x-x' )\,.
\end{equation}
Since, given $\xi$ and $x \in \xi \cap \cC^*_\infty $  with
$|x|_\infty \leq n$, it holds   $|n_*(\xi) K e -x| _\infty \leq 2 n$
for $n$ large enough, one can apply \eqref{girolamo} with $y=x$.
From \eqref{girolamo} and \eqref{lezione} we then obtain
\begin{equation}
\lim _{n\rightarrow \infty} \frac{1}{n} \max _{x _0 \in \xi \cap
B_0} \max _{\substack{x \in \xi \cap \cC_\infty^*\\ |x|_\infty \leq
n } } \bigl| \chi( \t_{x_0} \xi, x-x_0) \bigr|=0\,,
\end{equation}
which corresponds to the thesis.
\end{proof}

Let us finally conclude. Take $x_0 \in B_0 \cap \xi$ and $x \in \xi $ with $|x|_\infty \leq n$. From the Optional Stopping Theorem (cf.\ Proposition \ref{pizza})
we know that
\be
x+\chi ( \t_{x_0 } \xi, x - x_0) = E_{x, \xi}\left[X_{T_1}+\chi ( \t_{x_0 } \xi, X_{T_1} - x_0)
\right]\,.
\end{equation}
Take $\e<1$ and let $\cS_k=\{(k-1)n^\e\leq |X_{T_1}-x| < kn^\e \}$, $k=1,2,\dots$.
Recalling \eqref{errenne}, we can estimate
\be 
|\chi ( \t_{x_0 } \xi, x - x_0)| \leq n^\e + R_{2n} + \sum_{k=2}^\infty (kn^\e + R_{n+kn^\e})P_{x,\xi}(\cS_k)   \,.
\end{equation}
Lemma \ref{saratoga} gives us the estimate $R_n=o(n)$, and therefore we see that the desired conclusion follows if we can show that a.s.
$\sum_{k=2}^\infty (n+kn^\e)P_{x, \xi}(\cS_k) = o(n)$.
This, in turn, follows from Lemma \ref{moltobuono} and the fact that
$|x|_\infty\leq n$. Indeed, for every $k\geq 1$, and $p\geq 1$:
\[
P_{x, \xi}(\cS_{k+1})\leq P_{x, \xi}(|X_{T_1}-x| \geq kn^\e)\leq
\k\,(kn^\e)^{-p}\,(\log n)^p\,.
\]
Taking e.g.\ $p=3$ we get the desired bound. 

\section{Proof of Theorem \ref{atreiu}  in the presence of  energy marks}\label{estensione}
Let us suppose now that the function $u(E_x,E_y)$ is non trivial. In
this case, the environment of the random walk is given by
$\o = \bigl\{ (x, E_x) : x \in \xi \bigr\} $ and  corresponds to a marked simple point process. We refer to
\cite[Section 2]{FSS} for detailed definitions and
references.
Here we simply recall that  stationarity and ergodicity of the point process $\xi$ automatically extend to $\bbP$.
Moreover, the Campbell identity remains valid in the marked case
(with suitable changes).

We fix some notation. We write $\tilde \cN$ for the state space of the  marked point process $\o$ and, given $v\in \bbR^d$, we define the
 translation $\t_v \o$ as
\[
 \t_v \o:=\bigl\{ (x -v ,E_x) : x \in \xi \bigr\}\,, \qquad
 \o=\bigl\{ (x, E_x) : x \in \xi \bigr\} \,.
\]
 Let us suppose that
$\bbP$ is an ergodic stationary marked  simple point process with
finite second  moment. As already mentioned assumption (H1) is the
same, while assumption (H2) has to be slightly modified as follows:

\begin{itemize}
\item[(H2)]  for each $K>0$ and for each
  vector $e \in \bbZ^d$ with $|e |_1 =1$, consider the product  probability space $\Theta:=\wt\cN \times
  [( [0,K)^d \times \bbR)  \cup \{ \partial \} ] ^{\bbZ} $ whose elements $\bigl(\o, (a_i : i \in \bbZ) \bigr)$ are sampled as follows:
  choose $\o=\bigl\{ (x, E_x) : x \in \xi \bigr\} $ with law  $ \bbP$, afterwards  choose independently  for  each index $i$  a point $b_i \in \xi \cap B( i e)$ with uniform probability
  and set $a_i:= (b_i - i K e , E_{b_i})$ (if $\xi \cap B(ie)=\emptyset$, set $a_i = \partial$). We assume that  the resulting law $P^{(K, e)} $ on
 $\tilde \cN \times
  [( [0,K)^d\times \bbR)  \cup \{ \partial \} )
 ]^{\bbZ} $
  is ergodic w.r.t.\ the transformation
 \begin{equation}\t :\bigl(\o, (a_i : i \in \bbZ) \bigr) \to \bigl( \t_{Ke } \o , ( a_{i+1} : i \in \bbZ ) \bigr) \,.
 \end{equation}
\end{itemize}

\medskip

The proof that the marked PPP satisfies the new assumption (H2) is
similar to the non--marked case. The
 proof of Theorem
\ref{atreiu} in the presence of the energy marks can be obtained by
a straightforward extension of the proof presented in the
non--marked case. Indeed,  the presence of the energy marks is
rather painless since the weights $e^{-u(E_x,E_y) }$ are uniformly
bounded from above and from below by some positive constants.
Furthermore, the following covariant property, implicitly used in
the non--marked case,  holds: writing $p_{\o} (x,y)$ for the jump
probability of $X_n$ in the environment $\o$, then $ p_{\o}(x,y)=
p_{\t_v \o} (x-v,y-v)$ for all $v \in \xi$.

\appendix

\section{Strong invariance principle for Mott random walk on diluted lattices}
In this appendix we discuss the quenched invariance principle for diluted lattices.
The proof differs from the one of Theorem \ref{atreiu} in few points (mainly related to ergodicity) that we comment below.
In order to simplify the notation, we disregard the energy marks (all the arguments can be easily adapted  to the marked case).

 We start with a lattice $\G$ (or crystal, cf.\
\cite{AM}): $\G$ is  a locally finite  set $\Gamma\subset \bbR^d$
such that for a suitable basis $v_1, v_2, \dots, v_d$ of $\bbR^d$,
it holds
\begin{equation}\label{paranza}
\G -x =\G \, \qquad \forall x\in G:=\bigl \{ z_1v_1+z_2v_2 +\cdots
+z_d v_d \,:\, z_i \in \ZZ \;\; \forall i  \bigr\}\,.
\end{equation}
Let $\D$ be the elementary cell defined as $\bigl \{t_1 v_1+t_2 v_2
+\cdots+t_d v_d \,:\, 0\leq t_i <1\;\; \forall  i \bigr\}$. (Note
that both the group  $G$ and the cell $\D$ depend on the basis $v_1,
v_2, \dots, v_d$.)

Let $\omega=\bigl(\omega_x\,:\, x\in \G\bigr)$ be a site Bernoulli
percolation on $\G$ with parameter $p\in (0,1]$. For each $u\in \G \cap \D$
we call $P_u$ the law on $\cN_0$ of the random point process  given by
$\{0\}\cup \{x-u: x\in \G\,,\; \o_x=1\}$  and we consider $\bbP_0 =
\frac{1}{|\D\cap \G|} \sum _{u\in \D\cap \G} P_u $. As proved in \cite{FM}, $\bbP_0$ does not depend on the basis $v_1, \dots, v_d$ and on the fundamental cell $\D$, moreover $\bbP_0$ is indeed the Palm distribution of the
 stationary point process with law $\bbP$ realized as $ \{ x-V: x \in \G, \; \o_x =1 \}$ where $V$ is a random vector uniformly distributed in the fundamental cell $\D$, independent from the field $\o$. Finally, we call $P$ the law of the point process $\{ x \in \G:\o_x=1\}$.

It is simple to check that both the discrete--time and the continuous--time Mott random walks are well defined  $\bbP_0$--a.s., $\bbP$--a.s.\ and $P$--a.s.
Moreover, as in Theorem \ref{atreiu} and Corollary \ref{strong}, proving the invariance principle for
$\bbP_0$--a.a.\ $\xi$ with starting point $x_0=0$, one  automatically gets the  strong invariance principle.

 The corrector field is defined as in Section
\ref{ascolipiceno}.  By applying a linear isomorphism, we can assume
that the basis $v_1, \dotsc, v_d$ coincides with the coordinate
basis of $\bbZ^d$ and therefore that $\D=[0,1)^d$.  We restrict to
$K \in \bbN_+$. Then  under $P$, the point processes $B_K(z) \cap
\t_{Kz}\xi$ with $z \in \bbZ^d$ are i.i.d.  In particular, $P$ is
stationary and ergodic w.r.t.\ the translation   $\t_{K v_i}$.
Moreover,
 sampling $\xi$ with law $P$, the random field $\s^K(\xi)$ is a Bernoulli random field, supercritical if $K$ is taken large enough. Define $C_\infty$ as its unique infinite cluster and define $\cC_\infty$ as before. In  the definition of the  law $\bbP_*$ on the space $\O$ given in Section \ref{mediano1},  replace $\bbP$ with $P$. With this trick, $\bbP_*$ remains ergodic and stationary w.r.t.\ to the map $\cT$ defined in Section \ref{mediano1} and one can prove the sublinearity of the corrector field along a given direction. At this point,
substituting $\bbP$ with $P$, the proof of the quenched invariance
principle follows the same main steps of the proof of Theorem
\ref{atreiu}, even with huge simplications. Indeed, working with
diluted lattices overcrowded regions are absent. In particular,
taking $T_0$ large enough, the field $\vartheta ^{K,T_0}$ coincides
with $\s^K$.

\section{Miscellanea}\label{natalina}

      We start with a key technical lemma: 
 \begin{lemma}\label{techn_lemma}
Let $\bbP _0$ be the Palm distribution
  associated to a stationary  simple   point process $\bbP$ with finite density.  
\begin{enumerate}[(i)]
\item Suppose that $\rho_2 =\bbE\(\xi([0,1)^d)^k\)< \infty$, and let  $f:\bbR^d \times \cN_0 \to \bbR$ be a
measurable function satisfying $\bbE_0\Big[ \sum_{x \in \xi} |f (x,
\xi  )|\Big]<\infty$. Then $\bbE \Big[ \sum _{x \in \xi} | f( -x,
\t_x \xi) |\Big]<\infty$ and
\begin{equation}\label{kiwi1}
 \bbE_0\Big[ \sum_{x \in \xi} f (x, \xi  )\Big]= \bbE \Big[ \sum _{x \in \xi}  f( -x, \t_x \xi) \Big] \,.
\end{equation}
\item 
Let $n$ be a nonnegative integer such that
$\rho_{n+1}<\infty$. Then
\begin{equation}
\bbE_0\Big[ \bigl( \sum_{x \in \xi}      e^{-\g |x|^\a }
\bigr)^n\Big] <\infty\,, \qquad \forall \a, \g>0\,.
\end{equation}
\end{enumerate}
\end{lemma}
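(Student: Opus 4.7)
For part (ii), the plan is a tiling argument. Partition $\bbR^d$ into unit cubes $Q_z := z + [0,1)^d$, $z \in \bbZ^d$. Since $|x| \leq |z| + \sqrt d$ for $x \in Q_z$, there exist constants $C, \gamma' > 0$ depending only on $\gamma, \alpha, d$ such that
\[ \sum_{x \in \xi} e^{-\gamma|x|^\alpha} \;\leq\; C \sum_{z \in \bbZ^d} \xi(Q_z)\,e^{-\gamma'|z|^\alpha}. \]
Raising to the $n$-th power, expanding the resulting $n$-fold product, taking $\bbE_0$, and applying H\"older's inequality, it suffices to bound $\bbE_0[\xi(Q_z)^n]$ uniformly in $z \in \bbZ^d$ by a constant depending only on $\rho_{n+1}$. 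This uniform bound follows from the Palm inversion formula $\bbE_0[g] = \rho^{-1} \bbE[\sum_{y \in \xi \cap [0,1)^d} g(\tau_y \xi)]$, itself a direct consequence of \eqref{campbell}. Applied to $g(\xi) = \xi(Q_z)^n$ and noting that for $y \in [0,1)^d$ the translate $Q_z + y$ is contained in the union of $2^d$ neighboring unit cubes of $z$, we can bound $\xi(Q_z+y)^n$ by a universal constant times $\sum_j \xi(Q_{z+j})^n$. A further H\"older step with conjugate exponents $n+1$ and $(n+1)/n$ separates the factor $\xi([0,1)^d)$ from the rest, and stationarity reduces the remaining moments to $\rho_{n+1}$. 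The combination yields $\bbE_0[\xi(Q_z)^n] \leq C'\rho_{n+1}$ uniformly in $z$, and plugging back gives $\bbE_0[(\sum_x e^{-\gamma|x|^\alpha})^n]$ bounded by a constant multiple of $(\sum_{z \in \bbZ^d} e^{-\gamma'|z|^\alpha})^n$, which is finite.

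For part (i), the identity is a form of the refined Campbell theorem (Mecke's exchange identity) characteristic of Palm measures of stationary point processes. My plan is to introduce the auxiliary function $\tilde f(y, \eta) := f(-y, \eta)$ and apply the Campbell identity \eqref{campbell} to $\tilde f$, which rewrites the right-hand side as
\[ \bbE\Big[\sum_{x \in \xi} f(-x, \tau_x \xi)\Big] = \bbE\Big[\sum_{x \in \xi} \tilde f(x, \tau_x \xi)\Big] = \rho \int_{\bbR^d} \bbE_0[\tilde f(x, \cdot)]\, dx = \rho \int_{\bbR^d} \bbE_0[f(y, \cdot)]\, dy \]
after the change of variable $y = -x$. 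To match this with the Palm sum on the left-hand side, I would first verify the identity for indicator test functions $f(x, \xi) = \mathbf{1}_A(x) \mathbf{1}_B(\xi)$, using the stationarity of $\bbP$ together with the characterization of $\bbP_0$ via \eqref{campbell}, and then extend by a standard monotone-class argument to all non-negative measurable $f$, and finally to signed $f$ satisfying the stated integrability hypothesis. The finiteness claim $\bbE[\sum |f(-x, \tau_x \xi)|] < \infty$ is obtained by applying the same identity to $|f|$ in place of $f$, so that integrability transfers simultaneously between the two sides.

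The main obstacle is the careful handling in part (i) of the contribution from the point $x = 0$ in the Palm sum on the left (present because $\bbP_0$-almost surely $0 \in \xi$); reconciling this with the Lebesgue integral produced by Campbell requires a delicate use of stationarity and a precise translation between sums over points of $\xi$ and integrals against $\bbP_0$. In contrast, part (ii) is essentially a direct moment computation once the tiling is set up.
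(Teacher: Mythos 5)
Your part (ii) is correct: the unit-cube tiling, the Palm-inversion bound $\bbE_0[\xi(Q_z)^n]\leq C\rho_{n+1}$ obtained from \eqref{campbell} plus H\"older, and the final summation do give the stated finiteness, and this fills in the detail that the paper simply delegates to Lemma 2 of \cite{FSS}.

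Part (i), however, has a genuine gap, and it is exactly at the step you dismiss as a delicate technicality. Your Campbell computation of the right-hand side is correct as far as it goes: $\bbE\bigl[\sum_{x\in\xi}f(-x,\tau_x\xi)\bigr]=\rho\int_{\bbR^d}\bbE_0[f(y,\cdot)]\,dy$. But this is a first-order quantity, while the left-hand side $\bbE_0\bigl[\sum_{x\in\xi}f(x,\xi)\bigr]$ is governed by the two-point (second-order) structure of the process, and the two are \emph{not} equal in general: for a homogeneous PPP and $f(x,\xi)=g(x)$ with $g$ bounded and compactly supported (which satisfies the hypothesis), Slivnyak's theorem gives $\bbE_0[\sum_x g(x)]=g(0)+\rho\int g$, whereas $\rho\int\bbE_0[g(y)]\,dy=\rho\int g$; for non-Poisson processes the mismatch is not even confined to the atom at $0$, since the left side integrates $f$ against the reduced second moment measure rather than against $\rho\,dx$. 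Hence the identity you plan to verify on indicators $f=\mathbf{1}_A\otimes\mathbf{1}_B$ and extend by a monotone-class argument is already false at the indicator level, and no amount of ``careful handling of the point $x=0$'' can repair it; the same objection defeats your derivation of the finiteness claim by applying the identity to $|f|$. A telling symptom is that your argument for (i) never uses the hypothesis $\rho_2<\infty$, which the statement genuinely needs.

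What the lemma actually asserts --- this is what the appendix proves and what is used in the proof of Lemma \ref{carattere} --- is the exchange identity with the Palm expectation on \emph{both} sides, $\bbE_0[\sum_x f(x,\xi)]=\bbE_0[\sum_x f(-x,\tau_x\xi)]$; the $\bbE$ on the right of \eqref{kiwi1} is a misprint. The paper's proof is a two-point symmetrization that is absent from your outline: reduce to $f\geq0$, then to $f$ bounded with $f(x,\xi)=0$ for $|x|_\infty\geq\ell$; apply \eqref{campbell} on the box $\Lambda_L=[-L/2,L/2]^d$ to both $F(\xi)=\sum_x f(x,\xi)$ and $G(\xi)=\sum_x f(-x,\tau_x\xi)$, turning $\bbE_0[F]$ and $\bbE_0[G]$ into $(\rho L^d)^{-1}$ times double sums over pairs of points of $\xi$; the contributions with both points in $\Lambda_L$ coincide after exchanging the roles of the two summation variables, while the two cross terms are bounded (again via Campbell) by $\bbE_0[\sum_{x\in\xi\setminus\Lambda_{2L}}f(x,\xi)]$ and $\bbE_0[\sum_{x\in\xi\setminus\Lambda_{2L}}f(-x,\tau_x\xi)]$, which are finite for the truncated $f$ precisely because of part (ii) and $\rho_2<\infty$, and vanish as $L\to\infty$; finally the truncation is removed by monotone convergence, and the finiteness statement follows by running the identity with $|f|$. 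Without some argument of this kind, part (i) of your proposal cannot be completed.
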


\begin{proof}
Part (ii) can be derived generalizing  the proof   Lemma 2 of
\cite{FSS}. The proof of part (i) uses some arguments taken from the
proof of Lemma 1 (i) in \cite{FSS}. We give some more details.
Without loss of generality we can assume that $f\geq 0$. We define
$F(\xi)=\sum _{x \in \xi} f(x,\xi)$ and $G(\xi)= \sum _{x \in \xi}
f(-x, \t_x \xi)$. Note that, given $u \in\xi$, it holds $ F(\t_u
\xi)= \sum _{ y \in \xi} f(y-u, \t_u \xi) $ and $  G( \t_u \xi)=
\sum_{y \in \xi} f (u-y, \t_y \xi)$. In particular,  taking $L>0$
and setting $\L_L=[-L/2,L/2]^d$,  by the  Campbell identity we can
write $ \bbE_0[F] = A(L)+B(L)$ and $\bbE_0[G]=A(L)+ C(L)$ where
\begin{align*}
& A(L)= \frac{1}{\rho L^d} \bbE\Big[ \sum _{u \in \xi \cap \L_L}
\sum _{y \in \xi\cap \L_L } f(y-u, \t_u \xi ) \Big]= \frac{1}{\rho
L^d} \bbE\Big[ \sum _{y \in \xi \cap \L_L} \sum _{u \in \xi\cap \L_L
} f(u-y, \t_y \xi ) \Big]
\,,\\
& B(L)= \frac{1}{\rho L^d} \bbE\Big[ \sum _{u \in \xi \cap \L_L}
\sum _{y \in \xi \setminus \L_L } f(y-u, \t_u \xi ) \Big]\,,\\
& C(L)=\frac{1}{\rho L^d} \bbE\Big[ \sum _{u \in \xi \cap \L_L} \sum
_{y \in \xi \setminus \L_L } f(u-y, \t_y \xi ) \Big]\,.
\end{align*}
Since
\begin{align}
& \sum _{u \in \xi \cap \L_L} \sum _{y \in \xi\setminus \L_L }
f(y-u, \t_u \xi ) \leq \sum _{u \in \xi \cap \L_L} \sum _{ x \in
\t_u \xi \setminus \L_{2L}} f(x, \t_u \xi) \,,\\
& \sum _{u \in \xi \cap \L_L} \sum _{y \in \xi \setminus \L_L }
f(u-y, \t_y \xi ) \leq  \sum_{u \in \xi\cap \L_L} \sum _{x \in \t_u
\xi \setminus \L_{2L} } f(-x, \t_x \t_u \xi)\,,
\end{align}
by  Campbell's identity we can bound $B(L)$ and $ C(L)$
from above by 
$\bbE_0\big[ \sum _{x \in \xi \setminus \L_{2L}} f
(x,\xi) \big]$ and  $\bbE_0 \big[ \sum _{x \in \xi\setminus \L_{2L}
} f(-x, \t_x \xi )\big]$, respectively. Suppose for the moment that
$f(x, \xi)$ is bounded and $f(x,\xi)=0$ if $ |x|_\infty \geq \ell$ for some
positive $\ell$. This assures that all the above expectations are
bounded (we invoke part (ii) and the assumption $\rho_2<\infty$). In
particular, by the Dominated Convergence Theorem, we conclude that
$B(L)$ and $C(L)$ go to zero as $L\to  \infty$. As a consequence, it
holds $\bbE_0[F]= \bbE_0[G]$, which is simply the thesis in point
(i). On the other hand, given a general nonnegative function $f$ and
a constant $\ell>0$, we can define the cutoff
$$ f_\ell ( x,\xi  ) = \begin{cases} f(x,\xi ) & \text{ if } |x|_\infty \leq
\ell \,,\; f ( x,\xi  ) \leq \ell\,,\\
0 &\text{ otherwise}\,.
\end{cases}
$$
Then the thesis holds for $f_\ell$ (by what was proved above) and
extends to $f$ by the Monotone Convergence Theorem.
\end{proof}
\medskip

\begin{lemma}\label{francoforte}
Given a measurable subset $\cA _0\subset \cN_0$, define $\cA \subset \cN$ as
\[
 \cA = \{ \xi \in \cN\,:\, \t_x \xi \in \cA_0 \; \; \forall x \in \xi \} \,.
\]
Then $\bbP_0 (\cA_0)=  1$ if and only if $\bbP (\cA)= 1$.
\end{lemma}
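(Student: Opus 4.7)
The plan is to use the Campbell identity \eqref{campbell} as the main tool, applied to the indicator of $\cA_0^c$ localized to a bounded window. Specifically, for any bounded Borel set $B\subset\bbR^d$, I would apply \eqref{campbell} to the nonnegative function $f(x,\xi) = \mathbf{1}_B(x)\,\mathbf{1}_{\cA_0^c}(\xi)$. This yields the identity
\begin{equation*}
 \ell(B)\,\bbP_0(\cA_0^c)
 = \frac{1}{\rho}\,\bbE\Big[\sum_{x\in\xi\cap B}\mathbf{1}_{\cA_0^c}(\t_x\xi)\Big]\,,
\end{equation*}
which is the key link between $\bbP_0$-properties of $\cA_0$ and $\bbP$-properties of the points of $\xi$ under translation.

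For the forward implication, if $\bbP_0(\cA_0)=1$ then the left-hand side vanishes, so for $\bbP$-a.a. $\xi$ every point $x\in\xi\cap B$ satisfies $\t_x\xi\in\cA_0$. Taking $B=B_n$ an increasing sequence of boxes exhausting $\bbR^d$ and intersecting the corresponding $\bbP$-null sets gives $\bbP(\cA)=1$. For the reverse implication, if $\bbP(\cA)=1$ then the sum on the right-hand side is $\bbP$-a.s. zero for every $B$, so $\bbP_0(\cA_0^c)=0$.

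I would begin by checking that $\cA$ is measurable, which follows from writing $\cA^c = \{\xi\in\cN : \sum_{x\in\xi}\mathbf{1}_{\cA_0^c}(\t_x\xi)\geq 1\}$ and noting that the sum is a measurable function of $\xi$ (since $\cA_0\in\sigma(\cN_0)$ and the $\sigma$-algebra on $\cN$ is generated by the counting functionals, the shift map $(x,\xi)\mapsto\t_x\xi$ is jointly measurable on $\bbR^d\times\cN$). There is no real obstacle here: the argument is a textbook application of Campbell's formula, and the only thing to be slightly careful about is the joint measurability needed to legitimately apply \eqref{campbell} to $(x,\xi)\mapsto \mathbf{1}_{\cA_0^c}(\t_x\xi)$, but this is standard for the canonical space $\cN$.
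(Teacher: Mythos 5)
Your proof is correct and follows essentially the same route as the paper: both apply the Campbell identity \eqref{campbell} to an indicator localized to a bounded window, the only cosmetic difference being that you work with $\cA_0^c$ (so the left-hand side vanishes when $\bbP_0(\cA_0)=1$), whereas the paper works with $\cA_0$ and squeezes the resulting expectation against $\bbE[\xi(\L_L)]$. Both directions go through exactly as you describe.
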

\begin{proof}
Given $L>0$ we set $\L_L=[-L,L]^d$ and  we apply the Campbell identity \eqref{campbell} to the function $f(x,\xi):=
\bbI( x \in \L_L \,;\, \xi \in \cA_0)$:
\begin{equation*}(2L)^d= (2L)^d \bbP_0(\cA_0)= \rho^{-1}  \bbE\Big[ \sum_{x \in \xi\cap\L_L} \bbI (\t_x \xi \in \cA_0 ) \Big]\leq
\rho^{-1} \bbE\Big[\xi ( \L_L) \Big]= (2L)^d\,.
\end{equation*}
Hence,
 all members in the above expression must be equal. In particular, $\bbP$--a.s.\ it holds $ \t_x \xi \in \cA_0$
for all $x \in \xi \cap \L_L$.
Using the arbitrariness of $L$, we conclude.
\end{proof}

\begin{lemma}\label{arancia}
Suppose that  $\bbP$ is the law of a stationary ergodic marked
simple point process with finite second moment, or a marked diluted
lattice.
Then, both for
$\bbP$ and for $\bbP_0$--a.a. $\o$, the DTRW and the CTRW are well
defined for any starting point $x_0 \in \xi$.
\end{lemma}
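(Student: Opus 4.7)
The plan hinges on the observation that since $e^{-u(E_x,E_y)}\in[e^{-c},1]$ is bounded away from $0$ and $\infty$, both walks are well defined provided that $w(x)=\sum_{y\in\xi\setminus\{x\}}r(y-x)$ is finite at every $x\in\xi$ and the CTRW has no explosion. First I would establish integrability of the weight at a typical point. Taking $r(x)=e^{-|x|^\a}$ as in the rest of the paper, Lemma \ref{techn_lemma}(ii) with $n=1$ gives $\bbE_0[w(0)]<\infty$ under the finite second moment assumption $\rho_2<\infty$; in the diluted lattice setting this bound is immediate from the independence of the occupation field. Hence $w(0)<\infty$ $\bbP_0$-a.s., so the set $\cA_0:=\{\xi\in\cN_0:w(0)<\infty\}$ has $\bbP_0(\cA_0)=1$.

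Next I would upgrade this to every $x\in\xi$, simultaneously under $\bbP$ and under $\bbP_0$. Define the translation-invariant event $\cA:=\{\xi\in\cN:\tau_x\xi\in\cA_0\ \forall\,x\in\xi\}=\{w(x)<\infty\ \forall\,x\in\xi\}$. Lemma \ref{francoforte} applied to $\cA_0$ gives $\bbP(\cA)=1$. To pass back to $\bbP_0$, I apply the Campbell identity \eqref{campbell} to $f(x,\xi)=\1_{[0,1)^d}(x)\1_{\cA^c}(\xi)$; exploiting the translation invariance of $\cA$, one obtains $\bbP_0(\cA^c)=\rho^{-1}\bbE[\xi([0,1)^d)\1_{\cA^c}(\xi)]=0$, so $\bbP_0(\cA)=1$ as well. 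On the full-measure event $\cA$, the DTRW kernel $p(x,\cdot)$ is a bona fide probability measure on $\xi$ for every $x\in\xi$, and the discrete-time chain is well defined for every starting point $x_0\in\xi$.

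Finally I would rule out explosion of the CTRW. Conditional on the DTRW trajectory $(X_n)$, the successive holding times are $\sigma_n/w(X_n)$ with $\sigma_n$ i.i.d.\ Exp$(1)$, so non-explosion amounts to $\sum_n w(X_n)^{-1}=\infty$ almost surely. The environment process $n\mapsto\tau_{X_n}\xi$ is a Markov chain on $\cN_0$ admitting $\tilde\bbP_0(d\xi):=w(0)\bbP_0(d\xi)/\bbE_0[w(0)]$ as reversible invariant distribution, which is a genuine probability thanks to the first step; its ergodicity is exactly the content of Proposition 2 in \cite{FSS} and transfers verbatim to our hypotheses. The pointwise ergodic theorem applied with the test function $g(\xi)=1/w(0)$ then yields
\[
\frac{1}{n}\sum_{k=0}^{n-1}\frac{1}{w(X_k)}\;\longrightarrow\;\frac{1}{\bbE_0[w(0)]}\;>\;0
\]
$P_{0,\xi}$-a.s.\ for $\bbP_0$-a.a.\ $\xi$, forcing the series to diverge. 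The analogous $\bbP$-statement for an arbitrary starting point $x_0\in\xi$ then follows from a final application of Lemma \ref{francoforte}, since non-explosion from a designated point is a property of the configuration stable under translation.

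The hard part will be the ergodicity of the environment-as-seen-from-the-walker invoked in the third step; I would not reprove it but cite it directly from \cite{FSS}. All other steps reduce to a routine combination of the Campbell identity \eqref{campbell}, the integrability bound of Lemma \ref{techn_lemma}, and the transfer lemma \ref{francoforte}.
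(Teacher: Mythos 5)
Your proposal is correct and follows essentially the same route as the paper: finiteness of $\bbE_0[w(0)]$ from the second-moment assumption, transfer between $\bbP_0$ and $\bbP$ via Lemma \ref{francoforte} (and Campbell), and non-explosion of the CTRW via the ergodicity of the environment seen from the walker. The only difference is that the paper simply cites \cite[Prop.~10]{FSS} for the non-explosion step, whereas you spell out the underlying argument (divergence of $\sum_n w(X_n)^{-1}$ via the ergodic theorem for the chain $n\mapsto\t_{X_n}\xi$), which is a faithful unpacking of that citation.
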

\begin{proof}
First, we point out that Lemma \ref{francoforte} holds also in the
marked case and the proof is very similar. By the assumption of
finite second moment (recall that  diluted lattices have finite
moments of all orders) it holds $\bbE_0[w(0)] < \infty$. This
implies that for $\bbP_0$--a.a.\ $\o$ and for all $x \in \xi $ it
holds $w(x)< \infty$. By Lemma \ref{francoforte}, the same property
is fulfilled $\bbP$--a.e. As a  consequence, the DTRW is well
defined. The claim for the CTRW follows from \cite[Prop. 10]{FSS}
and Lemma \ref{francoforte} (diluted lattices can be treated apart
since due to the uniform density bounds the proof becomes trivial).
\end{proof}

\bigskip

\subsection*{Acknowledgements} We kindly  thank S.\ Popov for very
useful discussions. P.C.\ and A.F.\ acknowledge the financial support
of the European Research Council through the ``Advanced Grant'' 
PTRELSS 228032. T.\ Prescott thanks the Department of Mathematics of
Universit\`a  Roma Tre for their kind hospitality.

\end{document}